\theoremstyle{remark}
\newtheorem{para}{\bf}[subsection]
\theoremstyle{definition}
\newtheorem{dfn}[para]{Definition}
\theoremstyle{plain}
\newtheorem{thm}[para]{Theorem}
\newtheorem{lemma}[para]{Lemma}
\newtheorem{cor}[para]{Corollary}
\newtheorem{prop}[para]{Proposition}
\newenvironment{numequation}
{\addtocounter{para}{1}\begin{equation}}{\end{equation}}
\newcommand{\al}{{\alpha}}
\newcommand{\bbA}{{\mathbb A}}
\newcommand{\bbC}{{\mathbb C}}
\newcommand{\bbG}{{\mathbb G}}
\newcommand{\bbN}{{\mathbb N}}
\newcommand{\bbQ}{{\mathbb Q}}
\newcommand{\bbR}{{\mathbb R}}
\newcommand{\bbZ}{{\mathbb Z}}
\newcommand{\bB}{{\bf B}}
\newcommand{\bC}{{\bf C}}
\newcommand{\bG}{{\bf G}}
\newcommand{\bN}{{\bf N}}
\newcommand{\bT}{{\bf T}}
\newcommand{\bU}{{\bf U}}
\newcommand{\bb}{{\bf b}}
\newcommand{\frb}{{\mathfrak b}}
\newcommand{\frg}{{\mathfrak g}}
\newcommand{\frh}{{\mathfrak h}}
\newcommand{\frn}{{\mathfrak n}}
\newcommand{\frt}{{\mathfrak t}}
\newcommand{\frx}{{\mathfrak x}}
\newcommand{\frB}{{\mathfrak B}}
\newcommand{\frG}{{\mathfrak G}}
\newcommand{\frT}{{\mathfrak T}}
\newcommand{\frX}{{\mathfrak X}}
\newcommand{\frY}{{\mathfrak Y}}
\newcommand{\cA}{{\mathcal A}}
\newcommand{\cC}{{\mathcal C}}
\newcommand{\cD}{{\mathcal D}}
\newcommand{\cF}{{\mathcal F}}
\newcommand{\cI}{{\mathcal I}}
\newcommand{\cL}{{\mathcal L}}
\newcommand{\cM}{{\mathcal M}}
\newcommand{\cN}{{\mathcal N}}
\newcommand{\cO}{{\mathcal O}}
\newcommand{\cT}{{\mathcal T}}
\newcommand{\sB}{{\mathscr B}}
\newcommand{\sC}{{\mathscr C}}
\newcommand{\sD}{{\mathscr D}}
\newcommand{\sF}{\vartheta^{-1}_{\bf B}}
\newcommand{\sI}{{\mathscr I}}
\newcommand{\sK}{{\mathscr K}}
\newcommand{\sM}{{\mathscr M}}
\newcommand{\End}{{\rm End}}
\newcommand{\Hom}{{\rm Hom}}
\newcommand{\lra}{\longrightarrow}
\newcommand{\ra}{\rightarrow}
\newcommand{\smprod}{\#}
\newcommand{\cotimes}{\hat{\otimes}}
\newcommand{\ci}{\circ}
\newcommand{\car}{\stackrel{\cong}{\longrightarrow}}
\newcommand{\UFe}{U_F^{(e)}}
\newcommand{\UFbe}{U_{F'}^{(e)}}
\newcommand{\Uxne}{U_{x_0}^{(e)}}
\newcommand{\Uze}{U_z^{(e)}}
\newcommand{\comFe}{\mathring{\omega}_F^{(e)}}
\newcommand{\Gan}{\bG^{an}}
\newcommand{\Xan}{X^{an}}
\newcommand{\sta}{\cO_{\sB,z}}
\newcommand{\shf}{(\cO_{\sB}\smprod\tiD)}\newcommand{\shfo}{\cO_{\sB}\smprod\tiD}
\newcommand{\shfu}{\cO_{\sB}\smprod U(\frg)_K}
\newcommand{\sDc}{\sD_{r,\chi}}\newcommand{\sDcz}{\sD_{r,\chi,z}}
\newcommand{\tiD}{\underline{D}_r}
\newcommand{\tiDt}{\underline{D}_{r,\theta}}
\newcommand{\tiM}{\underline{M}_r}
\newcommand{\func}{\mathscr{L}_{r,\chi}}
\def\Vsim{\raise1pt\hbox{$\mathop{V}\limits_{\approx}$}} %%%%%%%%%%%%Schneider-Stuhler "Privatdefinition"
\def\cVsim{\raise1pt\hbox{$\mathop{\check{V}}\limits_{\approx}$}} %%%%%%%%%%%%Schneider-Stuhler "Privatdefinition"
\def\cVrsim{\raise1pt\hbox{$\mathop{\check{V}}\limits_{\approx,r}$}} %%%%%%%%%%%%Schneider-Stuhler "Privatdefinition"
\newcommand{\tiV}{\Vsim}
\begin{document}

\title{Locally analytic representations
and sheaves on the Bruhat-Tits building}

\author{Deepam Patel}
\address{Department of Mathematics, VU University Amsterdam - Faculty of Sciences,
De Boelelaan 1081a, 1081 HV Amsterdam, The Netherlands}
\email{deeppatel1981@gmail.com}
\author{Tobias Schmidt}
\address{Mathematisches Institut, Westf\"alische Wilhelms-Universit\"at M\"unster, Einsteinstr.
62, D-48149 M\"unster, Germany}
\email{toschmid@math.uni-muenster.de}
\author{Matthias Strauch}
\address{Indiana University, Department of Mathematics, Rawles Hall, Bloomington, IN 47405, U.S.A.}
\email{mstrauch@indiana.edu}
\thanks{M. S. would like to acknowledge the support of the National Science Foundation (award numbers DMS-0902103 and DMS-1202303).}
\maketitle

\normalsize

\begin{abstract}
Let $L$ be a finite field extension of
$\bbQ_p$ and let $G$ be the group of $L$-rational points of a split
connected reductive group over $L$. We view $G$ as a locally
$L$-analytic group with Lie algebra $\frg$. The purpose of this
work is to propose a construction which extends the localization
of smooth $G$-representations of P. Schneider and U. Stuhler
(\cite{SchSt97}) to the case of locally analytic
$G$-representations. We define a functor from admissible locally analytic
$G$-representations with prescribed infinitesimal character to a
category of equivariant sheaves on the Bruhat-Tits building of
$G$. For smooth representations, the corresponding sheaves are closely related to
the sheaves of Schneider and Stuhler. The functor is also compatible, in a certain sense, with the localization of $\frg$-modules on the flag variety by A.
Beilinson and J. Bernstein (\cite{BB81}).

\end{abstract}

\tableofcontents

\vskip30pt

\section{Introduction}

Let $L$ be a finite field extension of
the field $\bbQ_p$ of $p$-adic numbers. Let $\bG$ be a connected
split reductive group over $L$ and $\bB\subset\bG$ a Borel
subgroup defined over $L$. Let $\bT\subset\bG$ be a maximal torus
contained in $\bB$. Let $G:=\bG(L)$, $T:=\bT(L)$ denote the groups
of rational points, viewed as a locally $L$-analytic groups. Let
$\frg$ and $\frt$ be the corresponding Lie algebras.

\vskip8pt

The purpose of this work is to propose a construction which
extends the localization theory for smooth $G$-representations of P.
Schneider and U. Stuhler (\cite{SchSt97}) to the case of locally
analytic $G$-representations. In more concrete terms, we define
an exact functor from admissible locally analytic
$G$-representations with prescribed infinitesimal character to a
category of equivariant sheaves on the Bruhat-Tits building of
$G$. The functor is also compatible, in a certain sense, with
the localization theory for $\frg$-modules on the flag variety of $\bG$ by
A. Beilinson and J. Bernstein (\cite{BB81}),
and J.-L. Brylinski and M. Kashiwara (\cite{BK80}, \cite{BK81}).

\vskip8pt

To give more details, let $\sB$ be the (semisimple) Bruhat-Tits
building of $G$. The torus $\bT$ determines an apartment $A$ in
$\sB$. We fix a fundamental chamber $\sC\subset A$ and a special
vertex $x_0\in \overline{\sC}$ which will be used as an origin for
the affine space $A$. In \cite{SchSt97} the authors consider, for
any point $z\in\sB$, a well-behaved filtration
$$P_z \supset U_z^{(0)} \supset U_z^{(1)}\supset...$$

of the pointwise stabilizer $P_z$ of $z$ in $G$ by open pro-$p$
subgroups $\Uze$. It forms a fundamental system of neighborhoods
of $1\in P_z$. The group $\Uze$ does not change when $z$ varies in a facet
of the building. Let from now on $e\geq 0$ be a fixed number (called
a {\it level} in loc.cit.).  %Let $\Phi$ be the
%root system of $\bT$, and let $\Phi^+$ be the set of positive roots relative to
%$\bB$. %As a second key property any group $\Uze$ admits a 'root
%space decomposition'
%$$\Uze=U_z^{(e,-)}\times U_z^{(e,t)}\times U_z^{(e,+)}$$

%(as sets) with subgroups $U_z^{(e,-)},U_z^{(e,t)}, U_z^{(e,+)}$
%corresponding to negative roots, a toral part and positive roots
%respectively.

\vskip8pt

Using the groups $\Uze$, Schneider and Stuhler define in \cite[sec. IV]{SchSt97}
an exact functor
$$V\mapsto\tiV$$ from smooth complex
$G$-representations to sheaves of complex vector spaces on $\sB$.
The stalk of the sheaf $\tiV$ at a point $z$ is given by the
coinvariants $V_{\Uze}$ and the restriction of $\tiV$ to a facet
$F\subset\sB$ equals the constant sheaf with fibre $V_{\UFe}$. The
functor $V\mapsto\tiV$ has particularly good properties when
restricted to the subcategory of representations generated by
their $\Uxne$-fixed vectors. It is a major tool in the proof of
the Zelevinsky conjecture (loc.cit.).

\vskip8pt

From now on we fix a complete discretely valued field extension $K$ of $L$. The
functor $V\mapsto\tiV$ can be defined in exactly the same way for
smooth $G$-representations on $K$-vector spaces, and produces
sheaves of $K$-vector spaces on $\sB$. The naive extension of the
functor $V\mapsto\tiV$ to locally analytic representations, by
taking coinvariants as above, does not have good properties. For
instance, applying this procedure to an irreducible
finite-dimensional algebraic representation, which is not the
trivial representation, produces the zero sheaf. Moreover, if we
aim at a picture which is related to the localization theory of
$\frg$-modules, then localizing an irreducible algebraic
representation should give a line bundle.

\vskip8pt

We consider the variety of Borel subgroups
$$X=\bG/\bB$$ of $\bG$. We let $\cO_X$ be its structure sheaf and $\cD_X$
be its sheaf of differential operators. Deriving the left regular
action of $\bG$ on $X$ yields an algebra homomorphism $$\alpha:
\underline{U}(\frg)\longrightarrow\cD_X$$ where the source refers
to the constant sheaf on $X$ with fibre equal to the universal
enveloping algebra $U(\frg)$ of $\frg$. Let $Z(\frg)$ be the
center of the ring $U(\frg)$.

\vskip8pt

The torus $\bT$ determines a root system. Let $\rho$ be half the
sum over the positive roots with respect to $\bB$. For any
algebraic character $\chi-\rho$ of the torus $\bT$ we have the
sheaf $\cD_\chi$ of differential endomorphisms of the line bundle
on $X$ associated with $\chi-\rho$. Any trivialization of the line
bundle induces a local isomorphism between $\cD_\chi$ and $\cD_X$,
and we have $\cD_\rho = \cD_X$. More generally, if $\chi-\rho$ is
an arbitrary character of $\frt$ there is a sheaf of so-called
{\it twisted} differential operators $\cD_\chi$ on $X$. As in the
former case it comes equipped with a morphism
$\cO_X\hookrightarrow\cD_\chi$ which is locally isomorphic to the
canonical morphism $\cO_X\hookrightarrow\cD_X$. Moreover, there is
an algebra homomorphism $\underline{U}(\frg)\rightarrow\cD_\chi$
locally isomorphic to $\alpha$. The sheaf
$\cD_\chi$ was first introduced in
\cite{BB81} as a certain quotient sheaf of the skew tensor
product algebra $\cO_X\smprod U(\frg)$. We use this notation
('$\smprod$') to indicate that the multiplication on the tensor
product $\cO_X \otimes U(\frg)$ involves the action of $U(\frg)$
on $\cO_X$.

\vskip8pt

Let $\chi$ be a character of $\frt$. Let $\theta$ be the character
of $Z(\frg)$ associated with $\chi$ via the classical
Harish-Chandra homomorphism. The above map factors via a
homomorphism
$$\underline{U}(\frg)_\theta\longrightarrow\cD_\chi$$ where
$U(\frg)_\theta=U(\frg)\otimes_{Z(\frg),\theta} L$. If $\chi$ is
dominant and regular, a version of the localization theorem
(\cite{BB81}) asserts that the functor
$$ \Delta_\chi: M\mapsto\cD_\chi \otimes_{\underline{U}(\frg)_\theta} \underline{M}$$
is an equivalence of categories between the (left)
$U(\frg)_\theta$-modules and the (left) $\cD_\chi$-modules which
are quasi-coherent as $\cO_X$-modules. The underlined objects
refer to the associated constant sheaves on $X$. We remark that a
seminal application of this theorem (or rather its complex
version) leads to a proof of the Kazhdan-Lusztig multiplicity conjecture
(cf. \cite{BB81}, \cite{BK80}, \cite{BK81}).

\vskip8pt

The starting point of our work is a result of V. Berkovich
(\cite{BerkovichBook}, \cite{RemyThuillierWerner10}) according to
which the building $\sB$ may be viewed as a locally closed
subspace$$\sB\hookrightarrow\Xan$$ of the Berkovich
analytification $\Xan$ of $X$. This makes it possible to 'compare'
sheaves on $\sB$ and $\Xan$ in various ways. Most of what has been
said above about the scheme $X$ extends to the analytic space
$\Xan$. In particular, there is an analytic version
$\cD^{an}_\chi$ of $\cD_\chi$ and an analytic version
$\Delta_\chi(\cdot)^{an}$ of the functor $\Delta_\chi$ (sec. 6).

\vskip8pt For technical reasons we have to assume at some point in
this paper that $L=\bbQ_p$, with $p>2$ an odd prime. (However, we have no doubts that our
results eventually extend to general $L$ and $p$). To describe our
proposed locally analytic 'localization functor' under this
assumption we let $D(G)$ be the algebra of $K$-valued locally
analytic distributions on $G$. It naturally contains $U(\frg)$.
Recall that the category of admissible locally analytic
$G$-representations over $K$ (in the sense of P. Schneider and J.
Teitelbaum, cf. \cite{ST5}) is anti-equivalent to a full abelian
subcategory of the (left) $D(G)$-modules, the so-called
coadmissible modules. A similar result holds over any compact open
subgroup $\Uze$.

\vskip8pt From now on we fix a central character
$$\theta: Z(\frg_K)\longrightarrow K$$ and a toral character $\chi\in\frt^*_K$ associated to $\theta$ via
the classical Harish-Chandra homomorphism. To give an example, the
usual augmentation map $K[T]\rightarrow K$ of the group ring
$K[T]$ induces a character $\lambda_0$ of $D(T)$ such that
$\chi=\rho$ and with $\theta_0$ equal to the trivial infinitesimal
character, i.e., $\ker\theta_0= Z(\frg_K)\cap U(\frg_K)\frg_K$.
The ring $Z(\frg_K)$ lies in the center of the ring $D(G)$ (\cite{ST4})
so that we may consider the central reduction

$$D(G)_\theta := D(G)\otimes_{Z(\frg_K),\theta} K.$$
We propose to study the abelian category of (left)
$D(G)_\theta$-modules which are coadmissible over $D(G)$. As
remarked above it is anti-equivalent to the category of admissible
locally analytic $G$-representations over $K$ with infinitesimal
character $\theta$. We emphasize that {\it any} topologically
irreducible admissible locally analytic $G$-representation admits,
up to a finite extension of $K$, an infinitesimal character
(\cite{DospSchraen}).

\vskip8pt

To start with, consider a point $z\in\sB$. The group $\Uze$ carries
a natural $p$-valuation in the sense of M. Lazard, cf.
\cite{Lazard65}. According to the general locally analytic theory
(\cite{ST5}), this induces a family of norms $||.||_r$ on the
distribution algebra $D(\Uze)$ for $r\in [r_0,1)$ where
$r_0:=p^{-1}$. We let $D_r(\Uze)$ be the corresponding completion
of $D(\Uze)$ and $D_r(\Uze)_\theta$ its central reduction. In 8.2
we introduce sheaves of distribution algebras $\tiD$ and $\tiDt$
on $\sB$ with stalks
$$(\tiD)_z=D_r(\Uze),~~~~~~(\tiDt)_z=D_r(\Uze)_\theta$$ for all points $z\in\sB$. The inclusions $U(\frg)\subset D_r(\Uze)$ sheafify to a morphism
$\underline{U}(\frg_K)_\theta\ra \tiDt.$ Similarly, for any
coadmissible $D(G)_\theta$-module $M$ we consider a
$\tiDt$-module $\tiM$ on $\sB$ having stalks
$$(\tiM)_z=D_r(\Uze)_\theta\otimes_{D(\Uze)_\theta} M$$ for all points $z\in\sB$. The formation of $\tiM$ is functorial in $M$.
%The natural maps $M\rightarrow
%D_r(\Uze)_\theta\otimes_{D(\Uze)_\theta} M$ sheafify to a morphism
%$$\underline{M}\lra\tiM.$$
The sheaves $\tiDt,\tiM$ are constructible and will formally
replace the constant sheaves appearing in the definition of the
functors $\Delta_\chi,\Delta_\chi^{an}$. To simplify the
exposition in this introduction we assume from now on that the
level $e\geq 0$ is sufficiently large.

\vskip8pt

Consider the restriction of the structure sheaf of $X^{an}$ to $\sB$, i.e.,
$$\cO_\sB = \cO_{X^{an}}|_\sB \;.$$

We then define a sheaf of
noncommutative rings $\sDc$ on $\sB$ which is also a module over
$\cO_{\sB}$ and which is vaguely reminiscent of a 'sheaf of
twisted differential operators'. It has a natural $G$-equivariant
structure. It depends on the level $e$, but, following the usage
of \cite{SchSt97}, we suppress this in our notation. More
important, it depends on the 'radius' $r$ which is genuine to the
locally analytic situation and is related to a choice of completed
distribution algebra $D_r(\Uze)$ at each point $z\in\sB$.
Completely analogous to constructing $\cD_\chi$ out of the skew
tensor product algebra $\cO_X\smprod U(\frg_K)$ (cf. \cite{BB81})
we obtain the sheaf $\sDc$ out of a skew tensor product
algebra of the form $\cO_{\sB}\smprod \tiD$.

\vskip8pt

To describe the sheaf $\sDc$ we observe first that, for any point
$z\in\sB$, the inclusion $\Uze\subset P_z$ implies that there is a
locally analytic $\Uze$-action on the analytic stalk
$\cO_{\sB,z}$. We therefore have the corresponding skew group ring
$\cO_{\sB,z}\smprod\Uze$ as well as the skew enveloping algebra
$\cO_{\sB,z}\smprod U(\frg)$, familiar objects from
noncommutative ring theory (\cite{MCR}). In sec. 3 and sec.
6.3/4 we explain how the completed tensor product
$$\cO_{\sB,z}\hat{\otimes}_L D_r(\Uze)$$ can be
endowed with a unique structure of a topological $K$-algebra such that
the $\cO_{\sB,z}$-linear maps

\[\begin{array}{clccccl}
 (\ast) \hskip14pt \cO_{\sB,z}\smprod\Uze\ra & \cO_{\sB,z}\hat{\otimes}_L D_r(\Uze) &&{\rm and} & &\cO_{\sB,z}\smprod
U(\frg)\ra& \cO_{\sB,z}\hat{\otimes}_L D_r(\Uze) \;, \\
\end{array}\]
induced by $\Uze\subset D(\Uze)^\times$ and $U(\frg)\subset D(\Uze)$ respectively, become ring homomorphisms. To emphasize
this skew multiplication we denote the target of the two maps in
$(*)$ by $\cO_{\sB,z}\smprod D_r(\Uze)$ keeping in mind that there
is a {\it completed} tensor product involved. This process leads
to a sheaf of $K$-algebras $\shfo$ on $\sB$ with stalks
$$(\shfo)_z=\cO_{\sB,z}\smprod D_r(\Uze)$$
at points $z\in\sB$. It comes equipped with a morphism
$\cO_\sB\smprod U(\frg)\ra\shfo$ giving back the second map in ($\ast$)
at a point $z\in\sB$.

\vskip8pt

To generalize the formalism of {\it twisting} to this new
situation we proceed similarly to \cite{BB81}. Let $\cT_{\Xan}$ be
the tangent sheaf of $\Xan$ and let
$\alpha^{an}:\frg\rightarrow\cT_{\Xan}$ be the analytification of
the map $\alpha|_\frg$. There is the sheaf of $L$-Lie
algebras

$$\frb^{\ci,an} := \ker\;(\cO_{\Xan} \otimes_L \frg \stackrel{\alpha^{an}}{\lra}\cT_{X^{an}}).$$
 \vskip8pt

The inclusion $\bT\subset\bB$ induces an isomorphism of Lie
algebras
$$\cO_{\Xan}\otimes_L \frt\car\frb^{\ci,an}/[\frb^{\ci,an},\frb^{\ci,an}].$$ We have
thus an obvious $\cO_{\Xan}$-linear extension of the character
$\chi-\rho$ of $\frt_K$ to $\frb^{\ci,an}\otimes_L K$. Its
kernel, restricted to the building $\sB$, generates a two-sided
ideal $\sI^{an}_\chi$ in $\cO_\sB \smprod\tiD$ and we set

$$\sD_{r,\chi} := \left(\cO_\sB \smprod\tiD)\right/\sI^{an}_\chi \;.$$
Let $\cD^{an}_{\sB,\chi}$ denote the restriction of
$\cD^{an}_{\chi}$ to the building $\sB$. The sheaf $\sD_{r,\chi}$
comes with an algebra homomorphism
$\cD^{an}_{\sB,\chi}\ra\sD_{r,\chi}$ induced from the inclusion
$\cO_{\sB}\smprod U(\frg_K)\rightarrow\cO_{\sB}\smprod\tiD$. Most
importantly, the canonical morphism
$\tiD\rightarrow\cO_{\sB}\smprod \tiD$ induces a canonical
morphism $\tiDt\ra\sDc$ making the diagram

\[\xymatrix{
\underline{U}(\frg_K)_\theta \ar[d]\ar[r] &  \cD^{an}_{\sB,\chi}\ar[d] \\
 \tiDt  \ar[r]& \sDc }
\]
commutative.

\vskip8pt In this situation we prove that
$$M\mapsto \mathscr{L}_{r,\chi}(M):=\sDc\otimes_{\tiDt}\tiM$$
is an exact covariant functor from coadmissible $D(G)_\theta$
modules into $G$-equivariant (left) $\sDc$-modules. The stalk of
the sheaf $\mathscr{L}_{r,\chi}(M)$ at a point $z\in\sB$ with
residue field $\kappa(z)$ equals the ($\chi-\rho$)-coinvariants of
the $\frt_K$-module
$$(\kappa(z)\hat{\otimes}_L \underline{M}_{r,z})/\frn_{\pi(z)}(\kappa(z)\hat{\otimes}_L
\underline{M}_{r,z})$$ as it should be (\cite{BB81}). Here,
$\frn_{\pi(z)}$ equals the nilpotent radical of the Borel
subalgebra of $\kappa(z)\otimes_L \frg$ defined by the point
$\pi(z)\in X$ where $\pi:\Xan\rightarrow X$ is the canonical map.

We tentatively call $\func$ a locally analytic 'localization
functor'. We suppress the dependence of $\mathscr{L}_{r,\chi}$ on
the level $e$ in our notation.

\vskip8pt

We prove the following compatibilities with the Schneider-Stuhler
and the Beilinson-Bernstein localizations. Suppose first that the
coadmissible module $M$ is associated to a {\it smooth}
$G$-representation $V$. Since $\frg M=0$ it has infinitesimal
character $\theta=\theta_0$ and the natural choice of twisting is
therefore $\chi=\rho$. We establish a canonical isomorphism (Thm. \ref{thm-compSchSt}) of $\cO_{\sB}$-modules
$$\mathscr{L}_{r_0,\rho}(M)\car \cO_{\sB}\otimes_L \check{\tiV}$$
where $\check{V}$ is the smooth dual of $V$ and $\check{\tiV}$ the
sheaf associated to $\check{V}$ by Schneider-Stuhler. The
isomorphism is natural in $M$.

\vskip8pt

Secondly, suppose the coadmissible module $M$ is associated to a
{\it finite dimensional algebraic} $G$-representation. The functor
$\Delta_\chi(\cdot)^{an}$ may be applied to its underlying
$\frg$-module and gives a $\cD^{an}_\chi$-module on $\Xan$ and
then, via restriction, a $\cD^{an}_{\sB,\chi}$-module
$\Delta_{\chi}(M)^{an}_{\sB}$ on $\sB$. We prove (Thm. \ref{thm-compBB}) that there is a number $r(M)\in [r_0,1)$ which
is intrinsic to $M$ and a canonical isomorphism of
$\cD^{an}_{\sB,\chi}$-modules
$$\mathscr{L}_{r,\chi}(M)\car\Delta_\chi(M)^{an}_\sB$$
for all $r\geq r(M)$. The isomorphism is natural in $M$.

\vskip8pt As a class of examples we finally investigate the
localizations of locally analytic representations in the image of
the functor $\cF^G_B$ introduced by S. Orlik and investigated in
\cite{OrlikStrauchJH}. The image of $\cF^G_B$ comprises a wide
class of interesting representations and contains all principal
series representations as well as all locally algebraic
representations (e.g. tensor products of smooth with algebraic
representations).

\vskip8pt

This paper is the first of a series of papers whose aim is to develop
a localization theory for locally analytic representations. Here we only
make a first step in this direction, focussing on the building and merging
the theory of Schneider and Stuhler with the theory of Beilinson and Bernstein,
resp. Brylinski and Kashiwara.
To get a more complete picture one has to extend the construction to a compactification
$\overline{\sB}$ of the building. The compactification which one would take here is,
of course, the closure of $\sB$ in $X^{an}$. Moreover, for intended applications like
functorial resolutions and the computation of Ext groups, one has to develop
a 'homological theory', in analogy to \cite[sec. II]{SchSt97}. However, the sheaves
produced in this way (using a compactification) would still have too many global sections.
For instance, the space of global sections would be a module for the ring of meromorphic functions on $X^{an}$ with poles outside
$\overline{\sB}$, and this is a very large ring. The aim would be to produce sheaves
whose global sections give back the $D(G)$-module one started with.
In the paper (\cite{PatelSchmidtStrauch}) we explore an approach (in the case of $GL(2)$)
which is based on the use of (a family of) semistable formal models $\frX$ of $X^{an}$,
and we replace $\cO_\sB$ by the pull-back of $\cO_\frX \otimes L$
via the specialization map $X^{an} \ra \frX$, and the r\^ole of $\sD_{r,\chi}$ is
played by arithmetic logarithmic differential operators. In this regard we want to
mention related works by C. Noot-Huyghe (\cite{NootHuyghe09}),
and K. Ardakov and S. Wadsley (\cite{AW}). While Noot-Huyghe studies localizations of
arithmetic $\sD$-modules on smooth formal models of $X$, Ardakov and Wadsley define and study localizations of representations of Iwasawa algebras on smooth models. Our present paper is in some sense complementary to these papers, as our focus is on non-compact groups.\vskip8pt

Despite the many aspects (like compactifications, homological theory, relation with formal models) that still have to be
explored, given the many technical details that one has to
take care of we thought it worthwhile to give an account
of the constructions as developed up to this point. \vskip8pt

{\bf Acknowledgments.} We thank Vladimir Berkovich for helpful correspondence on $p$-adic symmetric spaces and buildings.
T. S. gratefully acknowledges travel support by the SFB 878 ''Groups, Geometry $\&$ Actions'' at the University of M\"unster. D.P.
would like to thank Indiana University, Bloomington, for its support and hospitality.

\vskip8pt

{\bf Notations.} Let $p$ be an odd prime. Let $L/\bbQ_p$ be a finite extension
and $K \subseteq \bbC_p$ a complete discretely valued extension of $L$.
The absolute value $|.|$ on $\bbC_p$ is normalized by $|p|=p^{-1}$.
Let $o_L\subset L$ be the ring of integers and $\varpi_L\in o_L$ a
uniformizer. We denote by $v_L$ always the normalized $p$-adic
valuation on $L$, i.e. $v_L(\varpi)=1$. Let $n$ and $e(L/\bbQ_p)$
be the degree and the ramification index of the extension
$L/\bbQ_p$ respectively. Similarly, $o_K\subset K$ denotes the
integers in $K$ and $\varpi_K\in o_K$ a uniformizer. Let
$k:=o_K/(\varpi_K)$ denote the residue field of $K$.

\vskip8pt

The letter $\bG$ always denotes a connected reductive linear
algebraic group over $L$ which is split over $L$ and $G=\bG(L)$
denotes its group of rational points.

\vskip8pt

\section{Distribution algebras and locally analytic
representations}

For notions and notation from non-archimedean functional analysis
we refer to the book \cite{NFA}.

\subsection{Distribution algebras.}\label{subsec-dist} In this section we recall some
definitions and results about algebras of distributions attached
to locally analytic groups (\cite{ST4}, \cite{ST5}). We consider a
locally $L$-analytic group $H$ and denote by $C^{an}(H,K)$ the
locally convex $K$-vector space of locally $L$-analytic functions
on $H$ as defined in \cite{ST4}. The strong dual
$$D(H,K):=C^{an}(H,K)'_b$$ is the algebra of $K$-valued
locally analytic distributions on $H$ where the multiplication is
given by the usual convolution product. This multiplication is
separately continuous. However, if $H$ is compact, then $D(H,K)$
is a $K$-Fr\'echet algebra.

 The algebra $D(H,K)$ comes equipped
with a continuous $K$-algebra homomorphism
\[ \Delta: D(H,K)\lra D(H,K)\cotimes_{K,\iota} D(H,K)\]
which has all the usual properties of a comultiplication
(\cite[\S3 App.]{ST6}). Here, $\iota$ refers to the (complete)
inductive tensor product. Of course,
$\Delta(\delta_h)=\delta_h\otimes\delta_h$ for $h\in H$.

\vskip8pt

The universal enveloping algebra $U(\frh)$ of the Lie algebra
$\frh:=Lie(H)$ of $H$ acts naturally on $C^{an}(H,K)$. On elements
$\frx\in\frh$ this action is given by \[(\frx f)(h) = \frac{d}{dt}
(t\mapsto f(\exp_H(-t\frx)h))|_{t=0}\] where $\exp_H:\frh-->H$
denotes the exponential map of $H$, defined in a small
neighbourhood of $0$ in $\frh$. This gives rise to an embedding of
$U(\frh)_K:= U(\frh)\otimes_L K$ into $D(H,K)$ via

\[ U(\frh)_K\hookrightarrow D(H,K),~~\frx\mapsto
(f\mapsto(\dot{\frx}f)(1)).\]

Here $\frx\mapsto\dot{\frx}$ is the unique anti-automorphism of
the $K$-algebra $U(\frh)_K$ which induces multiplication by $-1$
on $\frh$. The comultiplication $\Delta$ restricted to $U(\frg)_K$
gives the usual comultiplication of the Hopf algebra $U(\frg)_K$,
i.e. $\Delta(\frx)=\frx\otimes 1+1\otimes\frx$ for all
$\frx\in\frh$.

\subsection{Norms and completions of distribution algebras}

\subsubsection{\it $p$-valuations.}
Let $H$ be a compact locally $\bbQ_p$-analytic group. Recall
(\cite{Lazard65}) that a {\it $p$-valuation} $\omega$ on $H$ is a
real valued function $\omega: H\setminus\{1\}\rightarrow
(1/(p-1),\infty)\subset\bbR$ satisfying\vskip8pt
\begin{itemize}
  \item[(i)] $\omega(gh^{-1}) \geq {\rm
  min~}(\omega(g),\omega(h)),$\vskip8pt
  \item[(ii)]
 $ \omega(g^{-1}h^{-1}gh) \geq \omega(g)+\omega(h),$\vskip8pt
  \item[(iii)]
 $ \omega(g^p) =\omega(g)+1$ \\
\end{itemize}
for all $g,h\in H$. As usual one puts $\omega(1):=\infty$ and
interprets the above inequalities in the obvious sense if a term
$\omega(1)$ occurs.

Let $\omega$ be a $p$-valuation on $H$. It follows from loc.cit.,
III.3.1.3/7/9 that the topology on $H$ is defined by $\omega$
(loc.cit., II.1.1.5), in particular, $H$ is a pro-$p$ group.
Moreover, there is a topological generating system $h_1,...,h_d$
of $H$ such that the map
\[\bbZ_p^d\rightarrow H,~~(a_1,...,a_d) \mapsto
h_{1}^{a_1}\cdot\cdot\cdot h_{d}^{a_d}\] is well-defined and a
homeomorphism. Moreover,
\[\omega ( h_{1}^{a_1}\cdot\cdot\cdot h_{d}^{a_d})=\min \{\omega (h_i) + v_p(a_i)
\mid i=1,...,d\}\] where $v_p$ denotes the $p$-adic valuation on
$\bbZ_p$. The sequence $(h_1,...,h_d)$ is called a {\it $p$-basis}
(or an {\it ordered basis}, cf. \cite[\S4]{ST5}) of the $p$-valued
group $(H,\omega).$

\vskip8pt

Finally, a $p$-valued group $(H,\omega)$ is called {\it
$p$-saturated} if any $g\in H$ such that $\omega(g)>p/(p-1)$ is a
$p$-th power in $H$.

\subsubsection{\it The canonical $p$-valuation on uniform pro-$p$ groups.}

We recall some definitions and results about pro-$p$ groups
(\cite[ch. 3,4]{DDMS}) in the case $p\neq 2$. In this subsection
$H$ will be a pro-$p$ group which is equipped with its topology as
a pro-finite group. Then $H$ is called {\it powerful} if $H/H^p$
is abelian. Here, $H^p$ is the closure of the subgroup generated
by the $p$-th powers of its elements. If $H$ is topologically
finitely generated one can show that the subgroup $H^p$ are open
and hence automatically closed. The lower $p$-series
$(P_i(H))_{i\geq 1}$ of an arbitrary pro-$p$ group $H$ is defined
inductively by
\[
P_1(H):=H,~~~~ P_{i+1}(H):=\overline{P_i(H)^p[P_i(H),H]}.
\]

If $H$ is topologically finitely generated, then the groups
$P_i(H)$ are all open in $H$ and form a fundamental system of
neighborhoods of $1$ (loc.cit, Prop. 1.16). A pro-$p$ group $H$ is
called {\it uniform} if it is topologically finitely generated,
powerful and its lower $p$-series satisfies $(H: P_2(H)) =
(P_i(H): P_{i+1}(H))$ for all $i \geq 1$. If $H$ is a
topologically finitely generated powerful pro-$p$ group then
$P_i(H)$ is a uniform pro-$p$ group for all sufficiently large $i$
(loc.cit. 4.2). Moreover, any compact $\bbQ_p$-analytic group
contains an open normal uniform pro-$p$ subgroup (loc.cit. 8.34).
Now let $H$ be a uniform pro-$p$ group. It carries a distinguished
$p$-valuation
 $\omega^{can}$ which is associated to the lower $p$-series and which we call the
{\it canonical $p$-valuation}. For $h\neq 1$, it is defined by
$\omega^{can}(h) = \max \{i\geq 1: h\in P_i(H)\}$.

\subsubsection{\it Norms arising from $p$-valuations.}\label{subsubsec-norms} In this
section we let $H$ be a compact $\bbQ_p$-analytic group endowed
with a $p$-valuation $\omega$ that has rational values.
%The distribution algebra $D(H,K)$ is then a {\it Fr\'echet-Stein
%algebra} in the sense of \cite{ST5}, \S3. This means in particular
%that there exists a family of norms $||.||_r, \frac{1}{p}\leq r<
%1$, on $D(H,K)$ such that, if $D_r(H,K)$ denotes the completion of
%$D(H,K)$ with respect to $||.||_r$, the convolution product on
%$D(H,K)$ extends by continuity to a product on $D_r(H,K)$.
For convenience of the reader we briefly recall (\cite[\S4]{ST5})
the construction of a suitable family of submultiplicative norms
$||.||_r, r\in [1/p,1)$ on the algebra $D(H,K)$.

\vskip8pt

Let $h_1,...,h_d$ be an ordered basis for $(H,\omega)$. The
homeomorphism $\psi: \bbZ_p^d\simeq H$ given by
$(a_1,...,a_d)\mapsto h_1^{a_1}\cdot\cdot\cdot h_d^{a_d}$ is a
global chart for the $\bbQ_p$-analytic manifold $H$. By
functoriality of $C^{an}(.,K)$ it induces an isomorphism $\psi^*:
C^{an}(H,K)\car C^{an}(\bbZ_p^d,K)$ of topological $K$-vector
spaces. Using Mahler expansions (\cite[III.1.2.4]{Lazard65}) we may
express elements of $C(\bbZ_p^d,K)$, the space of continuous
$K$-valued functions on $\bbZ_p^d$, as series
$f(x)=\sum_{\al\in\bbN_0^d}c_\al {{x}\choose {\al}}$ where
$c_\al\in K$ and ${{x}\choose {\al}}={{x_1}\choose
{\al_1}}\cdot\cdot\cdot {{x_d}\choose {\al_d}}$ for multi-indices
$x=(x_1,...,x_d)$ and $\al=(\al_1,..,\al_d)\in \bbN_0^d$. Further,
we have $|c_\al|\rightarrow 0$ for
$|\al|=\al_1+...+\al_d\rightarrow\infty$. A continuous function
$f\in C(\bbZ_p^d,K)$ is locally analytic if and only if
$|c_\al|r^{|\al|}\rightarrow 0$ for some real number $r>1$
(loc.cit. III.1.3.9).

\vskip8pt

 Put
$b_i:=h_i-1\in\bbZ[H]$ and $\bb^\al:=b_1^{\al_1}\cdot\cdot\cdot
b_d^{\al_d}$ for $\al\in\bbN_0^d$. Identifying group elements with
Dirac distributions induces a $K$-algebra embedding
$K[H]\hookrightarrow D(H,K),~~~~h\mapsto\delta_h.$ In the light of
the dual isomorphism $\psi_*: D(\bbZ_p^d,K)\car D(H,K)$ we see
that any $ \delta\in D(H,K)$ has a unique convergent expansion
$\delta=\sum_{\al\in\bbN_0^d}d_\al\bb^\al$ with $d_\al\in K$ such
that the set $\{|d_\al|r^{|\al|}\}_\al$ is bounded for all
$0<r<1$. Conversely, any such series is convergent in $D(H,K)$. By
construction the value $\delta(f)\in K$ of such a series on a
function $f\in C^{an}(H,K)$ equals $\delta(f)=\sum_\al d_\al
c_\al$ where $c_\al$ are the Mahler coefficients of $\psi^*(f)$.

\vskip8pt

 To take the original $p$-valuation
$\omega$ into account we define $\tau\al:=\sum_i \omega(h_i)\al_i$
for $\al\in\bbN_0^d$. The family of norms $||.||_r,~0<r<1$ on
$D(H,K)$ defined on a series $\delta$ as above via $||\delta
||_r:=\sup_\al |d_\al|r^{\tau\al}$ defines the Fr\'echet topology
on $D(H,K)$. Let $D_r(H,K)$ denote the norm completion of $D(H,K)$
with respect to $||.||_r$. Thus we obtain
\[D_r(H,K)=\{\sum_{\al\in\bbN_0^d}d_\al\bb^\al\in
K[[b_1,...,b_d]]|
\lim_{|\al|\rightarrow\infty}|d_\al|r^{\tau\al}=0.\]

 There is an obvious
norm-decreasing linear map $D_{r'}(H,K)\rightarrow D_r(H,K)$
whenever $r\leq r'$.

\vskip8pt

The norms $||.||_r$ belonging to the subfamily $1/p\leq r<1$ are
submultiplicative (loc.cit., Prop. 4.2) and do not depend on the
choice of ordered basis (loc.cit., before Thm. 4.11). In
particular, each $D_r(H,K)$ is a $K$-Banach algebra in this case.
If we equip the projective limit $\varprojlim_r D_r(H,K)$ with the
projective limit topology the natural map \[D(H,K)\car
\varprojlim_r D_r(H,K)\] is an isomorphism of topological
$K$-algebras.

\vskip8pt

Finally, it is almost obvious that the comultiplication $\Delta$
completes to continuous 'comultiplications' $\Delta_r:
D_r(H,K)\lra D_r(H,K)\cotimes_K D_r(H,K)$ for any $r$ in the above
range.

\vskip8pt

We make two final remarks in case $H$ is a uniform pro-$p$ group
and $\omega$ is its canonical $p$-valuation (2.2.2). In this case
each group $P_m(H), m\geq 0$ is a uniform pro-$p$ group.
\begin{itemize}
    \item[(i)]The resulting $||.||_{1/p}$-norm
topology on $D(H,K)$ coincides with the $p$-adic topology. In
fact, there is a canonical isomorphism between $D_{1/p}(H,\bbQ_p)$
and the $p$-adic completion (with $p$ inverted) of the universal
enveloping algebra of the $\bbZ_p$-Lie algebra $\frac{1}{p}\cL(H)$
(\cite[Thm. 10.4/Remark 10.5 (c)]{AW}).
    \item[(ii)] Let
$$r_m:=\sqrt[p^m]{1/p}$$ for $m\geq 0$. In particular, $r_0=1/p$.
Since $P_{m+1}(H)$ is uniform pro-$p$ we may consider the
corresponding $||.||_{r_0}$- norm on its distribution algebra
$D(P_{m+1}(H))$. In this situtation the ring extension
$D(P_{m+1}(H))\subset D(H)$ completes in the $||.||_{r_m}$-norm
topology on $D(H)$ to a ring extension
\[D_{r_0}(P_{m+1}(H))\subset D_{r_m}(H)\] and $D_{r_m}(H)$ is a finite
and free (left or right) module over $D_{r_0}(P_{m+1}(H)))$ on a
basis any system of coset representatives for the finite group
$H/P_{m+1}(H)$ (\cite[Lem. 5.11]{SchmidtDIM}).
\end{itemize}

\subsection{Coadmissible modules}\label{subsec-coadmissible}
We keep all notations from the preceding section but suppose that
the $p$-valuation $\omega$ on $H$ satisfies additionally \[
\begin{array}{ll} {\rm (HYP)}
 & (H,\omega) {\rm ~is}~p-{\rm saturated~ and~ the~ ordered~ basis}~ h_1,...,h_d {\rm ~of~} H  \\
   & {\rm satisfies}~\omega(h_i)+\omega(h_j)>p/(p-1) {\rm ~for~ any~} 1\leq i\neq j\leq d. \\
\end{array}\]

Remark: This implies that $H$ is a uniform pro-$p$ group.
Conversely, the canonical $p$-valuation on a uniform pro-$p$ group
($p$ arbitrary) satisfies (HYP). For both statements we refer to
\cite[Rem. before Lem. 4.4]{ST5} and \cite[Prop. 2.1]{SchmidtAUS}.

\vskip8pt

Suppose in the following $r\in (p^{-1},1)$ and $r\in p^{\bbQ}$. In
this case the norm $||.||_r$ on $D_r(H,K)$ is multiplicative and
$D_r(H,K)$ is a (left and right) noetherian integral domain
(\cite[Thm. 4.5]{ST5}). For two numbers $r\leq r'$ in the given
range the ring homomorphism
\[D_{r'}(H,K)\rightarrow D_r(H,K)\]
makes the target a flat (left or right) module over the source
(loc.cit., Thm. 4.9). The above isomorphism $D(H,K)\car
\varprojlim_r D_r(H,K)$ realizes therefore a {\it Fr\'echet-Stein
structure} on $D(H,K)$ in the sense of loc.cit. \S3. The latter
allows to define a well-behaved abelian full subcategory $\cC_{H}$
of the (left) $D(H,K)$-modules, the so-called {\it coadmissible
modules}. By definition, an abstract (left) $D(H,K)$-module $M$ is
coadmissible if for all $r$ in the given range
\begin{itemize}
    \item[(i)] $M_r:=D_{r}(H,K)\otimes_{D(H,K)}M$ is finitely
    generated over $D_{r}(H,K)$,
    \item[(ii)] the natural map $M\car\varprojlim_r M_r$ is an
    isomorphism.
\end{itemize}

The projective system $\{M_r\}_r$ is sometimes called the {\it
coherent sheaf} associated to $M$. To give an example, any
finitely presented $D(H,K)$-module is coadmissible.

In general, any compact locally $L$-analytic group has the structure of a
Frechet-Stein algebra (\cite[Thm. 5.1]{ST5}). In particular, we
may define the notion of a coadmissible module over $D(H, K)$ for any
compact $L$-analytic group in a similar manner. For a general locally
$L$-analytic group $G$, a $D(G, K)$-module $M$ is coadmissible if it
is coadmissible as a $D(H,K)$-module for every compact open subgroup
$H \subset G$. It follows from loc. cit. that it is sufficient to check this for
a single compact open subgroup.

\subsection{Locally analytic representations} We recall some facts
of locally analytic representations. A vector space $V$ which
equals a locally convex inductive limit $V=\varinjlim_{n\in\bbN}
V_n$ over a countable system of Banach spaces $V_n$ where the
transition maps $V_n\ra V_{n+1}$ are injective compact linear maps
is called a vector space {\it of compact type}. We recall
(\cite[Thm. 1.1]{ST4}) that such a space is Hausdorff, complete,
bornological and reflexive. Its strong dual $V'_b$ is a nuclear
Fr\'echet space satisfying $V'_b=\varprojlim_n (V_n)'_b$.

\vskip8pt

After this preliminary remark let $H$ be a locally $L$-analytic
group, $V$ a Hausdorff locally convex $K$-vector space and $\rho:
H\ra {\rm GL}(V)$ a homomorphism. Then $V$ (or the pair
$(V,\rho)$) is called a {\it locally analytic representation of
$H$} if the topological $K$-vector space $V$ is barrelled, each
$h\in H$ acts $K$-linearly and continuously on $V$, and the orbit
maps $\rho_v: H\ra V, h\mapsto \rho(h)(v)$ are locally analytic
maps for all $v\in V$ , cf. \cite[sec. 3]{ST4}. If $V$ is of
compact type, then the contragredient $G$-action on its strong
dual $V'_b$ extends to a separately continuous left
$D(H,K)$-module on a nuclear Fr\'echet space.

In this way the functor $V\mapsto V'_b$ induces an equivalence of
categories between locally analytic $H$-representations on
$K$-vector spaces of compact type (with continuous linear $H$-maps
as morphisms) and separately continuous $D(H,K)$-modules on
nuclear Fr\'echet spaces (with continuous $D(H,K)$-module maps as
morphisms).

A locally analytic $H$-representation $V$ is said to be admissible if
 $V'_{b}$ is a coadmissible $D(H,K)$-module. The above functor restricts
to an equivalence between the corresponding categories of admissible locally
analytic representations and coadmissible $D(H,K)$-modules.

\vskip8pt

\section{Completed skew group rings}\label{sec-skew}

In this section we will describe a general method of completing
certain skew group rings.

\subsection{Preliminaries}

Let $H$ be a compact locally $L$-analytic group and let $A$ be a
locally convex $L$-algebra equipped with a locally analytic
$H$-representation $\rho: H\rightarrow {\rm GL}(A).$ The
$H$-action on $A$ extends to a natural structure of a
$D(H,L)$-module (\cite[Thm. 2.2]{ST4}). On the other hand,
$D(H,L)$ is a topological module over itself via left
multiplication. The completion $A\cotimes_L D(H,L)$ is thus a
topological $D(H,L)\cotimes_L D(H,L)$-module. We view it as a
topological $D(H,L)$-module by restricting scalars via the
comultiplication $\Delta$. This allows to define the $L$-bilinear
map
\[(A\otimes_L
D(H,L))\times(A\cotimes_L D(H,L))\lra A\cotimes_L D(H,L)\] given
by $(\sum_i f_i\otimes \delta_i,b)\mapsto\sum_i f_i\cdot
\delta_i(b).$ We consider the product topology on the source. In
view of the continuity of all operations involved together with
\cite[Lem. 17.1]{NFA} this map is continuous. Since the target is
complete it extends in a bilinear and continuous manner to the
completion of the source. In other words, $A\cotimes_L D(H,L)$
becomes a topological $L$-algebra. Of course, $A\cotimes_L D(H,K)$
is then a topological $K$-algebra. To emphasize its skew
multiplication we denote it in the following by
$$A\smprod_L D(H,K)$$
or even by $A\smprod D(H,K)$. This should not cause confusion.
However, one has to keep in mind that there is a {\it completed}
tensor product involved.

\subsection{Skew group rings, skew enveloping
algebras and their completions}

\begin{para}

Using the action $\rho$ we may form the abstract skew group ring
$A\smprod H$ (\cite[1.5.4]{MCR}). We remind the reader that it
equals the free left $A$-module with elements of $H$ as a basis
and with multiplication defined by $(ag)\cdot
(bh):=a(\rho(g)(b))gh$ for any $a,b\in A$ and $g,h\in H$. Each
element of $A\smprod H$ has a unique expression as $\sum_{h\in H}
a_h h$ with $a_h=0$ for all but finitely many $h\in H$. Evidently,
$A\smprod H$ contains $H$ as a subgroup of its group of units and
$A$ as a subring. Furthermore, the inclusion $L[H]\subseteq
D(H,L)$ gives rise to an $A$-linear map

\begin{numequation}\label{equ-skew} A\smprod H=A\otimes_L L[H]\lra A\cotimes_L
D(H,L).\end{numequation}

On the other hand, let $\frh:=Lie(H)$. Differentiating the locally
analytic action $\rho$ gives a homomorphism of $L$-Lie algebras
$\al: \frh\lra {\rm Der}_L(A)$ into the $L$-derivations of the
algebra $A$ making the diagram

\[\xymatrix{
U(\frh) \ar[d]^\subseteq \ar[r]^>>>>>>{\al} &  \End_L(A) \ar[d]^{Id} \\
D(H,K) \ar[r]^\rho & \End_L(A) }
\]

commutative (\cite[3.1]{ST4}). We may therefore form the {\it skew enveloping
algebra}  $A\smprod U(\frh)$ (\cite[1.7.10]{MCR}). We recall that
this is an $L$-algebra whose underlying $L$-vector space equals
the tensor product $A\otimes_L U(\frh)$. The multiplication is
defined by

$$(f_1 \otimes \frx_1) \cdot (f_2 \otimes \frx_2) = (f_1 \alpha(\frx_1)(f_2))\otimes \frx_2 \; + \; (f_1f_2) \otimes (\frx_1 \frx_2) \;,$$
for $f_i \otimes \frx_i \in A\otimes_L \frh$. Moreover, the
inclusion $U(\frh)\subseteq D(H,L)$ induces an $A$-linear map

\begin{numequation}\label{equ-smprod}A\smprod U(\frh)\longrightarrow A\cotimes_L
D(H,L).\end{numequation}

\end{para}

\begin{prop}\label{prop-skewextensions}
The $A$-linear maps (\ref{equ-skew}) and (\ref{equ-smprod}) are
$L$-algebra homomorphisms. The first of these maps has dense
image.
\end{prop}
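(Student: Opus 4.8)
The plan is to verify the multiplicativity of the two maps on dense subalgebras, where everything reduces to the defining relations of the skew group ring $A\smprod H$ and the skew enveloping algebra $A\smprod U(\frh)$, and then to invoke continuity to extend to the completions. For the map \eqref{equ-skew}, I would first observe that it is $A$-linear by construction and that it sends a group element $h\in H\subset A\smprod H$ to the Dirac distribution $\delta_h\in D(H,L)$. Since the elements $\sum_{h} a_h h$ with finite support span $A\smprod H$ over $L$ (indeed over $A$), it suffices to check the homomorphism property on products $(ag)\cdot(bh)$. Unwinding the definition of the skew group ring multiplication gives $(ag)\cdot(bh)=a\,\rho(g)(b)\,gh$, and I must compare its image $a\,\rho(g)(b)\cdot\delta_{gh}$ with the product of the images $(a\,\delta_g)\cdot(b\,\delta_h)$ computed inside $A\cotimes_L D(H,L)$. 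The latter product, by the very definition of the $L$-bilinear map in Section 3.1, is $a\cdot\delta_g(b\cdot\delta_h)$, and here one uses that $\delta_g$ acts on $A\cotimes_L D(H,L)$ via restriction along the comultiplication, together with $\Delta(\delta_g)=\delta_g\otimes\delta_g$; hence $\delta_g(b\cdot\delta_h)=\rho(g)(b)\cdot(\delta_g\delta_h)=\rho(g)(b)\cdot\delta_{gh}$. This gives exactly $a\,\rho(g)(b)\,\delta_{gh}$, as required.

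For the map \eqref{equ-smprod}, the argument is parallel but with the Lie-algebra relations in place of the group relations. Again the map is $A$-linear and restricts on $U(\frh)\subset A\smprod U(\frh)$ to the standard inclusion $U(\frh)\subset D(H,L)$, which is an algebra homomorphism. Because $A\otimes_L U(\frh)$ is generated as an $L$-algebra by $A$ and by $\frh$ (or even: the map is determined $A$-linearly, so it is enough to check multiplicativity of products $(f_1\otimes\frx_1)(f_2\otimes\frx_2)$ for $\frx_i\in\frh$ and then pass to the PBW basis), I would verify the homomorphism property on such generators. The skew enveloping multiplication gives $(f_1\otimes\frx_1)\cdot(f_2\otimes\frx_2)=f_1\alpha(\frx_1)(f_2)\otimes\frx_2+f_1f_2\otimes\frx_1\frx_2$. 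On the other side, computing $(f_1\cdot\frx_1)\cdot(f_2\cdot\frx_2)$ inside $A\cotimes_L D(H,L)$ uses that $\frx_1\in\frh\subset D(H,L)$ acts on $A$ (via the restriction of $\rho$ along $\Delta$, and $\Delta(\frx)=\frx\otimes 1+1\otimes\frx$ for $\frx\in\frh$, so $\frx$ acts as a derivation) precisely by $\alpha(\frx_1)$; thus $\frx_1\cdot(f_2\cdot\frx_2)=\alpha(\frx_1)(f_2)\cdot\frx_2+f_2\cdot(\frx_1\frx_2)$, and multiplying on the left by $f_1$ reproduces the two terms above. The commutativity of the square in Section 3.2 relating $\alpha$, $U(\frh)\subset D(H,K)$ and $\rho$ is exactly what licenses the identification of the $\frh$-action coming from $D(H,L)$ with $\alpha$.

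It remains to prove the density assertion for \eqref{equ-skew}. Here I would argue that the image of $A\smprod H=A\otimes_L L[H]$ in $A\cotimes_L D(H,L)$ contains $A\otimes_L L[H]$ with $L[H]$ sitting densely in $D(H,L)$: by Section 2.1, $L[H]$ (group elements identified with Dirac distributions) is dense in $D(H,L)$, since $D(H,L)$ is the strong dual of locally analytic functions and finite linear combinations of point evaluations separate such functions and are dense (for $H$ compact this is standard, e.g.\ via the Mahler/ordered-basis expansions recalled in 2.2.3, which exhibit every distribution as a limit of polynomials in the $b_i=h_i-1$, hence in group elements). Then $A\otimes_L L[H]$ is dense in $A\otimes_L D(H,L)$ for the inductive tensor topology, and the latter is dense in the completed tensor product $A\cotimes_L D(H,L)$ by definition of the completion. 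Composing these density statements gives the claim.

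I expect the only genuinely delicate point to be the bookkeeping around the comultiplication: one must be careful that the $D(H,L)$-module structure on $A\cotimes_L D(H,L)$ used to define the skew multiplication is the one obtained by restricting the diagonal $D(H,L)\cotimes_L D(H,L)$-action along $\Delta$, so that a Dirac distribution $\delta_g$ really acts by $a\otimes\delta\mapsto \rho(g)(a)\otimes\delta_g\delta$ and a Lie algebra element $\frx$ really acts by the derivation $a\otimes\delta\mapsto \alpha(\frx)(a)\otimes\delta + a\otimes\frx\delta$. Once these two formulas are pinned down, both homomorphism verifications are a direct comparison of the two explicit multiplication rules, and continuity (from Section 3.1, where $A\cotimes_L D(H,L)$ was shown to be a topological $L$-algebra) extends the identities from the dense subalgebras $A\smprod H$ and $A\smprod U(\frh)$ to wherever needed. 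No closure is claimed for \eqref{equ-smprod}, so nothing further is required there.
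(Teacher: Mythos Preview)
Your proof is correct and follows essentially the same approach as the paper. The paper's argument is terser but identical in substance: it isolates the two key commutation identities $(1\cotimes\delta_g)\cdot(f\cotimes 1)=\rho(g)(f)\cotimes\delta_g$ and $(1\cotimes\frx)\cdot(f\cotimes 1)=\alpha(\frx)(f)\cotimes 1+f\cotimes\frx$, deduces them from $\Delta(\delta_g)=\delta_g\cotimes\delta_g$ and $\Delta(\frx)=\frx\cotimes 1+1\cotimes\frx$ exactly as you do, and for the density statement simply cites \cite[Lem.~3.1]{ST4} (the density of $L[H]$ in $D(H,L)$), which is precisely the input you invoke.
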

\begin{proof}
The first statement follows from the identities
\begin{itemize}
    \item[(i)] $(1\cotimes\delta_g)\cdot (f\cotimes 1)=(\rho(g)(f))\cotimes \delta_g {\rm ~~~for~any~}
    g\in H, f\in A$,\vskip8pt
    \item[(ii)] $(1\cotimes\frx)\cdot (f\cotimes 1)=(\al(\frx)(f))\cotimes 1+f\cotimes\frx {\rm ~~~for~any~}
    \frx\in\frh, f\in A$
\end{itemize}
in $A\cotimes_L D(H,L)$. In turn these identities follow from
$\Delta(\delta_g)=\delta_g\cotimes\delta_g$ and
$\Delta(\frx)=\frx\cotimes 1+1\cotimes \frx$. The final statement
follows from \cite[Lem. 3.1]{ST4}.
\end{proof}

\begin{para}We assume from now on that $L=\bbQ_p$ and that the compact
locally $\bbQ_p$-analytic group $H$ is endowed with a
$p$-valuation $\omega$. We put \[r_0:=p^{-1}\]
%\[r_0:=p^{\frac{-1}{p-1}}\]
and consider the norm
completion $D_r(H,K)$ for some arbitrary but fixed $r\in [r_0,1)$.
Let us assume for a moment that the natural map
$D(H,L)\rightarrow D_r(H,L)$ satisfies the following
hypothesis:
\[\begin{array}{ll} {\rm (\star)}
 & {\rm The~topological~} D(H,L)-{\rm module~ structure~of~}A \\
   & {\rm extends~to~a~topological~}D_r(H,L)-{\rm module~structure.} \\
\end{array}\]
If we replace in the above discussion the comultiplication
$\Delta$ by its completion $\Delta_r$ we obtain in an entirely
analogous manner a completion $A\cotimes_L D_r(H,K)$ of the
skew group ring $A\smprod H$, base changed to $K$. It satisfies
{\it mutatis mutandis} the statement of the preceding proposition.
As before we will often abbreviate it by $A\smprod D_r(H,K)$.
\end{para}

\section{Sheaves on the Bruhat-Tits building and smooth representations}

\subsection{Filtrations of stabilizer subgroups}
%In the following we assume (for simplicity) that the connected
%reductive group $\bG$ is {\bf split} over $L$.

\begin{para}
Let $\bT$ be a maximal $L$-split torus in $\bG$. Let $X^*(\bT)$
resp. $X_*(\bT)$ be the group of algebraic characters resp.
cocharacters of $\bT$. Let $\Phi=\Phi(\bG,\bT)\subset X^*(\bT)$
denote the root system determined by the adjoint action of $\bT$
on the Lie algebra of $\bG$. Let $W$ denote the corresponding Weyl
group. For each $\al\in\Phi$ we have the unipotent root subgroup
$\bU_\al\subseteq \bG$. Since $\bG$ is split the choice of a {\it
Chevalley basis} determines a system of $L$-group isomorphisms
$$x_\al: \bbG_a\car \bU_\al$$ for each $\al\in\Phi$ (an {\it
\'epinglage}) satisfying Chevalley's commutation relations
(\cite[p. 27]{ChevalleyTohoku}). Let $X_*(\bC)$ denote the group
of $L$-algebraic cocharacters of the connected center $\bC$ of
$\bG$.

We denote by $G,T,U_\al$ the groups of $L$-rational points of
$\bG,\bT,\bU_\al (\al\in\Phi)$ respectively. Recall the normalized
$p$-adic valuation $v_L$ on $L$, i.e. $v_L(\varpi)=1$. For
$\al\in\Phi$ we denote by $(U_{\al,r})_{r\in\bbR}$ the filtration
of $U_\al$ arising from the valuation $v_L$ on $L$ via the
isomorphism $x_\al$. It is an exhaustive and separated discrete
filtration by subgroups. Put $U_{\al,\infty}:=\{1\}$.

\end{para}

\begin{para}

Let $\sB=\sB(G)$ be the semisimple Bruhat-Tits building of $G$.
The torus $\bT$ determines an apartment $A$ in $\sB$. Recall that
a point $z$ in the Coxeter complex $A$ is called {\it special} if
for any direction of wall there is a wall of $A$ actually passing
through $z$ (\cite[1.3.7]{BruhatTits72}). As in \cite{Cartier},
3.5 we choose once and for all a special vertex $x_0$ in $A$ and a
chamber $\sC\subset A$ containing it. We use the point $x_0$ to
identify the affine space $A$ with the real vector space
\[A=( X_*(\bT)/X_*(\bC))\otimes_{\bbZ} \bbR.\]
Each root $\al\in\Phi$ induces therefore a linear form $\al:
A\ra\bbR$ in an obvious way. For any nonempty subset
$\Omega\subseteq A$ we let $f_\Omega:
\Phi\rightarrow\bbR\cup\{\infty\},~~~ \al\mapsto
-\inf_{x\in\Omega}\al(x).$ It is a {\it concave} function in the
sense of \cite[6.4.1-5]{BruhatTits72}. We emphasize that the
concept of a concave function is developed in loc.cit. more
generally for functions taking values in the set
\[\tilde{\bbR}:=\bbR\cup\{r+:r\in\bbR\}\cup\{\infty\}.\]
The latter has a natural structure as totally ordered commutative
monoid extending the total order and the addition on $\bbR$. For
any $\al\in\Phi$ and $r\in\bbR$ we define
\[U_{\al,r+}:=\cup_{s\in\bbR,s>r} U_{\al,s}.\]
For any concave function $f:\Phi\rightarrow\tilde{\bbR}$ we then
have the group
\begin{numequation}\label{equ-groupconcave} U_f:={\rm
subgroup~of~}G{\rm ~generated~by~all~}U_{\al,f(\al)}~{\rm
for~}\al\in\Phi.\end{numequation}

\end{para}
\begin{para}
For each nonempty subset $\Omega\subseteq \sB$ we let
\[P_\Omega:=\{g\in G: gz=z {\rm ~for~any~}z\in\Omega\}\]
be its pointwise stabilizer in $G$. For any facet $F\subseteq\sB$
we will recall from \cite[I.2]{SchSt97} a certain decreasing
filtration of $P_F$ by open normal pro-$p$ subgroups which will be
most important for all that follows in this article. To do this we
first consider a facet $F$ in the apartment $A$. Its concave
function $f_F$ is given by
\[f_F: \Phi\longrightarrow\bbR, \alpha\mapsto -\inf_{x\in
F}\alpha(x).\] For $\al\in\Phi$ we put $f^*_F(\al):=f_F(\al)+$ if
$\al|_F$ is constant and $f^*_F(\al):=f_F(\al)$ otherwise. This
yields a concave function $f_F^*: \Phi\rightarrow\tilde{\bbR}.$
With $f^*_F$ also the functions $f^*_F+e$, for any integer $e\geq
0$, are concave. Hence there is the descending sequence of
subgroups
\[ U_{f^*_F}\supseteq U_{f^*_F+1}\supseteq
U_{f^*_F+2}\supseteq...\]

\end{para}
\begin{para}

On the other hand we let $\frT:=Spec(o_L[X^*(\bT)])$ and
\[T^{(e)}:=\ker (\frT (o_L)\lra \frT (o_L/\varpi_L^{e+1}o_L))\]
for any $e\geq 0$ (cf. \cite[pf. of Prop. I.2.6]{SchSt97})
and finally define
\[\UFe:=U_{f^*_F+e}\cdot T^{(e)}\]
for each $e\geq 0$ (loc.cit. p.21). This definition is extended to
{\it any} facet $F$ in $\sB$ by putting $U_{F}^{(e)}:=g
U_{F'}^{(e)} g^{-1}$ if $F=gF'$ with $g\in G$ and $F'$ a facet in
$A$. We thus obtain a filtration
\[ P_F\supseteq U_F^{(0)}\supseteq U_F^{(1)}\supseteq
...\] of the pointwise stabilizer $P_F$ by normal subgroups. As in
loc.cit. we definte, for any point $z\in\sB$,
\[ \Uze:=\UFe\]
where $F$ is the unique facet of $\sB$ that contains $z$. The
group $\Uze$ fixes the point $z$. By construction we have
\begin{numequation}\label{equ-conjgroups}
U_{gz}^{(e)}=g U_{z}^{(e)} g^{-1}
\end{numequation}
for any $z\in\sB$ and any $g\in G$.

\vskip8pt

Remark: We emphasize that the definition of the groups
$\{\UFe\}_{F\subset\sB, e\geq 0}$ depends on the choice of the
special vertex $x_0$ as an origin for $A$. We also remark that the
very same groups appear in the work of Moy-Prasad on unrefined
minimal types (\cite{MoyPrasad}).

\end{para}

We will make use of the following basic properties of the groups
$\UFe$. To formulate them let $$\Phi=\Phi^+\cup\Phi^-$$ be any
fixed decomposition of $\Phi$ into positive and negative roots.

\begin{prop}\label{prop-rootspace}

\begin{itemize}
    \item[(i)] Let $F\subset A$ be a facet. For any $e\geq 0$ the product map induces a
bijection
\[ (\prod_{\al\in\Phi^-}U_{f_F^*+e}\cap U_\al)\times T^{(e)}\times(\prod_{\al\in\Phi^+}U_{f_F^*+e}\cap
U_\al)\car\UFe\] whatever ordering of the factors of the left hand
side we choose. Moreover, we have
\[U_{f_F^*+e}\cap U_\al=U_{\al,f^*_F(\al)+e}\] for any
$\al\in\Phi$.

\item[(ii)] For any facet $F\subset\sB$ the $\UFe$ for $e\geq 0$
form a fundamental system of compact open neighbourhoods of $1$ in
$G$,

    \item[(iii)] $U_{F'}^{(e)}\subseteq U_F^{(e)}$ for any two facets $F,F'$
in $\sB$ such that $F'\subseteq\overline{F}$.

\end{itemize}
\end{prop}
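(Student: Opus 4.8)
The plan is to derive all three parts from the structure theory of the groups $U_f$ attached to concave functions (\cite[6.4]{BruhatTits72}), using the defining formula $\UFe=U_{f^*_F+e}\cdot T^{(e)}$ together with the fact that $T^{(e)}$ normalizes every step of every root filtration. For (i) I would first note that $f^*_F+e$ is concave, so \cite[6.4.9]{BruhatTits72} applied to $U_{f^*_F+e}$ gives both the equality $U_{f^*_F+e}\cap U_\al=U_{\al,f^*_F(\al)+e}$ and, for every ordering of $\Phi$, a bijective product map $\prod_{\al\in\Phi}(U_{f^*_F+e}\cap U_\al)\car U_{f^*_F+e}$. It then remains to splice in the torus factor $T^{(e)}$. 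Since $\bG$ is split over $o_L$, each $t\in\frT(o_L)$ acts on $U_\al$ by $x_\al(u)\mapsto x_\al(\al(t)u)$ with $\al(t)\in o_L^\times$, so $T^{(e)}\subseteq\frT(o_L)$ normalizes every $U_{\al,r}$, hence $U_{f^*_F+e}$; and $U_{f^*_F+e}\cap T=\{1\}$, because the decomposition puts every element of $U_{f^*_F+e}$ into the product of the two opposite unipotent radicals, which meets $T$ trivially inside the big cell. Thus $\UFe=U_{f^*_F+e}\rtimes T^{(e)}$ is a subgroup, and conjugating by $t$ the root factors that come after $T^{(e)}$ in a prescribed ordering lets one move $T^{(e)}$ into any slot, yielding the bijection for arbitrary orderings; for injectivity I would project onto $\UFe/U_{f^*_F+e}\cong T^{(e)}$ to pin down the torus coordinate and then invoke the uniqueness in \cite[6.4.9]{BruhatTits72}.

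For (ii), compactness is immediate from (i): $U_{\al,f^*_F(\al)+e}$ is a fractional $o_L$-ideal inside $U_\al\cong L$, hence compact, and $T^{(e)}$ is a closed finite-index subgroup of the compact group $\frT(o_L)$. For openness and the fundamental-system property I would use that the big-cell multiplication $\bU^-\times\bT\times\bU^+\to\bG$ (with $\bU^{\pm}$ the unipotent radicals of $\bB$ and of the opposite Borel) is an open immersion of $L$-varieties, hence a homeomorphism onto an open subset of $G$ on $L$-points; composing with the product homeomorphisms $\prod_{\al\in\Phi^{\mp}}U_\al\car\bU^{\mp}(L)$ identifies the set in (i) with an open subset of $G$, so $\UFe$ is open, and --- the filtrations $(U_{\al,r})_r$ being exhaustive and separated and $(T^{(e)})_{e\geq 0}$ a neighbourhood basis of $1$ in $\frT(o_L)$ --- every neighbourhood of $1$ in $G$ contains $\UFe$ once $e$ is large. (This part is also \cite[I.2]{SchSt97}.)

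For (iii) I would first reduce to the case $F,F'\subseteq A$: an apartment containing $F$ is closed, hence contains $\overline F$ and in particular $F'$, and by transitivity of $G$ on apartments one may pick $g\in G$ carrying it to $A$, so that $gF,gF'\subseteq A$ with $gF'\subseteq\overline{gF}$; by $G$-equivariance (\ref{equ-conjgroups}) the inclusion $\UFbe\subseteq\UFe$ is then equivalent to the same statement for $gF',gF$. Assuming $F,F'\subseteq A$, the factor $T^{(e)}$ is common to the decompositions (i) of $\UFbe$ and $\UFe$, and $U_{f^*_{F'}+e}$ is generated by the subgroups $U_{\al,f^*_{F'}(\al)+e}$, so it suffices to show $U_{\al,f^*_{F'}(\al)+e}\subseteq U_{\al,f^*_F(\al)+e}$ for each $\al\in\Phi$. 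Since $\al$ is affine on $A$ and $F'\subseteq\overline F$ one has $\al(F')\subseteq\overline{\al(F)}$, and, $F$ and $F'$ being facets, each of $\al(F),\al(F')$ is either a single integer $k$ (the facet lying on the wall $\{\al=k\}$) or an open interval $(k,k+1)$ between consecutive walls; a short check of these configurations then gives the inclusion root by root. I expect this last step to be the only real obstacle, the subtlety being that the naive inequality $f^*_{F'}\geq f^*_F$ in $\tilde{\bbR}$ can fail --- e.g.\ if $F'$ lies on the far wall $\{\al=m+1\}$ of a chamber $F$ with $\al(F)=(m,m+1)$, then $f^*_{F'}(\al)$ equals $(-m-1)+$, strictly below $f^*_F(\al)=-m$ in $\tilde{\bbR}$ --- yet the associated subgroups still satisfy $U_{\al,f^*_{F'}(\al)+e}=U_{\al,-m+e}=U_{\al,f^*_F(\al)+e}$, because the filtration of $U_\al\cong L$ jumps only at integers. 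Reconciling the symbol $+$ with the discreteness of $v_L$ is therefore where the real content of (iii) sits; everything else reduces to unwinding the definitions (or may be imported from \cite[6.4]{BruhatTits72}, \cite[I.2]{SchSt97}).
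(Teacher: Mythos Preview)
The paper's own proof is simply a citation to \cite[Prop.\ I.2.7, Cor.\ I.2.9, Prop.\ I.2.11]{SchSt97}, so there is nothing to compare at the level of argument; you are in effect reconstructing those proofs from Bruhat--Tits. Your treatment of (i) and (ii) is correct and is essentially how those references proceed: the product decomposition of $U_{f^*_F+e}$ is \cite[6.4.9]{BruhatTits72}, the torus factor is spliced in via the normalizing action of $\frT(o_L)$ on root filtrations, and (ii) follows from the open-immersion property of the big cell together with the separatedness of the root and torus filtrations.

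For (iii) your strategy is right and you have correctly located the crux --- that the naive inequality $f^*_{F'}\geq f^*_F$ in $\tilde{\bbR}$ can fail, and one must instead use that the filtration of $U_\al\cong L$ jumps only at integers. However, your intermediate dichotomy ``each of $\al(F),\al(F')$ is either a single integer $k$ or the full open interval $(k,k+1)$'' is not correct. In $B_2$ (roots $\pm e_1,\pm e_2,\pm e_1\pm e_2$), the fundamental alcove has vertices $(0,0)$, $(1,0)$, $(\tfrac12,\tfrac12)$, and the short root $e_2$ takes values in $(0,\tfrac12)$ on its interior --- a proper subinterval of $(0,1)$. Likewise, $\al|_{F'}$ can be constant with a non-integer value: the vertex $(\tfrac12,\tfrac12)$ has $e_2=\tfrac12$, yet lies on no wall of the form $e_2=k$. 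So your ``short check of configurations'' needs one more case.

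Fortunately your own mechanism handles it. Write $\al(F)\subseteq[m,m+1]$ and $f^*_F(\al)=-\inf_F\al=:-a$ with $m\leq a<m+1$; then $\lceil -a\rceil=-m$, so $U_{\al,f^*_F(\al)+e}=U_{\al,e-m}$. For $F'\subseteq\overline F$ one has $\al(F')\subseteq[a,b]\subseteq[m,m+1]$; whether $\al|_{F'}$ is non-constant (giving $f^*_{F'}(\al)=-a'$ with $m\leq a'<m+1$, hence again level $e-m$) or constant with value $c\in[m,m+1]$ (giving $f^*_{F'}(\al)=(-c)+$, hence level $e-m$ if $c\notin\bbZ$ or $c=m+1$, and level $e-m+1$ if $c=m$), one always lands in $U_{\al,e-m}$. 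This is exactly the discreteness argument you sketched; just drop the unnecessary claim that $\al(F)$ is the full interval or an integer.
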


\begin{proof}
Cf. \cite[Prop. I.2.7, Cor. I.2.9, Prop. I.2.11]{SchSt97}.
\end{proof}

\begin{para}\label{ConcreteDescription}
As an example and in view of later applications we give a more
concrete description of the groups $\{U_{x_0}^{(e)}\}_{e\geq 0}$.
The stabilizer $P_{\{x_0\}}$ in $G$ of the vertex $x_0$ is a
special, good, maximal compact open subgroup of $G$
(\cite[3.5]{Cartier}). We let $\frG$ be the connected reductive
$o_L$-group scheme with generic fibre $\bG$ associated with the
special vertex $x_0$ (\cite[3.4]{TitsCorvallis},
\cite[4.6.22]{BruhatTits84}). Its group of $o_L$-valued points
$\frG(o_L)$ can be identified with $P_{\{x_0\}}$. For $e\geq 0$ we
therefore have in $P_{\{x_0\}}$ the normal subgroup
$\frG(\varpi^{e}):=\ker\; ( \frG(o_L)\rightarrow
\frG(o_L/\varpi^{e}o_L) ).$

Now the concave function $f_{\{x_0\}}$ vanishes identically whence
$f_{\{x_0\}}^*$ has constant value $0+$. Thus,
\[ U_{\alpha,f^*_{\{x_0\}}(\alpha)+e}=\cup_{s>0} \{a\in L: v_L(a)\geq
e+s\}=\varpi^{e+1}o_L\] for any $e\geq 0$. By (\ref{prop-rootspace} (i)) and the definition of $T^{(e)}$ we
therefore have a canonical isomorphism $U_{x_0}^{(e)}\car
\frG(\varpi^{e+1})$ for any $e\geq 0$.

\end{para}

\subsection{The Schneider-Stuhler construction} We now review the
construction of a certain `localization' functor constructed by P.
Schneider and U. Stuhler in \cite[IV.1]{SchSt97}. In fact,
there will be a functor for each 'level' $e\geq 0$. Following
loc.cit., we will suppress this dependence in our notation. In
\cite{SchSt97} only complex representations are considered.
However, all results remain true over our characteristic zero
field $K$ (\cite{VignerasSheaves}).

\begin{para}

Recall that a {\it smooth} representation $V$ of $G$ is a
$K$-vector space $V$ together with a linear action of $G$ such
that the stabilizer of each vector is open in $G$. A morphism
between two such representations is simply a $K$-linear
$G$-equivariant map.

Now let us fix an integer $e\geq 0$ and let $V$ be a smooth
representation. For any subgroup $U\subseteq G$ we have the
$K$-vector space
\[V_U:={\rm maximal~quotient~of~}V {\rm~on~which~the~}U-{\rm action~is~trivial}\]
of $U$-coinvariants of $V$. For any open subset
$\Omega\subseteq\sB$ we let
\[
\tiV(\Omega):=K{\rm -vector~ space~ of~ all~ maps~}
s:\Omega\rightarrow\dot{\bigcup}_{z\in\Omega} V_{\Uze} {\rm ~such~
that}
\]
\begin{itemize}
    \item[-] $s(z)\in V_{\Uze}$ for all $z\in\Omega$,
    \item[-] there is an open covering $\Omega=\cup_{i\in I} \Omega_i$ and
    vectors $v_i\in V$ with $$s(z)={\rm class~of~}v_i~~\in V_{\Uze}$$ for any $z\in\Omega_i$ and $i\in I$.
\end{itemize}

\end{para}
We summarize some properties of this construction in the following
proposition. Recall that a sheaf on a polysimplicial space is
called {\it constructible} if its restriction to a given geometric
polysimplex is a constant sheaf (\cite[8.1]{KashiwaraSchapira}).
\begin{prop}\label{prop-smoothloc}
\begin{itemize}
\item[(i)] The correspondance $\Omega\mapsto\tiV(\Omega)$ is a
sheaf of $K$-vector spaces,
    \item[(ii)] for any $z\in\sB$ the stalk of the sheaf $\tiV$ at $z$ equals $(\tiV)_z=V_{\Uze}$,
    \item[(iiI)] $\tiV$ is a constructible sheaf whose restriction to any facet $F$ of
    $\sB$ is constant with value $V_{\UFe}$,
    \item[(iv)] the correspondance $V\mapsto\tiV$ is an exact
    functor from smooth $G$-representations to sheaves of
    $K$-vector spaces on $\sB$.
\end{itemize}
\end{prop}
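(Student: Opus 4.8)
The plan is to verify each of the four assertions of Proposition~\ref{prop-smoothloc} by reducing everything to the local structure of the sheaf, namely the stalks $V_{\Uze}$ and the behaviour of the coinvariants functor $V\mapsto V_U$ as $U$ varies over the groups $\Uze$. The key geometric input is part~(ii) of Proposition~\ref{prop-rootspace}: the groups $\UFe$ for $e\geq 0$ form a fundamental system of compact open neighbourhoods of $1$ in $G$, together with part~(iii) which says $U_{F'}^{(e)}\subseteq U_F^{(e)}$ whenever $F'\subseteq\overline F$. The latter inclusion gives, for each such pair of facets, a surjection $V_{U_F^{(e)}}\twoheadrightarrow V_{U_{F'}^{(e)}}$ (a quotient on which a larger group acts trivially is a further quotient), and it is these transition maps that control how sections glue.

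First I would establish (i), that $\Omega\mapsto\tiV(\Omega)$ is a sheaf. The separatedness and gluing axioms are essentially formal from the definition, which is phrased exactly as the sheaf of sections of an espace \'etal\'e: a section over $\Omega$ is a function $z\mapsto s(z)\in V_{\Uze}$ that is locally of the form ``class of a fixed vector $v_i\in V$''. One checks that being locally of this form is a local condition, so the two sheaf axioms hold; the only point requiring the theory developed above is that the locally constant condition is consistent, i.e. that the sets where $s$ is represented by a given vector are open, which follows because $\Uze$ is constant on facets and the building is a polysimplicial space with the obvious topology. Next, for (ii), the stalk at $z$ is $\varinjlim_{\Omega\ni z}\tiV(\Omega)$; given a section near $z$, it is represented by some $v\in V$ on a neighbourhood, and the assignment $s\mapsto s(z)$ gives a map to $V_{\Uze}$. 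Surjectivity is clear (any class lifts to some $v\in V$, which defines a section on all of $\sB$ by $z'\mapsto \text{class of }v$); injectivity uses that if $v$ maps to $0$ in $V_{\Uze}$ then, because $V$ is smooth and $\Uze$ is compact open, $v$ already lies in the span of $\{u\cdot v - v : u\in \Uze\}$, hence the section $z'\mapsto\text{class of }v$ vanishes on a whole neighbourhood of $z$ (namely any facet-neighbourhood on which the relevant group is contained in $\Uze$, using Proposition~\ref{prop-rootspace}(iii)).

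For (iii), constructibility, I would fix a facet $F$ and show the restriction $\tiV|_F$ is the constant sheaf $V_{\UFe}$. For $z\in F$ one has $\Uze=\UFe$ by definition, so the stalks are all $V_{\UFe}$; it remains to see that a section over a connected open $\Omega\subseteq F$ is given by a single class. This is where one uses that $F$ is contractible (a geometric polysimplex) and that the transition maps along $F$ are identities, so the local representatives on an open cover must agree after passing to $V_{\UFe}$. Finally (iv): exactness of $V\mapsto\tiV$ follows from exactness of the coinvariants functor $V\mapsto V_U$ for $U$ a pro-$p$ group (equivalently compact) acting smoothly on $K$-vector spaces in characteristic zero --- taking $U$-coinvariants is exact because it is also computed as $U$-invariants via an averaging idempotent, $V_U\cong V^U$ --- combined with the fact that stalks compute exactness for sheaves, and stalks of $\tiV$ are $V_{\Uze}$ by (ii).

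The main obstacle I anticipate is not exactness (which is soft, given the characteristic-zero averaging trick) but rather the careful bookkeeping in (i) and (iii): one must check that the ``locally constant'' sections really do form a sheaf and that on a single facet they are genuinely constant, and this requires knowing precisely how the groups $\Uze$ behave as $z$ moves across faces of the polysimplicial complex --- in particular that for any $z$ and any facet $F$ with $z\in\overline F$ one has $\Uze\supseteq\UFe$, so that a vector fixed modulo $\Uze$ represents a consistent germ. All of this is packaged in Proposition~\ref{prop-rootspace}(ii)--(iii), so the proof is really an exercise in assembling those facts; I would simply cite \cite[IV.1]{SchSt97} for the topological details and give the characteristic-zero argument for exactness.
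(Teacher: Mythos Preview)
Your approach is essentially the same as the paper's, which proves (i) by noting the definition is local, cites \cite[Lem.~IV.1.1]{SchSt97} for (ii) and (iii), and deduces (iv) from (ii) together with $\mathrm{char}(K)=0$; you have simply unpacked these references. One correction, however: you have the direction of the transition surjection reversed. From $U_{F'}^{(e)}\subseteq U_F^{(e)}$ (with $F'\subseteq\overline{F}$) it is $U_F^{(e)}$ that is the larger group, so the natural surjection of coinvariants goes $V_{U_{F'}^{(e)}}\twoheadrightarrow V_{U_F^{(e)}}$, not the other way; correspondingly, in your injectivity argument for (ii) the point is that for $z'$ in the star of the facet containing $z$ one has $U_{z'}^{(e)}\supseteq U_z^{(e)}$ (not $\subseteq$), whence a vector whose class vanishes in $V_{U_z^{(e)}}$ also has vanishing class in $V_{U_{z'}^{(e)}}$. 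With this sign fixed your sketch goes through.
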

\begin{proof}
(i) follows from the local nature of the preceding definition.
(ii) and (iii) is \cite[Lem. IV.1.1]{SchSt97}. (iv) follows from
(ii) because of $char(K)=0$.
\end{proof}

We recall that the smooth representation $V$ is called {\it
admissible} if the $H$-invariants $V^H$ form a finite dimensional
$K$-vector space for any compact open subgroup $H$ of $G$. In this
situation the natural projection map $V\rightarrow V_H$ induces a
linear isomorphism $V^H\car V_H$. For an admissible representation
$V$ we may therefore deduce from (\ref{prop-smoothloc} (ii))
that the stalks of $\tiV$ are finite dimensional $K$-vector
spaces.

We emphasize again that the functor $V\mapsto \tiV$ depends on the
level $e\geq 0$.

\subsection{$p$-valuations on certain stabilizer subgroups}

We keep the notations from the preceding paragraph and define
certain $p$-valuations on the groups $\UFe$. However, for the rest
of this section we {\bf assume} $L=\bbQ_p$.

\begin{lemma} Let $F$ be a facet in $\sB$ and $e,e'\geq 0$. The commutator
group $(\UFe,U_{F}^{(e')})$ satisfies
\[(\UFe,U_{F}^{(e')})\subseteq U_F^{(e+e')}.\]
\end{lemma}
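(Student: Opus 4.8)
The plan is to reduce the assertion to a short list of commutator estimates between the root-subgroup and torus ``building blocks'' of $\UFe$, and then to settle those using Chevalley's commutation relations together with the concavity of $f^*_F$.

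First I would reduce to the case of a facet $F\subset A$ in the standard apartment: since $U^{(e)}_{gF'}=gU^{(e)}_{F'}g^{-1}$ by definition and commutators transform equivariantly, $(gAg^{-1},gBg^{-1})=g(A,B)g^{-1}$, the general case follows. For $F\subset A$ one has $\UFe,U^{(e')}_F\subseteq P_F$ while $U^{(e+e')}_F$ is \emph{normal} in $P_F$; so, using the standard commutator identities (which, modulo a normal subgroup, rewrite a commutator of products as a product of commutators), it suffices to prove $(A,B)\subseteq U^{(e+e')}_F$ when $A$ runs through a generating family of subgroups of $\UFe$ and $B$ through one of $U^{(e')}_F$. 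By Proposition \ref{prop-rootspace}(i) these may be taken to be the root subgroups $U_{\al,f^*_F(\al)+e}$, $\al\in\Phi$, together with $T^{(e)}$, and likewise for $U^{(e')}_F$.

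For a pair $A=U_{\al,f^*_F(\al)+e}$, $B=U_{\beta,f^*_F(\beta)+e'}$ with $\beta\neq\pm\al$, I would expand, writing $a=x_\al(u)$, $b=x_\beta(v)$, via Chevalley's relations $(a,b)=\prod_{i,j\geq 1,\,i\al+j\beta\in\Phi}x_{i\al+j\beta}(c_{ij}u^iv^j)$ with $c_{ij}\in\bbZ$; since $v_L(u)\geq f^*_F(\al)+e$ and $v_L(v)\geq f^*_F(\beta)+e'$ (inequalities read in $\tilde{\bbR}$), concavity of $f^*_F$ and the facts $i,j\geq 1$, $e,e'\geq 0$ give
\[ v_L(c_{ij}u^iv^j)\ \geq\ i\bigl(f^*_F(\al)+e\bigr)+j\bigl(f^*_F(\beta)+e'\bigr)\ \geq\ f^*_F(i\al+j\beta)+e+e', \]
so every factor lies in $U_{i\al+j\beta,\,f^*_F(i\al+j\beta)+e+e'}\subseteq U_{f^*_F+e+e'}\subseteq U^{(e+e')}_F$; the case $\beta=\al$ is trivial since $U_\al\cong\bbG_a$ is commutative. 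The torus cases are similar: $(T^{(e)},T^{(e')})=\{1\}$ as $T$ is abelian, while for $t\in T^{(e)}$ one computes $(t,x_\beta(v))=x_\beta(sv)$ with $v_L(s)\geq e+1$, so $(t,x_\beta(v))\in U_{\beta,f^*_F(\beta)+e+e'}\subseteq U^{(e+e')}_F$; the mirror case $(x_\al(u),t')$ with $t'\in T^{(e')}$ is identical.

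The main obstacle is the case $\beta=-\al$, where the commutator of opposite root subgroups genuinely involves the torus. Here I would pass to the rank-one subgroup $\langle U_\al,U_{-\al}\rangle$ (a quotient of $\mathrm{SL}_2$) and compute directly: since $v_L(uv)\geq f^*_F(\al)+f^*_F(-\al)+e+e'>0$, the element $1-uv$ is a unit, and $(x_\al(u),x_{-\al}(v))$ equals $x_\al(u')\,\al^\vee(w)\,x_{-\al}(v')$ with $v_L(u')=2v_L(u)+v_L(v)$, $v_L(v')=v_L(u)+2v_L(v)$ and $v_L(w-1)=v_L(u)+v_L(v)$, so $\al^\vee(w)\in T^{(v_L(u)+v_L(v)-1)}$. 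The two unipotent factors land in $U_{\pm\al,\,f^*_F(\pm\al)+e+e'}$ using $f^*_F(\al)+f^*_F(-\al)\geq 0$, and the torus factor lands in $T^{(e+e')}$ because $f^*_F(\al)+f^*_F(-\al)$ is the oscillation $\sup_F\al-\inf_F\al$ (with a ``$+$'' added when $\al|_F$ is constant): this oscillation equals $1$ when $\al$ is non-constant on $F$ — the two walls bounding $F$ in the $\al$-direction being consecutive — while when $\al|_F$ is constant one has $f^*_F(\al)=f_F(\al)+$, $f^*_F(-\al)=f_F(-\al)+$ with $f_F(\pm\al)\in\bbZ$ and $f_F(\al)+f_F(-\al)=0$, so integrality of $v_L$ over $\bbQ_p$ forces $v_L(u)\geq f_F(\al)+e+1$ and $v_L(v)\geq f_F(-\al)+e'+1$; in all cases $v_L(u)+v_L(v)\geq e+e'+1$. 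The three factors thus lie in $U^{(e+e')}_F$, and, noting the commutators are trivial when $u=0$ or $v=0$, combining all the cases completes the argument.
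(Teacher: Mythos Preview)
Your route is different from the paper's and, modulo one fixable gap, correct. The paper does not unpack the generators at all: it extends $f^*_F$ to a concave function $h_F$ on $\Phi\cup\{0\}$ by setting $h_F(0):=0+$, checks that the Bruhat--Tits ``commutator function'' $h$ attached to the pair of concave functions $f:=h_F+e'$ and $g:=h_F+e$ satisfies $h(a)\geq h_F(a)+e+e'$ for all $a$, and then invokes \cite[Prop.~6.4.44]{BruhatTits72} to obtain $(U_f,U_g)\subseteq U_h\subseteq U_{h_F+e+e'}=U_F^{(e+e')}$ in one stroke. Your explicit verification via Chevalley's relations and the rank-one $\mathrm{SL}_2$ commutator is more elementary and makes the origin of the torus contribution visible; the price is the case analysis.

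The gap is in your $\beta=-\al$ step: both auxiliary claims fail in non-simply-laced types. The oscillation $\sup_F\al-\inf_F\al$ need not equal $1$ when $\al|_F$ is non-constant: for $\bG=\mathrm{Sp}_4$ (type $C_2$, simple roots $\al_1$ short and $\al_2$ long), the edge of the fundamental alcove with $\al_2=0$ has $\al_1(F)=(0,\tfrac12)$, so the oscillation of $\al_1$ is $\tfrac12$. Likewise, in the constant case $f_F(\pm\al)$ need not be integral: at the vertex of that same alcove where $\al_2=0$ and $2\al_1+\al_2=1$ one has $\al_1=\tfrac12$, hence $f_F(\al_1)=-\tfrac12$. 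Both errors are repaired by a single observation whose ingredients you already have: in either case $v_L(u)+v_L(v)>e+e'$ \emph{strictly} --- from $\sup_F\al-\inf_F\al>0$ in the non-constant case, and from the ``$+$'' in $f^*_F(\pm\al)$ in the constant case --- and since $v_L$ takes values in $\bbZ$ (here $L=\bbQ_p$) this forces $v_L(u)+v_L(v)\geq e+e'+1$, whence $\al^\vee(w)\in T^{(e+e')}$. With this uniform integrality argument in place of your two separate claims, your proof is complete.
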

\begin{proof}
Choosing a facet $F'$ in $A$ and an element $g\in G$ such that
$F'=gF$ we may assume that $F$ lies in $A$. Define a function
$h_F:\Phi\cup\{0\}\rightarrow\tilde{\bbR}$ via $h_F|_\Phi:=f^*_F$
and $h_F(0):=0+$. Then $g:=h_F+e$ and $f:=h_F+e'$ are concave
functions in the sense of \cite[Def. 6.4.3]{BruhatTits72}.
Consider the function
$h:\Phi\cup\{0\}\rightarrow\tilde{\bbR}\cup\{-\infty\}$ defined as
\[h(a):=\inf \{\sum_i f(a_i)+\sum_jg(b_j)\}\]
where the infimum is taken over the set of pairs of finite
nonempty sets $(a_i)$ and $(b_j)$ of elements in $\Phi\cup\{0\}$
such that $a=\sum_i a_i+\sum_j b_j$. Using that the functions $f$
and $g$ are concave one finds $$h_F(a)+e+e'\leq h(a)$$ for any
$a\in\Phi\cup\{0\}.$ By loc.cit., Prop. 6.4.44, the function $h$ is
therefore concave and has the property
\[(U_f,U_g)\subseteq U_h\subseteq U_{h_F+e+e'}.\]
Here, the groups involved are defined completely analogous to
(\ref{equ-groupconcave}) (cf. loc.cit., Def. 6.4.42). It remains
to observe that $U_{h_F+a}=U_{F}^{(a)}$ for any integer $a\geq 0$
(\cite[p. 21]{SchSt97}).
\end{proof}
Let $l$ be the rank of the torus $\bT$. By construction of $\frT$
any trivialization $\bT\simeq(\bbG_m)^l$ yields an identification
$\frT\simeq (\bbG_{m/o_L})^l$ which makes the structure of the
topological groups $T^{(e)}, e\geq 0$ explicit. Moreover, we {\bf
assume} in the following $e\geq 2$. For each $g\in\UFe\setminus
\{1\}$ let
$$ \omega^{(e)}_F(g):=\sup \{n\geq 0: g\in U_F^{(n)}\}.$$
The following corollary is essentially due to H. Frommer
(\cite[1.3, proof of Prop. 6]{Frommer}). For sake of completeness
we include a proof.
\begin{cor}
The function
\[\omega^{(e)}_F: \UFe\setminus\{1\}\longrightarrow (1/(p-1),\infty)\subset\bbR\]
is a $p$-valuation on $\UFe$.
\end{cor}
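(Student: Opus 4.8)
The plan is to verify the three axioms (i), (ii), (iii) of a $p$-valuation directly from the definition of $\omega^{(e)}_F$, together with the fact that the values genuinely land in $(1/(p-1),\infty)$. First I would observe that since $e\geq 2$ and $p$ is odd, every nonzero value of $\omega^{(e)}_F$ is an integer $\geq e \geq 2 > 1/(p-1)$, so the target is correct; in particular there is no issue with the lower bound $1/(p-1)$, which is the usual delicate point and here is handled for free by the assumption $e\geq 2$. Axiom (i), the inequality $\omega^{(e)}_F(gh^{-1})\geq \min(\omega^{(e)}_F(g),\omega^{(e)}_F(h))$, is immediate from the fact that the $U_F^{(n)}$ form a \emph{descending chain of subgroups}: if $g,h\in U_F^{(n)}$ then $gh^{-1}\in U_F^{(n)}$. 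Axiom (ii), $\omega^{(e)}_F(g^{-1}h^{-1}gh)\geq \omega^{(e)}_F(g)+\omega^{(e)}_F(h)$, is exactly the content of the preceding Lemma: if $g\in U_F^{(m)}$ and $h\in U_F^{(n)}$ (with $m,n\geq e$) then $(g,h)\in (U_F^{(m)},U_F^{(n)})\subseteq U_F^{(m+n)}$, so the commutator has $\omega$-value at least $m+n$.

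The only axiom requiring real work is (iii), namely $\omega^{(e)}_F(g^p)=\omega^{(e)}_F(g)+1$ for all $g\neq 1$. This is where I expect the main obstacle to lie, since it is an equality rather than an inequality, and it is precisely the point where one must use the explicit structure of the groups $U_F^{(n)}$ rather than just the filtration axioms. The strategy is to reduce to the apartment, i.e. assume $F\subset A$ (conjugation by $g\in G$ identifies $U_{gF}^{(n)}$ with $gU_F^{(n)}g^{-1}$ and hence preserves $\omega$), and then use the product decomposition of Proposition~\ref{prop-rootspace}(i):
\[
U_F^{(n)} \;\cong\; \Big(\prod_{\al\in\Phi^-}U_{\al,f^*_F(\al)+n}\Big)\times T^{(n)}\times\Big(\prod_{\al\in\Phi^+}U_{\al,f^*_F(\al)+n}\Big),
\]
where $U_{\al,f^*_F(\al)+n}$ is the subgroup of $U_\al\cong\bbG_a(\bbQ_p)=\bbQ_p$ cut out by a valuation condition of the form $v_p(\cdot)\geq c_\al+n$ (with $c_\al$ either an integer or of the form "integer$+$", depending on whether $\al|_F$ is constant), and $T^{(n)}=\ker(\frT(\Zp)\to\frT(\Zp/p^{n+1}))\cong (1+p^{n+1}\Zp)^l$. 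On each root-group factor, raising to the $p$-th power is the map $x\mapsto px$ on $\bbQ_p$, which shifts the $p$-adic valuation up by exactly $1$; on the torus factor, the Frommer/Schneider--Stuhler computation shows $(1+p^{n+1}a)^p\in 1+p^{n+2}\Zp$ but $\notin 1+p^{n+3}\Zp$ when $a\in\Zp^\times$, again a shift by exactly $1$ (here $p$ odd is essential — this is where the hypothesis enters). The subtlety is that $g^p$ need not be computed factor-by-factor, because the product decomposition is not a group isomorphism; so I would argue that modulo $U_F^{(n+1)}$ the $p$-th power map \emph{is} compatible with the decomposition of $U_F^{(n)}/U_F^{(n+1)}$, using that the commutators landing in higher terms (controlled by the preceding Lemma) do not affect the leading term. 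Concretely: if $\omega^{(e)}_F(g)=n$, write $g$ via the product decomposition with at least one factor of "exact level $n$"; expand $g^p$ and use the commutator estimates to see that $g^p\equiv (\text{product of }p\text{-th powers of the factors})\pmod{U_F^{(n+2)}}$, whence $g^p\in U_F^{(n+1)}\setminus U_F^{(n+2)}$.

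Having established (i), (ii), (iii) and the value constraint, the corollary follows. I would present this essentially as a citation to \cite[1.3, proof of Prop. 6]{Frommer} for the hard axiom (iii), filling in the reduction to the apartment and the reduction to the individual $\bbG_a$- and torus-factors as sketched, and noting explicitly that axioms (i) and (ii) come respectively from the descending-chain property and from the commutator Lemma just proved. The main obstacle, to repeat, is axiom (iii): controlling the $p$-th power map precisely enough to get an equality, which forces one to open up the Bruhat--Tits product decomposition and to invoke oddness of $p$ in the torus factor.
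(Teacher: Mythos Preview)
Your proposal is correct and follows essentially the same approach as the paper: axioms (i) and (ii) are dispatched via the descending-chain property and the preceding commutator Lemma, and axiom (iii) is proved by writing $g$ in its root-space/torus product decomposition, checking the exact shift by $1$ on each factor, and using the commutator estimate $(U_F^{(n)},U_F^{(n)})\subseteq U_F^{(2n)}$ together with $n\geq 2\Rightarrow 2n\geq n+2$ to control the error between $g^p$ and the product of the factor-wise $p$-th powers. The paper's write-up is slightly terser (it does not explicitly mention the reduction to $F\subset A$ or the role of $p$ odd in the torus factor, both of which you correctly flag), but the argument is the same.
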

\begin{proof}
The first axiom (i) is obvious and (ii) follows from the lemma.
Let $g\in\UFe$ with $n:=\omega^{(e)}_F(g)$. We claim
$\omega_F^{(e)}(g^p)=n+1$. The root space decomposition
(\ref{prop-rootspace})
\[ m: (\prod_{\al\in\Phi^-}U_{\al,f^*_F(\al)+n})\times
T^{(n)}\times(\prod_{\al\in\Phi^+}U_{\al,f^*_F(\al)+n})\car
U_F^{(n)}\] is in an obvious sense compatible with variation of
the level $n$. If $g\in T^{(n)}$ the claim is immediate. The same
is true if $g\in U_{\al,f^*_F(\al)+n}$ for some $\al\in\Phi$:
indeed the filtration of $U_\al$ is induced by the $p$-adic
valuation on $\bbQ_p$ via $x_\al: \bbQ_p\simeq U_\al$. In general
let $m(h_1,...,h_d)=g$. By what we have just said there is $1\leq
i\leq d$ such that $\omega^{(e)}(h_i^p)=n+1$ and
$\omega^{(e)}(h^p_j)\geq n+1$ for all $j\neq i$. Furthermore,
$h_1^p\cdot\cdot\cdot h_d^pg'=g^p$ where $g'\in
(U_F^{(n)},U_F^{(n)})\subseteq U_F^{(2n)}$. Since $n\geq 2$ we
have $2n\geq n+2$ and hence $g^p\in U_F^{(n+1)}$. If $g^p\in
U_F^{(n+2)}$ then $h_1^p\cdot\cdot\cdot h_d^p=g^pg'^{-1}\in
U_F^{(n+2)}$ which contradicts the existence of $h_i$. Hence
$\omega^{(e)}(g^p)=n+1$ which verifies axiom (iii).
\end{proof}

\begin{para}
For a given root $\al\in\Phi$ let $u_\al$ be a topological
generator for the group $U_{\al,f^*_F(\al)+e}$. Let $t_1,...,t_l$
be topological generators for the group $T^{(e)}$. In the light of
the decomposition of (\ref{prop-rootspace} (i)) it is rather
obvious that the set
\[\{u_\al\}_{\al\in\Phi^-}\cup
\{t_i\}_{i=1,...,l}\cup\{u_\al\}_{\al\in\Phi^+}\] arranged in the
order suggested by loc.cit. is an ordered basis for the $p$-valued
group $(\UFe,\omega_F^{(e)})$. Of course, $\omega_F^{(e)}(h)=e$
for any element $h$ of this ordered basis.

\vskip8pt

For technical reasons we will work in the following with the
slightly simpler $p$-valuations

\[\mathring{\omega}_F^{(e)}:=\omega_F^{(e)}-(e-1)\]

satisfying $\mathring{\omega}_F^{(e)}(h)=1$ for any element $h$ of
the above ordered basis. If $z\in\sB$ lies in the facet
$F\subset\sB$ we write $\mathring{\omega}_z^{(e)}$ for
$\mathring{\omega}_F^{(e)}$.

\vskip8pt

Remark: The tangent map at $1\in G$ corresponding to the $p$-power
map equals multiplication by $p$ and thus, is an isomorphism. It
follows from (\ref{prop-rootspace} (ii)) that there is
$e(F)\geq 2$ such that the $p$-power map is invertible on $\UFe$
for all $e\geq e(F)$. In this case, $\UFe$ is obviously
$p$-saturated (cf. 2.2.1) and therefore a uniform pro-$p$ group
(apply remark before Lemma 4.4 in \cite{ST5} to
$\mathring{\omega}_F^{(e)}$ and use $p\neq 2$). Since any facet in
$\sB$ is conjugated to a facet in $\sC$ we deduce from
(\ref{equ-conjgroups}) that there is a number $e_{uni}\geq 2$ such
that all the groups $\UFe$ for $F\subset\sB$ are uniform pro-$p$
groups whenever $e\geq e_{uni}$. In this situation \cite[Prop. A1]{HKN},
asserts that the subgroups
$$\UFe\supset U_F^{(e+1)}\supset U_F^{(e+2)}...$$
form the lower $p$-series of the group $\UFe$.

\end{para}

We may apply the discussion of (\ref{subsec-dist}) to
$(\UFe,\comFe)$ and the above ordered basis to obtain a family of
norms $||.||_r, r\in [1/p,1)$ on $D(\UFe,K)$ with completions
$D_r(\UFe,K)$ being $K$-Banach algebras. For facets $F,F'$ in
$\sB$ such that $F'\subseteq\overline{F}$ we shall need a certain
`gluing' lemma for these algebras.
\begin{lemma}\label{lem-glue}Let $F,F'$ be two facets in $\sB$ such that $F'\subseteq\overline{F}$. The
inclusion $U_{F'}^{(e)}\subseteq\UFe$ extends to a
norm-decreasing algebra homomorphism
$$\sigma_r^{F'F}: D_r(U_{F'}^{(e)},K)\longrightarrow D_r(\UFe,K).$$
Moreover,
\begin{itemize}
    \item[(i)] $\sigma_r^{FF}=id$,\vskip8pt
    \item[(ii)] $\sigma_r^{F'F}\circ\sigma_r^{F''F'}=\sigma_r^{F''F}$ if $F''$ is a
third facet in $\sB$ with $F''\subseteq\overline{F'}$.
\end{itemize}
Finally, $\sigma_r^{F'F}$ restricted to $Lie(U_{F'}^{(e)})$ equals
the map $Lie(U_{F'}^{(e)})\simeq Lie(U_{F}^{(e)})\subset
D_r(\UFe,K)$ where the first arrow is the canonical Lie algebra
isomorphism from \cite[III \S3.8]{B-L}, .
\end{lemma}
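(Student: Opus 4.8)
\textbf{The approach.} The key point is that the family of norms $||.||_r$ on a distribution algebra $D(H,K)$ for a $p$-valued group $(H,\omega)$ with ordered basis $h_1,\dots,h_d$ is, in this range $r\in[1/p,1)$, \emph{independent of the choice of ordered basis} (as recalled in \S\ref{subsubsec-norms}, citing \cite[before Thm. 4.11]{ST5}). So to construct $\sigma_r^{F'F}$ it suffices to exhibit, for the inclusion $U_{F'}^{(e)}\subseteq\UFe$, one ordered basis of the source whose image sits inside the norm-unit ball of the target, or more precisely so that the obvious continuous extension $D(U_{F'}^{(e)},K)\to D(\UFe,K)$ of the algebra map $K[U_{F'}^{(e)}]\to K[\UFe]$ is norm-decreasing. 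The natural choice: by Proposition \ref{prop-rootspace}(iii) we have $U_{F'}^{(e)}\subseteq\UFe$, and by the explicit root-group description in Proposition \ref{prop-rootspace}(i) both groups have ordered bases built from the $u_\al$'s and $t_i$'s; one checks that the ordered basis elements of $U_{F'}^{(e)}$, expressed in the chart of $\UFe$, have $\mathring\omega_F^{(e)}$-value $\geq 1 = \mathring\omega_{F'}^{(e)}$-value. Since passing to a coarser facet can only \emph{shrink} the root-group filtration steps $U_{\al,f^*_F(\al)+e}$ (because $F'\subseteq\overline F$ forces $f^*_{F'}(\al)\geq f^*_F(\al)$ when $\al|_{F'}$ is constant, etc.), the $\comFe$-value of a basis element of $U_{F'}^{(e)}$ computed in $\UFe$ is $\geq$ its $\mathring\omega_{F'}^{(e)}$-value. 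This is exactly the inequality one needs to force $||\delta_h||_r\leq 1$ for $h$ in the $U_{F'}^{(e)}$-basis, hence norm-decreasing on all of $K[U_{F'}^{(e)}]$ by submultiplicativity, hence on the completion.

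\textbf{The key steps, in order.} First I would reduce, via conjugation by some $g\in G$ with a chosen $F'=gF$ as in the proof of the Lemma above, to the case where $F$ (and hence $\overline F\supseteq F'$) lies in the apartment $A$; the conjugation isomorphisms (\ref{equ-conjgroups}) are compatible with everything. Second, I would record the comparison of $p$-valuations: for $h\in U_{F'}^{(e)}$, $\omega^{(e)}_{F'}(h)\leq\omega^{(e)}_F(h)$, hence $\mathring\omega^{(e)}_{F'}(h)\leq\mathring\omega^{(e)}_F(h)$, which is immediate from $\omega^{(e)}_F(h)=\sup\{n: h\in U_F^{(n)}\}$ together with $U_{F'}^{(n)}\subseteq U_F^{(n)}$ (Proposition \ref{prop-rootspace}(iii), applied at each level $n$). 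Third, I would invoke the basis-independence of $||.||_r$ to conclude that the natural extension of $K[U_{F'}^{(e)}]\to K[\UFe]$ to $D_r$ is norm-decreasing: write a distribution in $D(U_{F'}^{(e)})$ in its $\bb^\al$-expansion with respect to the $U_{F'}^{(e)}$-basis, note each basis element maps into $D_r(\UFe,K)$ with norm $\leq 1$ by the previous step, and use submultiplicativity of $||.||_r$. Fourth, the cocycle relations (i) $\sigma^{FF}_r=\mathrm{id}$ and (ii) $\sigma^{F'F}_r\circ\sigma^{F''F'}_r=\sigma^{F''F}_r$ are formal: on the dense subalgebras $K[\,\cdot\,]$ these are the obvious identities of group-algebra maps induced by the inclusions $U_{F''}^{(e)}\subseteq U_{F'}^{(e)}\subseteq\UFe$, and they extend by continuity. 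Finally, for the statement about Lie algebras: on $\Lie(U_{F'}^{(e)})\subset D(U_{F'}^{(e)},K)$ the map $\sigma^{F'F}_r$ agrees with the differential at $1$ of the inclusion $U_{F'}^{(e)}\hookrightarrow\UFe$, which is precisely the canonical Lie-algebra map; one identifies this with the isomorphism $\Lie(U_{F'}^{(e)})\simeq\Lie(\UFe)$ coming from \cite[III \S3.8]{B-L} by naturality of the logarithm/exponential for the inclusion of open subgroups.

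\textbf{The main obstacle.} The one genuinely technical point is the inequality $\mathring\omega^{(e)}_{F'}(h)\leq\mathring\omega^{(e)}_F(h)$ being \emph{enough}: a priori a norm-decreasing map requires $||\,b_i\,||_r\leq 1$ for $b_i=h_i-1$ where $h_i$ ranges over a $U_{F'}^{(e)}$-basis, and for this I need that each such $h_i$, viewed inside $\UFe$, lies in $U_F^{(n)}$ for $n\geq$ its own $\mathring\omega^{(e)}_{F'}$-value — i.e. I need the filtration-level comparison at the level of individual elements, not just of whole subgroups. This follows from Proposition \ref{prop-rootspace}(iii) level by level as indicated, but one must be slightly careful that the $U_{F'}^{(e)}$-basis can be chosen \emph{compatibly} with the root-space decomposition (it can, by Proposition \ref{prop-rootspace}(i)), so that the comparison reduces to the two easy cases — a single root group $U_\al$, where both filtrations are induced by the $p$-adic valuation on $\bbQ_p$ via $x_\al$ and the claim is transparent, and the torus part $T^{(e)}$, where it is equally clear. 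Everything else is bookkeeping with the $\bb^\al$-expansions and continuity.
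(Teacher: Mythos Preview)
Your approach is essentially the paper's: reduce to showing $||\sigma(b'_i)||_r \leq ||b'_i||_r$ for basis elements $b'_i = h'_i - 1$, split into the torus part (same generators $t_j$ on both sides, so trivial) and the root part (where $h'_\alpha = h_\alpha^{a}$ for some $a\in\bbZ_p$). Your extra conceptual layer --- the $p$-valuation comparison $\mathring\omega^{(e)}_{F'}(h)\leq\mathring\omega^{(e)}_{F}(h)$ via $U_{F'}^{(n)}\subseteq U_F^{(n)}$ for every level $n$ --- is correct and a clean way to organize the argument, but it does not by itself replace the key estimate.

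Two small points. First, in a couple of places you write that the images have ``norm $\leq 1$''; what is actually needed is $||\sigma(b'_i)||_r\leq ||b'_i||_r=r^{\mathring\omega^{(e)}_{F'}(h'_i)}=r$, and $\leq 1$ would only give boundedness, not a norm-decreasing map. Second, the root-group case you call ``transparent'' is precisely where the paper does the one nontrivial computation: after using basis-independence to replace $a$ by $p^s$, one expands
\[
h_\alpha^{p^s}-1=\sum_{k=1}^{p^s}\binom{p^s}{k}(h_\alpha-1)^k
\]
and bounds $||\sigma(h'_\alpha-1)||_r\leq\max_{1\leq k\leq p^s}|\binom{p^s}{k}|\,r^{k}\leq r$. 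This binomial estimate is exactly the bridge from your $p$-valuation inequality to the norm inequality in $D_r(\UFe,K)$; it is short, but it is the content and should be written out rather than asserted.
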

\begin{proof}
By functoriality (\cite[1.1]{KohlhaaseI}) of $D(\cdot,K)$ we
obtain an algebra homomorphism
\[\sigma: D(U_{F'}^{(e)},K)\longrightarrow D(\UFe,K).\]
Let $h'_1,...,h'_d$ and $h_1,...,h_d$ be the ordered bases of
$\UFbe$ and $\UFe$ respectively. Let $b'_i=h'_i-1\in\bbZ[\UFbe]$
and ${\bb'}^{m}:={b'_1}^{m_1}\cdot\cdot\cdot {b'_d}^{m_d}$ for
$m\in\bbN_0^d$. Given an element
\[\lambda=\sum_{m\in\bbN_0^d}d_m{\bb'}^{m}\in
D(U_{F'}^{(e)},K)\] we have $||\lambda||_r= \sup_m
|d_m|~||b_i'||_r$. Because of
\[ ||\sigma (\lambda)||_r \leq
\sup_m \;|d_m| (||\sigma (b_1')||_r)^{m_1}\cdot\cdot\cdot
(||\sigma (b_d')||_r)^{m_d})
\]
it therefore suffices to prove $||\sigma (b_i')||_r\leq
||b_i'||_r$ for any $i$. If $h'_i$ belongs to the toral part of
the ordered basis of $\UFbe$ then $\sigma(b_i')=b_i'$ and we are
done. Let therefore $\al\in\Phi$ and consider the corresponding
elements $h'_\al$ and $h_\al$ in the ordered bases of $\UFbe$ and
$\UFe$ respectively. By the root space decomposition we have
\[U_{\al,f^*_{F'}(\al)+e}\subseteq
U_{\al,f^*_{F}(\al)+e}=(h_\al)^{\bbZ_p}.\] Let therefore
$a\in\bbZ_p$ such that $h'_\al=(h_\al)^a$. Since a change of
ordered basis does not affect the norms in question (cf. \ref{subsubsec-norms}) we may assume $a=p^s$ for some natural
number $s\geq 0$. Then
\[h'_\al-1=(h_\al+1-1)^{p^s}-1=\sum_{k=1,...,p^s}{p^s\choose
k}(h_\al-1)^k\] and therefore
\[ ||\sigma(h'_\al-1)||_r\leq\max_{k=1,...,p^s} |{p^s\choose
k}|~ ||(h_\al-1)||_r^k=\max_{k=1,...,p^s} |{p^s\choose k}|
r^{k}\leq r=||h'_\al-1||_r\] which shows the claim and the
existence of $\sigma_r^{FF'}$. The properties (i),(ii) follow from
functoriality of $D(\cdot,K)$ by passing to completions. Since
$U_{F'}^{(e)}\subseteq\UFe$ is an open immersion of Lie groups the
final statement is clear.
\end{proof}

\section{Sheaves on the flag variety and Lie algebra
representations}\label{sec-BB}

\subsection{Differential operators on the flag variety}
\begin{para}
 Let $X$ denote the variety of Borel subgroups of $\bG.$ It is a
 smooth and projective $L$-variety. Let $\cO_X$ be its structure sheaf. Let $\frg$ be the Lie algebra of
$\bG$. Differentiating the natural (left) action of $\bG$ on $X$
yields a homomorphism of Lie algebras
\[\al: \frg\longrightarrow \Gamma(X,\cT_X)\]
into the global sections of the tangent sheaf $\cT_X={\mathcal
Der}_{L}(\cO_{X})$ of $X$ (\cite[II \S4.4.4]{DemazureGabriel}). In
the following we identify an abelian group (algebra, module etc.)
with the corresponding constant sheaf on $X$. This should not
cause any confusion. Letting $$\frg^\ci:=\cO_X\otimes_L \frg$$ the
map $\al$ extends to a morphism of $\cO_X$-modules $\al^\ci:
\frg^\ci\longrightarrow \cT_X.$ Defining $[\frx,f]:=\al(\frx)(f)$
for $\frx\in\frg$ and a local section $f$ of $\cO_X$ makes
$\frg^\ci$ a sheaf of $L$-Lie algebras\footnote{Following
\cite{BB81} we call such a sheaf simply a Lie algebra over $X$ in
the sequel. This abuse of language should not cause confusion.}.
Then $\al^\ci$ is a morphism of $L$-Lie algebras. We let
$\frb^\ci:=\ker\al^\ci,$ a subalgebra of $\frg^\ci$, and
$\frn^\ci:=[\frb^\ci,\frb^\ci]$ its derived algebra. At a point
$x\in X$ with residue field $k(x)$ the reduced stalks of the
sheaves $\frb^\ci$ and $\frn^\ci$ equal the Borel subalgebra
$\frb_x$ of $k(x)\otimes_L \frg$ defined by $x$ and its nilpotent
radical $\frn_x\subset\frb_x$ respectively.

Let $\frh$ denote the abstract Cartan algebra of $\frg$
(\cite[\S2]{Milicic93}). We view the $\cO_X$-module
$\cO_X\otimes_L\frh$ as an abelian $L$-Lie algebra. By definition
of $\frh$ there is a canonical isomorphism of $\cO_X$-modules and
$L$-Lie algebras
\begin{numequation}\label{equ-cartan}
\frb^\ci/\frn^\ci\car\cO_X\otimes_L\frh.\end{numequation}

Let $U(\frg)$ be the enveloping algebra of $\frg$. The enveloping
algebra of the Lie-algebra $\frg^\ci$ has the underlying
$\cO_X$-module $\cO_X\otimes_L U(\frg)$. Its $L$-algebra of local
sections over an open affine $V\subseteq X$ is the skew enveloping
algebra $\cO_X(V)\smprod U(\frg)$ relative to $\al: \frg\ra {\rm
Der}_L (\cO_X(V))$ (in the sense of sec \ref{sec-skew}). To
emphasize this skew multiplication we follow \cite[3.1.3]{BMR08} and denote the enveloping algebra of $\frg^\ci$ by
\[\cO_X\smprod U(\frg).\]

\end{para}
\begin{para}

To bring in the torus $\bT$ we choose a Borel subgroup $\bB\subset
\bG$ defined over $L$ containing $\bT$. Let $\bN\subset\bB$ be the
unipotent radical of $\bB$ and let $\bN^{-}$ be the unipotent
radical of the Borel subgroup opposite to $\bB$. We denote by
$$q:\bG\longrightarrow\bG/\bB=X$$
the canonical projection.

\vskip8pt

Let $\frb$ be the Lie algebra of $\bB$ and $\frn\subset\frb$ its
nilpotent radical. If $\frt$ denotes the Lie algebra of $\bT$ the
map $\frt\subset\frb\rightarrow\frb/\frn\simeq\frh$ induces an
isomorphism $\frt\simeq\frh$ of $L$-Lie algebras. We will once and
for all identify these two Lie algebras via this isomorphism.
Consequently, (\ref{equ-cartan}) yields a morphism of
$\cO_X$-modules and $L$-Lie algebras
\[\frb^\ci\lra\frb^\ci/\frn^\ci\car\cO_X\otimes_L\frt.\]
Given a linear form $\lambda\in\frt^*$ it extends $\cO_X$-linearly
to the target of this morphism and may then be pulled-back to
$\frb^\ci$. This gives a $\cO_X$-linear morphism $\lambda^\ci:
\frb^\ci\longrightarrow\cO_X.$

\end{para}

\begin{para}Let $\rho:=\frac{1}{2}\sum_{\al\in\Phi^+}\al.$ Given $\chi\in\frt^*$
we put $\lambda:=\chi-\rho.$ Denote by $\cI_\chi$ the right ideal
sheaf of $\cO_X\smprod U(\frg)$ generated by $\ker\lambda^\ci$,
i.e. by the expressions
\[\xi - \lambda^\ci(\xi)\] with $\xi$ a local section of
$\frb^\ci\subset\frg^\ci\subset \cO_X\smprod U(\frg)$. It is a
two-sided ideal and we let
\[\cD_\chi:=(\cO_X\smprod U(\frg))/\cI_\chi\]
be the quotient sheaf. This is a sheaf of noncommutative
$L$-algebras on $X$ endowed with a natural algebra morphism
$U(\frg)\rightarrow\Gamma(X,\cD_\chi)$ induced by $\frx\mapsto
1\otimes\frx$ for $\frx\in U(\frg)$. On the other hand $\cD_\chi$
is an $\cO_X$-module through the (injective) $L$-algebra morphism
$\cO_X\rightarrow\cD_\chi$ induced by $f\mapsto f\otimes 1$. This
allows to define the full subcategory $\cM_{qc}(\cD_\chi)$ of the
(left) $\cD_\chi$-modules consisting of modules which are
quasi-coherent as $\cO_X$-modules. It is abelian.

\end{para}

\begin{para}
For future reference we briefly discuss a refinement of the above
construction of the sheaf $\cD_{\chi}$. The right ideal of
$\cO_X\smprod U(\frg)$ generated by $\frn^\ci$ is a two-sided
ideal and, following \cite[\S3]{Milicic93} we let
$$\cD_{\frt}:=(\cO_X\smprod
U(\frg))/\frn^\ci(\cO_X\smprod U(\frg))$$ be the quotient sheaf.
We have the open subscheme $U_1:=q(\bN^{-})$ of $X$. Choose a
representative $\dot{w}\in G$ for every $w\in W$ with $\dot{1}=1$.
The translates $U_w:=\dot{w}U_1$ for all $w\in W$ form a Zariski
covering of $X$. Let $\frn^{-}$ be the Lie algebra of $\bN^{-}$
and put $\frn^{-,w}:={\rm Ad}(\dot{w})(\frn^{-})$ for any $w\in
W$.

\vskip8pt

For any $w\in W$ there are obvious canonical maps from
$\cO_X(U_w), U(\frn^{-,w})$ and $U(\frt)$ to $\cO_X(U_w)\smprod
U(\frg)$ and therefore to $\cD_{\frt} (U_w)$. According to
\cite[Lem. C.1.3]{Milicic93Preprint} they induce a $K$-algebra
isomorphism
\begin{numequation}\label{equ-trivialization} (\cO_X(U_w)\smprod U(\frn^{-,w})
)\otimes_L
U(\frt)\car\cD_\frt(U_w).\end{numequation} Note here that
$\bN^{-}=\bbA_L^{|\Phi^-|}$ implies that the skew enveloping
algebra $\cO_X(U_w)\smprod U(\frn^{-,w})$ is equal to the usual
algebra of differential operators $\cD_X(U_w)$ on the translated
affine space $U_w=\dot{w}U_1$.

\vskip8pt The above discussion implies that the canonical
homomorphism $$U(\frt)\mapsto\cO_X\smprod U(\frg), \frx\mapsto
1\otimes\frx$$ induces a central embedding $U(\frt)\hookrightarrow
\cD_\frt.$ In particular, the sheaf $(\ker\lambda)\cD_\frt$ is a
two-sided ideal in $\cD_\frt$. According to the discussion before Thm. 3.2 in \cite{Milicic93}, p. 138, the canonical map
$\cD_\frt\rightarrow\cD_\chi$ coming from $\frn^{\ci}\subset
\ker\lambda^\ci$ induces
$$\cD_\frt\otimes_{U(\frt)}
L_\lambda=\cD_{\frt}/(\ker\lambda)\cD_\frt \car \cD_\chi,$$ an
isomorphism of sheaves of $K$-algebras.

\vskip8pt

Remark: According to the above we may view the formation of the
sheaf $\cD_\chi$ as a two-step process. In a first step on
constructs the sheaf $\cD_\frt$ whose sections over the Weyl
translates of the big cell $U_1$ are explicitly computable.
Secondly, one performs a central reduction
$\cD_\frt\otimes_{U(\frt)}L_\lambda$ via the chosen character
$\lambda=\chi-\rho$. This point of view will be useful in later
investigations.

\end{para}

\vskip8pt

\subsection{The Beilinson-Bernstein localization theorem}

\begin{para}\label{para-HC}

We recall some notions related to the classical {\it
Harish-Chandra isomorphism}. To begin with let $S(\frt)$ be the
symmetric algebra of $\frt$ and let $S(\frt)^W$ be the subalgebra
of Weyl invariants. %We observe at this point that any $L$-algebra
%homomorphim $S(\frt)^W\ra L$ allows an extension, modulo a finite
%base change of $L$, to $S(\frt)$ (Going Up theorem).
Let $Z(\frg)$ be the center of the universal enveloping algebra
$U(\frg)$ of $\frg$. The classical Harish-Chandra map is an
algebra isomorphism $Z(\frg)\car S(\frt)^W$ relating central
characters and highest weights of irreducible highest weight
$\frg$-modules in a meaningful way (\cite[7.4]{Dixmier}). Given a
linear form $\chi\in\frt^*$ we let
$$\sigma(\chi): Z(\frg)\rightarrow L$$ denote the central character
associated with $\chi$ via the Harish-Chandra map. Recall that
$\chi\in\frt^*$ is called {\it dominant} if
$\chi(\check{\al})\notin\{-1,-2,...\}$ for any coroot
$\check{\al}$ with $\al\in\Phi^+$. It is called {\it regular} if
$w(\chi)\neq \chi$ for any $w\in W$ with $w\neq 1$.

\end{para}
Let $\theta:=\sigma(\chi)$ and put
$U(\frg)_\theta:=U(\frg)\otimes_{Z(\frg),\theta} L$ for the
corresponding central reduction.
\begin{thm}(Beilinson/Bernstein)\label{thm-BB}
\begin{itemize}
    \item[(i)] The algebra morphism $U(\frg)\rightarrow
    \Gamma(X,\cD_\chi)$ induces an isomorphism $U(\frg)_\theta\simeq
    \Gamma(X,\cD_\chi)$.
    \item[(ii)] If $\chi$ is dominant and regular the functor
    $M\mapsto \cD_\chi\otimes_{U(\frg)_\theta} M$ is an
    equivalence of categories between the (left)
    $U(\frg)_\theta$-modules and $\cM_{qc}(\cD_\chi)$.
     \item[(iii)] Let $M$ be a $U(\frg)_\theta$-module. The reduced stalk of the sheaf $\cD_\chi\otimes_{U(\frg)_\theta}
 M$ at a point $x\in X$ equals the $\lambda$-coinvariants of the $\frh$-module
 $(k(x)\otimes_L M)/\frn_x(k(x)\otimes_L M)$.

\end{itemize}
\end{thm}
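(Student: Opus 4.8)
The plan is to recall the three classical ingredients behind the Beilinson--Bernstein theorem, in the twisted-differential-operator formulation of \cite{Milicic93}. Statements (i) and (iii) hold for an arbitrary $\chi$; dominance and regularity enter only in (ii). For (i) I would put on $\cD_\chi$ the order filtration coming from the standard filtration of $\cO_X\smprod U(\frg)$; the twisting leaves principal symbols unchanged, so ${\rm gr}\,\cD_\chi\cong\bigoplus_{n\geq 0}{\rm Sym}^n_{\cO_X}\cT_X\cong\pi_*\cO_{T^*X}$ for the cotangent bundle $\pi:T^*X\to X$. Since $\pi$ is affine, $H^i(X,{\rm gr}_n\cD_\chi)=H^i(T^*X,\cO_{T^*X})_n$ for all $i,n$, and these vanish for $i>0$: the moment map $\mu:T^*X\to\frg^*$ is a proper birational morphism onto the normal nilpotent cone $\cN=\overline{\mu(T^*X)}$, so $R\mu_*\cO_{T^*X}=\cO_\cN$ and $\cN$ is affine. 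Hence $H^1(X,F_n\cD_\chi)=0$ for all $n$ (inductively), so ${\rm gr}\,\Gamma(X,\cD_\chi)=\Gamma(X,{\rm gr}\,\cD_\chi)=\Gamma(\cN,\cO_\cN)$, which by Kostant's theorem equals $S(\frg)/S(\frg)\,S(\frg)^G_+$. On the other hand ${\rm gr}\,Z(\frg)=S(\frg)^G$ and $U(\frg)$ is free over $Z(\frg)$ (Kostant), so ${\rm gr}\,U(\frg)_\theta=S(\frg)/S(\frg)\,S(\frg)^G_+$ as well; the map $U(\frg)_\theta\to\Gamma(X,\cD_\chi)$ is filtered and, on associated graded, becomes the identity of $S(\frg)/S(\frg)\,S(\frg)^G_+$ (this amounts to tracking that the principal symbol of $\al(\frx)$ is $\al(\frx)$ for $\frx\in\frg$). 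A filtered homomorphism that is an isomorphism on graded pieces is an isomorphism; this proves (i).

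For (iii) the reduced stalk at $x$ is $k(x)\otimes_{\cO_{X,x}}\bigl((\cD_\chi)_x\otimes_{U(\frg)_\theta}M\bigr)=E_x\otimes_{U(\frg)_\theta}M$, where $E_x:=k(x)\otimes_{\cO_{X,x}}(\cD_\chi)_x$ is the fibre of $\cD_\chi$ at $x$ (the left $\cO_X$-action on $\cD_\chi$ commutes with its right $U(\frg)_\theta$-action, so $E_x$ inherits the latter). As this assignment is right exact in $M$ and commutes with direct sums, it suffices to determine $E_x$ together with its two commuting $U(\frg)$-actions. From the presentation $\cD_\chi=(\cO_X\smprod U(\frg))/\cI_\chi$ with $\cI_\chi$ generated by $\{\xi-\lambda^\ci(\xi):\xi\in\frb^\ci\}$, the facts that $\frb^\ci\subset\frg^\ci$ is a subbundle specializing to $\frb_x$ and that $\lambda^\ci$ specializes to the character $\lambda_x:\frb_x\to k(x)$ induced by $\lambda$ through $\frt\cong\frh\cong\frb_x/\frn_x$, and right-exactness of $-\otimes_{\cO_{X,x}}k(x)$, one gets $E_x=U(\frg_x)/\sum_{\bar\xi\in\frb_x}(\bar\xi-\lambda_x(\bar\xi))U(\frg_x)$ with $\frg_x:=k(x)\otimes_L\frg$ --- a twisted Verma-type module, whose shape is also visible from the local trivialization (\ref{equ-trivialization}) of $\cD_\chi$ over a Weyl translate $U_w\ni x$, on which $\cD_\chi$ is ordinary differential operators. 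Feeding this into $E_x\otimes_{U(\frg)_\theta}M$ and unwinding the PBW decomposition $\frg_x=\frn^-_x\oplus\frb_x$ yields $(k(x)\otimes_L M)/\frn_x(k(x)\otimes_L M)$ reduced modulo the action of $\ker\lambda$, i.e. its $\lambda$-coinvariants; most economically, one first runs the same computation for $\cD_\frt\otimes_{U(\frg)}M$, getting $(k(x)\otimes_L M)/\frn_x(k(x)\otimes_L M)$ as a module over the central $U(\frt)\subset\cD_\frt$, and then applies $-\otimes_{U(\frt)}L_\lambda$, using $\cD_\chi=\cD_\frt/(\ker\lambda)\cD_\frt$.

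For (ii), using (i) the functor $\Gamma(X,-)$ from $\cM_{qc}(\cD_\chi)$ to $U(\frg)_\theta$-modules is right adjoint to $\Delta_\chi$, and I would show the unit and counit of this adjunction are isomorphisms, using two facts: (a) when $\chi$ is dominant, $\cD_\chi$ is generated as a sheaf of algebras by $\cO_X$ and its global sections (a Borel--Weil--Bott vanishing), so every $\cM\in\cM_{qc}(\cD_\chi)$ is globally generated and the counit $\Delta_\chi\Gamma(X,\cM)\to\cM$ is surjective; (b) when $\chi$ is dominant and regular, $X$ is $\cD_\chi$-affine, i.e. $H^i(X,\cM)=0$ for all $i>0$ and all $\cM\in\cM_{qc}(\cD_\chi)$. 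Granting (a) and (b), $\Gamma(X,-)$ is exact on $\cM_{qc}(\cD_\chi)$; applying it to $0\to\cK\to\Delta_\chi\Gamma(X,\cM)\to\cM\to 0$ and using that $\cK$ is again globally generated while $\Gamma(X,\cK)=0$ forces $\cK=0$, so the counit is an isomorphism; and the unit $M\to\Gamma(X,\Delta_\chi(M))$ is an isomorphism for $M$ free over $U(\frg)_\theta$ by (i), hence for every $M$ by right exactness of $\Delta_\chi$, exactness of $\Gamma$, and a free presentation of $M$. Thus $\Delta_\chi$ and $\Gamma(X,-)$ are mutually inverse equivalences.

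The one genuinely hard point is (b), the $\cD_\chi$-affinity of $X$: this is where dominance and regularity of $\chi$ are really used, and its proof reduces --- via the order filtration of a coherent $\cD_\chi$-submodule and twisting by a very ample line bundle --- to Serre vanishing together with the Borel--Weil--Bott input of (a). Everything else is a graded/symbol computation ((i)), PBW bookkeeping ((iii)), or a formal argument with adjunctions; complete details are in \cite{BB81} and, in the twisted formulation used here, in \cite{Milicic93} (see also \cite{Milicic93Preprint}).
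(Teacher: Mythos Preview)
The paper does not prove this theorem at all: its entire proof reads ``This is the main theorem of \cite{BB81}'', followed by a remark that the arguments of \cite{BB81}, written over an algebraically closed base, go through over any field of characteristic zero once $\frg$ is split. So there is no approach to compare against; the theorem is simply quoted.

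Your sketch, by contrast, reproduces the classical Beilinson--Bernstein argument in reasonable detail and is essentially correct. The filtration/symbol computation for (i) via the Springer resolution and Kostant's theorem is the standard route; the two-step computation of (iii) through $\cD_\frt$ and then $-\otimes_{U(\frt)}L_\lambda$ matches exactly how the paper itself organizes the sheaf $\cD_\chi$ (see the discussion around (\ref{equ-trivialization})); and for (ii) you correctly isolate global generation under dominance and $\cD_\chi$-affinity under dominance plus regularity as the substantive inputs. One small imprecision: your formulation of (a) as ``$\cD_\chi$ is generated as a sheaf of algebras by $\cO_X$ and its global sections, so every $\cM$ is globally generated'' is not quite the mechanism --- global generation of an arbitrary quasi-coherent $\cD_\chi$-module comes from the tensor-product trick with finite-dimensional $\bG$-modules (the Key Lemma of \cite{BB81}), not from a structural statement about $\cD_\chi$ alone. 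But you flag the Borel--Weil--Bott input and defer to \cite{BB81} and \cite{Milicic93} for details, which is exactly what the paper does.
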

\begin{proof}
This is the main theorem of \cite{BB81}.
\end{proof}

Remarks:\begin{itemize}
    \item[(i)] In \cite{BB81} the theorem is proved
under the assumption that the base field is algebraically closed.
However, all proofs of loc. cit. go through over an arbitrary
characteristic zero field in the case where the $\frg$ is split
over the base field. In the following, this is the only case we
shall require.
    \item[(ii)] If $\lambda:=\chi-\rho\in X^*(\bT)\subset\frt^*$ and if
$\cO(\lambda)$ denotes the associated invertible sheaf on $X$ then
$\cD_\chi$ can be identified with the sheaf of differential
endomorphisms of $\cO(\lambda)$ (\cite[p. 138]{Milicic93}). It is
therefore a {\it twisted sheaf of differential operators} on $X$
in the sense of \cite[\S1]{BB81}. In particular, if $\chi=\rho$
the map $\al^\ci$ induces an isomorphism $\cD_\rho\car\cD_X$ with
the usual sheaf of differential operators on $X$ (\cite[\S16.8]{EGA_IV_4}). In this case, $\cM_{qc}(\cD_\chi)$ equals therefore the
usual category of algebraic $D$-modules on $X$ in the sense of
\cite{BorelDMod}.

\end{itemize}

\section{Berkovich analytifications}\label{sec-Berkovich}
\subsection{Differential operators on the analytic flag variety}
\begin{para}
For the theory of Berkovich analytic spaces we refer to
\cite{BerkovichBook}, \cite{BerkovichEtale}. We keep the notations
introduced in the preceding section. In particular, $X$ denotes
the variety of Borel subgroups of $\bG$. Being a scheme of finite
type over $L$ we have an associated Berkovich analytic space
$\Xan$ over $L$ (\cite[Thm. 3.4.1]{BerkovichBook}). In the
preceding section we recalled a part of the algebraic
Beilinson-Bernstein localization theory over $X$. It admits the
following `analytification' over $\Xan$.

\vskip8pt

 By construction $\Xan$
comes equipped with a canonical morphism
\[\pi: \Xan\rightarrow X\] of locally ringed spaces. Let $\pi^*$
be the associated inverse image functor from $\cO_X$-modules to
$\cO_{\Xan}$-modules. Here $\cO_{\Xan}$ denotes the structure
sheaf of the locally ringed space $\Xan$. As with any morphism of
locally ringed spaces we have the sheaf
\[\cT_{\Xan}:={\mathcal Der}_{L}(\cO_{\Xan})\]
of $L$-derivations of $\cO_{\Xan}$ (\cite[16.5.4]{EGA_IV_4}). By
definition $\Gamma(\Xan,\cT_{\Xan})={\rm Der}_L(\cO_{\Xan})$.
Since $\Xan$ is smooth over $L$ the results of
\cite[3.3/3.5]{BerkovichEtale} imply that the stalk of this sheaf
at a point $x\in\Xan$ equals $\cT_{\Xan,x}={\rm
Der}_L(\cO_{\Xan,x}).$

\vskip8pt

Let $\Gan$ denote the analytic space associated to the variety
$\bG$ and let $\pi_{\bG}:\Gan\ra\bG$ be the canonical morphism.
The space $\bG$ is a group object in the category of $L$-analytic
spaces (a {\it $L$-analytic group} in the terminology of
\cite[5.1]{BerkovichBook}). The unit sections of $\bG$ and $\Gan$
correspond via $\pi_\bG$ which allows us to canonically identify
the Lie algebra of $\Gan$ with $\frg$ (loc.cit., Thm. 3.4.1 (ii)).
By functoriality the group $\Gan$ acts on $\Xan$. The following
result is proved as in the scheme case.
\begin{lemma}\label{lem-analytification}
The group action induces a Lie algebra homomorphism
$$\frg\ra\Gamma(\Xan,\cT_{\Xan}).$$
\end{lemma}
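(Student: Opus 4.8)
The plan is to mimic the proof of Lemma \ref{lem-analytification}'s algebraic counterpart (the homomorphism $\al:\frg\to\Gamma(X,\cT_X)$ coming from the $\bG$-action on $X$), transporting everything through the analytification functor. First I would recall that the action $\bG\times X\to X$ is a morphism of $L$-schemes, and that analytification is a functor from finite-type $L$-schemes to $L$-analytic spaces preserving products (\cite[Thm. 3.4.1, 3.5.1]{BerkovichBook}); hence we get an action morphism $\Gan\times\Xan\to\Xan$ of $L$-analytic spaces, compatible via $\pi$ and $\pi_\bG$ with the algebraic action. The unit section of $\Gan$ corresponds to that of $\bG$ under $\pi_\bG$, which is exactly what identifies $\Lie(\Gan)$ with $\frg$; this identification is already recorded in the paragraph preceding the lemma.

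Next I would extract the infinitesimal action. Given a tangent vector $\frx\in\frg=\Lie(\Gan)$, i.e.\ a morphism $\Spec\big(L[\vep]/(\vep^2)\big)\to\Gan$ (or its analytic avatar) hitting the unit with derivative $\frx$, compose with the action morphism to obtain an $L[\vep]/(\vep^2)$-point of the automorphism "functor" of $\Xan$ reducing to the identity mod $\vep$ — concretely, an $\cO_{\Xan}$-linear automorphism of $\cO_{\Xan}[\vep]/(\vep^2)$ of the form $\mathrm{id}+\vep\, D_\frx$, and the Leibniz rule for this automorphism forces $D_\frx\in{\rm Der}_L(\cO_{\Xan})=\Gamma(\Xan,\cT_{\Xan})$. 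Linearity of $\frx\mapsto D_\frx$ is immediate; the bracket relation $D_{[\frx,\fry]}=[D_\frx,D_\fry]$ (for whichever sign convention matches the algebraic $\al$) follows by the standard second-order commutator computation with two parameters $\vep,\vep'$, exactly as in the scheme case — and in fact one can simply invoke that the analytic $D_\frx$ agrees, under $\pi^{-1}$ and the stalk description $\cT_{\Xan,x}={\rm Der}_L(\cO_{\Xan,x})$ from \cite[3.3/3.5]{BerkovichEtale}, with $\pi^*$ of the algebraic $\al(\frx)$, so the Lie algebra identities are inherited from the already-established algebraic statement. This "it is proved as in the scheme case" route, made precise by the compatibility with $\pi$, is really the whole content.

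The step I expect to require the most care is checking that the construction genuinely lives on $\Xan$ and not merely formally: one must know that an $\cO_{\Xan}$-linear derivation defined by the infinitesimal action is a global section of $\cT_{\Xan}$, which uses that $\Xan$ is smooth over $L$ so that $\cT_{\Xan}$ behaves well (its stalks are the $L$-derivations of the stalks, by \cite[3.3/3.5]{BerkovichEtale}), and that the analytified action morphism is itself $L$-analytic in a neighbourhood of $\{1\}\times\Xan$ so the derivative makes sense pointwise and glues. Once the comparison $D_\frx\leftrightarrow \pi^*\al(\frx)$ is in place on stalks, compatibility of $\pi^{-1}$ with the Lie bracket on derivations finishes the argument, and the homomorphism $\frg\to\Gamma(\Xan,\cT_{\Xan})$ is seen to be the analytification of $\al$.
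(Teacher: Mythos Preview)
Your proposal is correct and is essentially a detailed unpacking of the paper's own argument, which consists solely of the remark that the result ``is proved as in the scheme case.'' You have spelled out precisely what that entails (functoriality of analytification for products and group actions, differentiating the analytic action at the identity, and inheriting the Lie bracket relation via comparison with the algebraic $\al$ through $\pi^*$), which is exactly the intended content.
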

We define
$$\frg^{\ci,an}:=\cO_\Xan\otimes_L\frg=\rho^*(\frg^\ci).$$ The preceding lemma allows on the one hand, to
define a structure of $L$-Lie algebra on $\frg^{\ci,an}$. Its
enveloping algebra will be denoted by $\cO_{\Xan}\smprod U(\frg)$.
On the other hand, the map from the lemma extends to a
$\cO_\Xan$-linear morphism of $L$-Lie algebras
\begin{numequation}\label{equ-alpha-analytic}\al^{\ci,an}:
\frg^{\ci,an}\longrightarrow\cT_{\Xan}.\end{numequation}

As in the algebraic case we put $\frb^{\ci,an}:=\ker\al^{\ci,an}$
and $\frn^{\ci,an}:=[\frb^{\ci,an},\frb^{\ci,an}]$. Again, we
obtain a morphism
$\frb^{\ci,an}\rightarrow\cO_{\Xan}\otimes_L\frt$. Given
$\chi\in\frt^*$ and $\lambda:=\chi-\rho$ we denote by $\cI^{an}$
resp. $\cI^{an}_\chi$ the right ideal sheaf of $\cO_{\Xan}\smprod
U(\frg)$ generated by $\frn^{\ci,an}$ resp. $\ker\lambda^{\ci,an}$
where $\lambda^{\ci,an}$ equals the $\cO_\Xan$-linear form of
$\frb^{\ci,an}$ induced by $\lambda$. These are two-sided ideals.
We let
\[\cD^{an}_\frt:=(\cO_\Xan\smprod U(\frg))/\cI^{an}~~~~~~{\rm and}~~~~~~\cD^{an}_\chi:=(\cO_\Xan\smprod U(\frg))/\cI^{an}_\chi\]
be the quotient sheaves. We view $\cD^{an}_\chi$ as a sheaf of
twisted differential operators on $\Xan$.

\vskip8pt

All these constructions are compatible with their algebraic
counterparts via the functor $\pi^*$. For example, using the fact
that $\pi^*(\cT_X)=\cT_{\Xan}$ it follows from the above proof
that $\al^{\ci,an}=\pi^*(\al^\ci)$. Moreover, all that has been
said in sec. 5 on the relation between the
    sheaves $\cD_\frt$ and $\cD_\chi$ remains true for its
    analytifications. In particular, $\cD^{an}_\chi$ is a central
    reduction of $\cD^{an}_\frt$ via the character $\lambda:
    U(\frt)\rightarrow L$:
    \begin{numequation}\label{equ-centred}\cD^{an}_{\frt}/(\ker\lambda)\cD^{an}_{\frt}\car\cD^{an}_\chi.\end{numequation}

\end{para}

\subsection{The Berkovich embedding and analytic stalks}
Recall our chosen Borel subgroup $\bB\subset\bG$ containing $\bT$
and the quotient morphism $q:\bG\rightarrow \bG/\bB=X.$ We will
make heavy use of the following result of V. Berkovich which was
taken up and generalized in a conceptual way in
\cite{RemyThuillierWerner10}. Let $\eta\in X$ be the generic point
of $X$.

\begin{thm}(Berkovich,Remy/Thuillier/Werner)\label{thm-RTW}
There exists a $G$-equivariant injective map
\[\vartheta_\bB:\sB\longrightarrow\Xan\]
which is a homeomorphism onto its image. The latter is a locally
closed subspace of $\Xan$ contained in the preimage
$\pi^{-1}(\eta)$ of the generic point of $X$.
\end{thm}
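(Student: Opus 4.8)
The plan is to construct the map $\vartheta_{\bB}$ explicitly and then verify its properties. First I would recall Berkovich's description of $\Xan$ as a set: since $X$ is proper over $L$, a point of $\Xan$ lying over $\eta\in X$ is given by the pair consisting of $\eta$ and a real-valued valuation on the residue field $L(\eta)$ extending the absolute value on $L$, or equivalently (using the valuative criterion of properness for $X$) by a multiplicative seminorm on the relevant affine coordinate rings. The key point is that the building $\sB$ carries a natural map into such seminorms: for $z\in\sB$, the stabilizer data (or, in Berkovich's original approach, the attached affinoid subgroup / the associated norm on $\cO_{\bG}$) produces a point $\vartheta_{\bB}(z)\in\Gan$ which, after composing with the orbit map $g\mapsto g\bB$ and using the surjection $\Gan\to\Xan$, lands in $\pi^{-1}(\eta)$. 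Concretely, using $q:\bG\to X$ one pushes the ``Gauss-type'' norm attached to $z$ on the coordinate ring of a suitable big cell $\dot{w}U_1$ around, and the vertices/facets of $\sB$ correspond to the natural integral models.

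Second, I would establish $G$-equivariance. This is essentially built into the construction: the point $\vartheta_{\bB}(z)$ is defined from the stabilizer filtration $\{\Uze\}$ (or the attached norm), and by (\ref{equ-conjgroups}) one has $U^{(e)}_{gz}=gU^{(e)}_zg^{-1}$; translating the defining seminorm by $g\in G$ yields exactly the seminorm attached to $gz$. One must check that the $G$-action on $\Xan$ induced by the algebraic action of $\bG$ on $X$ restricts, on $L$-points, to the expected action on these seminorms, which is a formal consequence of functoriality of analytification.

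Third, I would prove that $\vartheta_{\bB}$ is injective and a homeomorphism onto its image, with locally closed image contained in $\pi^{-1}(\eta)$. Containment in $\pi^{-1}(\eta)$ is immediate from the construction (the underlying scheme-point is always $\eta$, because $z$ being a point of the building never ``degenerates'' to a Borel subgroup defined over a smaller field). For injectivity and the topological statements I would reduce to a single apartment $A$ and a single big cell $U_w\cong \bA^{|\Phi^-|}$: over $U_w$ the map becomes the assignment sending a point of the affine space $A$ (the apartment, identified as in the text with $(X_*(\bT)/X_*(\bC))\otimes\bbR$) to a monomial (``Gauss'') valuation on $L[\bN^{-,w}]$ with prescribed values on coordinates; this is a well-understood homeomorphism onto a ``skeleton''-type subset, and the $W$-translates of these pieces glue. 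Since $\sB$ is covered by finitely many Weyl-translates of the closure of $\sC$, and each piece embeds as a locally closed (indeed closed in the appropriate tube) subset, local closedness of the total image follows; properness of $X$ (hence of $\Xan$ over the compact $\mathcal{M}(L)$) guarantees the map is proper on each apartment, giving the homeomorphism onto image.

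The main obstacle I expect is the gluing across apartments and the verification that the image is \emph{locally closed} (not merely locally closed in each apartment): one needs to control how the ``skeleta'' coming from different apartments fit together inside $\pi^{-1}(\eta)$, which is where the Bruhat--Tits combinatorics (the fact that any two facets lie in a common apartment, and the compatibility of the integral models under the gluing maps $\sigma^{F'F}_r$ of Lemma~\ref{lem-glue}) must be invoked carefully. In practice I would simply cite \cite{BerkovichBook} for the construction and \cite{RemyThuillierWerner10} for the conceptual reformulation and the precise topological statement, since reproving it here would duplicate substantial parts of those works; the role of this theorem in the paper is only as an input.
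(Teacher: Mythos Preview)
Your proposal is essentially correct and, like the paper, treats this theorem as an input to be cited from \cite{BerkovichBook} and \cite{RemyThuillierWerner10} rather than reproved; your three-step sketch (affinoid subgroup attached to $z$, Shilov boundary point, composition with the orbit map $\bG\to X$) matches the paper's summary exactly.

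Two points of divergence worth noting. First, for $G$-equivariance you invoke the conjugation property of the filtration groups $U_z^{(e)}$ via (\ref{equ-conjgroups}), but the map $\vartheta_{\bB}$ is defined in terms of the affinoid subgroups $\bG_z\subset\bG^{an}$, not the $U_z^{(e)}$; equivariance follows from $\bG_{gz}=g\bG_z g^{-1}$, which is the analogous statement one level up. Second, and more substantively, your argument for containment in $\pi^{-1}(\eta)$ (``the underlying scheme-point is always $\eta$, because $z$ never degenerates to a Borel defined over a smaller field'') is only a heuristic. The paper instead isolates this as a separate lemma immediately following the theorem: since $\vartheta(z)$ is the Shilov boundary point of $\bG_z$, it corresponds to a \emph{norm} on the affinoid algebra, so $\cO_{\bG_z,\vartheta(z)}$ is a field; the injective local map coming from the orbit morphism forces $\cO_{\Xan,\vartheta_{\bB}(z)}$ to be a field, and then faithful flatness of $\pi$ on local rings forces $\cO_{X,\pi(\vartheta_{\bB}(z))}$ to be a field, i.e.\ $\pi(\vartheta_{\bB}(z))=\eta$. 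This is a clean argument you could adopt in place of the heuristic.
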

\begin{proof}
This is \cite[5.5.1]{BerkovichBook}. We sketch the
construction in the language of \cite{RemyThuillierWerner10}. The
map is constructed in three steps. First one attaches to any point
$z\in\sB$ an $L$-affinoid subgroup $\bG_z$ of $\bG^{an}$ whose
rational points coincide with the stabilizer of $z$ in $G$. In a
second step one attaches to $\bG_z$ the unique point in its Shilov
boundary (the {\it sup-norm} on $\bG_z$) which defines a map
$\vartheta:\sB\rightarrow\bG^{an}$. In a final step one composes
this map with the analytification of the orbit map $\bG\rightarrow
X, g\mapsto g.\bB$. The last assertion follows from the next lemma.
\end{proof}
\begin{lemma}\label{lem-stalksarefields} Let $z\in\sB$. The local
rings $\cO_{\Xan,\vartheta_{\bB}(z)}$ and
$\cO_{X,\pi(\vartheta_{\bB}(z))}$ are fields. In particular,
$\pi(\vartheta_{\bB}(z)))=\eta$, the generic point of $X$.
\end{lemma}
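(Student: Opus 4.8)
The plan is to reduce the whole statement to the single assertion $\pi(\vartheta_\bB(z))=\eta$. Granting it, $\cO_{X,\pi(\vartheta_\bB(z))}=\cO_{X,\eta}$ is the function field $L(X)$, hence a field; and $\cO_{\Xan,\vartheta_\bB(z)}$ is a field because $X$ is smooth and $\vartheta_\bB(z)$ lies over the generic point of $X$, so that by the structure theory of Berkovich analytifications the local ring of $\Xan$ at such a point is a field (equivalently: it is a regular local ring whose Krull dimension $\dim X-d(\vartheta_\bB(z))$ vanishes, the ``degree of the point'' being maximal precisely because the point sits over the generic point). Conversely, ``$\cO_{X,\pi(\vartheta_\bB(z))}$ is a field'' alone already forces $\pi(\vartheta_\bB(z))$ to be a generic point of the reduced scheme $X$, and $X$ being irreducible this is $\eta$; so either way everything hinges on identifying $\pi(\vartheta_\bB(z))$.

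To do that, I would use the construction of the embedding recalled in the proof of Theorem~\ref{thm-RTW}: one has $\vartheta_\bB(z)=q^{an}(\vartheta(z))$, where $q\colon\bG\to X=\bG/\bB$, $g\mapsto g\bB$, is the orbit morphism, $q^{an}\colon\Gan\to\Xan$ its analytification, and $\vartheta(z)\in\Gan$ is the unique point of the Shilov boundary of the $L$-affinoid subgroup $\bG_z\subseteq\Gan$, i.e. the point given by the spectral (sup-)seminorm on $\cO(\bG_z)$. First I would check that the support of $\vartheta(z)$ in the affine scheme $\bG=\Spec\cO(\bG)$ is the generic point $\eta_\bG$: the kernel of the sup-seminorm on $\cO(\bG_z)$ is the nilradical, which vanishes since $\bG_z$ has good — in particular reduced — reduction by R\'emy--Thuillier--Werner; and $\cO(\bG)\to\cO(\bG_z)$ is injective because $\bG_z(L)=\mathrm{Stab}_G(z)$ is a compact open, hence Zariski-dense, subgroup of $\bG(L)$. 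Thus the seminorm that $\vartheta(z)$ induces on $\cO(\bG)$ has trivial kernel, i.e. $\pi_\bG(\vartheta(z))=\eta_\bG$. Finally, by functoriality of analytification $\pi_X\circ q^{an}=q\circ\pi_\bG$, so $\pi(\vartheta_\bB(z))=q(\pi_\bG(\vartheta(z)))=q(\eta_\bG)$, and since $q$ is surjective, hence dominant, this equals the generic point $\eta$ of $X$.

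The delicate points are not the bookkeeping above — which only uses Zariski-density of compact open subgroups of $\bG(L)$, functoriality of $(\cdot)^{an}$, and the elementary fact that a reduced zero-dimensional local ring is a field — but the two imported inputs. The first is that $\bG_z$ has reduced reduction: without it the Shilov point of $\bG_z$ could have nonzero support, and this reducedness is precisely (part of) the good-reduction statement of R\'emy--Thuillier--Werner underlying Theorem~\ref{thm-RTW}. The second is the fact, drawn from Berkovich's comparison of schemes and analytic spaces, that the local ring of $\Xan$ at a point lying over the generic point of the smooth variety $X$ is a field; I expect this to be the main thing to locate a precise reference for (it is in the circle of results around Theorem~\ref{thm-RTW}), rather than something one would reprove here, since the naive ``separate the point from the vanishing locus by shrinking'' argument fails — a lower-dimensional affinoid can map Zariski-densely into $X$.
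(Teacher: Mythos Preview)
Your argument is correct, but the paper's proof runs in the opposite direction and is shorter. The paper does not first identify $\pi(\vartheta_\bB(z))$; instead it observes that the Shilov point $\vartheta(z)\in\bG_z$ is a multiplicative \emph{norm} on the (reduced, irreducible) affinoid algebra $\cO(\bG_z)$, so the analytic local ring $\cO_{\bG_z,\vartheta(z)}=\cO_{\bG^{an},\vartheta(z)}$ is a field. It then pushes this down: the orbit map $\bG^{an}\to\Xan$ gives an injective \emph{local} homomorphism $\cO_{\Xan,\vartheta_\bB(z)}\hookrightarrow\cO_{\bG^{an},\vartheta(z)}$, and an injective local homomorphism into a field forces the source to be a field; then the faithfully flat map $\cO_{X,\pi(\vartheta_\bB(z))}\hookrightarrow\cO_{\Xan,\vartheta_\bB(z)}$ does the same for the algebraic local ring, whence $\pi(\vartheta_\bB(z))=\eta$ a posteriori.

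What this buys: the paper never needs the Zariski-density argument for $\cO(\bG)\hookrightarrow\cO(\bG_z)$, and---more to the point---it entirely sidesteps the step you yourself flagged as delicate, namely the Berkovich structure-theoretic claim that $\cO_{\Xan,x}$ is a field whenever $\pi(x)=\eta$. In the paper's order of argument that claim becomes a \emph{consequence}, via the elementary lemma ``injective local map into a field $\Rightarrow$ source is a field'', rather than an input requiring a reference. Your route is fine and perhaps more geometric (it pins down the scheme-theoretic image first), but it trades a one-line local-ring trick for two external facts.
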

\begin{proof}
The surjective orbit map $\bG^{an}\rightarrow\Xan$ restricted to
the affinoid subdomain $\bG_z\subset\bG^{an}$ induces an injective
local homomorphism on local rings. However, the point $z\in\bG_z$
corresponds to a norm on the affinoid algebra of $\bG_z$ whence
the local ring $\cO_{\bG_z,\vartheta(z)}$ is a field. Hence,
$\cO_{\Xan,\vartheta_{\bB}(z)}$ is a field. Since $\pi$ induces a
faithfully flat (and hence injective) local homomorphism on local
rings, $\cO_{X,\pi(\vartheta_{\bB}(z))}$ has to be a field, too.

\end{proof}
Since $\Xan$ is compact the closure of the image of
$\vartheta_\bB$ in $\Xan$ is a compactification of $\sB$
(loc.cit., Remark 3.31). It is called the {\it Berkovich
compactification} of $\sB$ of type $\emptyset$ (loc.cit., Def.
3.30).
%It is an
%irreducible smooth
%projective scheme of finite type over $L$ endowed with a natural
We will in the following often identify $\sB$ with its image under
$\vartheta_\bB$ and hence, view $\sB$ as a locally closed subspace
of $\Xan$.

\begin{para}
By \cite[1.5]{BerkovichEtale} the space $\Xan$ is a {\it good}
analytic space (in the sense of loc.cit., Rem. 1.2.16) which means
any point of $\Xan$ lies in the topological interior of an
affinoid domain. In particular, given $x\in\Xan$ the stalk
$\cO_{\Xan,x}$ may be written as
\[\cO_{\Xan,x}=\varinjlim_{x\in V} \cA_V\]
where the inductive limit ranges over the affinoid neighbourhoods
$V$ of $x$ and where $\cA_V$ denotes the associated affinoid
algebra. As usual a subset of neighbourhoods of $x$ will be called
{\it cofinal} if it is cofinal in the directed partially ordered
set of all neighbourhoods of $x$.
%Since $X$ is smooth over $L$ the analytic space $\Xan$ is smooth
%over $L$ (loc.cit., Prop. 3.4.6). we may pass to a cofinal system
%of neighbourhoods $V$ such that for $U \subset V$ the transition
%map $\cA_V\rightarrow\cA_U$ is injective and a compact map between
%$L$-Banach spaces.
If $V$ is an affinoid neighbourhood of $x$, the corresponding
affinoid algebra $\cA_V$ has its Banach topology. We endow
$\cO_{\Xan,x}$ with the locally convex final topology (\cite[\S5.E]{NFA}) arising from the above inductive limit. This topology makes
$\cO_{\Xan,x}$ a topological $L$-algebra. We need another, rather
technical, property of this topology.\end{para}
\begin{lemma}\label{lem-neighbour}
Let $x\in\Xan$. There is a sequence $V_1 \supset V_2 \supset V_3
...$ of irreducible reduced strictly affinoid neighbourhoods of
$x$ which is cofinal and has the property: the homomorphism of
affinoid algebras $\cA_{V_{i}}\ra\cA_{V_{i+1}}$ associated with
the inclusion $V_{i+1}\subset V_{i}$ is flat and an injective
compact linear map between Banach spaces. In partiuclar, the stalk
$\cO_{\Xan,x}$ is a vector space of compact type.
\end{lemma}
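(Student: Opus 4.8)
The plan is to construct the cofinal sequence $V_1 \supset V_2 \supset \cdots$ by a local-model argument, exploiting the fact that $\Xan$ is smooth over $L$ and that, by Lemma \ref{lem-stalksarefields}, the stalk $\cO_{\Xan,x}$ is a field (so $x$ maps to the generic point of $X$ when $x$ lies on $\sB$; but here $x$ is an arbitrary point of $\Xan$, and we only need smoothness). First I would pass to a connected affinoid neighbourhood $W$ of $x$ on which there is an étale morphism $W \to \mathbb{B}^n$ to a closed polydisc, coming from the smoothness of $\Xan$; this is where \cite[1.5]{BerkovichEtale} and the étale-local structure theory enter. Pulling back a cofinal system of polydiscs (or Laurent domains) shrinking to the image of $x$, and taking connected components containing $x$, produces a countable descending basis of irreducible strictly affinoid neighbourhoods $V_i$ of $x$. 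Reducedness can be arranged since $\Xan$ is smooth, hence reduced, and affinoid subdomains of a reduced space are reduced.

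The key point is then to check the two properties of the transition maps $\cA_{V_i} \to \cA_{V_{i+1}}$: flatness and injective compactness. For \emph{flatness}, the inclusion $V_{i+1} \subset V_i$ of affinoid domains always induces a flat map on the associated affinoid algebras — this is the standard fact that rational (more generally, affinoid) subdomain embeddings are flat, valid in the Berkovich setting just as in the rigid one. For \emph{compactness and injectivity}, I would arrange that each $V_{i+1}$ is relatively compact in $V_i$ (i.e. $V_{i+1} \Subset V_i$ over $L$), which is possible because the $V_i$ form a neighbourhood basis of the single point $x$ and one can always interpolate a slightly smaller affinoid domain whose closure in $V_i$ is contained in the interior; for such an inclusion the restriction map on functions is a compact operator between the Banach algebras — this is the $p$-adic analogue of Montel's theorem and is exactly the kind of statement recorded in \cite{NFA} for the relevant function spaces. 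Injectivity of $\cA_{V_i} \to \cA_{V_{i+1}}$ follows because $V_i$ is irreducible (connected and reduced, with $\cA_{V_i}$ a domain) and $V_{i+1}$ is a nonempty affinoid subdomain, so it is Zariski-dense; alternatively it follows from flatness together with the fact that $\cA_{V_{i+1}} \neq 0$ and $\cA_{V_i}$ is a domain.

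Finally, once such a cofinal sequence is in hand, the statement that $\cO_{\Xan,x} = \varinjlim_i \cA_{V_i}$ is a vector space of compact type is immediate from the definition recalled in Section 2.4: it is a locally convex inductive limit over a countable system of Banach spaces with injective compact transition maps, which is precisely the definition of compact type. I expect the main obstacle to be the careful bookkeeping in the first step — producing the étale chart and verifying that one can choose the pullback neighbourhoods to be simultaneously irreducible, reduced, strictly affinoid, \emph{and} nested with relatively compact inclusions — rather than any single deep input; all the analytic ingredients (flatness of domain embeddings, compactness of restriction for relatively compact domains, the good/smooth structure of $\Xan$) are standard and already cited in the text.
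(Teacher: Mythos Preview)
Your outline is essentially sound and hits the same three checkpoints as the paper (flatness from the subdomain property, injectivity from irreducibility, compactness from a relative-interior condition), but you take a more elaborate route to the neighbourhood basis and you misattribute the compactness input.

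The paper does \emph{not} use an \'etale chart to a polydisc. Instead it observes that $\Xan$ is closed and strictly $L$-analytic, fixes any strictly affinoid neighbourhood $V$ with $x\in{\rm Int}(V)$, and then invokes \cite[Prop.~3.2.9]{BerkovichBook}: because the residue field of $L$ is finite, $x$ already admits a \emph{countable} basis $\{W_n\}$ of strictly affinoid (even Laurent) subdomains of $V$. Smoothness gives that $\cO_{\Xan,x}$ is a regular local ring, hence a domain, so the $W_n$ may be taken reduced and irreducible. One then selects a subsequence inductively so that $V_{i+1}\subseteq{\rm Int}(V_i)$, using that ${\rm Int}(V_i)$ is open and contains $x$. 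Your \'etale-chart construction should also produce such a basis, but it is heavier: you must argue that connected components of preimages under the \'etale map are strictly affinoid and eventually separate $x$ from the other fibre points, and that the resulting sets are cofinal. None of this is needed once one has Prop.~3.2.9.

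More importantly, the compactness of $\cA_{V_i}\to\cA_{V_{i+1}}$ is not in \cite{NFA}. The paper's mechanism is: $V_{i+1}\subseteq{\rm Int}(V_i)$ means, by \cite[Prop.~2.5.9]{BerkovichBook}, that the map $\cA_{V_i}\to\cA_{V_{i+1}}$ is \emph{inner} with respect to $L$ in the sense of \cite[Def.~2.5.1]{BerkovichBook}; then \cite[Prop.~2.1.16]{EmertonA} converts ``inner'' into ``compact linear map''. Your phrase ``$p$-adic analogue of Montel's theorem'' is morally right, but the precise statement for general affinoid algebras (not just Tate algebras of shrinking radii) goes through this inner/relative-interior formalism, and the reference is Emerton, not Schneider's functional-analysis book. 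Flatness (\cite[Prop.~2.2.4(ii)]{BerkovichBook}) and injectivity (irreducibility plus $V_{i+1}$ containing a nonempty open) are exactly as you say.
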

\begin{proof}
Being an analytification the analytic space $\Xan$ is closed (in
the sense of \cite[p. 49]{BerkovichBook}). Since the valuation on
$L$ is nontrivial it is therefore strictly $k$-analytic (loc.cit.,
Prop. 3.1.2) . Let $V$ be a strictly affinoid neighbourhood of $x$
in $\Xan$ so that $x$ lies in the topological interior of $V$. In
the following we will use basic results on the relative interior
${\rm Int}(Y/Z)$ of an analytic morphism $Y\ra Z$ (loc.cit.,
2.5/3.1). As usual we write ${\rm Int}(Y)$ in case of the
structure morphism $Y\ra\sM(L)$. Since $\Xan$ is closed we have by
definition ${\rm Int}(\Xan)=\Xan$. Moreover, loc.cit., Prop. 3.1.3
(ii) implies ${\rm Int}(V)={\rm Int}(V/\Xan).$ By part (i) of the
same proposition the topological interior of $V$ is equal to ${\rm
Int}(V/\Xan)$ and, thus, $x\in {\rm Int}(V)$. Now the residue
field of $L$ being finite there is a countable basis
$\{W_n\}_{n\in\bbN}$ of neighbourhoods of $x$ consisting of
strictly affinoid subdomains (even Laurent domains) of $V$
(\cite[Prop. 3.2.9]{BerkovichBook}). By smoothness of $\Xan$ the
local ring $\cO_{\Xan,x}$ is noetherian regular and hence an
integral domain. We may therefore assume that all $W_n$ are
reduced and irreducible (loc.cit., last sentence of 2.3). Consider
$V_1:=W_{n_1}$ for some $n_1\in\bbN$. As we have just seen $x\in
{\rm Int}(V_1)$. Since ${\rm Int(V_1)}$ is an open neighbourhood
of $x$ there is $n_2>n_1$ such that $W_{n_2}\subseteq {\rm
Int}(V_1)$. We put $V_2:=W_{n_2}$ and repeat the above argument
with $V_1$ replaced by $V_2$. In this way we find a cofinal
sequence $V_1 \supset V_2 \supset V_3 ...$ of strictly irreducible
reduced affinoid neighbourhoods of $x$ with the property ${\rm
Int}(V_i)\supseteq V_{i+1}$ for all $i\geq 1$. According to
loc.cit., Prop. 2.5.9 the bounded homomorphism of $L$-Banach
algebras $\cA_{V_{i}}\ra\cA_{V_{i+1}}$ associated with the
inclusion $V_{i+1}\subset V_{i}$ is inner with respect to $L$ (in
the sense of loc.cit., Def. 2.5.1). The arguments in
\cite[Prop. 2.1.16]{EmertonA} now show that
$\cA_{V_{i}}\ra\cA_{V_{i+1}}$ is a compact linear map between
Banach spaces. Finally, this latter map is injective because $V_i$
is irreducible and $V_{i+1}$ contains a nonempty open subset of
$V_i$. It is also flat since, by construction, $V_{i+1}$ is an
affinoid subdomain of $V_i$ (\cite[Prop. 2.2.4
(ii)]{BerkovichBook}).
\end{proof}

For the rest of this section we will {\bf assume} $L=\bbQ_p.$ Let
$z\in\sB\subset\Xan$ be a point. For any $e\geq 0$ the group
$\Uze\subseteq P_{\{z\}}$ fixes the point $z$. Suppose $V$ is an
affinoid neighbourhood of $z$ which is $\Uze$-stable. Since the
$\Uze$-action on $\Xan$ comes from the algebraic action of $\bG$
on the variety $X$ the induced action
\[ \Uze\lra {\rm GL}(\cA_V)\] is a locally analytic
$\Uze$-representation. By section \ref{sec-skew} the completed
skew group ring $\cA_V\cotimes_L D(\Uze,K)$ exists. We shall need
a more refined version of this fact.
\begin{lemma}\label{lem-stableneighbour}
There exists a number $e_0\geq 0$ with the following property. For
any point $z\in\sB$, viewed as a point in $\Xan$, the
distinguished sequence of affinoid neighbourhoods $\{V_n\}_n$ of
$z$ described in the preceding lemma may be chosen in such a way
that each $V_n$ is $U_z^{(e_0)}$-stable.
\end{lemma}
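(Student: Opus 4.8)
The plan is to combine the explicit description of the neighbourhoods $\{W_n\}$ produced in the proof of Lemma \ref{lem-neighbour} with a uniform openness/compactness argument for the action of the groups $\Uze$ on $\Xan$. First I would recall that, by Theorem \ref{thm-RTW}, a point $z\in\sB$ sits inside $\Xan$ with $\pi(z)=\eta$, and its stabilizer in $G$ is the group of rational points of an affinoid subgroup $\bG_z\subset\bG^{an}$; in particular $\Uze$ fixes $z$ for every $e\ge 0$, and the action $\Uze\lra{\rm GL}(\cO_{\Xan,z})$ is continuous for the locally convex topology introduced before Lemma \ref{lem-neighbour}. The key point to isolate is that the $G$-action on $\Xan$ is \emph{continuous} as a map $G\times\Xan\to\Xan$ (this follows from the fact that it is the analytification of the algebraic action of $\bG$ on $X$, applied to the $L$-points), so for each affinoid neighbourhood $V$ of $z$ and each $g$ fixing $z$ there is a compact open subgroup of $G$ carrying $V$ into $V$; the real work is to make the relevant compact open subgroup \emph{independent of $z$}.

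The main obstacle, and the reason a single level $e_0$ can be chosen uniformly, is the following: although $z$ ranges over the whole (non-compact) building $\sB$, every point of $\sB$ lies in a facet which is $G$-conjugate to a facet in the fixed chamber $\sC\subset A$, and by (\ref{equ-conjgroups}) we have $U^{(e)}_{gz}=gU^{(e)}_z g^{-1}$. So it suffices to produce, for each of the finitely many facets $F\subset\overline{\sC}$ and each $z\in F$, a level $e(F)$ and a cofinal system of $U^{(e(F))}_F$-stable affinoid neighbourhoods, and then set $e_0:=\max_{F\subset\overline{\sC}} e(F)$; conjugating by $g\in G$ transports such a system at $z\in F$ to one at $gz$, stable under $gU^{(e_0)}_Fg^{-1}\supseteq U^{(e_0)}_{gz}$ (using part (iii) of Proposition \ref{prop-rootspace} to pass between facets of a chamber and general $e\ge e_0$). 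Thus I reduce to finitely many points' worth of local analysis, at the cost of observing that the construction is $G$-equivariant — this equivariance is exactly what is delicate, because the neighbourhoods $\{W_n\}$ of Lemma \ref{lem-neighbour} were not constructed equivariantly and one must re-choose them.

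Carrying this out, for a fixed facet $F$ and $z\in F$ I would argue as follows. Let $V$ be any strictly affinoid neighbourhood of $z$ with $z\in{\rm Int}(V)$, as in Lemma \ref{lem-neighbour}. Because $\Uze$ is compact and acts continuously on $\Xan$ fixing $z$, the set $\bigcap_{g\in\Uze}gV$ still contains $z$ in its interior for $e$ large enough; more precisely, since $\UFe$ for $e\ge 0$ form a fundamental system of neighbourhoods of $1$ in $G$ (Proposition \ref{prop-rootspace}(ii)) and $G\times\Xan\to\Xan$ is continuous, there is $e(F)$ such that $U^{(e(F))}_F\cdot W\subseteq V$ for some neighbourhood $W\ni z$; then $V':=\bigcap_{g\in U^{(e(F))}_F} gV$ is a $U^{(e(F))}_F$-stable neighbourhood of $z$ (it is a finite intersection after passing to an open subgroup acting through a finite quotient on the finitely many Laurent data defining $V$, hence again an affinoid domain). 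Now I re-run the cofinal construction of Lemma \ref{lem-neighbour} \emph{inside} $V'$: the countable basis of Laurent-domain neighbourhoods $\{W_n\}$ of $z$ in $V'$ can be replaced by $\{W'_n\}$ with $W'_n:=\bigcap_{g\in U^{(e(F))}_F} gW_n$, each of which is $U^{(e(F))}_F$-stable, strictly affinoid, and still a neighbourhood of $z$ for $n$ large (by the same continuity/compactness argument); one then extracts the nested cofinal subsequence $V_1\supset V_2\supset\cdots$ with ${\rm Int}(V_i)\supseteq V_{i+1}$ exactly as in loc. cit. Finally, setting $e_0:=\max_{F\subset\overline{\sC}}e(F)$ and transporting by $G$-conjugation via (\ref{equ-conjgroups}) gives the claim for all $z\in\sB$. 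The one technical nuisance to check carefully is that intersecting an affinoid (Laurent) domain with its finitely many translates under a group acting through a finite quotient again yields an affinoid subdomain which is a neighbourhood of the fixed point $z$ — this is where I would spend the most care, using that the group acts by automorphisms of $\Xan$ and that finite intersections of affinoid subdomains are affinoid subdomains.
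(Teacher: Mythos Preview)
Your overall strategy---reducing to the finitely many facets $F\subseteq\overline{\sC}$ via conjugation and (\ref{equ-conjgroups}), and then producing $\UFe$-stable cofinal affinoid neighbourhoods for a single level $e(F)$---is exactly the paper's. The gap is in the local step. You assert that $V':=\bigcap_{g\in U^{(e(F))}_F} gV$ is ``a finite intersection after passing to an open subgroup acting through a finite quotient on the finitely many Laurent data defining $V$''. This presupposes that some open subgroup $H\le U^{(e(F))}_F$ satisfies $hV=V$ for all $h\in H$, which is precisely the point at issue; mere continuity of $G\times\Xan\to\Xan$ gives $U^{(e(F))}_F\cdot W\subseteq V$ for some neighbourhood $W$ of $z$, but it does \emph{not} give $gV=V$ for $g$ near $1$. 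Without such an $H$ the intersection is genuinely over a compact infinite group and you have no reason to believe it is an affinoid domain (or even has $z$ in its interior in any useful sense). A related symptom is that your $e(F)$ is produced from a particular $V$ via continuity, so it depends on $V$, whereas one needs a level that works for an entire cofinal system.

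The paper supplies the missing mechanism via formal models. After shrinking $V$, one realizes it as $sp^{-1}(\frY)$ for an open affine $\frY$ in the special fibre of an admissible formal blow-up $\frX$ of the formal completion of $\frG/\frB$, where $\frG$ is the smooth $o_L$-group scheme attached to the special vertex $x_0$. The algebraic $\frG$-action then shows that for $n$ sufficiently large the congruence subgroup $\frG(\varpi^n)$ acts on $\frX$ fixing $\frX_s$ pointwise, hence stabilizes $V=sp^{-1}(\frY)$. One now chooses $e(F)$ so that $U^{(e(F))}_z\subseteq P_{\{x_0\}}=\frG(o_L)$---a condition independent of $V$---and intersects over coset representatives of the \emph{finite} group $\Uze/(\Uze\cap\frG(\varpi^n))$; since $\Xan$ is separated, this finite intersection is an affinoid $\Uze$-stable neighbourhood of $z$ inside $V$. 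This is exactly the ``open subgroup stabilizes $V$'' input that your continuity argument cannot provide on its own.
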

\begin{proof}
Recall our fundamental chamber $\sC$ in the Coxeter complex $A$.
Let $F$ be a facet contained in $\overline{\sC}$ and $z\in F$ a
point. It suffices to establish the following assertion: there
exists a number $e(F)\geq 0$ and a sequence of affinoid
neighbourhoods $\{V_n\}_n$ of $z$ as in the preceding lemma such
that each $V_n$ is $U_z^{(e)}$-stable. Indeed, since any facet in
$\sB$ is a $G$-conjugate of some $F\subseteq\overline{\sC}$ and
since we have (\ref{equ-conjgroups}) we may take
$e_0:=\max_{F\subseteq\overline{\sC}} e(F)$. So let us prove the
assertion. By construction of the sequence $\{V_n\}_n$ in the
preceding lemma it suffices to show that there is $e(F)\geq 0$ and
a cofinal system of affinoid neighbourhoods of $z$ which are
$\Uze$-stable for all $e\geq e(F)$. To do this let $V$ be an
arbitrary affinoid neighbourhood of $z$. Since $X$ is a compact
strictly $L$-analytic space there is formal scheme $\frX$ locally
finitely presented over $o_L$ with generic fibre $X$ and special
fibre $\frX_s$ and an open affine subscheme $\frY\subset\frX_s$
such that $V=sp^{-1}(\frY)$. %(e.g. \cite{TemkinCoursenotes}, 5.3)
Here $sp: X\rightarrow\frX_s$ is the specialization map associated
to $\frX$. Consider the connected smooth $o_L$-group scheme $\frG$
associated to the (hyper-)special point $x_0$ (cf. \ref{ConcreteDescription})
and a Borel subgroup scheme $\frB\subset\frG$ with generic fibre $\bB$.
Making $V$ smaller if necessary, we may assume that $\frX$ is a
formal admissible blow-up of the formal completion $\frX_0$ of the
$o_L$-scheme $\frG/\frB$ along its special fibre. It is now easy
to see that the algebraic $\frG$-action on $\frG/\frB$ induces,
for $n$ sufficiently large, an action of $\frG(\varpi^n)$ on the
formal scheme $\frX$ stabilizing $\frX_s$ pointwise. The map $sp$
is equivariant with respect to this action whence $\frG(\varpi^n)$
stabilizes $V$. We now let $e(F)$ be any number such that
$U_z^{(e(F))}\subseteq P_{\{x_0\}}$ (Prop. \ref{prop-rootspace} (ii)). Let $t_1=1,...,t_m$ be a system of representatives in
$\Uze$ for the finite group $\Uze/(\Uze\cap\frG(\varpi^n))$. Since
$\Xan$ is separated $W:=\cap_i t_iV$ is an affinoid $\Uze$-stable
neighbourhood of $z$ contained in $V$.
\end{proof}

\subsection{A structure sheaf on the building}\label{subsec-structuresheaves}

\begin{para}
To be able to compare the localization of Schneider-Stuhler and
Beilinson-Bernstein we equip the topological space $\sB$ with a
sheaf of commutative and topological $L$-algebras. Recall that
$\cO_{\Xan}$ is naturally a sheaf of locally convex algebras. We
consider the exact functor $\sF$ from abelian sheaves on $\Xan$ to
abelian sheaves on $\sB$ given by restriction along
$\vartheta_\bB:\sB\hookrightarrow\Xan$. Let
$$\cO_{\sB}:=\sF(\cO_{\Xan}).$$ Given an open set
$\Omega\subseteq\sB$ we have the canonical map $\varinjlim_{U}
\cO_{\Xan}(U)\ra \cO_\sB(\Omega)$ where $U$ runs through the open
subsets of $\Xan$ containing $\Omega$. The source has the locally
convex inductive limit topology and the target receives the final
locally convex topology. We point out that the stalk
$\cO_{\sB,z}=\cO_{\Xan,z}$ for any point $z\in\sB$ is in fact a field (Lem. \ref{lem-stalksarefields}).

\vskip8pt

We have the following properties of $\sF$:

\begin{itemize}
    \item[(1)] $\sF$ preserves (commutative) rings, $L$-algebras, $L$-Lie
    algebras and $G$-equivariance,\vskip8pt
    \item[(2)] $\sF$ maps $\cO_{\Xan}$-modules into
    $\cO_\sB$-modules,\vskip8pt
    \item[(3)] $\sF$ induces a Lie algebra homomorphism ${\rm
    Der}_L(\cO_{\Xan})\ra {\rm Der}_L(\cO_\sB)$, \vskip8pt
    \item[(4)] $\cO_{\sB}$ is a sheaf of {\it locally convex}
    $L$-algebras and the stalk $\cO_{\sB,z}$ is of compact type with a defining system of $U_z^{(e_0)}$-stable
    Banach algebras for all $z\in\sB$.

\end{itemize}

\vskip8pt

Composing the map $\frg\ra {\rm Der}_L(\cO_{\Xan})$ from (Lem. \ref{lem-analytification}) with (4) yields a Lie algebra
homomorphism $\frg\ra {\rm Der}_L(\cO_\sB)$ and the associated
skew enveloping algebra $\cO_\sB\smprod U(\frg).$ By (1),(2) we
have the $L$-Lie algebras and $\cO_\sB$-modules
$\frn_\sB^{\ci,an}:= \sF(\frn^{\ci,an})$ and
$\frb_\sB^{\ci,an}:=\sF(\frb^{\ci,an})$. Similarly,
$$\lambda_\sB^{\ci,an}:=\sF(\lambda^{\ci,an}): \frb_\sB^{\ci,an}\lra
\cO_\sB$$ is a morphism of $L$-Lie algebras and $\cO_\sB$-modules.
Let $\cI^{an}_{\sB,\frt}$ resp. $\cI^{an}_{\sB,\chi}$ be the right
ideal sheaf of $\cO_{\sB}\smprod U(\frg)$ generated by
$\frn_\sB^{\ci,an}$ resp. $\ker (\lambda_\sB^{\ci,an})$. One
checks that these are two-sided ideals. We let
\[\begin{array}{lcr}
 \cD^{an}_{\sB,\frt}:=(\cO_\sB\smprod U(\frg))/\cI^{an}_{\sB,\frt}  & {\rm and} &\cD^{an}_{\sB,\chi}:=(\cO_\sB\smprod
U(\frg))/\cI^{an}_{\sB,\chi}.  \\
\end{array}\]

Note that by exactness of $\sF$ we have \[\begin{array}{lcr}
 \cD^{an}_{\sB,\frt}=\sF(\cD^{an}_{\frt}) & {\rm and} &\cD^{an}_{\sB,\chi}=\sF(\cD^{an}_{\chi}).  \\
\end{array}\]

\end{para}

\begin{para}

The sheaf $\cD^{an}_{\sB,\chi}$ of twisted differential operators
on $\sB$ is formed with respect to the Lie algebra action of
$\frg$ on the ambient space $\sB\subset\Xan$. In an attempt to
keep track of the whole analytic $G$-action on $\Xan$ we will
produce in the following a natural injective morphism of sheaves
of algebras
\[\cD^{an}_{\sB,\chi}\lra \sD_{r,\chi}\] with target a sheaf of what we tentatively call {\it twisted
distribution operators} on $\sB$. Actually, there will be one such
sheaf for each `radius' $r\in [r_0,1)$ in $p^{\bbQ}$ and each
sufficiently large 'level' $e>0$. Again following \cite{SchSt97}
we suppress the dependence on the level in our the notation.

%\vskip8pt

%Remark: Let $U$ be an abstract group and $R$ a ring equipped with
%an action of $U$. In noncommutative algebra and by analogy with
%differential operators the skew group ring $R\smprod U$ is
%sometimes called the {\it ring of formal automorphic operators}
%(cf. \cite{MCR}, 1.5.6). To be completely clear, the notion
%'automorphic' refers here to a certain behaviour with respect to
%extensions of prime ideals and primitive ideals (loc.cit.,
%10.1.3). Now let $z\in\sB$ be a point. The group $\Uze$ acts
%naturally on $\sta$ and this action is locally analytic.
%Consequently, we have the completed skew group ring $\sta\smprod
%D_r(\Uze,K)$ at our disposal (cf. \ref{sec-skew}). The stalk of
%our extension $\sD_{r,\chi}$ at the point $z\in\sB$ will be a
%certain $\chi$-twist of this completed skew group ring. We hope
%this explanation will justify our choice of name.

\subsection{Mahler series and completed skew group rings}

\begin{para}

Suppose for a moment that $\cA$ is an arbitrary $L$-Banach
algebra. Since $\bbQ_p\subset\cA$ the completely valued
$\bbZ_p$-module $(\cA,|.|)$ is saturated in the sense of
\cite[I.2.2.10]{Lazard65}. Consequently, we have the theory of
Mahler expansions over $\cA$ at our disposal (loc.cit., III.1.2.4
and III.1.3.9). In this situation we prove a version of the
well-known relation between decay of Mahler coefficients and
overconvergence.
\end{para}
\begin{prop}\label{prop-overconvergence}
Let $f=\sum_{\al\in\bbN_0^d} a_\alpha x^\alpha$ be a $d$-variable
power series over $\cA$ converging on the disc $|x_i|\leq R$ for
some $R>1$. Let $c>0$ be a constant such that $|a_\al|\leq
cR^{-|\alpha|}$ for all $\alpha$. Let
\[f(\cdot)=\sum_{\al\in\bbN_0^d}c_{\al} {{\cdot}\choose {\al}},\]
$c_{\al}\in\cA$, be the Mahler series expansion of $f$. Then
$|c_\alpha|\leq c s^{|\alpha|}$ for all $\alpha$ where
$s=r_1R^{-1}$ with $r_1=p^{\frac{-1}{p-1}}$.
\end{prop}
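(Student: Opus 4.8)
The plan is to reduce everything to the one-variable case $d=1$ and then carry out a direct estimate on the Mahler coefficients using the explicit combinatorial identity relating powers $x^n$ to binomial polynomials $\binom{x}{k}$. First I would observe that the multi-variable statement follows from the one-variable statement by a tensor/iteration argument: writing $f = \sum_\alpha a_\alpha x^\alpha$ with $|a_\alpha| \le c R^{-|\alpha|}$, one can either apply the one-variable estimate successively in each variable $x_1,\dots,x_d$ (keeping track that at each stage the ``constant'' picks up a factor that is already absorbed into $c s^{|\alpha'|}$ for the remaining variables), or note that the Mahler expansion of a product in separate variables is the product of the Mahler expansions, so $c_\alpha = \prod_i c^{(i)}_{\alpha_i}$ with each $c^{(i)}_{\alpha_i}$ estimated by the $d=1$ case. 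So the real content is $d=1$.

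For $d=1$: recall the Stirling-type expansion $x^n = \sum_{k=0}^{n} S(n,k)\, k!\, \binom{x}{k}$ where $S(n,k)$ are the Stirling numbers of the second kind, which are nonnegative integers, hence have $|k!\,S(n,k)| \le 1$ in $\cA$. Substituting into $f = \sum_n a_n x^n$ and interchanging the order of summation (legitimate because $|a_n| \le cR^{-n} \to 0$ and the coefficients $k!\,S(n,k)$ are bounded by $1$), one gets $c_k = \sum_{n \ge k} a_n\, k!\, S(n,k)$. Therefore
\[
|c_k| \le \sup_{n \ge k} |a_n|\,|k!\,S(n,k)| \le \sup_{n\ge k} c R^{-n} = c R^{-k}\,.
\]
This already gives $|c_k| \le c R^{-k}$, which is \emph{stronger} than the claimed $|c_k| \le c s^k$ with $s = r_1 R^{-1}$ and $r_1 = p^{-1/(p-1)} < 1$, since $s < R^{-1}$. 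So in fact the estimate in the proposition is not tight; but to match the statement exactly one simply notes $R^{-k} \le r_1^{-k} s^k$... wait — that goes the wrong way. Let me reconsider: the factor $r_1$ must come in through a genuine use of the $p$-adic valuation of $k!\,S(n,k)$, not merely the trivial bound by $1$. The point is that $k!$ is divisible by a large power of $p$ when $k$ is large: $v_p(k!) = (k - s_p(k))/(p-1)$ where $s_p(k)$ is the digit sum, so $|k!| \le p^{-(k-1)/(p-1)} = r_1^{k-1}\cdot r_1^{-1}\cdot \dots$; more usefully $|k!| \le r_1^{k - \log_p(\dots)}$. Hmm — actually the clean inequality is $|k!| \le r_1^{\,k}\cdot p^{1/(p-1)} = r_1^{k-1}$, using $v_p(k!) \ge (k-1)/(p-1)$ valid for $k \ge 1$ (and trivially for $k=0$). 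Combining with $|S(n,k)| \le 1$ and the bound on $|a_n|$:
\[
|c_k| = \Bigl|\sum_{n \ge k} a_n\, k!\, S(n,k)\Bigr| \le \sup_{n\ge k}\bigl(c R^{-n}\bigr)\cdot |k!| \le c R^{-k}\cdot r_1^{\,k-1} = c\,r_1^{-1}(r_1 R^{-1})^k = c\,r_1^{-1} s^k\,.
\]

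The main obstacle — and the step I would be most careful about — is getting the constant exactly right, i.e. removing the spurious $r_1^{-1}$ so that the bound reads $|c_k| \le c s^k$ as stated, rather than $c\,r_1^{-1} s^k$. This requires the sharper divisibility $v_p(k!) \ge k/(p-1)$, which is false for small $k$ (e.g. $k=1$, $p=3$ gives $v_p(1!) = 0 < 1/2$); so the honest fix is a slightly more careful bookkeeping: for the terms with $n = k$ one uses $|a_k| \le cR^{-k}$ directly together with $|k!\,S(k,k)| = |k!| \le 1$, and for $n > k$ one has the extra decay $R^{-n} \le R^{-k-1}$ which, since $R^{-1} < r_1^{-1}\cdot$ (something)... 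In any case, the cleanest route is: write $R^{-n} = (R^{-1})^n$ and note $|k!\,S(n,k)| \le |k!| \le r_1^{k-1}$, then absorb the single lost factor by observing that the hypothesis $R>1$ forces $R^{-1} \le 1$, and the authors' $s = r_1 R^{-1}$ has enough slack; if the constant still does not come out to exactly $c$, one re-examines whether the intended reading is $|c_\alpha| \le c\,s^{|\alpha|}$ up to the harmless replacement $c \rightsquigarrow c\cdot p^{1/(p-1)}$, or tightens the Stirling-number valuation estimate $v_p(k!\,S(n,k))$ using Legendre's formula applied to the binomial-coefficient expression $S(n,k) = \frac{1}{k!}\sum_{j}(-1)^{k-j}\binom{k}{j}j^n$. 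I would flag the precise constant as the one place meriting a careful line-by-line check, and otherwise the proof is the interchange-of-summation argument above.
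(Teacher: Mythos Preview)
Your approach---reduce to $d=1$ and expand $x^n$ in binomial polynomials via Stirling numbers of the second kind---is exactly the paper's. The paper phrases it with $b_k := c_k/k!$ and falling factorials $(x)_k = k!\binom{x}{k}$: it writes $x^n = \sum_{k\le n} s(n,k)(x)_k$ with $s(n,k)\in\bbZ$, bounds the truncated coefficients $b_{k,i}=\sum_{k\le n\le i} a_n\,s(n,k)$ by $cR^{-k}$ (using $|s(n,k)|\le 1$), passes to the limit $\tilde b_k$, and then sets $\tilde c_k = k!\,\tilde b_k$. To pass from $|\tilde b_k|\le cR^{-k}$ to the stated $|c_k|\le cs^k$ the paper invokes the inequality $|k!|\le r_1^{\,k}$.

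So your worry about the constant is not a defect of your write-up: it is precisely the step the paper takes, and your suspicion is justified. Legendre's formula gives $v_p(k!)=(k-s_p(k))/(p-1)\le k/(p-1)$, hence $|k!|\ge r_1^{\,k}$, the reverse inequality. Indeed the bound as stated already fails for $f(x)=x$ with $c=R$: then $|a_1|=1=cR^{-1}$ but $|c_1|=1$ while $cs=r_1<1$. What your argument (and the paper's, once this step is removed) rigorously establishes is $|c_k|\le cR^{-k}$, using only that $k!\,S(n,k)\in\bbZ$. Your corrected reading---that $s=r_1R^{-1}<R^{-1}$ makes $cs^k$ the \emph{smaller}, hence stronger, upper bound---is the right one; your first instinct that you had proved something stronger was backwards.

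For the application in the corollary that follows (where $R=p$) the honest bound still gives decay rate $r_0=p^{-1}$, which is all that is actually used there, so nothing downstream breaks. One small note on your multivariable reduction: the claim ``$c_\alpha=\prod_i c^{(i)}_{\alpha_i}$'' is only valid when $f$ factors as a product of one-variable functions; the clean route is to expand each monomial directly as $x^\alpha=\sum_\beta\bigl(\prod_i \beta_i!\,S(\alpha_i,\beta_i)\bigr)\binom{x}{\beta}$ and run the same ultrametric estimate, which is what the paper's ``same lines but with more notation'' amounts to.
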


\begin{proof}
We prove the lemma in case $d=1$. The general case follows along
the same lines but with more notation. We define the following
series of polynomials over $\bbZ$
\[ (x)_0=1,...,(x)_k=x(x-1)\cdot\cdot\cdot (x-k+1) \]
for $k\geq 1$. The $\bbZ$-module $\bbZ[x]$ has the $\bbZ$-bases
$\{x^k\}_{k\geq 0}$ and $\{(x)_k\}_{k\geq 0}$ and the transition
matrices are unipotent upper triangular. We may therefore write
\begin{numequation}\label{stirling}x^n=\sum_{k=0,...,n}s(n,k)(x)_k\end{numequation} with $s(n,k)\in\bbZ$. Put
$b_k:=c_k/k!$. Then $$\sum_{k\geq 0} c_k{{x}\choose
{k}}=\sum_{k\geq 0} b_k(x)_k$$ is a uniform limit of continuous
functions (even polynomials) on $\bbZ_p$ (cf. \cite[Thm.
VI.4.7]{Robert}). We now proceed as in (the proof of)
\cite[Prop. 5.8]{WashingtonBook}. Fix $i\geq 1$ and write
\[ \sum_{n\leq i} a_nx^n=\sum_{k\leq i} b_{k,i}(x)_k\]
as polynomials over $\cA$ with some elements $b_{k,i}\in\cA$.
Inserting (\ref{stirling}) and comparing coefficients yields
$b_{k,i}=\sum_{k\leq n \leq i} a_n s(n,k)$ and consequently,
\[ |b_{k,i}|\leq\max_{k \leq n \leq i} |a_n| \leq \max_{k \leq
n\leq i} (cR^{-n})\leq cR^{-k}\] since $R^{-1}<1$. We easily
deduce from this that $\{b_{k,i}\}_{i\geq 0}$ is a Cauchy sequence
in the Banach space $\cA$. Let $\tilde{b}_k$ be its limit.
Clearly, $|\tilde{b}_k|\leq cR^{-k}$. Put $\tilde{c}_k:=k!\cdot
\tilde{b}_k$. Since $|k!|\leq (r_1)^k$ we obtain $|\tilde{c}_k|
\leq c (r_1R^{-1})^k=cs^k$ for all $k$. By definition of
$\tilde{b}_k$ the series of polynomials
\[\sum_{k\geq 0}\tilde{c}_k{{x}\choose {k}}=\sum_{k\geq 0}
\tilde{b}_k(x)_k\] converges pointwise to the limit
\[\lim_{i\rightarrow\infty}\sum_{k\leq i} b_{k,i}(x)_k=\lim_{i\rightarrow\infty}\sum_{n\leq i}a_nx^n=f(x).\] By \cite[IV.2.3, p. 173]{Robert} this convergence is uniform and so uniqueness of
Mahler expansions implies $\tilde{c}_k=c_k$ for all $k$. This
proves the lemma.
\end{proof}
We let $e_1$ be the smallest integer strictly bigger than
$\frac{e(L/\bbQ_p)}{ p-1}.$ Recall that the usual logarithm series
$\log(1+X)=X-\frac{X^2}{2}+\frac{X^3}{3}...$ provides an
isomorphism of topological groups $1+(\varpi_L o_L)^{e}\car
(\varpi_L o_L)^{e}$ whenever $e\geq e_1$ (\cite[Prop.
5.5]{NeukirchI}).

\vskip8pt

Recall that a {\it special} analytic subdomain $V\subset\Xan$ is a
finite union over affinoid subdomains and has an associated Banach
algebra $\cA_V$ (\cite[Cor. 2.2.6]{BerkovichBook}).

%Let $z\in\sB$ and let $V$ be a sequence of neighbourhoods of $z$
%as in lemma \ref{lem-stableneighbour}. For any $n\geq 0$ we then
%have the completed skew group ring $\cA_{V_n}\cotimes_L
%D(\Uze,K)$.
\begin{cor}\label{cor-extendmodule}
Let $L=\bbQ_p$ and $z\in\sB$. Consider a $U_z^{(e-1)}$-stable
special analytic domain $V$ of $\Xan$ for $e>e_1$. The locally
analytic representation $\rho: \Uze\ra {\rm GL}(\cA_{V})$
satisfies the assumption {\rm $(\star)$} of section \ref{sec-skew}
for any $r\in [r_0,1)$.
\end{cor}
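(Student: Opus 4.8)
The plan is to deduce this from Proposition~\ref{prop-overconvergence} once one knows that, because the \emph{larger} group $U_z^{(e-1)}$ already stabilises $V$, the orbit maps of the $\Uze$-representation on $\cA_V$ are, in a suitable chart, power series converging on a polydisc of radius $p>1$. From such an overconvergence one extracts a decay estimate $\|\bb^\al\cdot v\|\le C\,s^{|\al|}$ on the elements $\bb^\al\cdot v\in\cA_V$ with $s=r_1p^{-1}=p^{-p/(p-1)}$, and since $s<p^{-1}=r_0\le r$ for every $r\in[r_0,1)$ this is exactly the integrability needed to extend the $D(\Uze,K)$-action on $\cA_V$ continuously to $D_r(\Uze,K)$.

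I would proceed as follows. Fix the ordered basis $h_1,\dots,h_d$ of $\Uze$ given by the root-group/torus decomposition of Proposition~\ref{prop-rootspace}, and let $\psi\colon\bbZ_p^d\xrightarrow{\ \sim\ }\Uze$, $(b_i)\mapsto h_1^{b_1}\cdots h_d^{b_d}$, be the associated chart. Since $L=\bbQ_p$ the uniformiser is $p$, and $x_\al(pa)=x_\al(a)^p$ together with the description of $T^{(e)}$ shows, upon comparing the decompositions at levels $e-1$ and $e$ (and recalling that $\|.\|_r$ does not depend on the choice of ordered basis), that the ordered basis $g_1,\dots,g_d$ of $U_z^{(e-1)}$ can be chosen so that $h_i=g_i^p$ for all $i$. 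Consequently $h_1^{b_1}\cdots h_d^{b_d}=g_1^{pb_1}\cdots g_d^{pb_d}$, i.e. $\psi$ is the corresponding chart $\psi'$ of $U_z^{(e-1)}$ composed with the dilation $\mathbf b\mapsto p\mathbf b$ of $\bbZ_p^d$. Now, as $V$ is $U_z^{(e-1)}$-stable, $\rho$ extends to a locally analytic representation of $U_z^{(e-1)}$ on $\cA_V$, and since the $G$-action on $\sB\subset\Xan$ is the restriction of the \emph{algebraic} $\bG$-action on $X$ this representation is, in the $\psi'$-chart, analytic on the closed unit polydisc: for $v\in\cA_V$ the orbit map $g\mapsto\rho(g)(v)$ is a single power series $F(\mathbf y)=\sum_\al a_\al\,\mathbf y^\al$ with $a_\al\in\cA_V$ and $\|a_\al\|\to0$ (one expands the algebraic coaction locally on $V$ and restricts the resulting algebraic functions on $\bG$ to $U_z^{(e-1)}$, where in the $\psi'$-chart they become Tate series). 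Put $C:=\sup_\al\|a_\al\|<\infty$. By the previous sentence the $\Uze$-orbit map of $v$, read in the $\psi$-chart, is $F(p\mathbf x)=\sum_\al (p^{|\al|}a_\al)\,\mathbf x^\al$, and $\|p^{|\al|}a_\al\|\le C\,p^{-|\al|}$, so Proposition~\ref{prop-overconvergence} applies with $R=p$ and constant $C$.

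To finish: by the description of the expansions $\delta=\sum_\al d_\al\bb^\al$ in \S\ref{subsubsec-norms}, the $\al$-th Mahler coefficient of the $\Uze$-orbit map of $v$ in the $\psi$-chart is precisely $\bb^\al\cdot v$, where $\bb^\al=(h_1-1)^{\al_1}\cdots(h_d-1)^{\al_d}$ acts via $\rho$; hence Proposition~\ref{prop-overconvergence} yields $\|\bb^\al\cdot v\|\le C\,s^{|\al|}$ with $s=r_1p^{-1}=p^{-p/(p-1)}<p^{-1}=r_0$. Since the ordered basis $h_1,\dots,h_d$ has $p$-valuation $1$ at each element, the norm on $D_r(\Uze,K)$ is $\|\sum_\al d_\al\bb^\al\|_r=\sup_\al|d_\al|\,r^{|\al|}$, and $s\le r_0\le r$; therefore for $\delta=\sum_\al d_\al\bb^\al\in D_r(\Uze,K)$ the series $\sum_\al d_\al(\bb^\al\cdot v)$ converges in $\cA_V$ with $\big\|\sum_\al d_\al(\bb^\al\cdot v)\big\|\le C\,\|\delta\|_r$, and $C$ may be taken of the form $C_0\|v\|$ with $C_0$ independent of $v$. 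Thus $\delta\cdot v:=\sum_\al d_\al(\bb^\al\cdot v)$ defines a topological $D_r(\Uze,K)$-module structure on $\cA_V$; on the dense subalgebra $D(\Uze,K)$ it is given by the same formula and hence restricts to the original one. This is assumption~$(\star)$.

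The step I expect to be the main obstacle is the analyticity input in the second paragraph — that the $U_z^{(e-1)}$-orbit maps on $\cA_V$ are genuine global power series on the closed unit polydisc, and not merely locally analytic. This rests on the algebraic origin of the $G$-action (it realises $U_z^{(e-1)}$ as the $\bbQ_p$-points of an affinoid subgroup of $\Gan$, acting on a neighbourhood of $z$; compare the formal-model picture in the proof of Lemma~\ref{lem-stableneighbour}), and it is here that the hypothesis $e>e_1$ enters, through convergence of the $\log$-series on the relevant $\bbZ_p$-lattice, which is what makes $\psi'$ an analytic chart onto the closed polydisc.
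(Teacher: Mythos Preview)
Your argument is correct and follows essentially the same route as the paper: pass to the larger group $U_z^{(e-1)}$, use that the orbit map there is (globally) analytic so that after the chart-change $h_i=g_i^{p}$ the $\Uze$-orbit map is a power series on the polydisc of radius $p$, apply Proposition~\ref{prop-overconvergence} with $R=p$ to get Mahler decay at rate $s=p^{-p/(p-1)}<r_0$, and sum. You are in fact more explicit than the paper at the one delicate point you flagged --- the paper compresses the global-analyticity-and-rescaling step into the single clause ``we see that the Mahler series of $\rho_f^{(e)}$ can be expanded in a power series converging on the disc of radius $p$,'' whereas you spell out both the relation $\psi=\psi'\circ(p\cdot)$ and the algebraic origin of the action that makes $\rho_f^{(e-1)}\circ\psi'$ a Tate series on the closed unit polydisc.
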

\begin{proof}
Let $\delta\in D_r(\Uze,L)$ and $f\in\cA_V$. Let $\rho^{(e)}_f:
g\mapsto g.f$ be the orbit map of $f$. In particular,
$(g.f)(x)=f(g^{-1}x)$ for any $x\in V$. The map $\rho^{(e)}_f$ is
a $\cA_V$-valued locally analytic function on $\Uze$. The ordered
basis $(h_1,...,h_d)$ of the $p-$valuation on $\Uze$ determines a
global chart $\Uze\simeq (\bbZ_p)^d$. We have the theory of Mahler
expansions over $\cA_V$ at our disposal. The function
$\rho^{(e)}_f$, viewed as a function on $(\bbZ_p)^d$, is locally
analytic and hence, its Mahler expansion
\[\rho_f^{(e)} (\cdot)=\sum_{\al\in\bbN_0^d}c_{f,\al} {{\cdot}\choose
{\al}},\] $c_{f,\al}\in\cA_V$ has the property (*):
$|c_{f,\al}|_V\leq c s^{|\al|}$ for some constant $c>0$ and some
real number $0<s<1$.

Now $V$ is also stable under the larger group $U_z^{(e-1)}$ and we
may apply the above discussion to the triple $(f,U_z^{(e-1)},V)$.
The function $\rho_f^{(e)}$ equals the restriction of
$\rho_f^{(e-1)}$ to $U_z^{(e)}\subseteq U_z^{(e-1)}$. We see that
the Mahler series of $\rho_f^{(e)}$ can be expanded in a power
series converging on the disc of radius $p$. The preceding lemma
applied to $R=p$ implies that the Mahler coefficients $c_{f,\al}$
of $\rho_f^{(e)}$ satisfy the property (*) for
\[s=r_1p^{-1}=p^{\frac{-1}{p-1}-1}=p^{\frac{-p}{p-1}}<p^{-1}=r_0.\]
Let us fix such a number $s$. We may expand $\delta$ into a series
$\delta=\sum_{\al\in\bbN_0^d}d_\al\bb^\al$ with $d_\al\in L$ such
that $|d_\al|r^{|\al|}\rightarrow 0$. Since $s\leq r$ the sum
\begin{numequation}\label{sum}
\delta.f:=\delta(\rho_f^{(e)}):=\sum_{\al\in\bbN_0^d}d_\al
c_{f,\al}\end{numequation} converges in the Banach space $\cA_V$
according to $(*)$. The map $(\delta,f)\mapsto \delta.f$ makes
$\cA_V$ a topological module over $D_r(\Uze,L)$ in a way
compatible with the map $D(\Uze)\ra D_r(\Uze)$.
\end{proof}
Until the end of this section we will {\bf assume} $L=\bbQ_p,
e>\max (e_0,e_1)$ and $r\in [r_0,1)$.
\begin{prop}\label{prop-skewexist}
Let $z\in\sB$. Then the stalk $\cO_{\sB,z}$ may be written as an
inductive limit over $\Uze$-stable Banach algebras $\cA_V$ with
the property: the completed skew group rings $\cA_V\smprod
D_r(\Uze,K)$ and $\sta\smprod D_r(\Uze,K)$ exist and the natural
map
\[ \varinjlim_V\;(\cA_V\smprod D_r(\Uze,K))\car \sta\smprod
D_r(\Uze,K)\] is an isomorphism of topological $K$-algebras.
\end{prop}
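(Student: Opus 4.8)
The plan is: (a) extract a good presentation of $\sta=\cO_{\sB,z}$ from Lemmas \ref{lem-neighbour} and \ref{lem-stableneighbour}; (b) feed each member of this presentation into Corollary \ref{cor-extendmodule} to obtain hypothesis $(\star)$ and hence the Banach algebras $\cA_{V_n}\smprod D_r(\Uze,K)$; (c) check that $(\star)$ survives passage to the colimit, so that $\sta\smprod D_r(\Uze,K)$ is defined; and (d) show that the formation of the completed skew group ring commutes with $\sta=\varinjlim_n\cA_{V_n}$. For (a)--(b): fix $z\in\sB$. By Lemma \ref{lem-stableneighbour} together with Lemma \ref{lem-neighbour} there is a cofinal sequence $V_1\supset V_2\supset\cdots$ of irreducible reduced strictly affinoid neighbourhoods of $z$ in $\Xan$, each $U_z^{(e_0)}$-stable, with every transition map $\cA_{V_n}\ra\cA_{V_{n+1}}$ flat and an injective compact map of $\bbQ_p$-Banach algebras; in particular $\sta=\varinjlim_n\cA_{V_n}$ is of compact type. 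Since $e>\max(e_0,e_1)$ we have $U_z^{(e-1)}\subseteq U_z^{(e_0)}$, so each $V_n$ is a $U_z^{(e-1)}$-stable special analytic domain and $e>e_1$; hence Corollary \ref{cor-extendmodule} shows the locally analytic representation $\Uze\ra{\rm GL}(\cA_{V_n})$ satisfies $(\star)$ for every $r\in[r_0,1)$. By the discussion following Proposition \ref{prop-skewextensions} (applied with $A=\cA_{V_n}$, $H=\Uze$), the completed skew group ring $\cA_{V_n}\smprod D_r(\Uze,K)=\cA_{V_n}\cotimes_{\bbQ_p}D_r(\Uze,K)$ exists as a topological $K$-algebra.

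Since the inclusions $V_{n+1}\subset V_n$ are $\Uze$-equivariant, the maps $\cA_{V_n}\ra\cA_{V_{n+1}}$ are morphisms of locally analytic $\Uze$-representations, hence of topological $D_r(\Uze,K)$-modules, and therefore induce continuous $K$-algebra homomorphisms $\cA_{V_n}\smprod D_r(\Uze,K)\ra\cA_{V_{n+1}}\smprod D_r(\Uze,K)$ compatible with the homomorphisms (\ref{equ-skew}) and (\ref{equ-smprod}). For (c), the orbit map of any $v\in\sta$ factors as $\Uze\ra\cA_{V_n}\ra\sta$ for a suitable $n$ (with $V_n$ being $\Uze$-stable), the first map locally analytic and the second continuous, so $\Uze$ acts locally analytically on the compact type --- hence barrelled --- space $\sta$. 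The $D_r(\Uze,K)$-module structures on the $\cA_{V_n}$ furnished by Corollary \ref{cor-extendmodule} are compatible with the transition maps, and since $\sta$ is bornological with bounded subsets contained in some $\cA_{V_n}$, they glue to a topological $D_r(\Uze,K)$-module structure on $\sta$ compatible with $D(\Uze)\ra D_r(\Uze)$. Thus $(\star)$ holds for $\sta$ and $\sta\smprod D_r(\Uze,K)=\sta\cotimes_{\bbQ_p}D_r(\Uze,K)$ is defined.

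Finally, for (d), the maps $\cA_{V_n}\cotimes_{\bbQ_p}D_r(\Uze,K)\ra\sta\cotimes_{\bbQ_p}D_r(\Uze,K)$ induced by $\cA_{V_n}\ra\sta$ and the identity on $D_r(\Uze,K)$ are compatible with the transition maps and assemble into a continuous $K$-algebra homomorphism $\varinjlim_n(\cA_{V_n}\smprod D_r(\Uze,K))\ra\sta\smprod D_r(\Uze,K)$. That this is a topological isomorphism I would deduce from the fact that the completed tensor product against the $\bbQ_p$-Banach space $D_r(\Uze,K)$ commutes with the compact type inductive limit $\sta=\varinjlim_n\cA_{V_n}$, i.e. $\sta\cotimes_{\bbQ_p}D_r(\Uze,K)\car\varinjlim_n(\cA_{V_n}\cotimes_{\bbQ_p}D_r(\Uze,K))$, with the induced transition maps still injective and compact, so that the right-hand side is again of compact type and carries the colimit topology (a standard fact about spaces of compact type, cf. \cite{NFA}). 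This identifies the two sides as topological vector spaces; that the twisted algebra structures agree is then immediate on the dense subspace $\cO_{\sB,z}\smprod\Uze=\varinjlim_n(\cA_{V_n}\smprod\Uze)$, using the defining identities (i) and (ii) in the proof of Proposition \ref{prop-skewextensions} and the $\Uze$-equivariance of the transition maps. The main obstacle is the functional-analytic bookkeeping in steps (c) and (d): verifying that $(\star)$ passes to the colimit, so that the target is even defined, and that the completed inductive tensor product --- with both its topology and its twisted multiplication --- commutes with the compact type inductive limit; everything else is a formal consequence of the presentation from Lemma \ref{lem-stableneighbour} and the overconvergence input of Corollary \ref{cor-extendmodule}.
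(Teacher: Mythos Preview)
Your proof is correct and takes essentially the same approach as the paper, whose own proof is simply the one-line remark ``This follows from section \ref{sec-skew} together with the above results.'' You have faithfully unpacked what that line means: assemble the cofinal $\Uze$-stable system of affinoid neighbourhoods from Lemmas \ref{lem-neighbour} and \ref{lem-stableneighbour}, apply Corollary \ref{cor-extendmodule} to each term to get $(\star)$, pass $(\star)$ to the compact-type colimit, and then invoke the compatibility of the completed (inductive) tensor product against a Banach space with compact-type inductive limits to identify the two sides.
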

\begin{proof}
This follows from section \ref{sec-skew} together with the above
results.
\end{proof}
The following corollary is immediate and recorded only for future
reference.
\begin{cor}\label{cor-homgerm}
Keep the assumptions and notations. Let $\iota_z$ be the natural
map $\cA_V\rightarrow \sta$ sending a function to its germ at $z$.
The map $\iota_z\cotimes_L {id}$ is an algebra homomorphism
\[\cA_V\smprod D_r(\Uze,K)\lra \sta\smprod D_r(\Uze,K).\]
\end{cor}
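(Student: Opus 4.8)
The plan is to deduce this from the functoriality of the completed skew group ring construction of Section~\ref{sec-skew} in its coefficient algebra. Three properties of the germ map $\iota_z\colon\cA_V\to\sta$ are relevant: it is a homomorphism of $L$-algebras; it is continuous, since the target carries the final locally convex topology of the inductive limit $\varinjlim_V\cA_V$ in which $V$ itself occurs; and it is $\Uze$-equivariant, because the $\Uze$-action on both $\cA_V$ and $\sta$ is induced by the algebraic action of $\bG$ on $X$. Hence $\iota_z\cotimes_L\mathrm{id}$ is a continuous $K$-linear map $\cA_V\cotimes_L D_r(\Uze,K)\to\sta\cotimes_L D_r(\Uze,K)$, the latter being precisely $\sta\smprod D_r(\Uze,K)$ by Proposition~\ref{prop-skewexist}; it remains to see it is multiplicative.

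I would check multiplicativity first on the abstract skew group ring $\cA_V\smprod\Uze=\cA_V\otimes_L L[\Uze]$, whose image in $\cA_V\smprod D_r(\Uze,K)$ is dense by the $r$-completed version of Proposition~\ref{prop-skewextensions} (the discussion following hypothesis~$(\star)$ in Section~\ref{sec-skew}). On $\cA_V\smprod\Uze$ the product is given by the finite formula $(f_1g_1)(f_2g_2)=(f_1\,\rho(g_1)(f_2))\,g_1g_2$, and $\iota_z\cotimes\mathrm{id}$ carries $\cA_V\smprod\Uze$ into $\sta\smprod\Uze$ simply as $\iota_z\otimes\mathrm{id}_{L[\Uze]}$; since $\iota_z$ is an algebra homomorphism and is $\Uze$-equivariant, one has $\iota_z\bigl(f_1\,\rho(g_1)(f_2)\bigr)=\iota_z(f_1)\,\rho(g_1)\bigl(\iota_z(f_2)\bigr)$, so the restricted map respects this product. (Equivalently, applying the continuous map $\iota_z$ to the convergent Mahler sum~(\ref{sum}) of Corollary~\ref{cor-extendmodule} and using equivariance to identify the Mahler coefficients of the orbit map of $\iota_z(f)$ with the $\iota_z(c_{f,\al})$, one sees that $\iota_z$ intertwines the two $D_r(\Uze,L)$-module structures; as the skew multiplication on either side is assembled from this action together with $\Delta_r$ and the algebra multiplications, this is the conceptual reason it is preserved.)

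Finally, I would pass to the completion: the skew multiplications on $\cA_V\smprod D_r(\Uze,K)$ and on $\sta\smprod D_r(\Uze,K)$ are separately continuous by their very construction in Section~\ref{sec-skew}, and $\iota_z\cotimes\mathrm{id}$ is continuous, so the identity $\phi(ab)=\phi(a)\phi(b)$, known on the dense subalgebra $\cA_V\smprod\Uze$, extends to all of $\cA_V\smprod D_r(\Uze,K)$ by approximating $a$ and then $b$ and using separate continuity in each variable in turn. There is no real obstacle here; the only point that deserves to be stated explicitly rather than taken for granted is the continuity of these two skew multiplications, which is exactly what Section~\ref{sec-skew} together with Proposition~\ref{prop-skewexist} (for the stalk $\sta$) provides.
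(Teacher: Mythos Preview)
Your proof is correct and essentially coincides with the paper's point of view. The paper records the corollary as ``immediate'' from Proposition~\ref{prop-skewexist}: since that proposition identifies $\sta\smprod D_r(\Uze,K)$ with the inductive limit $\varinjlim_V(\cA_V\smprod D_r(\Uze,K))$ as topological $K$-algebras, the map $\iota_z\cotimes_L\mathrm{id}$ is just a structure map into this limit and hence automatically an algebra homomorphism; your argument unpacks exactly the functoriality (equivariance of $\iota_z$, density of $\cA_V\smprod\Uze$, separate continuity of the skew product) that underlies this identification.
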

\begin{cor}
Keep the assumptions and notations. The inclusions
$L[\Uze]\subseteq D_r(\Uze,K)$ and $U(\frg)_K\subseteq
D_r(\Uze,K)$ induce algebra homomorphisms
\begin{itemize}
    \item[(i)] $\cA_V\smprod\Uze=\cA_V\otimes_L
    L[\Uze]\longrightarrow\cA_V\smprod
D_r(\Uze,K),$\vskip8pt
    \item[(ii)]$\cA_V\smprod U(\frg)_K\longrightarrow\cA_V\smprod
D_r(\Uze,K).$
\end{itemize}
Varying $V$ these maps assemble to algebra homomorphisms
\begin{itemize}
    \item[(i)] $\sta\smprod\Uze=\sta\otimes_L
    L[\Uze]\longrightarrow\sta\smprod
D_r(\Uze,K),$\vskip8pt
    \item[(ii)]$\sta\smprod U(\frg)_K\longrightarrow\sta\smprod
D_r(\Uze,K).$
\end{itemize}
\end{cor}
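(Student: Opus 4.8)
The plan is to reduce everything to the corresponding statements for a single Banach algebra $\cA_V$ and then pass to the inductive limit over $V$. Fix the point $z\in\sB$ and the cofinal system of $\Uze$-stable strictly affinoid neighbourhoods $\{V_n\}_n$ of $z$ furnished by Lemma~\ref{lem-stableneighbour} and Proposition~\ref{prop-skewexist}; by Lemma~\ref{lem-stableneighbour} we may take each $V_n$ to be $U_z^{(e_0)}$-stable. Under the running hypothesis $e>\max(e_0,e_1)$ one has $e-1\geq e_0$, hence $U_z^{(e-1)}\subseteq U_z^{(e_0)}$, so each $V_n$ is in fact $U_z^{(e-1)}$-stable; since moreover $e>e_1$, Corollary~\ref{cor-extendmodule} applies and shows that the locally analytic representation $\rho: \Uze\ra {\rm GL}(\cA_{V_n})$ satisfies the hypothesis $(\star)$ of Section~\ref{sec-skew} for every $r\in[r_0,1)$. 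This is precisely what is needed in order that the completed skew group ring $\cA_{V_n}\smprod D_r(\Uze,K)$ be defined.

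Granting $(\star)$, the ``mutatis mutandis'' extension of Proposition~\ref{prop-skewextensions} recorded immediately after its proof (with $\Delta$ replaced by $\Delta_r$ and scalars extended to $K$) says that the two $\cA_{V_n}$-linear maps
\[\cA_{V_n}\smprod\Uze=\cA_{V_n}\otimes_L L[\Uze]\lra\cA_{V_n}\smprod D_r(\Uze,K),\qquad \cA_{V_n}\smprod U(\frg)_K\lra\cA_{V_n}\smprod D_r(\Uze,K)\]
induced respectively by $\Uze\subset D(\Uze)^\times$ and by $U(\frg)\subset D(\Uze)$ are $L$-algebra homomorphisms; here $\Lie(\Uze)=\frg$ because $\Uze$ is open in $G$. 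This establishes (i) and (ii) at the level of each $\cA_{V_n}$.

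For the assembled statements I would pass to the filtered colimit over the $V_n$. Since $\otimes_L$ commutes with filtered colimits, $\varinjlim_n(\cA_{V_n}\otimes_L L[\Uze])=(\varinjlim_n\cA_{V_n})\otimes_L L[\Uze]=\sta\smprod\Uze$ and likewise $\varinjlim_n(\cA_{V_n}\smprod U(\frg)_K)=\sta\smprod U(\frg)_K$, while on the target side Proposition~\ref{prop-skewexist} identifies $\varinjlim_n(\cA_{V_n}\smprod D_r(\Uze,K))$ with $\sta\smprod D_r(\Uze,K)$ as topological $K$-algebras. The maps of the previous paragraph are compatible with the affinoid restriction homomorphisms $\cA_{V_n}\ra\cA_{V_{n+1}}$, by functoriality of the constructions of Section~\ref{sec-skew} and the fact that $L[\Uze]$ and $U(\frg)_K$ do not depend on $V_n$; hence they pass to the colimit and yield the desired algebra homomorphisms $\sta\smprod\Uze\ra\sta\smprod D_r(\Uze,K)$ and $\sta\smprod U(\frg)_K\ra\sta\smprod D_r(\Uze,K)$. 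Equivalently, one may post-compose the $\cA_{V_n}$-level maps with the germ homomorphism of Corollary~\ref{cor-homgerm} and observe that the resulting composites factor through the colimit.

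The real work has already been done in Corollary~\ref{cor-extendmodule} (verification of $(\star)$) and Proposition~\ref{prop-skewexist} (the colimit description), so there is no serious obstacle; the only points requiring a little care are the bookkeeping with levels -- that the running hypothesis $e>\max(e_0,e_1)$ indeed makes the cofinal system of neighbourhoods $U_z^{(e-1)}$-stable, so that Corollary~\ref{cor-extendmodule} is applicable -- and the entirely routine check that the $\cA_{V_n}$-level homomorphisms commute with the restriction maps.
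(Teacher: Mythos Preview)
Your approach mirrors the paper's: reduce to the affinoid level via Proposition~\ref{prop-skewextensions} (in its $D_r$-variant) and then pass to the inductive limit using Proposition~\ref{prop-skewexist}. For part (i) and for the colimit step this is fine and matches the paper essentially verbatim.

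There is, however, a point you skip in (ii) that the paper addresses explicitly. The skew enveloping algebra $\cA_V\smprod U(\frg)_K$ appearing in the statement---and used subsequently in (\ref{equ-alghom}) and in Proposition~\ref{prop-BBvsdistr}---carries the skew multiplication coming from the map $\alpha^{\ci,an}:\frg\to{\rm Der}_L(\cA_V)$ of Lemma~\ref{lem-analytification}, i.e.\ from the analytic action of $\bG^{an}$ on $X^{an}$. On the other hand, Proposition~\ref{prop-skewextensions} only asserts that the map from the skew enveloping algebra formed via the \emph{differentiated locally analytic $\Uze$-action} is a ring homomorphism. Your remark ``$\Lie(\Uze)=\frg$ because $\Uze$ is open in $G$'' identifies the two Lie algebras abstractly, but does not by itself verify that the two resulting $\frg$-actions on $\cA_V$ coincide. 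The paper checks precisely this compatibility by exhibiting the commutative square
\[\xymatrix{
U(\frg) \ar[d]_{\subseteq} \ar[r]^-{\alpha^{\ci,an}} &  \End_L(\cA_V) \ar[d]^{\mathrm{id}}\\
D_r(\Uze,L) \ar[r] & \End_L(\cA_V) }\]
and observing that it commutes because the locally analytic $\Uze$-action on $\cA_V$ is the restriction of the algebraic $\bG$-action on $X$, so that differentiating it recovers $\alpha^{\ci,an}$. This is an easy but genuine verification; without it, (ii) is not established for the skew structure that is actually used downstream.
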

\begin{proof}
Consider the case of $\cA_V$. The map (i) follows from (\ref{prop-skewextensions}). The same is true for the map (ii) once
we convince ourselves that there is a commutative diagram of
algebra homomorphisms

\[\xymatrix{
U(\frg) \ar[d]^\subseteq \ar[r]^>>>>>>{\al^{\ci,an}} &  \End_L(\cA_V) \ar[d]^{Id}\\
D_r(\Uze,L) \ar[r] & \End_L(\cA_V) }
\] where the upper horizontal arrow is derived from (\ref{equ-alpha-analytic}) and
the lower horizontal arrow describes the $D_r(\Uze,L)$-module
structure of $\cA_V$ as given by (Cor. \ref{cor-extendmodule}).
Restricting the lower horizontal arrow to $\frg$ amounts to
differentiate the locally analytic $\Uze$-action on $\sta$. This
action comes from the algebraic action of $\bG$ on $X$. The
diagram commutes by the remark following (Lem.
\ref{lem-analytification}). Having settled the case $\cA_V$ the
case of $\sta$ now follows by passage to the inductive limit.
\end{proof}

As a result of this discussion we have associated to each point
$z\in\sB\subset\Xan$ a (noncommutative) topological $K$-algebra,
namely the completed skew group ring $\sta\smprod D_r(\Uze,K)$. As
we have seen it comes together with an injective algebra
homomorphism
\begin{numequation}\label{equ-alghom} \cO_{\sB,z}\smprod
U(\frg)_K\hookrightarrow \sta\smprod D_r(\Uze,K).\end{numequation}

In the next section we will sheafify this situation and obtain a
sheaf of noncommutative $K$-algebras $\cO_\sB\smprod D_r$ on $\sB$
together with an injective morphism of sheaves of algebras
\[\cO_{\sB}\smprod U(\frg)_K\hookrightarrow \cO_\sB\smprod D_r\]
inducing the map  (\ref{equ-alghom}) at all points $z\in\sB$.

\vskip8pt

To do this we shall need a simple `gluing property' of the
algebras $\sta\smprod D_r(\Uze,K)$.
\begin{lemma}\label{lem-homglue}
Let $F,F'$ be facets in $\sB$ such that $F'\subseteq\overline{F}$
and let $\sigma_r^{F'F}: D_r(U_{F'}^{(e)},K)\rightarrow
    D_r(\UFe,K)$ be the corresponding algebra homomorphism.
    Suppose $V$ and $V'$ are two special analytic domains
    stable under $\UFe$
    and $U^{(e)}_{F'}$ respectively. If $V\subseteq V'$ the map $res^{V'}_V\cotimes\sigma_r^{F'F}$ is an algebra homomorphism
    \[res^{V'}_V\cotimes\sigma_r^{F'F}: \cA_{V'}\smprod D_r(U_{F'}^{(e)},K)\lra
    \cA_{V}\smprod D_r(\UFe,K).\]
\end{lemma}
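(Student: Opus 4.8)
The plan is to reduce the statement to two already-established facts: the existence of the completed skew group rings on each side (Proposition \ref{prop-skewexist}, via Corollary \ref{cor-extendmodule}) and the compatibility of the map $\sigma_r^{F'F}$ with the Lie algebra inclusions (the final assertion of Lemma \ref{lem-glue}). First I would note that since $V\subseteq V'$, restriction of functions gives a bounded $L$-algebra homomorphism $res^{V'}_V:\cA_{V'}\ra\cA_V$; and since $U_{F'}^{(e)}\subseteq\UFe$ and $V$ is $\UFe$-stable, $V$ is in particular $U_{F'}^{(e)}$-stable, so $\cA_{V'}$ and $\cA_V$ are both locally analytic $U_{F'}^{(e)}$-representations, with $res^{V'}_V$ being $U_{F'}^{(e)}$-equivariant. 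Hence $res^{V'}_V$ is a morphism of topological $D(U_{F'}^{(e)},K)$-modules, and by Corollary \ref{cor-extendmodule} (applicable since $e>e_1$ and both $V,V'$ are stable under the respective $U^{(e-1)}$-groups, or more simply since the module structures extend) it is a morphism of topological $D_r(U_{F'}^{(e)},K)$-modules, where on $\cA_V$ the $D_r(U_{F'}^{(e)},K)$-action is obtained from the $D_r(\UFe,K)$-action by restriction of scalars along $\sigma_r^{F'F}$.

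Next I would set up the map on the underlying completed tensor products. The functoriality of the completed inductive tensor product gives a continuous $K$-linear map $res^{V'}_V\cotimes\sigma_r^{F'F}: \cA_{V'}\cotimes_L D_r(U_{F'}^{(e)},K)\ra \cA_V\cotimes_L D_r(\UFe,K)$. To see this is an algebra homomorphism for the skew multiplications, I would first check it on the dense subalgebra generated by $\cA_{V'}$, by group elements $\delta_g$ for $g\in U_{F'}^{(e)}$, and (for good measure) by $U(\frg)$. On these generators one uses the defining skew relations: for $f\in\cA_{V'}$ and $g\in U_{F'}^{(e)}$ one has $(1\cotimes\delta_g)(f\cotimes 1)=(\rho(g)f)\cotimes\delta_g$ in $\cA_{V'}\smprod D_r(U_{F'}^{(e)},K)$ — this is identity (i) from the proof of Proposition \ref{prop-skewextensions} — while on the target side $(1\cotimes\sigma_r^{F'F}(\delta_g))(res^{V'}_V(f)\cotimes 1)=(1\cotimes\delta_g)(res^{V'}_V(f)\cotimes 1)=(\rho(g)res^{V'}_V(f))\cotimes\delta_g$, since $\sigma_r^{F'F}(\delta_g)=\delta_g$ as $g\in U_{F'}^{(e)}\subseteq\UFe$ and $\sigma_r^{F'F}$ is induced by functoriality of $D(\cdot,K)$. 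Equivariance of $res^{V'}_V$ gives $\rho(g)res^{V'}_V(f)=res^{V'}_V(\rho(g)f)$, so the two sides agree. The $\cA$-linearity on the left factor and multiplicativity on the $D_r$-factors being clear, multiplicativity holds on the dense subalgebra, and then extends to the whole completed skew group ring by continuity of both multiplications together with density (Proposition \ref{prop-skewextensions}).

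The one point requiring a little care — and the step I expect to be the main obstacle — is the precise sense in which $res^{V'}_V\cotimes\sigma_r^{F'F}$ is "well-defined" as a map between the two completed algebras: this rests on the fact that restriction of scalars along $\sigma_r^{F'F}$ turns the natural $D_r(\UFe,K)$-module $\cA_V$ into exactly the $D_r(U_{F'}^{(e)},K)$-module that the direct construction of $\cA_V\smprod D_r(U_{F'}^{(e)},K)$ would use, i.e.\ the restriction-of-scalars module structure agrees with the one coming from the locally analytic $U_{F'}^{(e)}$-action on $V$ via Corollary \ref{cor-extendmodule}. This compatibility follows because both module structures restrict, on the group algebra $K[U_{F'}^{(e)}]$ and on $U(\frg)_K$, to the same action — for the group part this is immediate since $\sigma_r^{F'F}$ fixes $\delta_g$, and for the Lie algebra part it is precisely the last sentence of Lemma \ref{lem-glue}, which says $\sigma_r^{F'F}$ restricted to $Lie(U_{F'}^{(e)})$ is the canonical identification $Lie(U_{F'}^{(e)})\simeq Lie(\UFe)\subset D_r(\UFe,K)$; since $K[U_{F'}^{(e)}]$ is dense in $D_r(U_{F'}^{(e)},K)$ and the module structures are continuous, the two agree everywhere. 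With this identification in hand, the map is simply $res^{V'}_V$ completed-tensored with a module homomorphism, and the algebra-homomorphism property is the computation above. Finally, by passing to the inductive limit over $V$ (using Corollary \ref{cor-homgerm} and Proposition \ref{prop-skewexist}), these maps are compatible with the germ maps, which is what will be needed for the sheafification in the next section.
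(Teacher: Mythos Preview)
Your proof is correct and follows essentially the same approach as the paper. The paper's version is more compressed: it simply records the commutative diagram
\[
\xymatrix{
D_r(U_{F'}^{(e)},K) \times \cA_{V'} \ar[d]_{\sigma_r^{F'F}\times res} \ar[r] & \cA_{V'} \ar[d]^{res} \\
D_r(\UFe,K) \times \cA_V \ar[r] & \cA_V
}
\]
(which is exactly your module-structure compatibility observation, justified as you do by the fact that $\sigma_r^{F'F}$ comes from the group inclusion $U_{F'}^{(e)}\subseteq\UFe$), and then appeals directly to the construction of the skew multiplication via the comultiplication $\Delta_r$ in section~\ref{sec-skew} to conclude. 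You instead verify multiplicativity by hand on the dense image of $\cA_{V'}\smprod U_{F'}^{(e)}$ using the relation $(1\cotimes\delta_g)(f\cotimes 1)=(\rho(g)f)\cotimes\delta_g$ and extend by continuity; this amounts to unpacking what the paper's appeal to section~\ref{sec-skew} actually says. Your detour through $U(\frg)$ and the last sentence of Lemma~\ref{lem-glue} is unnecessary here (group elements already suffice for density), but harmless.
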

\begin{proof}
Since the map $\sigma_r^{F'F}$ is induced from the inclusion
$U_{F'}^{(e)}\subseteq U_F^{(e)}$ there is a commutative diagram

\[\xymatrix{
D_r(U_{F'}^{(e)},K) \ar[d]^{\sigma_r^{F'F}\times{res}} \times \cA_{V'} \ar[r] & \cA_{V'}  \ar[d]^{res}\\
D_r(\UFe,K)\times \cA_{V} \ar[r] & \cA_V }.
\]
where the horizontal arrows describe the module structures of
$\cA_{V'}$ and $\cA_V$ over $D_r(U_{F'}^{(e)},K)$ and
$D_r(\UFe,K)$ respectively (Lem. \ref{cor-extendmodule}). The
assertion follows now from the construction of the skew
multiplication of the source and target of
$res^{V'}_V\cotimes\sigma_r^{F'F}$ (cf. sec. \ref{sec-skew}).
\end{proof}

\section{A Sheaf of 'distribution operators' on the building}
In this section we keep our assumptions, i.e. we {\bf assume}
$L=\bbQ_p, e>\max (e_0,e_1)$ and $r\in [r_0,1)$. Moreover, we will
assume that $e\geq e_{uni}$ so that all groups $\Uze$ will be
uniform pro-$p$ groups (4.3.3). We will work from now on
exclusively over the coefficient field $K$. To ease {\bf notation}
we will therefore drop this coefficient field from the notation
when working with distribution algebras. We thus write
$D(G)=D(G,K), D_r(\UFe)=D_r(\UFe,K)$ etc.

\vskip8pt

Recall that the sheaf of (twisted) differential operators
$\cD_\chi$ on $X$ may be constructed from the skew tensor product
$\cO_X\smprod U(\frg)$ (sec. 5.1). In a similar way we are going
to construct a sheaf of 'distribution operators' on $\sB$ starting
from a twisted tensor product $\shfo$. Here, $\tiD$ replaces the
constant sheaf $\underline{U(\frg)}$ and equals a sheaf of
distribution algebras on $\sB$. It will be a constructible sheaf
with respect to the usual polysimplicial structure of $\sB$.
Recall from 4.2. that a sheaf on a polysimplicial space is called
{\it constructible} if its restriction to a given geometric
polysimplex is a constant sheaf.

\subsection{A constructible sheaf of distribution algebras}
Recall that the {\it star} of a facet $F'$ in $\sB$ is the subset
of $\sB$ defined by
\[ St(F'):={\rm
~union~of~all~facets~}F\subseteq\sB {\rm~such~that~}
F'\subseteq\overline{F}.\] These stars form a locally finite open
covering of $\sB$. Given an open set $\Omega\subset\sB$ we have
the natural map
\[\iota_z: \cO_{\sB}(\Omega)\longrightarrow\sta,~~~~f\mapsto {\rm
germ~of~}f{\rm~at~}z\] for any $z\in\Omega$.

\begin{dfn}\label{def-sheaf}
For an open subset $\Omega\subseteq\sB$ let
\[
\tiD(\Omega):=K{\rm -vector~ space~ of~ all~ maps~}
s:\Omega\rightarrow\dot{\bigcup}_{z\in\Omega} D_r(\Uze) {\rm ~s.
t.}
\]
\begin{itemize}
    \item[(1)] $s(z)\in
D_r(\Uze)$ for all $z\in\Omega$,\vskip8pt
    \item[(2)] for each facet $F\subseteq\sB$ there exists a finite open covering $\Omega\cap St(F)=\cup_{i\in
    I}\Omega_i$ with the property: for each $i$ with $\Omega_i\cap
    F\neq\emptyset$ there is an element $s_i\in
    D_r(\UFe)$ such that

    \vskip8pt

    \begin{itemize}
    \item[(2a)] ~$s(z)=s_i
    {\rm~for~any~} z\in\Omega_i\cap F,$

    \vskip8pt
    \item[(2b)]
    ~$s(z')=\sigma_r^{FF'}(s_i){\rm~for~any~} z'\in
    \Omega_i$. Here, $F'$ is the unique facet in $St(F)$ that contains $z'$.
\end{itemize}\end{itemize}
\end{dfn}
>From (2a) it is almost obvious that the restriction of $\tiD$ to a
facet $F$ is the constant sheaf with value $D_r(\UFe)$. Hence
$\tiD$ is constructible. Furthermore, if $\Omega'\subseteq\Omega$
is an open subset there is the obvious restriction map
$\tiD(\Omega)\rightarrow\tiD(\Omega')$. The proof of the following
result is implicitly contained in the proofs of (Lem.
\ref{lem-sheafI}) and (Lem. \ref{lem-stalkI}) below.
\begin{lemma}
With pointwise multiplication $\tiD$ is a sheaf of $K$-algebras.
For $z\in\sB$ one has $(\tiD)_z=D_r(\Uze)$.
\end{lemma}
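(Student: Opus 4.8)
The plan is to follow the template of the Schneider--Stuhler construction in Proposition~\ref{prop-smoothloc} (and to anticipate the arguments of Lemmas~\ref{lem-sheafI} and~\ref{lem-stalkI}): three things have to be checked, namely that $\Omega\mapsto\tiD(\Omega)$ with the evident restriction maps is a sheaf, that pointwise addition, scalar multiplication and multiplication turn it into a sheaf of $K$-algebras, and that the stalk at a point $z$ is $D_r(\Uze)$.

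First I would establish the sheaf axioms. Separatedness is immediate, since a section is literally a function $s\colon\Omega\to\dot{\bigcup}_z D_r(\Uze)$, and two such functions agreeing on the members of an open covering agree everywhere. For the gluing axiom the key point is that conditions (1) and (2) of Definition~\ref{def-sheaf} are of local nature: given $s_j\in\tiD(\Omega_j)$ agreeing on overlaps, the pointwise-glued function $s$ obviously satisfies (1), and for (2) one fixes a facet $F$ and must produce a \emph{finite} open covering of $\Omega\cap St(F)$ adapted to $s$. Here the compatibilities recorded in Lemma~\ref{lem-glue}(i),(ii) (that $\sigma_r^{FF}=\mathrm{id}$ and $\sigma_r^{F'F}\circ\sigma_r^{F''F'}=\sigma_r^{F''F}$) guarantee that the local propagation data $s_i\in D_r(\UFe)$ coming from the individual $s_j$ patch consistently along $F$ and along the higher-dimensional facets of $St(F)$; one then extracts a finite adapted covering of $\Omega\cap St(F)$ using the local finiteness of the covering $\{St(F')\}_{F'}$ together with the compactness of $\overline F$. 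I expect this bookkeeping — turning an a priori unbounded amount of local data into a \emph{finite} covering for each star — to be the only genuinely delicate point, and it is precisely what is handled in the proof of Lemma~\ref{lem-sheafI}.

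Next, the algebra structure. Addition, $K$-scalar multiplication and multiplication of sections are defined pointwise, so I only need that these operations preserve membership in $\tiD(\Omega)$. Condition (1) is preserved since each $D_r(\Uze)$ is a $K$-algebra, and condition (2) is preserved since $\sigma_r^{FF'}$ is a $K$-algebra homomorphism (Lemma~\ref{lem-glue}): given two sections with adapted finite coverings $\{\Omega_i\}_{i\in I}$ and $\{\Omega_j'\}_{j\in J}$ of $\Omega\cap St(F)$ carrying propagation data $s_i$ resp. $s_j'$, the common refinement $\{\Omega_i\cap\Omega_j'\}_{(i,j)\in I\times J}$ is still finite and carries the propagation data $s_i+s_j'$ resp. $s_is_j'$ (a scalar multiple being handled already on the covering of the single section), because $\sigma_r^{FF'}$ respects addition, scalar multiplication and multiplication. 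Associativity, distributivity and the remaining $K$-algebra axioms are then inherited pointwise from the rings $D_r(\Uze)$, and the restriction maps are $K$-algebra homomorphisms by construction.

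Finally, the stalk. Since $St(F_z)$ — with $F_z$ the facet containing $z$ — is an open neighbourhood of $z$, the open sets $\Omega$ with $z\in\Omega\subseteq St(F_z)$ are cofinal among all neighbourhoods of $z$, so $(\tiD)_z=\varinjlim_{z\in\Omega\subseteq St(F_z)}\tiD(\Omega)$. Evaluation $s\mapsto s(z)$ defines a $K$-algebra map $\mathrm{ev}_z\colon(\tiD)_z\to D_r(U_{F_z}^{(e)})=D_r(\Uze)$; it is well defined and injective because applying Definition~\ref{def-sheaf}(2) with $F=F_z$ shows that on a sufficiently small neighbourhood of $z$ a section is constant on $F_z$ with value $s(z)$ and is obtained from $s(z)$ by the propagation maps $\sigma_r^{F_zF'}$ — so its germ is determined by $s(z)$, and $\sigma_r^{F_zF'}(0)=0$ forces a section with $s(z)=0$ to vanish near $z$. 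For surjectivity, given $d\in D_r(\Uze)$ I would exhibit the section $s_d$ on $St(F_z)$ defined by $s_d(z'):=\sigma_r^{F_zF'}(d)$, where $F'$ is the facet containing $z'$; using $\sigma_r^{F_zF_z}=\mathrm{id}$ and the cocycle relation of Lemma~\ref{lem-glue} — which makes the one-element covering $\{St(F_z)\cap St(G)\}$ adapted for every facet $G$ — one checks $s_d\in\tiD(St(F_z))$, and clearly $\mathrm{ev}_z(s_d)=d$.
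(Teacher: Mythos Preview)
Your outline is correct and follows exactly the route the paper indicates: the paper gives no independent argument but simply says the proof is implicitly contained in Lemmas~\ref{lem-sheafI} and~\ref{lem-stalkI} (written for the richer sheaf $\shfo$), and your three steps --- sheaf axioms via local data, closure under pointwise products via a common refinement and the multiplicativity of $\sigma_r^{FF'}$, and the stalk via evaluation plus the explicit propagated section $s_d$ on $St(F_z)$ --- are precisely the simplified versions of those proofs.

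One caveat concerning the word ``finite'' in condition~(2) of the definition of $\tiD$. You flag it as the delicate point and propose to extract a finite adapted covering using compactness of $\overline{F}$; this cannot work, because $\Omega\cap F$ need not be compact. In fact, with the finiteness clause the presheaf $\tiD$ is \emph{not} a sheaf: take $F$ a chamber (so $St(F)=F$), let $\Omega\subset F$ be a disjoint union of infinitely many open intervals, and assign pairwise distinct constants $d_n\in D_r(\UFe)$ on the pieces; each piece gives a section, but the glued function takes infinitely many values on $\Omega\cap F$ and admits no finite datum. Note that the parallel Definition~\ref{def-sheaf} for $\shfo$ carries \emph{no} finiteness requirement, and the gluing step in the proof of Lemma~\ref{lem-sheafI} makes no attempt to produce one. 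The word ``finite'' in the definition of $\tiD$ is thus an oversight; drop it, and your argument goes through exactly as written and agrees with the paper's deferred proof.
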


\subsection{Sheaves of completed skew group rings}

\begin{dfn}\label{def-sheaf}
For an open subset $\Omega\subseteq\sB$ let
\[
\shf(\Omega):=K{\rm -vector~ space~ of~ all~ maps~}
s:\Omega\rightarrow\dot{\bigcup}_{z\in\Omega} \sta\smprod
D_r(\Uze) {\rm ~such~that}
\]
\begin{itemize}
    \item[(1)] $s(z)\in\sta\smprod
D_r(\Uze)$ for all $z\in\Omega$,\vskip8pt
    \item[(2)] for each facet $F\subseteq\sB$ there exists an open covering $\Omega\cap St(F)=\cup_{i\in
    I}\Omega_i$ with the property: for each $i$ with $\Omega_i\cap
    F\neq\emptyset$ there is an
element \[s_i\in
    \cO_{\sB}(\Omega_i)\cotimes_L D_r(\UFe)\] such that

    \begin{itemize}
    \item[(2a)] ~$s(z)=(\iota_z\cotimes {\rm id})(s_i)
    {\rm~for~any~} z\in\Omega_i\cap F,$\vskip8pt

    \item[(2b)]
    ~$s(z')=(\iota_{z'}\cotimes\sigma_r^{FF'})(s_i){\rm~for~any~} z'\in
    \Omega_i$. Here, $F'$ is the unique facet in $St(F)$ that contains $z'$.
\end{itemize}\end{itemize}
\end{dfn}

Consider a function $s: \Omega\rightarrow\dot{\cup}_{z\in\Omega}
\sta\smprod D_r(\Uze)$ satisfying $(1)$. It will be convenient to
call an open covering $\Omega\cap St(F)=\cup_{i\in I} \Omega_i$
together with the elements $s_i$ such that $(2a)$ and $(2b)$ hold
a {\it datum} for $s$ with respect to the facet $F$. Any open
covering of $\Omega\cap St(F)$ which is a refinement of the
covering $\{\Omega_i\}_{i\in I}$, together with the same set of
elements $s_i$ is again a datum for $s$ with respect to $F$.

\begin{lemma}\label{lem-refine}
Let $\Omega\subseteq \sB$ be an open set and let
$s\in\shf(\Omega).$ Let $F$ be a facet in $\sB$. There is a datum
$\{\Omega_i\}_{i\in I}$ for $s$ with respect to $F$ such that, in
case $\Omega_i\cap F\neq\emptyset$, the set $\Omega_i$ is
$\UFe$-stable and the skew group ring $\cO_\sB(\Omega_i)\smprod
D_r(\UFe)$ exists.
\end{lemma}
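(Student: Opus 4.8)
The plan is to refine the datum that the membership $s\in\shf(\Omega)$ already provides, keeping the same data $s_i$. By the definition of $\shf(\Omega)$ there is some datum $\{\Omega_i\}_{i\in I}$ for $s$ with respect to $F$, with $s_i\in\cO_{\sB}(\Omega_i)\cotimes_L D_r(\UFe)$ whenever $\Omega_i\cap F\neq\emptyset$. As observed in the paragraph before the lemma, any open covering refining $\{\Omega_i\}$, together with the restrictions of the $s_i$, is again a datum for $s$ with respect to $F$: conditions (2a), (2b) are pointwise and commute with the restriction maps $\cO_{\sB}(\Omega_i)\ra\cO_{\sB}(\Omega_i')$, and $\mathrm{res}\cotimes\mathrm{id}$ is continuous. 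So it suffices, for each $i$ with $\Omega_i\cap F\neq\emptyset$, to cover $\Omega_i$ by $\UFe$-stable open subsets $W\subseteq\Omega_i$ for which $\cO_{\sB}(W)\smprod D_r(\UFe)$ exists; the $\Omega_i$ disjoint from $F$, and the refined pieces disjoint from $F$, are subject to no condition.

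Fix such an $\Omega_i$ and a point $z\in\Omega_i$. Since $\Omega_i\subseteq\Omega\cap St(F)$, the point $z$ lies in a facet $F'$ with $F\subseteq\overline{F'}$; using that the filtrations $\{\UFe\}_e$ and $\{U_z^{(e)}\}_e$ are decreasing together with Proposition \ref{prop-rootspace}(iii) we get $\UFe\subseteq U_{F'}^{(e)}=U_z^{(e)}$, and $\UFe\subseteq U_F^{(e_0)}\subseteq U_{F'}^{(e_0)}=U_z^{(e_0)}$, and likewise $\UFe\subseteq U_z^{(e-1)}$ (here $e>\max(e_0,e_1)$ is used; in fact $\UFe$ fixes all of $St(F)$ pointwise, being contained in $P_{F'}$ for every facet $F'$ of $St(F)$, but only these inclusions are needed). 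By Lemma \ref{lem-stableneighbour} applied at $z$ there is a cofinal family $\{V_n\}_n$ of strictly affinoid neighbourhoods of $z$ in $\Xan$ that are $U_z^{(e_0)}$-stable; by the previous inclusions each $V_n$ is then $\UFe$-stable and $U_z^{(e-1)}$-stable. As $\Omega_i$ is open in $\sB$ and the $V_n\cap\sB$ form a neighbourhood basis of $z$ in $\sB$, we may pass to a term with $V_n\cap\sB\subseteq\Omega_i$ and set $W_z:=V_n\cap\sB$; this is a $\UFe$-stable open neighbourhood of $z$ contained in $\Omega_i$.

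It remains to see that $\cO_{\sB}(W_z)\smprod D_r(\UFe)$ exists, i.e.\ that the locally analytic $\UFe$-representation on the locally convex $L$-algebra $\cO_{\sB}(W_z)$ satisfies hypothesis $(\star)$ of section \ref{sec-skew}. For the affinoid constituents this is Corollary \ref{cor-extendmodule} (valid since $e>e_1$ and the relevant $V_m\subseteq V_n$ are $U_z^{(e-1)}$-stable): the $\Uze$-action on $\cA_{V_m}$ extends to a topological $D_r(\Uze)$-module structure, and restricting scalars along the norm-decreasing algebra homomorphism $D_r(\UFe)\ra D_r(\Uze)$ of Lemma \ref{lem-glue} (equivalently, by the same Mahler-series argument applied to the subgroup $\UFe$) the $\UFe$-action does too, whence $\cA_{V_m}\smprod D_r(\UFe)$ exists. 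Presenting $\cO_{\sB}(W_z)$ as a locally convex inductive limit of such $\cA_{V_m}$ along restriction maps — exactly as in Proposition \ref{prop-skewexist}, with $\UFe$ in place of $\Uze$ and $W_z$ in place of the germ at $z$, and using property (4) of the structure sheaf in \S\ref{subsec-structuresheaves} — the $D_r(\UFe)$-actions are compatible with the transition maps and pass to the limit, giving the $D_r(\UFe)$-module structure on $\cO_{\sB}(W_z)$ that $(\star)$ demands; hence $\cO_{\sB}(W_z)\smprod D_r(\UFe)$ exists. The family of all the $W_z$, ranging over the $\Omega_i$ with $\Omega_i\cap F\neq\emptyset$ and over $z\in\Omega_i$, together with the unchanged pieces, refines $\{\Omega_i\}$, still covers $\Omega\cap St(F)$, and with the restricted elements $s_i$ is the desired datum.

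The combinatorial refinement is routine; the genuine difficulty is the last step — exhibiting $\cO_{\sB}(W_z)$ as a well-behaved inductive limit of $\UFe$-stable affinoid algebras so that property $(\star)$ propagates from the affinoid level. This is precisely what forces one to choose $W_z$ inside the distinguished cofinal systems of Lemma \ref{lem-stableneighbour} and to invoke Corollary \ref{cor-extendmodule} and Proposition \ref{prop-skewexist} in tandem, rather than argue directly from the given datum.
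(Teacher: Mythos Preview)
Your overall strategy coincides with the paper's: start from an arbitrary datum and, for each $\Omega_i$ meeting $F$, refine it around every point by a small $\UFe$-stable open neighbourhood, then restrict the $s_i$. The paper's own argument is extremely terse here---it simply asserts that one may ``replace $\Omega_{i(z)}$ by an open $\Uze$-stable neighbourhood $W_{i(z)}$ of $z$'' and declares the resulting datum has ``the desired property'', without spelling out why the skew group ring over $W_{i(z)}$ exists. You attempt to fill that in, which is the right instinct, but the argument you give has a gap.

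The problem is the sentence ``Presenting $\cO_{\sB}(W_z)$ as a locally convex inductive limit of such $\cA_{V_m}$ along restriction maps --- exactly as in Proposition~\ref{prop-skewexist}''. The inductive limit $\varinjlim_m \cA_{V_m}$ over the cofinal sequence of Lemma~\ref{lem-stableneighbour} is the \emph{stalk} $\cO_{\sB,z}$, not the space of sections over $W_z=V_n\cap\sB$: for $m>n$ the affinoids $V_m$ are strictly smaller than $V_n$ and do not contain $W_z$, so they cannot compute sections over $W_z$. Proposition~\ref{prop-skewexist} is genuinely a statement about the stalk and does not transfer by formally replacing ``germ at $z$'' with ``$W_z$''. (There is also a minor point: an affinoid neighbourhood $V_n$ need not be open in $\Xan$, so $V_n\cap\sB$ need not be open in $\sB$; one should pass to ${\rm Int}(V_n)\cap\sB$, which remains $\UFe$-stable since the group acts by homeomorphisms.)

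To establish hypothesis $(\star)$ for $\cO_\sB(W_z)$ one must argue differently---for instance by working with the single Banach algebra $\cA_{V_n}$, on which $(\star)$ holds by Corollary~\ref{cor-extendmodule}, and then controlling the passage to $\cO_\sB(W_z)$ via the final-topology description of \S\ref{subsec-structuresheaves}. The paper does not make this step explicit either, so the gap lies more in the lemma's exposition than in your particular strategy; but as written, your inductive-limit claim is false and the deduction of $(\star)$ from it does not stand.
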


\begin{proof} Let $\{\Omega_i\}_{i\in I}$ be a datum for
$s$ with respect to $F$. For any $z\in \Omega\cap St(F)$ choose
$i(z)\in I$ such that $z\in\Omega_{i(z)}$. In case
$\Omega_{i(z)}\cap F=\emptyset$ we let $W_{i(z)}:=\Omega_{i(z)}$.
In case $\Omega_{i(z)}\cap F\neq\emptyset$ we replace
$\Omega_{i(z)}$ by an open $\Uze$-stable neighbourhood $W_{i(z)}$
of $z$. Varying the point $z$ we obtain in this way an open
covering
$$\Omega\cap St(F)=\cup_{z\in\Omega\cap St(F)}W_{i(z)}$$ which
is a refinement of the covering $\{\Omega_i\}_{i\in I}$. In case
$W_{i(z)}\cap F\neq\emptyset$ we let
$t_{i(z)}\in\cO_{\sB}(W_{i(z)})\cotimes_L D_r(\UFe)$ be the image
of $s_{i(z)}$ under the map $res\cotimes {\rm id}$ where $res:
\cO_\sB(\Omega_{i(z)})\ra  \cO_\sB(W_{i(z)})$ is the restriction
map. The verification of $(2a)$ and $(2b)$ for these new elements
is straightforward. We thus have a new datum for $s$ with respect
to the facet $F$ that has the desired property.
\end{proof}

\vskip8pt

Suppose $\Omega'\subseteq\Omega$ is an open subset and let
$s\in\shf(\Omega)$. Let $F\subseteq\sB$ be a facet. Given a
corresponding datum $\{\Omega_i\}_{i\in I}$ for $s$ put
$\Omega_i':=\Omega'\cap\Omega_i$. Together with the elements $s_i$
, in case $\Omega'_i\cap F\neq\emptyset$, we obtain a datum for
the function $s|_{\Omega'}$. It follows that $\shf$ is a presheaf
of $K$-vector spaces on $\sB$.

\vskip8pt

In the following it will be convenient to define $\cF(\Omega)$ as
the $K$-vector space of all maps
\[s: \Omega\rightarrow\dot{\bigcup}_{z\in\Omega} \sta\smprod_L
D_r(\Uze)\] satisfying condition $(1)$ in the above definition. It
is clear that pointwise multiplication makes $\cF$ a sheaf of
$K$-algebras on $\sB$ such that $\shf$ is a subpresheaf of
$K$-vector spaces.

\begin{lemma}\label{lem-sheafI}
The induced multiplication makes $\shf\subseteq\cF$ an inclusion
of sheaves of $K$-algebras.
\end{lemma}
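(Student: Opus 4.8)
The statement has two parts: that the pointwise multiplication of $\cF$ restricts to $\shf$, making $\shf$ a sub-presheaf of $K$-algebras of $\cF$, and that this sub-presheaf is in fact a sheaf. Associativity, distributivity, the unit, and compatibility with the restriction maps will all be inherited stalkwise from the rings $\cO_{\sB,z}\smprod D_r(\Uze)$, so only the two points above require an argument.

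For closedness under multiplication, I would fix an open $\Omega\subseteq\sB$, sections $s,t\in\shf(\Omega)$, and their pointwise product $u:=st\in\cF(\Omega)$; condition $(1)$ for $u$ is immediate. To verify condition $(2)$ I fix a facet $F\subseteq\sB$. Using Lemma \ref{lem-refine} (and, underlying it, Lemma \ref{lem-stableneighbour}, which produces the $\UFe$-stable affinoid neighbourhoods on which the skew group rings of Proposition \ref{prop-skewexist} exist) I choose a datum $\{\Omega_i\}_i,\{s_i\}_i$ for $s$ and a datum $\{\Omega'_j\}_j,\{t_j\}_j$ for $t$ with respect to $F$, arranged so that every piece meeting $F$ is $\UFe$-stable and carries the skew group ring $\cO_\sB(\,\cdot\,)\smprod D_r(\UFe)$, and so that this persists on intersections. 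On the common refinement $W_{ij}:=\Omega_i\cap\Omega'_j$ I set, whenever $W_{ij}\cap F\neq\emptyset$,
\[
u_{ij}\;:=\;\bar s_i\cdot\bar t_j\;\in\;\cO_\sB(W_{ij})\smprod D_r(\UFe),
\]
the product of the restrictions of $s_i$ and $t_j$; this makes sense because the restriction maps $\cO_\sB(\Omega_i)\smprod D_r(\UFe)\to\cO_\sB(W_{ij})\smprod D_r(\UFe)$ are algebra homomorphisms (Lemma \ref{lem-homglue} with $F'=F$). Conditions $(2a)$ and $(2b)$ for $u$ then follow from the fact that the transport maps occurring in them — $\iota_z\cotimes{\rm id}$ in $(2a)$, and the germ-of-$\sigma_r^{FF'}$ map $\iota_{z'}\cotimes\sigma_r^{FF'}$ in $(2b)$ — are algebra homomorphisms, which is the content of Corollary \ref{cor-homgerm} together with Lemma \ref{lem-homglue}: for $z\in W_{ij}\cap F$ one obtains
\[
u(z)=s(z)t(z)=(\iota_z\cotimes{\rm id})(s_i)\,(\iota_z\cotimes{\rm id})(t_j)=(\iota_z\cotimes{\rm id})(u_{ij}),
\]
and similarly at a point $z'\in W_{ij}$ lying in the facet $F'\in St(F)$. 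Hence $\{W_{ij}\},\{u_{ij}\}$ is a datum for $u$ with respect to $F$, so $u\in\shf(\Omega)$, and $\shf$ is a sub-presheaf of $K$-algebras of $\cF$.

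For the sheaf property I would first note that $\shf$ is separated, being a sub-presheaf of the separated presheaf $\cF$. For gluing, let $\Omega=\bigcup_k U_k$ be an open cover and $s_k\in\shf(U_k)$ sections agreeing on overlaps; since $\cF$ is a sheaf there is a unique $s\in\cF(\Omega)$ with $s|_{U_k}=s_k$ for all $k$. To see $s\in\shf(\Omega)$ I fix a facet $F$, choose for each $k$ a datum $\{U_{k,i}\}_i,\{s_{k,i}\}_i$ for $s_k$ with respect to $F$, and observe that $\{U_{k,i}\}_{k,i}$ is an open cover of $\Omega\cap St(F)$ and that, because $s$ restricts to $s_k$ on each $U_{k,i}$, the elements $s_{k,i}$ still satisfy $(2a)$ and $(2b)$ for $s$. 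Thus this collection is a datum for $s$ with respect to $F$, so $s\in\shf(\Omega)$; this proves $\shf$ is a sheaf and that $\shf\hookrightarrow\cF$ is a morphism of sheaves of $K$-algebras.

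The step I expect to be the real obstacle is purely technical: arranging that, after passing to the common refinement, the pieces $W_{ij}$ meeting $F$ are still $\UFe$-stable and still carry the completed skew group ring $\cO_\sB(W_{ij})\smprod D_r(\UFe)$, so that the products $\bar s_i\cdot\bar t_j$ are genuinely defined there and the germ and $\sigma_r^{FF'}$ maps are honest algebra homomorphisms between the relevant skew group rings. This is exactly what Lemma \ref{lem-refine}, Lemma \ref{lem-stableneighbour}, Proposition \ref{prop-skewexist}, Corollary \ref{cor-homgerm} and Lemma \ref{lem-homglue} are designed to furnish; granted their conclusions, the argument reduces to the bookkeeping sketched above — and, as the paper remarks, the same bookkeeping simultaneously yields the ring structure and the stalks of $\tiD$.
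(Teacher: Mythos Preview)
Your proposal is correct and follows essentially the same approach as the paper: pass to a common refinement of the two data, use Lemma~\ref{lem-refine} to make the pieces $\UFe$-stable so the skew group rings exist, define the datum for the product as the product of the restricted elements, and verify (2a)/(2b) via Corollary~\ref{cor-homgerm} and Lemma~\ref{lem-homglue}; the sheaf property is then checked exactly as you describe by assembling the data for the local sections. The only cosmetic difference is that the paper first takes the common refinement and then applies Lemma~\ref{lem-refine} to the resulting single datum, whereas you stabilize first and then refine (correctly noting that stability and the existence of the skew group ring persist on intersections); both orderings work.
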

\begin{proof}
Consider an open subset $\Omega\subseteq\sB$. Let us first show
that for $s,s'\in\shf(\Omega)$ we have $ss'\in\shf(\Omega)$, i.e.
that $\shf(\Omega)$ is a subalgebra of $\cF(\Omega)$.

 To do this
let $F\subseteq\sB$ be a facet. Let $\{\Omega_i\}_{i\in I}$ and
$\{\Omega'_j\}_{j\in J}$ be corresponding data for $s$ and $s'$
respectively. Passing to $\{\Omega_{ij}\}_{ij}$ with
$\Omega_{ij}=\Omega_i\cap\Omega'_j$ we may assume: there exists
one datum $\{\Omega_i\}_{i\in I}$ for both $s,s'$. By (Lem.
\ref{lem-refine}) we may also assume that each set $\Omega_i$ is
$\UFe$-stable such that the skew group ring
$\cO_{\sB}(\Omega_{i})\smprod D_r(\UFe)$ exists. The element
$$s_is'_i\in \cO_{\sB}(\Omega_{i})\smprod D_r(\UFe)$$ is therefore well
defined. We will show that $\{\Omega_i\}_{i\in I}$ together with
the elements $s_is'_i$, in case $\Omega_i\cap F\neq\emptyset$, is
a datum for the function $ss'$. Let us check $(2a)$. Given $z\in
\Omega_i\cap F$ we compute
\[ (ss')(z)=s(z)s'(z)=(\iota_z\cotimes {id})(s_i)\cdot(\iota_z\cotimes
{id})(s'_i)=(\iota_z\cotimes {id})(s_is'_i)\] according to (Cor. \ref{cor-homgerm}). To check $(2b)$ we let $z'\in\Omega_i$ and
compute
\[ (s\cdot s')(z')=s(z')s'(z')=(\iota_z\cotimes\sigma_r^{F'F})(s_i)\cdot(\iota_z\cotimes\sigma_r^{F'F})(s'_i)=
(\iota_z\cotimes\sigma_r^{F'F})(s_i\cdot s'_i)\] according to (Lem. \ref{lem-homglue}). This shows $(2b)$. Consequently,
$ss'\in\shf(\Omega)$ and hence, $\shf(\Omega)$ is a subalgebra of
$\cF(\Omega)$. If $\Omega'\subseteq\Omega$ is an open subset the
restriction map $\shf(\Omega)\rightarrow\shf(\Omega')$ is
obviously multiplicative. Thus, $\shf$ is a presheaf of
$K$-algebras.

Let us show that $\shf$ is in fact a sheaf. Since
$\shf\subseteq\cF$ is a subpresheaf and $\cF$ is a sheaf it
suffices to prove the following: if $$\Omega=\bigcup_{j\in J}
U_j$$ is an open covering of an open subset $\Omega\subseteq\sB$
and if $s_j\in\shf(U_j)$ are local sections with $s_j|_{U_j\cap
U_i}=s_i|_{U_i\cap U_j}$ for all $i,j\in J$ then the unique
section $s\in\cF(\Omega)$ with $s|_{U_j}=s_j$ for all $j\in J$
lies in $\shf(\Omega)$. To do this let $F\subseteq\sB$ be a facet.
Consider for each $j\in J$ a datum $\{U_{ji}\}_{i\in I}$ for
$s_j$. In particular, $U_j\cap St(F)=\cup_{i\in I} U_{ji}$ and
there are distinguished elements
$$s_{ji}\in\cO_{\sB}(U_{ji})\cotimes_L D_r(\UFe)$$ whenever
$U_{ji}\cap F\neq\emptyset$. Then $\Omega\cap St(F)=\cup_{ji}
U_{ji}$ (together with the elements $s_{ji}$ whenever $U_{ji}\cap
F\neq\emptyset$) is a datum for $s$. Indeed, given $z\in
U_{ji}\cap F$ one has $s(z)=s_j(z)=(\iota_z\cotimes id)(s_{ji})$
which shows $(2a)$. Moreover, if $z'\in U_{ji}$ one has
$s(z')=s_j(z')=(\iota_z\cotimes\sigma_r^{F'F})(s_{ji})$ which
shows $(2b)$. Together this means $s\in\shf(\Omega)$.
\end{proof}
The next proposition shows that the stalks of the sheaf $\shf$ are
as expected.
\begin{lemma}\label{lem-stalkI}
One has $\shf_z=\sta\smprod D_r(\Uze)$ for any $z\in\sB$.
\end{lemma}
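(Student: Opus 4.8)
The plan is to show that the evaluation map ${\rm ev}_z\colon\shf_z\to\sta\smprod D_r(\Uze)$, $[s]\mapsto s(z)$, is an isomorphism of $K$-algebras. It is visibly a well-defined $K$-algebra homomorphism, since multiplication in $\shf$ is pointwise (Lem.~\ref{lem-sheafI}) and two local sections agreeing on a neighbourhood of $z$ take the same value at $z$. Write $F$ for the facet of $\sB$ containing $z$, so that $\Uze=\UFe$. I will use repeatedly that $St(F)$ is an open neighbourhood of $z$ and that an open set $\Omega\ni z$ lies in $St(F)$ precisely when every facet meeting $\Omega$ contains $F$ in its closure; and that, by (Prop.~\ref{prop-skewexist}) together with $\cO_{\sB,z}=\cO_{\Xan,z}$, completing the tensor product against the fixed Banach space $D_r(\UFe)$ commutes with the filtered colimit presenting the stalk, so that $\sta\smprod D_r(\Uze)=\varinjlim_{\Omega'}\bigl(\cO_{\sB}(\Omega')\cotimes_L D_r(\UFe)\bigr)$ over the open neighbourhoods $\Omega'\subseteq St(F)$ of $z$, compatibly with the germ maps $\iota_{z}$.

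For injectivity I would take $s\in\shf(\Omega)$ with $s(z)=0$, choose a datum $\{\Omega_i,s_i\}_{i\in I}$ for $s$ with respect to $F$, and pick $i$ with $z\in\Omega_i$ (so $z\in\Omega_i\cap F$). Then $0=s(z)=(\iota_z\cotimes{\rm id})(s_i)$, and by the colimit description above $s_i$ already restricts to $0$ in $\cO_{\sB}(\Omega_i')\cotimes_L D_r(\UFe)$ for some open $z\in\Omega_i'\subseteq\Omega_i$. Now $\{\Omega_i'\}$ together with ${\rm res}(s_i)=0$ is a datum for $s|_{\Omega_i'}$ with respect to $F$, so condition (2b) in the definition of $\shf$ forces $s(z')=(\iota_{z'}\cotimes\sigma_r^{FF'})(0)=0$ for every $z'\in\Omega_i'$, where $F'$ is the facet of $z'$ (which satisfies $F\subseteq\overline{F'}$, so $\sigma_r^{FF'}$ is defined by Lem.~\ref{lem-glue}). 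Hence $s$ vanishes on $\Omega_i'$ and $[s]=0$.

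For surjectivity I would start from $t\in\sta\smprod D_r(\UFe)$ and, using (Prop.~\ref{prop-skewexist}), write $t=(\iota_z\cotimes{\rm id})(\tilde t)$ for a $\UFe$-stable affinoid neighbourhood $V$ of $z$ in $\Xan$ and an element $\tilde t\in\cA_V\smprod D_r(\UFe)$, with $\iota_z\colon\cA_V\to\sta$ the germ map. Choosing an open $\Omega\ni z$ in $\sB$ with $\Omega\subseteq St(F)\cap\bigl({\rm Int}(V)\cap\sB\bigr)$ and restricting along $\cA_V\to\cO_{\Xan}({\rm Int}(V))\to\cO_{\sB}(\Omega)$, I obtain $s_V:=({\rm res}\cotimes{\rm id})(\tilde t)\in\cO_{\sB}(\Omega)\cotimes_L D_r(\UFe)$. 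I then define $s\in\cF(\Omega)$ by $s(z'):=(\iota_{z'}\cotimes\sigma_r^{FF'})(s_V)$, with $F'$ the facet of $z'$ (so $F\subseteq\overline{F'}$); since $\sigma_r^{FF}={\rm id}$ this gives $s(z)=(\iota_z\cotimes{\rm id})(s_V)=(\iota_z\cotimes{\rm id})(\tilde t)=t$. To see $s\in\shf(\Omega)$ I must produce, for each facet $F''$, a datum for $s$ with respect to $F''$: if $\Omega\cap F''=\emptyset$ there is nothing to check, and otherwise $\Omega\subseteq St(F)$ forces $F\subseteq\overline{F''}$, and I would take the one-element covering $\{\Omega\cap St(F'')\}$ together with $s_{F''}:=({\rm id}\cotimes\sigma_r^{FF''})\bigl(s_V|_{\Omega\cap St(F'')}\bigr)\in\cO_{\sB}(\Omega\cap St(F''))\cotimes_L D_r(U_{F''}^{(e)})$. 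For $z'\in\Omega\cap St(F'')$ with facet $F'$ one has the chain $F\subseteq\overline{F''}\subseteq\overline{F'}$, so the cocycle identity $\sigma_r^{F''F'}\circ\sigma_r^{FF''}=\sigma_r^{FF'}$ of Lem.~\ref{lem-glue}(ii), together with compatibility of germ maps with restriction, yields $(\iota_{z'}\cotimes\sigma_r^{F''F'})(s_{F''})=(\iota_{z'}\cotimes\sigma_r^{FF'})(s_V)=s(z')$, which is (2b); the case $F'=F''$ with $\sigma_r^{F''F''}={\rm id}$ gives (2a). Thus $s\in\shf(\Omega)$ and ${\rm ev}_z([s])=t$.

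The simplicial bookkeeping with the stars $St(F)$ and the transition maps $\sigma_r^{\bullet\bullet}$ is routine once Lem.~\ref{lem-glue} and the cocycle relation are in hand; so is the verification that the candidate $s$ takes values in the stated disjoint union. The one point I expect to need genuine care — and where I would write most carefully — is the functional-analytic claim invoked twice above: that $\cotimes_L D_r(\UFe)$ commutes with the filtered colimit presenting the stalk $\cO_{\sB,z}$, which is what licenses ``germ zero $\Rightarrow$ zero on a neighbourhood'' in the injectivity step and the passage from $\tilde t\in\cA_V\smprod D_r(\UFe)$ to a genuine local section of $\shf$ in the surjectivity step. This is essentially the vector-space shadow of (Prop.~\ref{prop-skewexist}).
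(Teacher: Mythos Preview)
Your proposal is correct and follows the same overall route as the paper: both show that evaluation at $z$ is an isomorphism, with essentially identical surjectivity arguments (lift $t$ to a section over a $\Uze$-stable neighbourhood inside $St(F)$, push forward via the maps $\sigma_r^{FF'}$, and verify (2a)/(2b) for an arbitrary facet using the cocycle identity of Lem.~\ref{lem-glue}).

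The one genuine difference is in the injectivity step. You argue via the filtered-colimit description that $(\iota_z\cotimes{\rm id})(s_i)=0$ forces $s_i$ to restrict to zero on some smaller $\Omega_i'$; this is the step you correctly flag as the delicate one, since Prop.~\ref{prop-skewexist} literally gives the colimit over $\Uze$-stable affinoid neighbourhoods in $\Xan$, not over opens in $\sB$, and a short extra argument is needed to pass between the two indexing systems. The paper sidesteps this entirely: it refines the datum so that each $\Omega_i$ (with $\Omega_i\cap F\neq\emptyset$) is irreducible, notes that $\iota_z\colon\cO_\sB(\Omega_i)\to\sta$ is then injective, and invokes \cite[Cor.~1.1.27]{EmertonA} to conclude that $\iota_z\cotimes{\rm id}$ remains injective on the completed tensor product. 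This gives $s_{i_0}=0$ on the nose (no shrinking needed), and (2b) then kills $s$ on all of $\Omega_{i_0}$. So the paper trades your colimit-commutation check for a single citation; both approaches are sound.
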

\begin{proof}
There is the $K$-algebra homomorphism
\[ \shf_z\lra\sta\smprod
D_r(\Uze),~~~{\rm~germ~of~}s{\rm~at~}z\mapsto s(z).\] Let us show
that this map is injective. Let $[s]$ be the germ of a local
section $s\in\shf(\Omega)$ over some open subset
$\Omega\subseteq\sB$ with the property $s(z)=0$. Let $F$ be the
unique facet of $\sB$ that contains $z$ and let
$\{\Omega_i\}_{i\in I}$ be a corresponding datum for $s$. We may
assume, in case $\Omega_i\cap F\neq\emptyset$, that the
topological space $\Omega_i$ is irreducible. In this case the map
$\iota_z: \cO_{\sB}(\Omega_i)\rightarrow\sta$ is injective for
every point $z\in\Omega_i$. According to \cite[Cor.
1.1.27]{EmertonA} each map
$$\iota_z\cotimes {id}: \cO(\Omega_i)\cotimes_L
D_r(\UFe)\rightarrow \sta\cotimes_L D_r(\UFe)$$ remains injective.
If $i_0\in I$ is such that $z\in\Omega_{i_0}\cap F$ we therefore
conclude from  $0=s(z)=(\iota_z\cotimes {id}) (s_{i_0})$ that
$s_{i_0}=0$. Given $z'\in\Omega_{i_0}$ let $F'$ be the unique
facet of $St(F)$ containing $z'$. Then
$s(z')=(\iota_z\cotimes\sigma_r^{F'F})(s_{i_0})=0$ according to
$(2b)$ and, consequently, $s|_{\Omega_{i_0}}=0$. Since
$\Omega_{i_0}$ is an open neighbourhood of $z$ this shows $[s]=0$
and proves injectivity.

Let us now show that our map in question is surjective. Let
$t\in\sta\smprod D_r(\Uze)$ be an element in the target. Since the
stalk $\cO_{\sB,z}$ is a compact limit there is an open
$\Uze$-stable neighbourhood $\Omega'$ of $z$ and an element
$\tilde{s} \in\cO_{\sB}(\Omega')\smprod D_r(\Uze)$ such that
$(\iota_z\cotimes {id})(\tilde{s})=t$. Let $F\subseteq\sB$ be a
facet containing $z$ and define
$$\Omega:=\Omega'\cap St(F),~~~~~s:=(res^{W}_\Omega\cotimes
{\rm id}) (\tilde{s})\in\cO_{\sB}(\Omega)\smprod D_r(\Uze).$$
Since $St(F)$ is an open neighbourhood of $z$ and contains only
finitely many facets of $\sB$ we may pass to a smaller $\Omega'$
(and hence $\Omega$) and therefore assume: any $F'\in
(\sB\setminus St(F))$ satisfies $F'\cap\Omega=\emptyset$. For any
$z'\in\Omega$ let $s(z'):=(\iota_{z'}\cotimes\sigma_r^{FF'})(s)$
where $F'$ denotes the facet in $St(F)$ containing $z'$. This
defines a function
\[s: \Omega\rightarrow\dot{\bigcup}_{z'\in\Omega}
\sta\smprod D_r(U_{z'}^{(e)})\] satisfying condition (1) of
definition (\ref{def-sheaf}). According to (Lem. \ref{lem-glue}) one
has $\sigma_r^{FF}={\rm id}$ whence
\[s(z)=(\iota_z\cotimes {\rm id})(s)=(\iota_z\cotimes {\rm
id})(\tilde{s})=t.\] Thus, the germ of $s$ at $z$ will be a
preimage of $t$ once we have shown that $s\in\shf(\Omega)$.

To do this consider an arbitrary facet $F'\subseteq\sB$ together
with the covering of $\Omega\cap St(F')$ consisting of the single
element
\[\Omega_0:=\Omega\cap St(F').\] Suppose $\Omega_0\cap
F'\neq\emptyset$. We have to exhibit an element
$s_0\in\cO_{\sB}(\Omega_0)\smprod D_r(U_{F'}^{(e)})$ satisfying
$(2a)$ and $(2b)$. Since $F'\in St(F)$ we may define $s_0:=({\rm
id}\cotimes\sigma_r^{FF'})(s).$ For any $z'\in\Omega\cap F'$ we
compute
\[ s(z')=(\iota_{z'}\cotimes\sigma_r^{FF'})(s)=(\iota_{z'}\cotimes{\rm id}) ({\rm id}\cotimes\sigma_r^{FF'})(s)
=(\iota_{z'}\cotimes {\rm id})(s_0)\] which shows $(2a)$.
Moreover, for any $z'\in\Omega_0$ we compute
\[ s(z')=(\iota_{z'}\cotimes\sigma_r^{FF''})(s)=
(\iota_{z'}\cotimes\sigma_r^{F'F''})\;(\iota_{z'}\cotimes\sigma_r^{FF'})(s)=(\iota_{z'}\cotimes\sigma_r^{F'F''})(s_0)\]
by (Lem. \ref{lem-glue}). Here $F''$ denote the facet of $St(F')$
that contains $z'$. This shows $(2b)$ and completes the proof.
\end{proof}
\begin{cor} The $\cO_{\sB,z}$-module structure on $\shf_z$ for any
$z\in\sB$ sheafifies to a $\cO_\sB$-module structure on $\shf$
(compatible with scalar multiplication by $L$).
\end{cor}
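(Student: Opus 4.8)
The plan is to define the $\cO_\sB$-action on $\shf$ by the only reasonable pointwise formula and then check that it respects the local conditions in Definition~\ref{def-sheaf}. First I would record the underlying linear-algebra input: for any locally convex $L$-algebra $A$ (with separately continuous multiplication) and the Banach algebra $D_r(\UFe)$, the completed tensor product $A\cotimes_L D_r(\UFe)$ is a left $A$-module via $a\cdot(f\otimes\delta):=(af)\otimes\delta$. Indeed, for fixed $a\in A$ left multiplication $\ell_a\colon A\to A$ is a continuous endomorphism, hence $\ell_a\cotimes\mathrm{id}$ is an endomorphism of $A\cotimes_L D_r(\UFe)$, and $a\mapsto\ell_a\cotimes\mathrm{id}$ is $L$-linear, multiplicative and sends $1$ to the identity. (When $A$ is $\UFe$-stable, so that the skew ring $A\smprod D_r(\UFe)$ of section~\ref{sec-skew} exists, this is just left multiplication inside that ring.) Because $\iota_z\colon\cO_\sB(\Omega_i)\to\sta$ is an $L$-algebra homomorphism and $\sigma_r^{FF'}$ operates on the second tensor factor, the transition maps $\iota_z\cotimes\mathrm{id}$ and $\iota_z\cotimes\sigma_r^{FF'}$ appearing in Definition~\ref{def-sheaf} are left $\cO_\sB(\Omega_i)$-linear, where the targets $\sta\smprod D_r(\Uze)$ carry the left $\cO_\sB(\Omega_i)$-module structure induced along $\iota_z$.

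Next, for an open $\Omega\subseteq\sB$, $f\in\cO_\sB(\Omega)$ and $s\in\shf(\Omega)$, I would set
\[
(f\cdot s)(z):=\iota_z(f)\cdot s(z)\in\sta\smprod D_r(\Uze),\qquad z\in\Omega,
\]
using on each stalk the left $\sta$-module structure of $\sta\smprod D_r(\Uze)$. This visibly satisfies condition~$(1)$, hence $f\cdot s\in\cF(\Omega)$. To see $f\cdot s\in\shf(\Omega)$, fix a facet $F\subseteq\sB$ and a datum $\{\Omega_i\}_{i\in I}$ for $s$ with respect to $F$, with distinguished elements $s_i\in\cO_\sB(\Omega_i)\cotimes_L D_r(\UFe)$ for those $i$ with $\Omega_i\cap F\neq\emptyset$, and put $t_i:=(f|_{\Omega_i})\cdot s_i\in\cO_\sB(\Omega_i)\cotimes_L D_r(\UFe)$. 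For $z\in\Omega_i\cap F$ one has $(f\cdot s)(z)=\iota_z(f)\cdot(\iota_z\cotimes\mathrm{id})(s_i)=(\iota_z\cotimes\mathrm{id})(t_i)$, and for $z'\in\Omega_i$ with $F'\in St(F)$ the facet containing $z'$ one has $(f\cdot s)(z')=(\iota_{z'}\cotimes\sigma_r^{FF'})(t_i)$, both by the left $\cO_\sB(\Omega_i)$-linearity recorded above. Thus $\{\Omega_i\}_{i\in I}$ together with the $t_i$ is a datum for $f\cdot s$, so $f\cdot s\in\shf(\Omega)$.

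Finally, $K$-bilinearity, the identities $(ff')\cdot s=f\cdot(f'\cdot s)$ and $1\cdot s=s$, and compatibility with the restriction maps of $\shf$ are all immediate from the pointwise formula, so $\shf$ acquires the structure of a sheaf of $\cO_\sB$-modules. Passing to stalks, the action $\cO_{\sB,z}\times\shf_z\to\shf_z$ is, under the identification $\shf_z=\sta\smprod D_r(\Uze)$ of Lemma~\ref{lem-stalkI}, exactly the left $\sta$-module structure of $\sta\smprod D_r(\Uze)$; and since $\iota_z$ is the identity on the constant subsheaf $L\subseteq\cO_\sB$, restricting the $\cO_\sB$-action to $L$ recovers the $L$-scalar multiplication underlying the $K$-vector space structure of $\shf$. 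There is no serious obstacle: the whole argument is a formal verification, the only delicate bookkeeping being the identification of the left module structures on the completed tensor products $\cO_\sB(\Omega_i)\cotimes_L D_r(\UFe)$ and their compatibility with the maps $\iota_z\cotimes\mathrm{id}$ and $\iota_z\cotimes\sigma_r^{FF'}$, which is the step I would write out most carefully.
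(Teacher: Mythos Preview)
Your proof is correct and follows essentially the same approach as the paper: define the $\cO_\sB$-action pointwise by $(f\cdot s)(z)=\iota_z(f)\cdot s(z)$ and then verify that conditions (2a) and (2b) of Definition~\ref{def-sheaf} are preserved via the $\cO_\sB(\Omega_i)$-linearity of the maps $\iota_z\cotimes\mathrm{id}$ and $\iota_{z'}\cotimes\sigma_r^{FF'}$. The paper compresses this verification into a single phrase (``the $\cO_\sB$-linearity in the conditions (2a) and (2b)''), whereas you spell out the datum $t_i=(f|_{\Omega_i})\cdot s_i$ and the relevant module structures explicitly, which is a welcome elaboration rather than a different argument.
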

\begin{proof}
As with any sheaf (\cite[II.1.2]{Godement}) we may regard $\shf$
as the sheaf of {\it continuous} sections of its \'etale space

\[\xymatrix{
\dot{\bigcup}_{z\in\sB} \shf_z\ar[d]\\
\sB}.
\]
and the same applies to the sheaf $\cO_\sB$. By the preceding
proposition we have $\shf_z=\sta\smprod D_r(\Uze)$ for any
$z\in\sB$. Let $\Omega\subseteq\sB$ be an open subset,
$s\in\shf,f\in\cO_\sB(\Omega)$. For $z\in\Omega$ put $(f\cdot
s)(z):=f(z)\cdot s(z)$. This visibly defines an element $f\cdot
s\in\cF(\Omega)$. The $'\cO_\sB$-linearity' in the conditions
$(2a)$ and $(2b)$ proves $f\cdot s\in \shf$. It follows that
$\shf$ is an $\cO_\sB$-module in the prescribed way.
\end{proof}
\begin{prop}\label{prop-morconst}
The natural map
\[D_r(\Uze)\rightarrow (\shfo)_z,~~~~
\delta\mapsto 1\cotimes\delta\] sheafifies to a morphism of
sheaves of $K$-algebras $\tiD\longrightarrow\shfo.$
\end{prop}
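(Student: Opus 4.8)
The plan is to construct the morphism directly at the level of sections and then check that it is $K$-algebra-linear and induces the asserted map on stalks. For an open set $\Omega\subseteq\sB$ and a section $s\in\tiD(\Omega)$ I would define
\[
\varphi_\Omega(s)\colon\Omega\longrightarrow\dot{\bigcup}_{z\in\Omega}\sta\smprod D_r(\Uze),\qquad z\longmapsto 1\cotimes s(z).
\]
Condition $(1)$ in the definition of $\shfo$ holds by construction, and compatibility of $\varphi_\Omega$ with restriction maps is immediate since $\varphi$ is defined pointwise. So the three remaining issues are: (a) that $\varphi_\Omega(s)$ satisfies condition $(2)$, hence lands in $\shfo(\Omega)$; (b) multiplicativity of $\varphi_\Omega$; (c) the identification of the induced stalk maps with $\delta\mapsto 1\cotimes\delta$.

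For (a), fix a facet $F\subseteq\sB$ and choose, as provided by the definition of $\tiD(\Omega)$, a datum for $s$ with respect to $F$: a finite open covering $\Omega\cap St(F)=\bigcup_{i\in I}\Omega_i$ and elements $s_i\in D_r(\UFe)$ (for those $i$ with $\Omega_i\cap F\neq\emptyset$) satisfying $(2a)$ and $(2b)$ of that definition. Put $\tilde s_i:=1\cotimes s_i\in\cO_\sB(\Omega_i)\cotimes_L D_r(\UFe)$, where $1$ is the unit section of $\cO_\sB(\Omega_i)$ (so $\tilde s_i$ is the image of $1\otimes s_i$ from the algebraic tensor product). Since $\iota_z(1)=1\in\sta$, we have $(\iota_z\cotimes\mathrm{id})(\tilde s_i)=1\cotimes s_i=1\cotimes s(z)=\varphi_\Omega(s)(z)$ for $z\in\Omega_i\cap F$, which is $(2a)$ for $\varphi_\Omega(s)$; and for $z'\in\Omega_i$, with $F'$ the unique facet of $St(F)$ containing $z'$, we get $(\iota_{z'}\cotimes\sigma_r^{FF'})(\tilde s_i)=1\cotimes\sigma_r^{FF'}(s_i)=1\cotimes s(z')=\varphi_\Omega(s)(z')$, which is $(2b)$. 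Hence $\varphi_\Omega(s)\in\shfo(\Omega)$, and $\varphi$ is a morphism of sheaves of $K$-vector spaces.

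For (b), pointwise one has $\varphi_\Omega(ss')(z)=1\cotimes\bigl(s(z)s'(z)\bigr)=(1\cotimes s(z))(1\cotimes s'(z))$, using that $\delta\mapsto 1\cotimes\delta$ is a unital $K$-algebra homomorphism $D_r(\Uze)\to\sta\smprod D_r(\Uze)$; this follows from the (completed analogue of the) identities in the proof of Proposition \ref{prop-skewextensions} together with a density/continuity argument, since $L[\Uze]$ is dense in $D_r(\Uze)$ and $1\cotimes(-)$ restricted to $L[\Uze]$ is multiplicative. For (c), the germ of $s\in\tiD(\Omega)$ at $z$ corresponds to $s(z)\in D_r(\Uze)=(\tiD)_z$, while by Lemma \ref{lem-stalkI} the germ of $\varphi_\Omega(s)$ at $z$ corresponds to $\varphi_\Omega(s)(z)=1\cotimes s(z)\in\sta\smprod D_r(\Uze)=(\shfo)_z$; thus the stalk map $\varphi_z$ is exactly $\delta\mapsto 1\cotimes\delta$, as required. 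I do not expect a genuine obstacle here: everything is a formal consequence of the pointwise description of both sheaves, and the only point demanding a little care is verifying that the constant section $1$ over each $\Omega_i$ is transported correctly by the maps $\iota_z\cotimes\mathrm{id}$ and $\iota_{z'}\cotimes\sigma_r^{FF'}$ occurring in condition $(2)$, which is straightforward.
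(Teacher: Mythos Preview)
Your proposal is correct and follows the natural approach; the paper's own proof consists solely of the sentence ``This is almost obvious,'' so you have simply spelled out the evident verification that the paper omits. The only minor remark is that the datum for $s\in\tiD(\Omega)$ is by definition a \emph{finite} open covering, while the definition of $\shfo$ only asks for an open covering, so your transported datum trivially qualifies.
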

\begin{proof}
This is almost obvious.
\end{proof}

Recall (\ref{equ-alghom}) that we have for any $z\in\sB$ a
canonical $K$-algebra homomorphism
\[
\cO_{\sB,z}\smprod U(\frg)_K\ra \sta\smprod D_r(\Uze).\]
\begin{prop}\label{prop-BBvsdistr}
The homomorphisms (\ref{equ-alghom}) sheafify into a morphism
\[\cO_{\sB}\smprod U(\frg)_K\lra \shfo\]
of sheaves of $K$-algebras. This morphism is $\cO_\sB$-linear.
\end{prop}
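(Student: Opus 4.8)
The plan is to construct the morphism on the level of the defining presheaf of $\cO_\sB\smprod U(\frg)_K$ and then to sheafify; the fact that the induced stalk maps are the homomorphisms (\ref{equ-alghom}) will then come out automatically. Fix an open subset $\Omega\subseteq\sB$. First I would assemble two pieces of data valued in $\shf(\Omega)$. The first is the $\cO_\sB(\Omega)$-algebra homomorphism $\psi_A\colon\cO_\sB(\Omega)\lra\shf(\Omega)$ sending $f$ to the section $z\mapsto\iota_z(f)\cotimes 1$; that this takes values in $\shf(\Omega)$ is routine, using over each $\Omega\cap St(F)$ a $\UFe$-stable refinement for which the relevant skew group rings exist (as in the proof of (Lem. \ref{lem-refine})), the elements $f|_{\Omega_i}\cotimes 1$, and $\sigma_r^{FF'}(1)=1$. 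The second is a Lie algebra homomorphism $\psi_\frg\colon\frg\lra\shf(\Omega)$: since every $\UFe$ is an open subgroup of $G$, all the Lie algebras $\Lie(\UFe)$ are canonically identified with $\frg$, and the last assertion of (Lem. \ref{lem-glue}) says that the transition maps $\sigma_r^{FF'}$ restrict to the identity of $\frg$; being algebra homomorphisms, they then restrict to the identity of $U(\frg)_K\subseteq D_r(\UFe)$. Consequently, for $\fry\in U(\frg)_K$ the function $z\mapsto(\text{image of }\fry\text{ in }D_r(\Uze))$ satisfies condition (2) of the definition of $\tiD$ — with the constant datum over each star — hence defines a section of $\tiD(\Omega)$; this yields a $K$-algebra homomorphism $U(\frg)_K\lra\tiD(\Omega)$, and composing with the sheaf morphism $\tiD\lra\shfo$ of (Prop. \ref{prop-morconst}) and restricting to $\frg$ produces $\psi_\frg$.

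Next I would check the one compatibility needed to invoke the universal property of the skew enveloping algebra $\cO_\sB(\Omega)\smprod U(\frg)_K$ (\cite[1.7.10]{MCR}), namely that $[\psi_\frg(\frx),\psi_A(f)]=\psi_A(\al(\frx)(f))$ for $\frx\in\frg$ and $f\in\cO_\sB(\Omega)$, where $\al$ denotes the differentiated $\frg$-action on $\cO_\sB$. Since $\shf$ is a sheaf, hence separated, this may be verified on stalks, where it becomes the identity $[1\cotimes\frx,\iota_z(f)\cotimes 1]=(\al(\frx)(\iota_z(f)))\cotimes 1$ in $\sta\smprod D_r(\Uze)$, which holds by the skew group ring relations of section \ref{sec-skew} (identity (ii) of (Prop. \ref{prop-skewextensions}), in its completed $D_r$-version). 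The universal property then provides a ring homomorphism $\Psi_\Omega\colon\cO_\sB(\Omega)\smprod U(\frg)_K\lra\shf(\Omega)$ extending $\psi_A$ and $\psi_\frg$; because it extends $\psi_A$ it is $\cO_\sB(\Omega)$-linear for the left module structures, the one on $\shfo$ being that of the corollary preceding (Prop. \ref{prop-morconst}). These $\Psi_\Omega$ are compatible with restriction in $\Omega$, so they glue to a morphism of presheaves and, after sheafifying the source, to a morphism $\cO_\sB\smprod U(\frg)_K\lra\shfo$ of sheaves of $K$-algebras which is $\cO_\sB$-linear.

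Finally I would identify the stalk maps with (\ref{equ-alghom}). Passing to stalks commutes with the skew enveloping algebra construction, so $(\cO_\sB\smprod U(\frg)_K)_z=\sta\smprod U(\frg)_K$, and this stalk carries the same universal property; the induced map $(\Psi)_z$ is therefore the unique ring homomorphism $\sta\smprod U(\frg)_K\lra\sta\smprod D_r(\Uze)$ restricting to $\iota_z$ on $\sta$ and to $U(\frg)_K\hra D_r(\Uze)$ on $U(\frg)_K$ — and this is precisely how (\ref{equ-alghom}) was built in the corollaries following (Prop. \ref{prop-skewexist}), so $(\Psi)_z=(\ref{equ-alghom})$. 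The step I expect to be the main obstacle is the very first one: checking that $\fry\mapsto(z\mapsto\text{image of }\fry)$ really defines a section of $\tiD$. This is the only point where the precise compatibility of the completed algebras $D_r(\UFe)$ under the maps $\sigma_r^{FF'}$ is used, and it rests on upgrading the Lie-algebra statement of (Lem. \ref{lem-glue}) multiplicatively to all of $U(\frg)_K$; everything else is formal manipulation with the sheaf axioms and the universal property.
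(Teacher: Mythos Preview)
Your argument is correct and rests on the same key ingredient as the paper --- namely that each $\sigma_r^{F'F}$ restricts to the identity on $U(\frg)_K$, which indeed follows from the last assertion of Lem.~\ref{lem-glue} together with multiplicativity --- but the route you take is genuinely different. The paper does not invoke the universal property of the skew enveloping algebra at all: it views $\cO_\sB\smprod U(\frg)_K$ directly as the sheaf of continuous sections of its \'etale space $\dot{\bigcup}_{z\in\sB}\sta\smprod U(\frg)_K\to\sB$, composes such sections pointwise with the stalk maps (\ref{equ-alghom}) to land in the ambient sheaf $\cF$, and then checks that the image lies in the subsheaf $\shf$ by exhibiting, over every star $St(F)$, the single-set datum $\Omega_0=\Omega\cap St(F)$ with $s_0$ the image of the given section under $\cO_\sB(\Omega)\otimes_L U(\frg)_K\to\cO_\sB(\Omega)\cotimes_L D_r(\UFe)$; axiom (2b) then falls out of the $\sigma_r^{FF'}$-compatibility on $U(\frg)_K$. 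This is shorter and matches the \'etale-space style already used for $\shf$ itself. Your construction, by contrast, builds the map on the presheaf level from $\psi_A$ and $\psi_\frg$ and obtains the algebra-homomorphism and $\cO_\sB$-linearity properties as a formal consequence of the universal property, at the price of a separate stalk-identification step at the end. Either way is fine; the paper's version is more economical, yours is more structural.
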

\begin{proof}
We view $\cO_{\sB}\smprod U(\frg)_K$ as the sheaf of continuous
sections of its \'etale space

\[\xymatrix{
\dot{\bigcup}_{z\in\sB} \sta\smprod_L U(\frg)_K\ar[d]\\
\sB.}
\]
Composing such a section with (\ref{equ-alghom}) defines a
morphism $i: \cO_{\sB}\smprod U(\frg)_K\lra \cF$ of sheaves of
$K$-algebras and we will prove that its image lies in the subsheaf
$\shf$. To do this let $\Omega\subseteq\sB$ be an open subset and
$s\in\shfu(\Omega)$ a local section.
%Indeed, we may choose an open neighbourhood Omega(z) of \sB around each point z in Omega.
%Then any f in \cO_\sB(\Omega(z)) comes by restriction from a function h in O_{\Xan}(U) for an open set U\subset\Xan containing \Omega(z)
%(because \cO_\sB is the restriction of \cO_\Xan). Since U contains z we find, since \Xan is good,
%an affinoid neighbourhood V of z that is contained in U. Passing to (top. interior of U) \cap \Omega (z)=: W(z) gives an open
%subset of \sB containing z such that f|_{W(z)} comes from h|_U. We do this now for all the finitely many 'first components' of s
% (in some expression of s as a finite sum of elementary tensors )

Let $F\subseteq\sB$ be a facet. Consider the covering of
$\Omega\cap St(F)$ consisting of the single element
$\Omega_0:=\Omega\cap St(F)$. In case $\Omega_0\cap
F\neq\emptyset$ let $s_0$ be the image of $\tilde{s}$ under the
map
\[\cO_{\sB}(\Omega)\otimes_L U(\frg)_K\lra
\cO_{\sB}(\Omega)\cotimes_L D_r(\UFe)\] induced by
$U(\frg)_K\subseteq D_r(\UFe)$. For any $z\in\Omega_0\cap F$ we
obviously have $i(s)(z)=(\iota_z\cotimes {\rm id})(s_0)$ which
shows $(2a)$. For any $z\in \Omega_0$ we find
$i(s)(z)=(\iota_z\cotimes {\rm
id})(s_0)=(\iota_z\cotimes\sigma_r^{FF'})(s_0)$ by the last
statement of (Lem. \ref{lem-glue}). Here $F'$ denotes the facet
containing $z$. This shows $(2b)$. In the light of the definitions
it is clear that the resulting morphism $\cO_{\sB}\smprod
U(\frg)_K\ra \shfo$ is $\cO_\sB$-linear.
\end{proof}

\subsection{Infinitesimal characters.}
In the following we write $\frg_K:=\frg \otimes_{\bbQ_p} K,
\frt_K:=\frt\otimes_{\bbQ_p}K$ etc.

\begin{para} According to \cite[Prop. 3.7]{ST4} the ring $Z(\frg_K)$ lies in the center of the ring $D(G)$.
In the following we fix a central character $$\theta:
Z(\frg_K)\longrightarrow K$$ and we let
$$D(G)_\theta:=D(G)\otimes_{Z(\frg_K),\theta} K$$ be the
corresponding central reduction of $D(G)$. A (left)
$D(G)_\theta$-module $M$ is called {\it coadmissible} if it is
coadmissible as $D(G)$-module via the natural map $D(G)\rightarrow
D(G)_\theta, \delta\mapsto \delta\cotimes 1.$ In the following we
are going to study the abelian category of coadmissible
$D(G)_\theta$-modules. As explained in the beginning this category
is anti-equivalent to the category of admissible locally analytic
$G$-representations over $K$ which have infinitesimal character
$\theta$.

\vskip8pt

Example: Let $\lambda_{0}:D(T)\longrightarrow K$ denote the
character of $D(T)$ induced by the augmentation map $K[T]
\rightarrow K$. The restriction of $\lambda_0$ to the Lie algebra
${\frt_K}\subset D(T)$ vanishes identically whence $\chi=\rho$.
Let $\theta_0: Z(\frg_K)\longrightarrow K$ be the infinitesimal
character associated to $\rho$ via the Harish-Chandra
homomorphism. Then $\ker\theta_0=Z(\frg_K)\cap U(\frg_K)\frg_K$.

\vskip8pt

Remark: In \cite[\S8]{AW} K. Ardakov and S. Wadsley establish a version of
Quillen's lemma for $p$-adically completed universal enveloping
algebras. It implies that any topologically irreducible admissible
locally analytic $G$-representation admits, up to a finite
extension of $K$, a central character and an infinitesimal
character (\cite{DospSchraen}).

\vskip8pt

\begin{para}To investigate the local situation let $F$ be
a facet in $\sB$. We have $$Z(\frg_K)\subseteq D(\UFe)\cap
Z(D(G))\subseteq Z(D(\UFe)),$$ again according to \cite[Prop. 3.7]{ST4}. We let
$D_r(\UFe)_\theta:=D_r(\UFe)\otimes_{Z(\frg)_K,\theta}K$ be the
corresponding central reduction of $D_r(\UFe)$.

\vskip8pt

Let $F,F'$ be two facets in $\sB$ such that
$F'\subseteq\overline{F}$ and consider the homomorphism
$\sigma_r^{F'F}$. According to the last statement of
(Lem. \ref{lem-glue}) it factors by continuity into a homomorphism
\[\sigma_r^{F'F}: D_r(U_{F'}^{(e)})_\theta\longrightarrow
D_r(\UFe)_\theta.\] We may therefore define a sheaf of
$K$-algebras $\tiDt$ in a completely analogous way as the sheaf
$\tiD$ by replacing each $D_r(\Uze)$ and each $D_r(\UFe)$ by their
central reductions. In particular, $(\tiDt)_z=D_r(\Uze)_\theta$
for any $z\in\sB$ and there is an obvious quotient morphism
$$\tiD\longrightarrow \tiDt.$$\end{para}

\subsection{Twisting}
 We now bring in a toral character $$\chi: \frt_K\longrightarrow
K$$ such that $\sigma(\chi)=\theta$. We consider the two-sided
ideals $\cI^{an}_{\sB,\frt}$ and $\cI^{an}_{\sB,\chi}$ of
$\cO_{\sB}\smprod U(\frg_K)$. Denote the right ideal in $\shfo$
generated by the image of the first resp. second under the
morphism
\[\cO_{\sB}\smprod U(\frg_K)\lra \shfo\]
by $\sI^{an}_{\frt}$ resp. $\sI^{an}_{\chi}$.

\begin{lemma}\label{lem-twosided}
The right ideals $\sI^{an}_{\frt}$ and $\sI^{an}_{\chi}$ are
two-sided ideals.
\end{lemma}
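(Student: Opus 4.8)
The plan is to verify both assertions stalkwise. Fix $z\in\sB$. By Lem.~\ref{lem-stalkI} the stalk is $\shfo_z=\sta\smprod D_r(\Uze)$, and I write $\frb^{\ci,an}_{\sB,z}$, $\frn^{\ci,an}_{\sB,z}$, $(\ker\lambda^{\ci,an}_\sB)_z$ for the stalks at $z$ of these $\cO_\sB$-submodules of $\frg^{\ci,an}_\sB=\cO_\sB\otimes_L\frg$, viewed inside $\shfo$ via Prop.~\ref{prop-BBvsdistr}. Since $\sta$ is a field (Lem.~\ref{lem-stalksarefields}), each of them is a finite-dimensional $\sta$-subspace of $\frg^{\ci,an}_{\sB,z}=\sta\otimes_L\frg$, and $(\sI^{an}_\frt)_z$ (resp. $(\sI^{an}_\chi)_z$) is the right ideal of $\shfo_z$ generated by $\frn^{\ci,an}_{\sB,z}$ (resp. by $(\ker\lambda^{\ci,an}_\sB)_z$). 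Because $\shfo_z=\sta\,\cotimes_L D_r(\Uze)$ is topologically generated as a $K$-algebra by $\sta$ and by the group algebra $K[\Uze]$ (which is dense in $D_r(\Uze)$), it is enough to establish: (a) left multiplication by an element of $\sta$, or by a Dirac distribution $\delta_g$ with $g\in\Uze$, carries the generating subspace into the right ideal it generates; and (b) that right ideal is closed in $\shfo_z$. Indeed, (a) propagates by induction to left multiplication by the dense subalgebra generated by $\sta$ and $K[\Uze]$, and then (b) together with continuity of multiplication gives $\shfo_z\cdot(\sI^{an}_\frt)_z\subseteq(\sI^{an}_\frt)_z$, and likewise for $\chi$.

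For (a), note that $\frn^{\ci,an}_{\sB,z}$ and $(\ker\lambda^{\ci,an}_\sB)_z$ are contained in $\frb^{\ci,an}_{\sB,z}$, the kernel of the anchor $\frg^{\ci,an}_\sB\to\sF(\cT_{\Xan})$; hence for such a section $\xi$ and $f\in\sta$ the skew-enveloping relation, transported along the algebra map $\cO_\sB\smprod U(\frg_K)\to\shfo$ of Prop.~\ref{prop-BBvsdistr}, reads $[\xi,f]=0$ in $\shfo_z$, so $f\xi=\xi f$, and since these subspaces are $\sta$-submodules left multiplication by $\sta$ preserves them. For the Dirac distributions I use that $\sF$ preserves $G$-equivariance: $\frn^{\ci,an}_\sB=\sF(\frn^{\ci,an})$ is a $G$-equivariant subsheaf of $\frg^{\ci,an}_\sB$, and so is $\ker\lambda^{\ci,an}_\sB$, because $\lambda^{\ci,an}_\sB$ is the composite of the canonical $G$-equivariant projection $\frb^{\ci,an}_\sB\twoheadrightarrow\cO_\sB\otimes_L\frt$ with the $\cO_\sB$-linear form induced by $\lambda=\chi-\rho$, and $G$ acts trivially on the abstract Cartan $\frh\cong\frt$. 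As $\Uze$ fixes $z$ it acts compatibly on all the stalks in sight, and in $\shfo_z=\sta\smprod D_r(\Uze)$ the conjugation relation $\delta_g\,\xi\,\delta_g^{-1}={}^{g}\xi$ gives $\delta_g\xi=({}^{g}\xi)\,\delta_g$, which lies in $(\sI^{an}_\frt)_z$ (resp. $(\sI^{an}_\chi)_z$) since ${}^{g}\xi$ lies in $\frn^{\ci,an}_{\sB,z}$ (resp. $(\ker\lambda^{\ci,an}_\sB)_z$). (Alternatively one expands $\delta\xi=\sum_{(\delta)}{\rm Ad}(\delta_{(1)})(\xi)\cdot\delta_{(2)}$ using the completed comultiplication $\Delta_r$ and the continuation of the adjoint $\Uze$-action to $D_r(\Uze)$, which places $\delta\xi$ directly in the closed right ideal once one knows these subspaces are $D_r(\Uze)$-stable — they are, being stalks of closed $G$-equivariant subsheaves.)

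The remaining point (b), that $(\sI^{an}_\frt)_z$ and $(\sI^{an}_\chi)_z$ are closed, is the main obstacle. A finite set of $\sta$-generators of $\frn^{\ci,an}_{\sB,z}$ (resp. $(\ker\lambda^{\ci,an}_\sB)_z$) already lies in $\cA_{V_0}\otimes_L\frg$ for some $\Uze$-stable affinoid neighbourhood $V_0$ of $z$ (Lem.~\ref{lem-stableneighbour}), and by Prop.~\ref{prop-skewexist} we may write $\shfo_z=\varinjlim_{V\subseteq V_0}(\cA_V\smprod D_r(\Uze))$; the right ideal in question is then the inductive limit of the finitely generated right ideals these elements span in the Banach algebras $\cA_V\smprod D_r(\Uze)$. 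Since each $\cA_V\smprod D_r(\Uze)$ is a noetherian $K$-Banach algebra (a completed skew tensor product of the noetherian Banach algebras $\cA_V$ and $D_r(\Uze)$), a finitely generated one-sided ideal in it is closed, and since the transition maps are flat and injective (Lem.~\ref{lem-neighbour}) the inductive limit is closed in the compact-type algebra $\shfo_z$; equivalently, one may extend the trivialization (\ref{equ-trivialization}) to a PBW-type decomposition of $\shfo_z$ and identify the right ideal with the kernel of the projection onto a complementary subalgebra. Granting this, the closures produced in (a) coincide with $(\sI^{an}_\frt)_z$ resp. $(\sI^{an}_\chi)_z$; as this holds for every $z\in\sB$, the right ideals $\sI^{an}_\frt$ and $\sI^{an}_\chi$ are two-sided.
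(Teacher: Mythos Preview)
Your argument follows essentially the same route as the paper's: check stalkwise, use the $G$-equivariance of the generating subsheaves $\frn^{\ci,an}_\sB$ and $\ker\lambda^{\ci,an}_\sB$, and the conjugation identity $\delta_g\,\xi\,\delta_g^{-1}={}^g\xi$ for $g\in\Uze$. The paper packages this slightly more efficiently: rather than working with generators $\xi$ and then propagating, it proves the global identity $\delta_g\cdot\partial=g^*(\partial)\,\delta_g$ for \emph{every} $\partial\in\shfo_z$ at once (by expanding $\partial=\sum_\alpha f_\alpha\hat\otimes\bb^\alpha$ and using the skew multiplication rule), and then simply invokes $G$-stability of $\sI^{an}_{\frt,z}$ to conclude $g^*(\partial)\delta_g\in\sI^{an}_{\frt,z}$. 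This avoids your separate check that left multiplication by $\sta$ preserves the ideal. Both arguments then reduce to $K[\Uze]$ by density and require the right ideal to be closed in $\shfo_z$; the paper leaves this implicit under its citation of \cite[Lem.~3.1]{ST4}.

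Your part (b) is where the genuine gap lies. The assertion that $\cA_V\smprod D_r(\Uze)$ is a noetherian $K$-Banach algebra does \emph{not} follow from the noetherianity of $\cA_V$ and $D_r(\Uze)$ separately; completed tensor products of noetherian Banach algebras need not be noetherian in general, and the skew multiplication only complicates matters. One would need an independent argument, for instance by exhibiting a complete filtration on $\cA_V\smprod D_r(\Uze)$ whose associated graded is a (commutative) affinoid-type algebra and then lifting noetherianity. Your alternative via a PBW-type splitting in the spirit of (\ref{equ-trivialization}) is the more promising route, but as written it is only a gesture. So the overall strategy matches the paper's, and your extra care about $\sta$-stability is welcome, but the closedness justification in (b) is not yet a proof.
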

\begin{proof}
According to section \ref{sect-equiv} the sheaves $\cO_{\sB}\smprod
U(\frg_K)$ and $\shfo$ have a natural $G$-equivariant structure
such that the morphism $\cO_{\sB}\smprod
U(\frg_K)\rightarrow\shfo$ is equivariant. Moreover, the ideals
$\cI^{an}_{\sB,\frt}$ and $\cI^{an}_{\sB,\chi}$ are $G$-stable.
Hence, so are the right ideals $\sI^{an}_{\frt}$ and
$\sI^{an}_{\chi}$. That these ideals are two-sided can be checked
stalkwise. We give the argument in the case $\sI^{an}_{\frt}$. The
other case follows in the same way. By \cite[Lem. 3.1]{ST4} it
suffices to prove that the product $\delta_g\cdot \partial\in
\cO_{\sB,z}\smprod D_r(\Uze)$ lies in the subspace
$\sI^{an}_{\frt,z}$ for $g\in\Uze$ and
$\partial\in\sI^{an}_{\frt,z}$. Using the power series expansions
for elements of completed distribution algebras (2.2.5) we may
write $\partial$ as an infinite convergent sum
$\partial=\sum_{\alpha\in\bbN_0^d}
f_\alpha\hat{\otimes}\bb^\alpha$ with $f_\alpha\in\cO_{\sB,z}$. By
definition of the skew multiplication
 (\ref{equ-skew}) we have
$$\delta_g\cdot \partial=\sum_{\alpha\in\bbN_0^d}
(g.f_\alpha)\hat{\otimes}\delta_g\bb^\alpha=\sum_{\alpha\in\bbN_0^d}
(g.f_\alpha)\hat{\otimes}(\delta_g\bb^\alpha\delta_g^{-1})\delta_g=\sum_{\alpha\in\bbN_0^d}
g^*(f_\alpha\hat{\otimes}\bb^\alpha)\delta_g=g^*(\partial)\delta_g\in\sI^{an}_{\frt,z}$$
where $g^*:\sI^{an}_{\frt,z}\car\sI^{an}_{\frt,z}$ is induced by
the equivariant structure on the sheaf $\sI^{an}_{\frt}$ (note
that $\Uze$ stabilizes the point $z$).
\end{proof}

By the preceding lemma we may form the quotient sheaves
\[\begin{array}{lcr}
 \sD_{r,\frt}:=\shf/\sI^{an}_{\frt},  & & \sD_{r,\chi}:=\shf/\sI^{an}_{\chi}. \\
\end{array}\]
These are sheaves of (noncommutative) $K$-algebras on $\sB$ and,
at the same time, $\cO_{\sB}$-modules. We have a commutative
diagram of morphisms

\begin{numequation}\label{equ-extension}
\xymatrix{
\cD^{an}_{\sB,\frt}\ar[d] \ar[r] &  \cD^{an}_{\sB,\chi} \ar[d] \\
 \sD_{r,\frt} \ar[r] & \sD_{r,\chi}}
\end{numequation}
with surjective horizontal arrows. Moreover, it follows from
(\ref{equ-centred}) and the preceding lemma that the lower
horizontal arrow induces an isomorphism
\begin{numequation}\label{equ-centred2}\sD_{r,\frt}/(\ker\lambda)\sD_{r,\frt}\car\sD_{r,\chi}.\end{numequation}
We have the following extension of the property 2. in \cite[\S2, Lemme]{BB81}.
\begin{prop}\label{prop-central}
The morphism $\tiD\rightarrow\shfo\rightarrow\sDc$ factors through
$\tiD\rightarrow \tiDt$.
\end{prop}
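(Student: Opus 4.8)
The statement to prove is that the composite $\tiD \to \shfo \to \sDc$ kills the central ideal defining $\tiDt$, i.e. factors through the quotient $\tiD \to \tiDt$. Concretely, writing $I_\theta \subset \tiD$ for the sheaf of two-sided ideals with stalk $I_{\theta,z} = (\ker\theta)\cdot D_r(\Uze)$ (the kernel of $\tiD \to \tiDt$), I must show that the image of $I_\theta$ in $\sDc$ is zero. Since both $\tiD$ and $\sDc$ are sheaves and the relevant maps are morphisms of sheaves, this can be checked stalkwise: it suffices to show that for each $z \in \sB$, the composite $D_r(\Uze) \to \sta \smprod D_r(\Uze) \to \sDcz$ sends $(\ker\theta)\cdot D_r(\Uze)$ to $0$.

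\textbf{Key steps.} First, unwind the stalk of $\sDc$ at $z$: by (\ref{equ-centred2}) and the stalk computations in (Lem.\ \ref{lem-stalkI}), $\sDcz = \big(\sta \smprod D_r(\Uze)\big)/\sI^{an}_{\chi,z}$, where $\sI^{an}_{\chi,z}$ is the right ideal generated by the image of $\ker(\lambda^{\ci,an}_\sB)$ under (\ref{equ-alghom}). Second, observe that an element $\zeta \in Z(\frg_K)$, viewed inside $D_r(\Uze)$ via $U(\frg_K) \subset D_r(\Uze)$, maps under (\ref{equ-alghom}) into $\sta \smprod U(\frg_K) \subset \sta \smprod D_r(\Uze)$. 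Via the Harish-Chandra homomorphism, $\zeta - \theta(\zeta)$ lies in $U(\frg_K)\frb^\ci$ locally on $X$ — more precisely, the argument of \cite[\S2, Lemme]{BB81} shows that modulo the ideal generated by $\ker\lambda^{\ci,an}$ (equivalently, modulo $\frn^{\ci,an}$ after the central reduction by $\lambda$), the image of $\zeta$ in $\cD^{an}_{\sB,\chi}$ equals $\theta(\zeta)$. Third, transport this along the commutative square at the end of Section 8.3 relating $\underline{U}(\frg_K) \to \cD^{an}_{\sB,\chi}$ and $\tiD \to \sDc$ (the square built from Prop.\ \ref{prop-BBvsdistr}): the image of $\zeta$ in $\sDcz$ factors through its image in $\cD^{an}_{\sB,\chi,z}$, which is $\theta(\zeta)$. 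Hence $\zeta$ and $\theta(\zeta)$ have the same image in $\sDcz$, i.e. $\ker\theta$ maps to $0$ there. Since $\sDc$ is a module over the image of $\tiD$ (and the relevant ideal is two-sided, being central in $\tiD$), the whole ideal $(\ker\theta)D_r(\Uze)$ dies. Finally, assemble the stalkwise statements into the sheaf-theoretic factorization.

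\textbf{Main obstacle.} The delicate point is the second step: verifying that the image of $\ker\theta$ in $\cD^{an}_{\sB,\chi}$ vanishes. This is the analytic counterpart of the classical Beilinson-Bernstein fact (\cite[\S2, Lemme]{BB81}, Thm.\ \ref{thm-BB}(i)) that $U(\frg)_\theta \xrightarrow{\ \sim\ } \Gamma(X,\cD_\chi)$ — in particular $\ker\theta \mapsto 0$ in $\cD_\chi$. One must check that this passes to the analytification $\cD^{an}_\chi = \pi^*\cD_\chi$ (which is routine, since $\cD^{an}_\chi$ is constructed by applying $\pi^*$ to the algebraic sheaf and the map $\underline{U}(\frg_K)\to\cD^{an}_\chi$ is the pullback of $\underline{U}(\frg_K)\to\cD_\chi$), and then restricts along $\vartheta_\bB$ via the exact functor $\sF$ to give $\ker\theta \mapsto 0$ in $\cD^{an}_{\sB,\chi} = \sF(\cD^{an}_\chi)$. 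Everything else is bookkeeping with the definitions of the stalks and the compatibility diagrams already established.
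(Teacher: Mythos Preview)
Your proof is correct and follows essentially the same route as the paper: reduce to stalks, observe that the kernel of $D_r(\Uze)\to D_r(\Uze)_\theta$ is generated by $I_\theta=\ker\theta\subset Z(\frg_K)$, and then invoke \cite[\S2, Lemme]{BB81} to see that $I_\theta$ maps into $\cI^{an}_{\sB,\chi,z}$ (hence into $\sI^{an}_{\chi,z}$). Your added remarks about passing to the analytification and restricting via $\sF$, and about the two-sidedness needed to kill the whole ideal $(\ker\theta)D_r(\Uze)$, are correct elaborations that the paper leaves implicit.
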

\begin{proof}
Letting $\sK$ be the kernel of the morphism $\tiD\rightarrow\tiDt$
the claim amounts to $$\sK\subseteq\ker (\tiD\rightarrow\sDc).$$
This can be checked stalkwise, i.e. we are reduced to show that,
for each $z\in\sB$ the natural map $D_r(\Uze)\ra \sta\smprod
D_r(\Uze)/(\sI_\chi)_z$ factores through $D_r(\Uze)_\theta$. The
kernel of the map $D_r(\Uze)\ra D_r(\Uze)_\theta$ is generated by
$$I_\theta:=\ker (U(\frg_K)\ra U(\frg_K)_\theta)$$ and the ideal
$(\sI_\chi)_z$ is generated by the image of
$\cI^{an}_{\sB,\chi,z}$. It therefore suffices to show that the
natural map $U(\frg_K)\ra \sta\smprod U(\frg_K)$ maps $I_\theta$
into $\cI^{an}_{\sB,\chi,z}$. This follows from loc.cit.
\end{proof}
\end{para}
\vskip8pt

\begin{para}
Let us finally make the structure of the stalks of the sheaves
$\sD_{r,\frt}$ and $\sD_{r,\chi}$ at a point $z\in\sB$ more
explicit. According to (Lem. \ref{lem-stalksarefields}) the local
ring $\cO_{\sB,z}$ is a field. For simplicity we put
$\kappa(z):=\cO_{\sB,z}$ and view this as a topological field of
compact type. Note that the Berkovich point $z\in\sB\subset\Xan$
canonically induces a norm topology on $\kappa(z)$ which is weaker
than our topology. We shall not make use of this norm topology in
the following.

\vskip8pt

By loc.cit. together with (5.1.1) we furthermore have
$(\frn^\circ)_{\pi(z)}=\frn_{\pi(z)}$ and
$(\frb^\circ)_{\pi(z)}=\frb_{\pi(z)}$ for the stalks of the
sheaves $\frn^\circ$ and $\frb^\circ$ at $\pi(z)=\eta$ (the
generic point of $X$). Since passage to the stalk is exact, this
proves the following lemma.
\begin{lemma}\label{lem-stalksdis}
Let $z\in\sB$. There is a canonical isomorphism
$$\sD_{r,\frt,z}\car (\kappa(z)\hat{\otimes}_L D_r(\Uze))/
\frn_{\pi(z)}(\kappa(z)\hat{\otimes}_L D_r(\Uze)).$$ This
isomorphism induces a canonical isomorphism between
$\sD_{r,\chi,z}$ and the $\lambda$-coinvariants of the
$\frt_K$-module $(\kappa(z)\hat{\otimes}_L D_r(\Uze))/
\frn_{\pi(z)}(\kappa(z)\hat{\otimes}_L D_r(\Uze))$. In particular,
$$\sD_{r,\rho,z}\car (\kappa(z)\hat{\otimes}_L D_r(\Uze))/
\frb_{\pi(z)}(\kappa(z)\hat{\otimes}_L D_r(\Uze)).$$

\end{lemma}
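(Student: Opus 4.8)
The statement is, as the remark preceding it indicates, a formal consequence of the exactness of the stalk functor. Every sheaf occurring here is obtained from $\shfo$ by dividing by an ideal generated by a subsheaf, and both the restriction functor $\sF=\vartheta^{-1}_\bB$ and the localization $\cG\mapsto\cG_z$ are exact; hence forming quotient sheaves and passing from a generating subsheaf to the (right, two-sided) ideal it generates both commute with taking the stalk at $z$. So the plan is: (1) identify $\shfo_z$; (2) identify $(\sI^{an}_\frt)_z$ inside it; (3) divide to get the first isomorphism; (4) deduce the $\chi$- and $\rho$-versions from (\ref{equ-centred2}) and (\ref{equ-cartan}).

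For (1), Lemma~\ref{lem-stalkI} gives $\shfo_z=\sta\smprod D_r(\Uze)=\kappa(z)\cotimes_L D_r(\Uze)$. For (2), $\sI^{an}_\frt$ is by construction the right ideal of $\shfo$ generated by the image of $\cI^{an}_{\sB,\frt}$ under $\cO_\sB\smprod U(\frg_K)\to\shfo$, and it is two-sided by Lemma~\ref{lem-twosided}. By exactness of the stalk functor $(\sI^{an}_\frt)_z$ is the ideal of $\kappa(z)\smprod D_r(\Uze)$ generated by the image of $(\cI^{an}_{\sB,\frt})_z$, which is the ideal of $\kappa(z)\smprod U(\frg_K)$ generated by the stalk $(\frn^{\ci,an}_\sB)_z=(\frn^{\ci,an})_{\vartheta_\bB(z)}$. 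Now $\frn^{\ci,an}=\pi^*(\frn^\ci)$, so this stalk equals $\cO_{\Xan,\vartheta_\bB(z)}\otimes_{\cO_{X,\pi(\vartheta_\bB(z))}}(\frn^\ci)_{\pi(\vartheta_\bB(z))}$; by Lemma~\ref{lem-stalksarefields} we have $\pi(\vartheta_\bB(z))=\eta$, the generic point of $X$, and since $\cO_{X,\eta}=k(\eta)$ is a field the reduced stalk of $\frn^\ci$ at $\eta$ recalled in \S\ref{sec-BB} agrees with the honest stalk and equals $\frn_{\pi(z)}$. Hence $(\frn^{\ci,an}_\sB)_z=\kappa(z)\otimes_{k(\eta)}\frn_{\pi(z)}$ is just the $\kappa(z)$-span of (the image of) $\frn_{\pi(z)}$, so $(\sI^{an}_\frt)_z=\frn_{\pi(z)}\bigl(\kappa(z)\cotimes_L D_r(\Uze)\bigr)$. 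Dividing proves $\sDtz\car(\kappa(z)\cotimes_L D_r(\Uze))/\frn_{\pi(z)}(\kappa(z)\cotimes_L D_r(\Uze))$, and naturality of all constructions involved makes the isomorphism canonical.

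For the twisted statements, (\ref{equ-centred2}) reads $\sDt/(\ker\lambda)\sDt\car\sDc$. Taking stalks (again exact) gives $\sDcz\car\sDtz/(\ker\lambda)\sDtz$, which is precisely the $\lambda$-coinvariants of $\sDtz$ regarded as a $\frt_K$-module via the central action inherited from $U(\frt_K)\cong U(\frh)\hookrightarrow\cD^{an}_{\sB,\frt}\to\sDt$; under the identification of step~(2) this action is the one induced by $\frb_{\pi(z)}/\frn_{\pi(z)}\cong\kappa(z)\otimes_L\frt_K$ coming from (\ref{equ-cartan}) (the building analogue of property~2 of \cite[\S2]{BB81}, compatible with Proposition~\ref{prop-central}), whence $\sDcz$ is the asserted module of coinvariants of $(\kappa(z)\cotimes_L D_r(\Uze))/\frn_{\pi(z)}(\ldots)$. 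Finally, for $\chi=\rho$ one has $\lambda=0$, so $\ker\lambda\subset U(\frt_K)$ is the augmentation ideal, generated by $\frt_K$; by (\ref{equ-cartan}) the image of $\frt_K$ together with $\frn_{\pi(z)}$ spans $\frb_{\pi(z)}$ over $\kappa(z)$, so the combined quotient is $(\kappa(z)\cotimes_L D_r(\Uze))/\frb_{\pi(z)}(\kappa(z)\cotimes_L D_r(\Uze))$.

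The only genuine work is step~(2): one must see that the stalk of $\frn^{\ci,an}_\sB=\sF(\pi^*\frn^\ci)$ at $z$ is computed entirely at the generic point $\eta$ of $X$ and there becomes the nilpotent radical $\frn_{\pi(z)}$. Both ingredients are already on hand — the reduced-stalk description of $\frn^\ci$ and $\frb^\ci$ in \S\ref{sec-BB}, and Lemma~\ref{lem-stalksarefields}, which forces $\pi(\vartheta_\bB(z))=\eta$ — so once these are assembled the lemma follows formally. The subtlest bookkeeping point is to make sure the $\frt_K$-action used in the coinvariants is exactly the central one arising from (\ref{equ-cartan}), rather than an \emph{a priori} different $\frt_K$-action, but this is guaranteed by the analogue over $\sB$ of the relation between $\cD_\frt$ and $\cD_\chi$ established in \S\ref{sec-BB}--\S\ref{sec-Berkovich}.
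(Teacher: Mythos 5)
Your proposal is correct and follows essentially the same route as the paper: the paper's (one-sentence) proof likewise combines Lemma \ref{lem-stalksarefields} (so that $\pi(z)=\eta$ and $\kappa(z)$ is a field) with the description of the stalks of $\frn^\ci$ and $\frb^\ci$ from 5.1.1, exactness of the stalk functor, and the central reduction (\ref{equ-centred2}); you have merely written out the bookkeeping (pullback along $\pi$, identification of $(\sI^{an}_{\frt})_z$, and the $\lambda$-coinvariants for $\chi$ and $\rho$) that the paper leaves implicit.
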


\end{para}

\section{From representations to sheaves}
We keep all our assumptions from the previous section, i.e. we
assume that $L = \bbQ_{p}$ and that $e> \max(e_{uni},e_{0},
e_{1})$. Our proposed 'localization functor' from representations
to sheaves associated to the pair $\sigma(\chi)=\theta$ will be a
functor
\[\mathscr{L}_{r,\chi}: M\mapsto \sDc\otimes_{\tiDt}\tiM\] from
(coadmissible) left $D(G)_\theta$-modules $M$ to left
$\sDc$-modules satisfying additional properties. Here $\tiM$ is a
constructible sheaf replacing the constant sheaf $\underline{M}$
appearing in the Beilinson-Bernstein construction (Thm.
\ref{thm-BB}). It is a modest generalization of the sheaf $\tiD$
as follows.

\subsection{A constructible sheaf of modules}
Suppose we are given any (left) $D(G)$-module $M$. Let
$F\subseteq\sB$ be a facet. We may regard $M$ as a
$D(\UFe)$-module via the natural map $D(\UFe)\ra D(G)$. We put
$$M_r(\UFe):=D_r(\UFe)\otimes_{D(\UFe)} M,$$
a (left) $D_r(\UFe)$-module. If $F'\subseteq\sB$ is another facet
such that $F'\subset\overline{F}$ the map
$$\sigma_r^{F'F}\otimes {\rm id}: M_r(U_{F'}^{(e)})\lra M_r(\UFe),~~~~\delta\otimes m\mapsto \sigma_r^{F'F}(\delta)\otimes m$$
is a module homomorphism relative to $\sigma_r^{F'F}$ and inherits
the homomorphic properties from $\sigma_r^{F'F}$ (cf. Lem.
\ref{lem-glue}). Again, we may define a sheaf of $K$-vector spaces
$\tiM$ on $\sB$ in a completely analogous way as the sheaf $\tiD$
by replacing each $D_r(\UFe)$ and each $D_r(\Uze)$ by $M_r(\UFe)$
and $M_r(\Uze)$ respectively. In particular, $\tiM$ restricted to
a facet $F$ is the constant sheaf with value $M_r(\UFe)$ and
therefore $\tiM$ is a constructible sheaf. If $s\in D_r(\Uze),
m\in M_r(\Uze)$ the `pointwise multiplication' $(s\cdot
m)(z):=s(z)m(z)$ makes $\tiM$ a $\tiD$-module.

\vskip8pt

%Remark: Making use of the canonical maps $M\rightarrow M_r(\Uze),
%m\mapsto 1\otimes m$ for all $z\in\sB$ we have a natural
%$K$-linear map
%\begin{numequation}\label{equ-globalsectionsDr}
%M\longrightarrow \Gamma(\sB,\tiM), m\mapsto s_m
%\end{numequation}
%into the global sections of $\tiM$. It is given by
%$s_m(z):=1\otimes m$ for all $z\in\sB$. It is generally neither
%injective nor surjective.

\begin{lemma}
If $M$ is a $D(G)_\theta$-module then $\tiM$ is a $\tiDt$-module
via the morphism $\tiD\ra\tiDt$.
\end{lemma}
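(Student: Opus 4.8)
The plan is to reduce the claim to a purely stalkwise statement, exactly as in the proof of Proposition~\ref{prop-central}. Since $\tiM$ is already a $\tiD$-module by construction, and since $\tiDt$ is a quotient sheaf of $\tiD$ via the morphism $\tiD\to\tiDt$, it suffices to show that the $\tiD$-action on $\tiM$ factors through $\tiDt$. Let $\sK=\ker(\tiD\to\tiDt)$; we must show that $\sK$ annihilates $\tiM$, i.e. that the image of $\sK\otimes\tiM\to\tiM$ is zero. As both sheaves are constructible and all the relevant maps are defined facetwise/stalkwise, this can be checked on stalks: for each $z\in\sB$ we must verify that the $D_r(\Uze)$-module structure on the stalk $(\tiM)_z=D_r(\Uze)\otimes_{D(\Uze)}M$ factors through the central reduction $D_r(\Uze)_\theta=D_r(\Uze)\otimes_{Z(\frg_K),\theta}K$.

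First I would identify the kernel of $D_r(\Uze)\to D_r(\Uze)_\theta$. Recall from Section~8.3 that $Z(\frg_K)$ lies in the center of $D(\Uze)$ (by \cite[Prop.\ 3.7]{ST4}), hence also in the center of $D_r(\Uze)$ after completion; the central reduction is the quotient by the two-sided ideal generated by $I_\theta:=\ker(\theta:Z(\frg_K)\to K)$, equivalently by $J_\theta:=\ker(U(\frg_K)\to U(\frg_K)_\theta)=U(\frg_K)\cdot I_\theta$. So $\sK_z$ is the ideal of $D_r(\Uze)$ generated by $J_\theta$. It therefore suffices to show that $J_\theta$ annihilates $(\tiM)_z = D_r(\Uze)\otimes_{D(\Uze)} M$. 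Here I would use that $M$ is a $D(G)_\theta$-module: by definition this means the action of $D(G)$ on $M$ factors through $D(G)_\theta=D(G)\otimes_{Z(\frg_K),\theta}K$, so $I_\theta M=0$ and hence $J_\theta M = U(\frg_K)I_\theta M = 0$.

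The one genuine point to handle carefully is commuting $J_\theta$ past the leading tensor factor $D_r(\Uze)$: for $z\in J_\theta$ and $\delta\otimes m\in D_r(\Uze)\otimes_{D(\Uze)}M$ we want $z\cdot(\delta\otimes m)=0$, i.e. $(z\delta)\otimes m = 0$. Since $J_\theta\subset Z(\frg_K)\cdot U(\frg_K)$ and $Z(\frg_K)$ is \emph{central} in $D_r(\Uze)$, for the generators $\xi\eta$ with $\xi\in I_\theta$, $\eta\in U(\frg_K)$ we have $(\xi\eta\delta)\otimes m = (\eta\delta\xi)\otimes m = (\eta\delta)\otimes(\xi m)=0$, using centrality of $\xi$ to move it to the right and then $D(\Uze)$-bilinearity of the tensor product together with $\xi m = 0$ (as $\xi\in I_\theta$ and $M$ is a $D(G)_\theta$-module, so a $Z(\frg_K)$-module via $\theta$). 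Hence the full ideal $\sK_z$ generated by $J_\theta$ annihilates $(\tiM)_z$, so the stalk map $(\tiD)_z\to\End((\tiM)_z)$ factors through $(\tiDt)_z=D_r(\Uze)_\theta$. Reassembling over all $z\in\sB$ (using that $\tiDt$ was defined as the sheaf with these stalks and the evident transition maps $\sigma_r^{F'F}$, which already factor through the central reductions by the discussion in Section~8.3), the $\tiD$-module structure on $\tiM$ factors through $\tiDt$, giving $\tiM$ the structure of a $\tiDt$-module. The main obstacle is really just bookkeeping: making sure the centrality argument is applied at the level of the completed algebra $D_r(\Uze)$ rather than just $U(\frg_K)$, which is exactly the content of \cite[Prop.\ 3.7]{ST4} as invoked in Section~8.3, and ensuring the stalkwise factorizations are compatible with the gluing data defining the sheaves — but this compatibility is immediate since the transition maps for $\tiDt$ and $\tiM$ are induced from those for $\tiD$ by the same quotient/tensor constructions.
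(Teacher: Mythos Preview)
Your proposal is correct and is exactly the natural elaboration of the paper's own proof, which reads in its entirety ``This is almost obvious.'' You have spelled out the one essential point: $I_\theta=\ker\theta\subset Z(\frg_K)$ is central in $D_r(\Uze)$ and annihilates $M$, hence annihilates $D_r(\Uze)\otimes_{D(\Uze)}M$ at every stalk, so the $\tiD$-action on $\tiM$ factors through $\tiDt$. (A minor simplification: you may work directly with $I_\theta$ rather than $J_\theta=U(\frg_K)I_\theta$, since the two generate the same ideal in $D_r(\Uze)$ and the centrality--bilinearity computation $(\delta\xi)\otimes m=\delta\otimes(\xi m)=0$ for $\xi\in I_\theta$ already suffices.)
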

\begin{proof}
This is almost obvious.
\end{proof}

\subsection{A localization functor}
As usual $\sDc\otimes_{\tiDt}\tiM$ denotes the sheaf associated to
the presheaf $V\mapsto \sDc(V)\otimes_{\tiDt(V)}\tiM(V)$ on $\sB$.
The construction $M\mapsto \tiM$ is functorial in $M$ and commutes
with arbitrary direct sums. Thus the correspondance
 \[\mathscr{L}_{r,\chi}: M\mapsto \sDc\otimes_{\tiDt}\tiM\] is a covariant functor from (left) $D(G)_\theta$-modules to (left)
 $\sDc$-modules. It commutes with arbitary direct sums.
We call it tentatively a {\it localization functor} associated to
$\chi$.
\end{para}
We emphasize that the functor $\mathscr{L}_{r,\chi}$ depends on
the choice of the level $e$. As we did before we suppress this
dependence in the notation. As a second remark, let $\mathcal{M}$
be an arbitrary $\sDc$-module and $f:
\mathscr{L}_{r,\chi}(M)\rightarrow\mathcal{M}$ a morphism. The
composite
$$M\rightarrow\Gamma(\sB,\tiM)\rightarrow
\Gamma(\sB,\mathscr{L}_{r,\chi}(M))\stackrel{f}{\rightarrow}\Gamma(\sB,\mathcal{M})$$
is a $K$-linear map. We therefore have a natural transformation of
functors
$${\rm Hom}_{\sDc}(\mathscr{L}_{r,\chi}(\cdot),.)\longrightarrow
{\rm Hom}_K(\cdot,\Gamma(\sB,\cdot)).$$ Generally, it is far from
being an equivalence.

\vskip8pt

We compute the stalks of the localization
$\mathscr{L}_{r,\chi}(M)$ for a {\it coadmissible} module $M$. In
this case $(\tiM)_z$ is finitely generated over the Banach algebra
$D_r(\Uze)_\theta$ and therefore has a unique structure as a
Banach module over $D_r(\Uze)_\theta$. Let $z\in\sB\subset \Xan$
with residue field $\kappa(z)$. Recall that $\pi(z)$ equals the generic point of $X$.%Similarly,
%$\mathscr{L}_{r,\chi}(M)_z$ is a finitely generated module over
%the compact type algebra $\sD_{r,\chi,z}$.
\begin{prop}\label{prop-stalks}
Let $M$ be a coadmissible left $D(G)_\theta$-module and let
$z\in\sB$. The morphism $\tiM\rightarrow\mathscr{L}_{r,\chi}(M)$
induces an isomorphism between the $\lambda$-coinvariants of the
$\frt_K$-module
$$(\kappa(z)\hat{\otimes}_L (\tiM)_z)/
\frn_{\pi(z)}(\kappa(z)\hat{\otimes}_L (\tiM)_z)$$ and the stalk
$\mathscr{L}_{r,\chi}(M)_z$. In particular, if $\theta=\theta_0$
we have
$$
(\kappa(z)\hat{\otimes}_L (\tiM)_z)/
\frb_{\pi(z)}(\kappa(z)\hat{\otimes}_L
(\tiM)_z)\car\mathscr{L}_{r,\rho}(M)_z.$$
\end{prop}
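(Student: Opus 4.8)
The plan is to compute the stalk of $\func(M) = \sDc\otimes_{\tiDt}\tiM$ at $z$ by combining three facts established earlier: that sheafification commutes with taking stalks (so $\func(M)_z = \sDcz\otimes_{(\tiDt)_z}(\tiM)_z$); that $(\tiDt)_z = D_r(\Uze)_\theta$ and $(\tiM)_z = M_r(\Uze)$ carry their natural (Banach, in the coadmissible case) structures; and the explicit description of $\sDcz$ from Lemma \ref{lem-stalksdis}. Concretely, I would first observe that $\sD_{r,\frt,z}\cong (\kappa(z)\hat\otimes_L D_r(\Uze))/\frn_{\pi(z)}(\kappa(z)\hat\otimes_L D_r(\Uze))$ as a $(\kappa(z)\hat\otimes_L D_r(\Uze))$-module, where the left action of $D_r(\Uze)$ factors through $D_r(\Uze)_\theta$ by Proposition \ref{prop-central}. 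Hence
\[
\func(M)_z \;\cong\; \Bigl((\kappa(z)\hat\otimes_L D_r(\Uze))\big/\frn_{\pi(z)}(\cdots)\Bigr)\otimes_{D_r(\Uze)_\theta} M_r(\Uze)\big/(\ker\lambda)(\cdots),
\]
and the associativity/right-exactness of tensor products lets one move the quotient by $\frn_{\pi(z)}$ and the central reduction by $\ker\lambda$ outside, identifying the result with the $\lambda$-coinvariants of $\kappa(z)\hat\otimes_L M_r(\Uze)\big/\frn_{\pi(z)}(\kappa(z)\hat\otimes_L M_r(\Uze))$.

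The first key step is therefore to check that $\sDcz\otimes_{D_r(\Uze)_\theta}(\tiM)_z$, with $\sDcz$ written as a quotient of $\kappa(z)\hat\otimes_L D_r(\Uze)$ via Lemma \ref{lem-stalksdis}, simplifies by the base-change identity
\[
\bigl(\kappa(z)\hat\otimes_L D_r(\Uze)\bigr)\otimes_{D_r(\Uze)} M_r(\Uze)\;\cong\;\kappa(z)\hat\otimes_L M_r(\Uze),
\]
together with right-exactness to propagate the three successive quotients (by $\frn_{\pi(z)}$, by $I_\theta = \ker(D_r(\Uze)\to D_r(\Uze)_\theta)$, and by $\ker\lambda$). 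The second key step is to track the morphism $\tiM\to\func(M)$: on stalks it is $m\mapsto 1\otimes m$, and under the above identifications it becomes the obvious map $M_r(\Uze)\to \kappa(z)\hat\otimes_L M_r(\Uze)\to (\text{quotient and }\lambda\text{-coinvariants})$, so that the asserted isomorphism is indeed induced by it. The final statement ($\theta=\theta_0$, $\chi=\rho$, so $\lambda = \chi-\rho = 0$) then follows because taking $0$-coinvariants of the $\frt_K$-module $(\kappa(z)\hat\otimes_L(\tiM)_z)/\frn_{\pi(z)}(\cdots)$ amounts to dividing further by $\frt_K$, and $\frn_{\pi(z)} + \frt_K$ generates (the image of) $\frb_{\pi(z)}$ inside $\kappa(z)\hat\otimes_L D_r(\Uze)$, using that $(\frb^\circ)_{\pi(z)} = \frb_{\pi(z)}$ and $\frb^\circ/\frn^\circ \cong \cO_X\otimes_L\frt$ from Section \ref{sec-BB} and the analogous analytic statement.

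The main obstacle I anticipate is the interplay between \emph{completed} tensor products and ordinary ones. The sheaf $\shfo$ is built using $\kappa(z)\hat\otimes_L D_r(\Uze)$, whereas $\func(M)$ is formed with the ordinary (algebraic) tensor product $\otimes_{\tiDt}$; one must be careful that when $M$ is coadmissible, $(\tiM)_z = M_r(\Uze)$ is finitely generated over the noetherian Banach algebra $D_r(\Uze)_\theta$, so that $\kappa(z)\hat\otimes_L M_r(\Uze)$ agrees with $(\kappa(z)\hat\otimes_L D_r(\Uze))\otimes_{D_r(\Uze)_\theta}M_r(\Uze)$ (finite generation lets one present $M_r(\Uze)$ by a finite complex of free modules over which $\hat\otimes$ and $\otimes$ coincide, and completed tensor product is exact in the relevant Banach/Fréchet setting). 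Once this compatibility is pinned down, the rest is the formal manipulation of right-exact functors and quotients sketched above, and the identification of the connecting map is immediate from the constructions in \ref{def-sheaf} and Proposition \ref{prop-BBvsdistr}.
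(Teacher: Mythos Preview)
Your proposal is correct and follows essentially the same route as the paper: both compute the stalk via Lemma \ref{lem-stalksdis}, establish the key compatibility $(\kappa(z)\hat\otimes_L D_r(\Uze))\otimes_{D_r(\Uze)} N \cong \kappa(z)\hat\otimes_L N$ for finitely generated Banach modules $N$ (using associativity of the completed tensor product / a finite free presentation), and then propagate the quotients by $\frn_{\pi(z)}$ and $\ker\lambda$ via right-exactness. The only cosmetic difference is that the paper first proves the functorial isomorphism for $\sD_{r,\frt,z}\otimes_{D_r(\Uze)}N$ and then deduces the $\chi$-version for general $N$ from the case $N=D_r(\Uze)$ by a finite-free-presentation argument, whereas you work directly with $\sD_{r,\chi,z}$.
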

\begin{proof}
Let $N$ be an arbitrary finitely generated $D_r(\Uze)$-module.
According to (Lem. \ref{lem-stalksdis}) the space
$\sD_{r,\frt,z}\otimes_{D_r(\Uze)}N$ may be written as
$$(((\kappa(z)\hat{\otimes}_L
D_r(\Uze))/\frn_{\pi(z)}(\kappa(z))\hat{\otimes}_L
D_r(\Uze))\otimes_{\kappa(z)\hat{\otimes}_L D_r(\Uze)}
\kappa(z)\hat{\otimes}_L D_r(\Uze))\otimes_{D_r(\Uze)} N.$$ Since
$N$ is a complete Banach module this may be identified with
$(\kappa(z)\hat{\otimes}_L
N)/\frn_{\pi(z)}(\kappa(z)\hat{\otimes}_L N)$ by associativity of
the completed tensor product. The resulting isomorphism
$$(\kappa(z)\hat{\otimes}_L
N)/\frn_{\pi(z)}(\kappa(z)\hat{\otimes}_L
N)\car\sD_{r,\frt,z}\otimes_{D_r(\Uze)}N$$ is functorial in $N$.
According to the second part of loc.cit. we obtain a functorial
homomorphism
$$(\lambda-{\rm coinvariants~of~}(\kappa(z)\hat{\otimes}_L
N)/\frn_{\pi(z)}(\kappa(z)\hat{\otimes}_L
N))\longrightarrow\sD_{r,\chi,z}\otimes_{D_r(\Uze)}N$$ which is an
isomorphism in the case $N=D_r(\Uze)$. Since source and target are
right exact functors in $N$ commuting with finite direct sums we
may use a finite free presentation of $N$ to obtain that it is an
isomorphism in general. The assertion of the proposition follows
by taking $N=(\tiM)_z$.
\end{proof}
\begin{cor}
Let $\chi$ be dominant and regular. The functor
$\mathscr{L}_{r,\chi}$, restricted to coadmissible modules, is
exact.
\end{cor}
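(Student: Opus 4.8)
The functor $\func$ is built from the relative tensor product $\sDc\otimes_{\tiDt}\tiM$ and from the (right exact) assignment $M\mapsto\tiM$, hence is right exact; so it suffices to prove left exactness, i.e.\ that $\func(M')\to\func(M)$ is injective for every short exact sequence $0\to M'\to M\to M''\to 0$ of coadmissible $D(G)_\theta$-modules (all three being coadmissible, as coadmissible modules form an abelian category). Injectivity of a morphism of sheaves may be checked on stalks, so fix $z\in\sB$. Directly from the definitions one has $(\sDc)_z=\sDcz$, $(\tiDt)_z=D_r(\Uze)_\theta$ and $(\tiM)_z=D_r(\Uze)\otimes_{D(\Uze)}M=D_r(\Uze)_\theta\otimes_{D(\Uze)_\theta}M$, so that $\func(M)_z=\sDcz\otimes_{D_r(\Uze)_\theta}(\tiM)_z$ naturally in $M$; this is also the content of the proof of Proposition \ref{prop-stalks}. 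We must therefore show that $\sDcz\otimes_{D_r(\Uze)_\theta}(\tiM')_z\to\sDcz\otimes_{D_r(\Uze)_\theta}(\tiM)_z$ is injective.

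The first step is to note that $M\mapsto(\tiM)_z$ is exact on coadmissible modules. Since $e\geq e_{uni}$ the group $\Uze$ is a uniform pro-$p$ group, so by \cite{ST5} the algebra $D(\Uze)$ -- and its central reduction $D(\Uze)_\theta$ -- is a Fr\'echet--Stein algebra with defining family of noetherian Banach quotients $D_r(\Uze)$, resp.\ $D_r(\Uze)_\theta$, for $r\in p^{\bbQ}\cap[r_0,1)$. A $D(G)_\theta$-module is coadmissible if and only if it is coadmissible over $D(\Uze)$, and for a Fr\'echet--Stein algebra the functor $N\mapsto D_r(\Uze)\otimes_{D(\Uze)}N$ is exact on coadmissible modules (coadmissible modules being acyclic for it). Hence $0\to(\tiM')_z\to(\tiM)_z\to(\tiM'')_z\to 0$ is a short exact sequence of finitely generated $D_r(\Uze)_\theta$-modules.

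It remains to show that ${\rm Tor}_1^{D_r(\Uze)_\theta}(\sDcz,N)=0$ for every finitely generated $D_r(\Uze)_\theta$-module $N$ -- equivalently, that $\sDcz$ is flat as a right $D_r(\Uze)_\theta$-module. This is the local, completed-distribution-algebra analogue of the Beilinson--Bernstein flatness in Theorem \ref{thm-BB}(ii), and it is here that the hypothesis that $\chi$ be dominant and regular is used. The plan is to reduce it to the untwisted situation: by (\ref{equ-centred2}), read at the stalk, $\sDcz=\sDtz/(\ker\lambda)\sDtz=\sDtz\otimes_{U(\frt_K),\lambda}K$ with $U(\frt_K)$ central in $\sDtz$ (Proposition \ref{prop-central}); one then shows that $\sDtz$ is flat over $D_r(\Uze)$ and free over the central $U(\frt_K)$. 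For the latter one exploits that $\pi(z)=\eta$ is the generic point of $X$, so that the trivialization (\ref{equ-trivialization}) is available, together with the PBW-type decomposition of $D_r(\Uze)$ along the root-space decomposition of Proposition \ref{prop-rootspace}(i); conjugating the Borel $\frb_{\pi(z)}$ into standard position inside the completed skew ring $\kappa(z)\cotimes_L D_r(\Uze)$ should identify $\sDtz$ with a ring of ``distribution operators on the big cell'' whose structure as a right $D_r(\Uze)$-module can be read off from that decomposition. Dominant regularity is then invoked exactly as in Theorem \ref{thm-BB}(ii) to pass from $\sDtz$ to $\sDcz$ without creating higher ${\rm Tor}$ over $D_r(\Uze)_\theta$ in the central reduction by $\ker\lambda$; the resulting exactness of $\func$, restricted to coadmissible modules, follows.

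The main obstacle is precisely this flatness. In the Beilinson--Bernstein setting one argues globally on $X$, via the vanishing of $H^{i}(X,\cD_\chi)$ for $i>0$ and the identity $\Gamma\circ\Delta_\chi=\mathrm{id}$, but on the building that route is unavailable -- $\sB$ carries far too many global sections -- so the argument must be genuinely local, inside $\kappa(z)\cotimes_L D_r(\Uze)$, and the delicate point is to control the twist of the right $D_r(\Uze)$-action produced by bringing $\frb_{\pi(z)}$ into standard position, and to check simultaneously that the central reduction along $\ker\lambda$ introduces no higher ${\rm Tor}$. Since the modules $N$ that actually arise are stalks of the sheaves $\tiM$ attached to coadmissible $M$, it is in fact enough to treat this restricted class, which lightens the bookkeeping.
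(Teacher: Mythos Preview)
Your reduction to stalks and your treatment of the first step are correct and match the paper: exactness may be checked at each $z\in\sB$, and the functor $M\mapsto(\tiM)_z=D_r(\Uze)_\theta\otimes_{D(\Uze)_\theta}M$ is exact on coadmissible modules by the Fr\'echet--Stein formalism of \cite{ST5}.

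The genuine gap is in your second step. You recast the remaining problem as proving that $\sDcz$ is flat as a right $D_r(\Uze)_\theta$-module, and you candidly acknowledge that you do not carry this through: your paragraph beginning ``The main obstacle is precisely this flatness'' is a sketch of a strategy (trivialize over the big cell, use the root-space decomposition, control the twist, handle the central reduction by $\ker\lambda$), not a proof. Each of these substeps is nontrivial over the completed skew ring $\kappa(z)\cotimes_L D_r(\Uze)$, and nothing in the paper establishes flatness of $\sDcz$ over $D_r(\Uze)_\theta$ in this generality.

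The paper bypasses this entirely. The point you are not exploiting is that Proposition~\ref{prop-stalks} already \emph{factors} the stalk functor for you: it identifies $\func(M)_z$ with the $\lambda$-coinvariants of $(\kappa(z)\hat{\otimes}_L(\tiM)_z)/\frn_{\pi(z)}(\kappa(z)\hat{\otimes}_L(\tiM)_z)$. Comparing with Theorem~\ref{thm-BB}(iii), this is exactly the Beilinson--Bernstein stalk functor applied to the $U(\frg_K)_\theta$-module $\kappa(z)\hat{\otimes}_L(\tiM)_z$. Thus the stalk functor is the composite of three functors: first $M\mapsto(\tiM)_z$ (exact as you showed), then $N\mapsto\kappa(z)\hat{\otimes}_L N$ on finitely generated $D_r(\Uze)_\theta$-modules (exact by \cite[Prop.~1.1.26]{EmertonA}, since $\kappa(z)$ is a compact-type limit of Banach spaces), and finally the Beilinson--Bernstein stalk functor, which is exact for dominant regular $\chi$ because the full localization $\Delta_\chi$ is an equivalence (Theorem~\ref{thm-BB}(ii)). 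No direct flatness statement for $\sDcz$ is needed.
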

\begin{proof}
Exactness can be checked at a point $z\in\sB$ where the functor in
question equals the composite of three functors. The first functor
equals $N\mapsto D_r(\Uze)_\theta \otimes_{D(\Uze)_\theta} N$ on
the category of coadmissible $D(\Uze)_\theta$-modules. It is exact
by \cite[Rem. 3.2]{ST5}. The second functor equals $N\mapsto
\kappa(z)\hat{\otimes}_L N$ on the category of finitely generated
$D_r(\Uze)_\theta$-modules. It is exact by \cite[Prop.
1.1.26]{EmertonA}. The last functor equals the Beilinson-Bernstein stalk
functor at $z$ according to (Thm. \ref{thm-BB} (iii)). By loc.cit., part (ii), this functor is exact if $\chi$ is dominant and regular. \end{proof}

\begin{lemma}\label{lem-zerodim}
Let $z\in\sB$. If $N$ is a finitely generated
$D_r(\Uze)_\theta$-module which is finite dimensional over $K$,
then the natural map
$$\cD^{an}_{\sB,\chi,z}\otimes_{U(\frg_K)_\theta} N\car\sDcz\otimes_{D_r(\Uze)_\theta} N
$$ is an
isomorphism which is functorial in such $N$.
\end{lemma}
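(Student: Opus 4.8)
The plan is to compute both sides of the asserted map explicitly, to identify each with one and the same $\frt_K$-module of $\lambda$-coinvariants, and then to check that the natural map of the statement realizes this identification. Throughout put $\kappa(z):=\cO_{\sB,z}$, a field by (Lem. \ref{lem-stalksarefields}), and recall that $\frn_{\pi(z)}$ denotes the nilpotent radical of the Borel subalgebra of $\kappa(z)\otimes_L\frg$ attached to $\pi(z)\in X$, which is $\kappa(z)\otimes_{k(\eta)}\frn_\eta$ for $\eta$ the generic point of $X$.

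First I would carry out some reductions. Since $N$ is finite dimensional over $K$ it is finitely generated, hence --- the rings $U(\frg_K)_\theta$ and $D_r(\Uze)_\theta$ being noetherian --- finitely presented over each of them, and it is automatically a complete Banach module over $D_r(\Uze)_\theta$. Moreover the central reductions are harmless here: since $\cD^{an}_{\sB,\chi,z}$ is a $U(\frg_K)_\theta$-module and $\sDcz$ a $D_r(\Uze)_\theta$-module, the canonical maps
\[
\cD^{an}_{\sB,\chi,z}\otimes_{U(\frg_K)}N\lra\cD^{an}_{\sB,\chi,z}\otimes_{U(\frg_K)_\theta}N,
\qquad
\sDcz\otimes_{D_r(\Uze)}N\lra\sDcz\otimes_{D_r(\Uze)_\theta}N
\]
are isomorphisms, the extra relations only involving $\ker(U(\frg_K)\ra U(\frg_K)_\theta)$, which acts by zero on both tensor factors. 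Hence it suffices to treat $\otimes_{U(\frg_K)}$ and $\otimes_{D_r(\Uze)}$, and the natural map of the statement then factors as
\[
\cD^{an}_{\sB,\chi,z}\otimes_{U(\frg_K)}N\lra\sDcz\otimes_{U(\frg_K)}N\lra\sDcz\otimes_{D_r(\Uze)}N,
\]
the first arrow being the stalk at $z$ of the right vertical map of (\ref{equ-extension}) tensored with $N$ and the second coming from $U(\frg_K)\subseteq D_r(\Uze)$; they are compatible by construction (cf. the commutative diagram in the introduction, (\ref{equ-extension}) and (Prop. \ref{prop-central})).

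For the left-hand side I would use the compatibility of $\cD^{an}_\chi$ with the algebraic sheaf $\cD_\chi$ via $\pi^\ast$ (sec. 6.1) and the identity $\cD^{an}_{\sB,\chi}=\sF(\cD^{an}_\chi)$ (sec. 6.3). Both $\pi^\ast$ and $\sF$ are compatible with stalks and with tensoring by the constant sheaf $\underline N$ (and $\sF$ is exact), so the stalk $\cD^{an}_{\sB,\chi,z}\otimes_{U(\frg_K)_\theta}N$ is obtained from the reduced stalk of $\cD_\chi\otimes_{U(\frg_K)_\theta}\underline N$ at $\eta$ by base change along the local map $k(\eta)=\cO_{X,\eta}\hra\cO_{\Xan,\vartheta_\bB(z)}=\kappa(z)$ (using (Lem. \ref{lem-stalksarefields})). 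By (Thm. \ref{thm-BB} (iii)) that reduced stalk is the $\lambda$-coinvariants of the $\frt_K$-module $(k(\eta)\otimes_L N)/\frn_\eta(k(\eta)\otimes_L N)$; since forming the $\frn$-quotient and the $\lambda$-coinvariants is right exact, it commutes with the base change $k(\eta)\hra\kappa(z)$. Hence $\cD^{an}_{\sB,\chi,z}\otimes_{U(\frg_K)_\theta}N$ is canonically the $\lambda$-coinvariants of the $\frt_K$-module $(\kappa(z)\otimes_L N)/\frn_{\pi(z)}(\kappa(z)\otimes_L N)$, functorially in $N$.

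For the right-hand side I would re-run the computation in the proof of (Prop. \ref{prop-stalks}) with $N$ in place of $(\tiM)_z$: being a finitely generated, hence complete Banach, $D_r(\Uze)_\theta$-module, (Lem. \ref{lem-stalksdis}) and associativity of the completed tensor product give that $\sDcz\otimes_{D_r(\Uze)_\theta}N$ is the $\lambda$-coinvariants of the $\frt_K$-module $(\kappa(z)\hat{\otimes}_L N)/\frn_{\pi(z)}(\kappa(z)\hat{\otimes}_L N)$. Now the hypothesis enters: since $N$ is finite dimensional over $K$, the completed tensor product $\kappa(z)\hat{\otimes}_L N$ coincides with the algebraic tensor product $\kappa(z)\otimes_L N$ (a completed tensor product with a finite-dimensional space is algebraic), $\frt_K$-equivariantly --- the two $\frt_K$-actions being induced, via the compatible morphisms of (\ref{equ-extension}), from $\frt_K\subseteq\frg_K$ acting by the skew multiplication. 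Thus both sides of the asserted map are canonically the $\lambda$-coinvariants of the \emph{same} $\frt_K$-module $(\kappa(z)\otimes_L N)/\frn_{\pi(z)}(\kappa(z)\otimes_L N)$. A final diagram chase --- tracing the composite of the second paragraph through these identifications, under which the map $\cD^{an}_{\sB,\frt,z}\to\sDtz$ of (\ref{equ-extension}) becomes the map induced by ${\rm id}_N$ --- shows that the natural map of the statement is precisely this identification, hence an isomorphism, and functoriality in $N$ is immediate since every step is functorial. I expect the only genuine work to lie in this last bookkeeping together with the (notationally suppressed) passage between completed and algebraic tensor products: it is exactly there that finite dimensionality of $N$ over $K$ is needed, for without it $\kappa(z)\hat{\otimes}_L N\neq\kappa(z)\otimes_L N$ and the map ceases to be an isomorphism.
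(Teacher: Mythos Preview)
Your argument is correct and reaches the same conclusion, but by a genuinely different route than the paper.

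The paper works at the level of the skew tensor products \emph{before} dividing by the ideal $\sI^{an}_\chi$: writing $\cO_{\sB,z}\smprod D_r(\Uze)$ as an inductive limit of Banach algebras $\cA_V\smprod D_r(\Uze)$ (Prop.~\ref{prop-skewexist}), it uses finite-dimensionality of $N$ to obtain $\cA_V\hat\otimes_L N=\cA_V\otimes_L N$ for each $V$, passes to the limit to identify $(\cO_{\sB,z}\smprod D_r(\Uze))\otimes_{D_r(\Uze)}N$ with $(\cO_{\sB,z}\smprod U(\frg_K))\otimes_{U(\frg_K)}N$, and then checks by hand that the resulting map to $\cD^{an}_{\sB,\chi,z}\otimes_{U(\frg_K)_\theta}N$ kills the generators of $\sI^{an}_{\chi,z}$, thereby producing an explicit inverse.

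Your approach instead invokes the stalk computations already established: Thm.~\ref{thm-BB}(iii) (transported via $\pi^\ast$ and $\sF$) for the Beilinson--Bernstein side, and the computation inside the proof of Prop.~\ref{prop-stalks} for the distribution side, identifying both with the $\lambda$-coinvariants of $(\kappa(z)\otimes_L N)/\frn_{\pi(z)}(\kappa(z)\otimes_L N)$ once $\kappa(z)\hat\otimes_L N=\kappa(z)\otimes_L N$. This is more conceptual and avoids revisiting the inductive system of $\cA_V$'s, at the cost of the final bookkeeping step (that the natural map of the statement becomes the identity under both identifications). The paper's approach, by contrast, is self-contained and makes the inverse completely explicit, which is useful if one later needs to track the map rather than merely know it is an isomorphism. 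Both arguments pivot on exactly the same use of finite-dimensionality.
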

\begin{proof}
We adopt the notation of (Prop. \ref{prop-skewexist}) and write \[
\varinjlim_V\;(\cA_V\smprod D_r(\Uze))\car \sta\smprod
D_r(\Uze),\] an isomorphism of topological $K$-algebras. By
\cite[Prop. 2.1]{ST5} the finitely generated module $(\cA_V\smprod
D_r(\Uze))\otimes_{D_r(\Uze)} N$ has a unique Banach topology. We
therefore have canonical $\cA_V$-linear isomorphisms
$$(\cA_V\hat{\otimes}_L D_r(\Uze))\otimes_{D_r(\Uze)}
N\simeq \cA_V\hat{\otimes}_L N=\cA_V\otimes_L N.$$ Passage to the
inductive limit yields the $\cO_{\sB,z}$-linear map
$$(\cO_{\sB,z}\hat{\otimes}D_r(\Uze))\otimes_{D_r(\Uze)}
N\simeq \cO_{\sB,z}\otimes_L N=(\cO_{\sB,z}\smprod
U(\frg_K))\otimes_{U(\frg_K)} N.$$ The target maps canonically to
$\cD^{an}_{\sB,\chi,z}\otimes_{U(\frg_K)_\theta} N$ and the
composed map annihilates all elements of the form
$\xi\hat{\otimes}n$ with $n\in N$ and
$\xi\in\cI^{an}_{\sB,\chi,z}$. Since such $\xi$ generate
$\sI^{an}_{\chi,z}$ the composed map factores therefore into a map
$$\sD_{r,\chi,z}\otimes_{D_r(\Uze)_\theta} N\rightarrow
\cD^{an}_{\sB,\chi,z}\otimes_{U(\frg_K)_\theta}N.$$ This gives the
required inverse map.
\end{proof}

\begin{cor}\label{cor-zerodim} Let $M$ be a left $D(G)_\theta$-module such that
${\rm dim}_K M_r(\Uze)<\infty$ for all $z\in\sB$. The natural
morphism of sheaves
$$\cD^{an}_{\sB,\chi}\otimes_{U(\frg_K)_\theta}\tiM\car
\sDc\otimes_{\tiDt}\tiM=\mathscr{L}_{r,\chi}(M)$$ induced from
(\ref{equ-extension}) is an isomorphism.
\end{cor}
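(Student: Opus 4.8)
The plan is to verify that the morphism is an isomorphism stalkwise, which suffices since a morphism of sheaves is an isomorphism if and only if it induces an isomorphism on every stalk. Fix $z\in\sB$ contained in a facet $F$, so that $\Uze=\UFe$. First I would record that the stalk of a tensor product of sheaves of modules is the tensor product of the stalks: sheafification does not change stalks, and filtered colimits commute with tensor products. Hence there are canonical identifications
\[\bigl(\cD^{an}_{\sB,\chi}\otimes_{U(\frg_K)_\theta}\tiM\bigr)_z=\cD^{an}_{\sB,\chi,z}\otimes_{U(\frg_K)_\theta}(\tiM)_z,\]
\[\bigl(\sDc\otimes_{\tiDt}\tiM\bigr)_z=\sDcz\otimes_{(\tiDt)_z}(\tiM)_z=\sDcz\otimes_{D_r(\Uze)_\theta}(\tiM)_z,\]
where I use $(\tiDt)_z=D_r(\Uze)_\theta$, $(\sDc)_z=\sDcz$ and $(\tiM)_z=M_r(\Uze)$ from the previous section. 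Under these identifications the stalk at $z$ of the morphism in question becomes the natural map of Lemma \ref{lem-zerodim} with $N:=M_r(\Uze)$; this last point is obtained by unwinding the definition of the morphism $\cD^{an}_{\sB,\chi}\to\sDc$ of (\ref{equ-extension}) together with the definitions of $\tiM$ and of the germ maps $\iota_z$ entering the construction of $\shfo$.

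It then remains to check that $N=M_r(\Uze)$ satisfies the hypotheses of Lemma \ref{lem-zerodim}. By construction $M_r(\Uze)=D_r(\Uze)\otimes_{D(\Uze)}M$, and since $M$ is a $D(G)_\theta$-module and $Z(\frg_K)$ lies in the center of both $D(G)$ and $D_r(\Uze)$ (\cite[Prop. 3.7]{ST4}), this is canonically a $D_r(\Uze)_\theta$-module. By the standing assumption $\dim_K M_r(\Uze)<\infty$, and any $K$-basis of $M_r(\Uze)$ generates it over $D_r(\Uze)_\theta$, so $N$ is in particular finitely generated over $D_r(\Uze)_\theta$ and finite dimensional over $K$. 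Lemma \ref{lem-zerodim} therefore applies and shows that the stalk at $z$ of our morphism is an isomorphism. As $z\in\sB$ was arbitrary, the morphism of sheaves is an isomorphism.

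I expect the only genuinely delicate point to be the identification in the first paragraph of the stalk of the globally defined morphism with the map appearing in Lemma \ref{lem-zerodim}. One has to trace the morphism $\cD^{an}_{\sB,\chi}\to\sDc$ of (\ref{equ-extension}) --- itself assembled out of $\cO_\sB\smprod U(\frg_K)\to\shfo$ (Prop. \ref{prop-BBvsdistr}) and the two quotient constructions of section 8.4 --- through the description of $\tiM$ as the sheaf of sections of its \'etale space, and match it with the explicit inductive-limit description of $\sDcz\otimes_{D_r(\Uze)_\theta}N$ used in the proof of Lemma \ref{lem-zerodim}. Everything else is formal.
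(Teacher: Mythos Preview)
Your proposal is correct and follows exactly the same approach as the paper: check the morphism stalkwise and apply Lemma \ref{lem-zerodim} with $N:=M_r(\Uze)$. Your write-up simply makes explicit the identifications of stalks and the verification of the hypotheses of Lemma \ref{lem-zerodim} that the paper's two-line proof leaves implicit.
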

\begin{proof}
Let $z\in\sB$. Applying the preceding lemma to $N:=M_r(\Uze)$ we
see that the morphism is an isomorphism at the point $z$. This
proves the claim.
\end{proof}

\subsection{Equivariance}\label{sect-equiv}

In this section, we keep the same assumptions as before, i.e. we
assume $L = \bbQ_{p}$, $e >  \max(e_{uni},e_{0}, e_{1})$ and $r
\in
[r_{0}, 1)$. %and that $\bG$ is semisimple.

\begin{para}Consider for a moment an arbitrary ringed space
$(Y,\mathcal{A})$ where $\mathcal{A}$ is a sheaf of (not
necessarily commutative) $K$-algebras on $Y$. Let $\Gamma$ be an
abstract group acting (from the right) on $(Y,\mathcal{A})$. In
other words, for every $g,h\in\Gamma$ and every open subset
$U\subseteq Y$ there is an isomorphism of $K$-algebras $g^*:
\mathcal{A}(U)\car\mathcal{A}(g^{-1}U)$ compatible in an obvious
sense with restriction maps and satisfying $(gh)^*=h^* g^*$.

A {\it $\Gamma$-equivariant} $\mathcal{A}$-module (cf.
\cite[II.F.5]{Jantzen}) is a (left) $\cA$-module $\cM$ equipped,
for any open subset $U\subseteq Y$ and for $g\in G$, with
$K$-linear isomorphisms $g^*:\cM(U)\car\cM(g^{-1}U)$  compatible
with restriction maps and such that $g^*(am)=g^*(a)g^*(m)$ for
$a\in\cA(U),m\in\cM(U)$. If $g,h\in G$ we require $(gh)^*=h^*g^*$.
An obvious example is $\cM=\cA$. If $\cM$ is equivariant we have a
$K$-linear isomorphism $\cM_z\car \cM_{g^{-1}z}$ between the
stalks of the sheaf $\cM$ at $z$ and $g^{-1}z$ for any $g\in G$.

Finally, a morphism of equivariant modules is a $\cA$-linear map
compatible with the $\Gamma$-actions. The equivariant modules form
an abelian category.\end{para}

\vskip8pt

\begin{para}
After these preliminaries we go back to the situation
discussed in the previous section. We keep all the assumptions
from this section.  The group $G$ naturally acts on the ringed
space $(\Xan,\cO_\Xan)$. Moreover, $G$ acts on $\frg$ and
$U(\frg)$ via the adjoint action as usual. It follows from the
classical argument (\cite[\S3]{Milicic93}) that the sheaves
$$\cO_\Xan\smprod U(\frg), \cI^{an}_\chi {\rm ~and~}
\cD^{an}_\chi:=(\cO_\Xan\smprod U(\frg))/\cI^{an}_\chi$$ (as
defined in section \ref{sec-Berkovich}) are equivariant
$\cO_\Xan$-modules. Of course, here $g^*:
\cD^{an}_\chi(U)\car\cD^{an}_\chi(g^{-1}U)$ is even a $K$-algebra
isomorphism for all $g\in G$ and open subsets $U\subseteq \Xan$.

\vskip8pt

On the other hand, the group $G$ acts on the ringed space
$(\sB,\cO_\sB)$ and the natural map
$\vartheta_\bB:\sB\longrightarrow\Xan$ is $G$-equivariant (Thm. \ref{thm-RTW}). Since our functor $\sF$ preserves $G$-equivariance
the $\cO_\sB$-modules
$$\cO_\sB\smprod U(\frg_K),\cI^{an}_{\sB,\chi} \hskip14pt {\rm and} \hskip14pt
\cD^{an}_{\sB,\chi}=(\cO_\sB\smprod
U(\frg_K))/\cI^{an}_{\sB,\chi}$$ are $G$-equivariant. Again, here
$g^*: \cD^{an}_{\sB,\chi}(U)\car\cD^{an}_{\sB,\chi}(g^{-1}U)$ is a
$K$-algebra isomorphism for all $g\in G$ and open subsets
$U\subseteq\sB$. Recall (\ref{def-sheaf}) the sheaf of
$K$-algebras $\shfo$.
\begin{prop}
The $\cO_\sB$-module $\shfo$ is $G$-equivariant. For $g\in G$ the
map $g^*$ is a $K$-algebra isomorphism.
\end{prop}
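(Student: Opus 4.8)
The plan is to build the isomorphisms $g^*$ first on the ambient sheaf of \emph{all} pointwise sections --- the sheaf of $K$-algebras $\cF$ introduced just before Lemma~\ref{lem-sheafI}, of which $\shfo$ is a subsheaf (Lemma~\ref{lem-stalkI}) --- where everything can be arranged stalk by stalk, and then to check that $g^*$ carries the subsheaf $\shfo$ into itself. Concretely, for $g\in G$ and $z\in\sB$ I would first produce a $K$-algebra isomorphism
\[\psi_{g,z}\colon\;\cO_{\sB,z}\smprod D_r(\Uze)\;\car\;\cO_{\sB,g^{-1}z}\smprod D_r(U_{g^{-1}z}^{(e)})\,,\]
by combining two ingredients. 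On the structure-sheaf factor one uses the stalk at $z$ of the $G$-equivariant structure of $\cO_\sB=\sF(\cO_\Xan)$, which exists because $G$ acts on the ringed space $(\Xan,\cO_\Xan)$, the map $\vartheta_\bB$ is $G$-equivariant (Thm.~\ref{thm-RTW}), and $\sF$ preserves $G$-equivariance (property~(1) in~\S\ref{subsec-structuresheaves}). On the distribution-algebra factor one uses the map induced by the group isomorphism $u\mapsto g^{-1}ug$, which by (\ref{equ-conjgroups}) carries $\Uze$ isomorphically onto $U_{g^{-1}z}^{(e)}$; this is an \emph{isometric} $K$-algebra isomorphism $D_r(\Uze)\car D_r(U_{g^{-1}z}^{(e)})$, because the $p$-valuation $\omFe$ (hence $\comFe$ and the norm $||.||_r$) is built from the conjugation-equivariant filtration $\{U_F^{(n)}\}_n$, and $||.||_r$ does not depend on the chosen ordered basis (\S\ref{subsubsec-norms}), an ordered basis of $\Uze$ being carried to one of $U_{g^{-1}z}^{(e)}$. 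Since these two maps intertwine the $\Uze$- and $U_{g^{-1}z}^{(e)}$-actions on $\cO_{\sB,z}$ and $\cO_{\sB,g^{-1}z}$, they assemble (via the construction of \S\ref{sec-skew}) to the desired isomorphism $\psi_{g,z}$ of completed skew group rings.

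Next I would set $(g^*s)(z'):=\psi_{g,gz'}(s(gz'))$ for $s\in\cF(\Omega)$ and $z'\in g^{-1}\Omega$. Because $\cF$ is a sheaf of $K$-algebras defined by pointwise conditions, this yields a $K$-algebra isomorphism $g^*\colon\cF(\Omega)\car\cF(g^{-1}\Omega)$ compatible with restriction maps; the identity $(gh)^*=h^*g^*$ follows from the cocycle relation for the pullback on $\cO_\sB$ together with the cocycle relation for conjugation on the groups $\Uze$, both of which are immediate, and the inverse of $g^*$ is $(g^{-1})^*$.

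The hard part will be showing that $g^*$ restricts to an isomorphism $\shfo(\Omega)\car\shfo(g^{-1}\Omega)$, i.e.\ that $g^*s$ again satisfies the local patching conditions in the definition of $\shfo$. Given $s\in\shfo(\Omega)$ and an arbitrary facet $F\subseteq\sB$, I would take a datum $\{\Omega_i\}_i$, $\{s_i\}$ for $s$ with respect to the facet $gF$, with $s_i\in\cO_\sB(\Omega_i)\cotimes_L D_r(U_{gF}^{(e)})$; since $g$ permutes the facets and $St(gF)=g\cdot St(F)$, the sets $\Omega_i':=g^{-1}\Omega_i$ form a covering of $g^{-1}\Omega\cap St(F)$. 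Letting $s_i'\in\cO_\sB(\Omega_i')\cotimes_L D_r(\UFe)$ be the image of $s_i$ under the completion of (germ-pullback) $\otimes$ (conjugation by $g^{-1}$), conditions (2a) and (2b) for $g^*s$ with this datum reduce to two compatibilities: that $\iota_z$ commutes with the pullback on germs, and that conjugation by $g^{-1}$ intertwines the transition maps, i.e.\ carries $\sigma_r^{F_1F_2}$ to $\sigma_r^{g^{-1}F_1\,g^{-1}F_2}$ --- the latter a consequence of functoriality of $D(\cdot)$ and (\ref{equ-conjgroups}) (compare the last assertion of Lemma~\ref{lem-glue}). I expect the genuine difficulty to be purely organizational: correctly tracking which facet of $St(F)$ contains a given point after applying $g^{-1}$, and making sure that the \emph{skew} multiplication --- not merely the $\cO$-factor and the $D_r$-factor separately --- is respected, which again comes down to the compatibility between the pullback on $\cO_\sB$ and conjugation encoded in $\psi_{g,z}$. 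Collecting all this, $g^*\colon\shfo(\Omega)\car\shfo(g^{-1}\Omega)$ is a $K$-algebra isomorphism, compatible with restrictions, with $(gh)^*=h^*g^*$, and $\cO_\sB$-semilinear along the equivariant structure of $\cO_\sB$ --- precisely the assertion that $\shfo$ is a $G$-equivariant $\cO_\sB$-module for which each $g^*$ is a $K$-algebra isomorphism.
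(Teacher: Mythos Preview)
Your proposal is correct and follows essentially the same route as the paper: build the stalkwise $K$-algebra isomorphisms $\psi_{g,z}$ by combining the $G$-equivariant structure on $\cO_\sB$ with the isometric Banach-algebra isomorphism $D_r(\Uze)\car D_r(U_{g^{-1}z}^{(e)})$ coming from conjugation (which respects the $p$-valuations $\comxe$), assemble these into a $K$-algebra automorphism of the ambient sheaf $\cF$, and then verify that the subsheaf $\shfo$ is preserved by transporting a datum for $s$ relative to a facet to a datum for $g^*s$ relative to the $g$-translated facet, using exactly the two compatibilities you name (of $\iota_z$ with pullback and of conjugation with the transition maps $\sigma_r^{F'F}$). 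The paper records these two compatibilities as a pair of commutative squares and is otherwise organized just as you describe.
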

\begin{proof}
Given $g\in G$ and $z\in\sB$ we have the group isomorphism
$g^{-1}(\cdot)g: \Uze\car U_{g^{-1}z}^{(e)}$ by
(\ref{equ-conjgroups}). Since it is compatible with variation of
the level $e$ it is compatible with the $p$-valuations
$\mathring{\omega}_z$ and $\mathring{\omega}_{g^{-1}z}$. It
induces therefore an isometric isomorphism of Banach algebras
\begin{numequation}\label{equ-equiDr}g^{-1}(\cdot)g: D_r(\Uze)\car D_r(U_{g^{-1}z}^{(e)}).\end{numequation}The
induced map
$$\cO_{\sB,z}\hat{\otimes}_L D_r(\Uze)\car
\cO_{\sB,z}\hat{\otimes}_L D_r(U_{g^{-1}z}^{(e)})$$ is
multiplicative with respect to the skew multiplication and we
obtain an isomorphism of topological $K$-algebras
\begin{numequation}\label{equ-equivar}g^*: \shf_z\car \shf_{g^{-1}z}\end{numequation} according to (Lem. \ref{lem-stalkI}).
Since we have the identity $g\frx g^{-1}={\rm Ad}(g)(\frx)$ in
$D(G)$ this isomorphism fits into the commutative diagram
\[\xymatrix{
(\cO_{\sB}\smprod U(\frg))_z\ar[d] \ar[r]^>>>>>\simeq &
(\cO_{\sB}\smprod
U(\frg))_{g^{-1}z} \ar[d] \\
 \shf_z \ar[r]^>>>>>>\simeq & \shf_{g^{-1}z}}
\]
where the vertical arrows are the inclusions from
(\ref{equ-alghom}). Recall the sheaf $\cF$ appearing in (Lem.
\ref{lem-sheafI}). Let $\Omega\subseteq\sB$ be an open subset. The
isomorphisms (\ref{equ-equivar}) for $z\in\Omega$ assemble to a
$K$-algebra isomorphism
$$g^*:\cF(\Omega)\car\cF(g^{-1}\Omega), s\mapsto [z\mapsto (g^*)^{-1}(s(gz))]$$ compatible with restriction
maps and satisfying $(gh)^*=h^*g^*$ for $g,h\in G$. It now
suffices to see that $g^*$ maps the subspace $\shf(\Omega)$ into
$\shf(g^{-1}\Omega)$. Let $s\in\shf(\Omega)$. If $F$ is a facet in
$\sB$ we let $\Omega=\cup_{i\in I} \Omega_i$ be a datum for $s$
with respect to $F$. If $F\cap\Omega_i\neq\emptyset$ we consider
$g^{-1}V_i$ and $(g^*)^{-1}(s_i)$ and obtain a datum
$g^{-1}\Omega=\cup_{i\in I} g^{-1}\Omega_i$ for the section
$(g^*)^{-1}sg\in \cF(g^{-1}\Omega)$ with respect to the facet
$g^{-1}F$. Indeed, the axiom {\rm (2a)} for the
section$(g^*)^{-1}sg$ follows from the commutativity of the
diagram

\[\xymatrix{
\cO_\sB(U)\ar[d]^{(g^*)^{-1}} \ar[r]^>>>>>{\iota_z} &  \cO_{\sB,z}\ar[d]^{(g^*)^{-1}} \\
 \cO_\sB(gU) \ar[r]^>>>>>{\iota_{gz}} & \cO_{\sB,gz} }
\]
valid for any open subset $U\subseteq\sB$ containing $z$.
Moreover, we have a commutative diagram

\[\xymatrix{
D_r(U_{F'}^{(e)}) \ar[d]^{(g^*)^{-1}} \ar[r]^>>>>{\sigma_r^{F'F}} &  D_r(\UFe)\ar[d]^{(g^*)^{-1}} \\
 D_r(U_{gF'}^{(e)}) \ar[r]^>>>>{\sigma_r^{gF'gF}} & D_r(U_{gF}^{(e)}) }
\]
whenever $F',F$ are two facets in $\sB$ with
$F'\subseteq\overline{F}$. From this the axiom {\rm (2b)} for the
section $(g^*)^{-1}sg$ follows easily.
\end{proof}
It follows from the preceding proof that the morphism
$\cO_{\sB}\smprod U(\frg)\rightarrow \shfo$ from (Prop.
\ref{prop-BBvsdistr}) is equivariant. The equivariant structure of
$\cI^{an}_{\sB,\chi}$ therefore implies that the ideal sheaf
$\sI^{an}_{\chi}$ of $\shfo$ is naturally equivariant. This yields
the following corollary.
\begin{cor} The $\cO_\sB$-module $\sDc$ is
equivariant. The map $g^*$ is a $K$-algebra isomorphism for any
$g\in G$. The morphism $\cD^{an}_{\sB,\chi}\ra\sD_{r,\chi}$ from
(\ref{equ-extension}) is equivariant.
\end{cor}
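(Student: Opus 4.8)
The plan is to deduce the corollary directly from the equivariance of $\shfo$ established in the preceding proposition, together with the $G$-equivariance of the two-sided ideal sheaf $\sI^{an}_\chi\subseteq\shfo$ recorded in the paragraph just above.

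First I would recall that $\sD_{r,\chi}=\shf/\sI^{an}_\chi$ by definition. Fix $g\in G$ and an open subset $U\subseteq\sB$. By the preceding proposition the map $g^*:\shf(U)\car\shf(g^{-1}U)$ is a $K$-algebra isomorphism, compatible with restriction and satisfying $(gh)^*=h^*g^*$; and since $\sI^{an}_\chi$ is equivariant, $g^*$ carries $\sI^{an}_\chi(U)$ onto $\sI^{an}_\chi(g^{-1}U)$. Hence $g^*$ passes to the quotient presheaves, and after sheafification it induces $K$-algebra isomorphisms $g^*:\sD_{r,\chi}(U)\car\sD_{r,\chi}(g^{-1}U)$ which inherit the compatibility with restriction maps and the relation $(gh)^*=h^*g^*$ from $\shfo$. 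The $\cO_\sB$-module structure is respected because $\cO_\sB\hookrightarrow\shfo$ is part of the equivariant data. Thus $\sD_{r,\chi}$ is a $G$-equivariant $\cO_\sB$-module and each $g^*$ is a $K$-algebra isomorphism.

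For the last assertion, recall from Proposition \ref{prop-BBvsdistr} the morphism $\cO_\sB\smprod U(\frg_K)\to\shfo$; it is $G$-equivariant by the remark following the preceding proposition, and by construction it maps the equivariant ideal $\cI^{an}_{\sB,\chi}$ into the equivariant ideal $\sI^{an}_\chi$. Passing to quotients therefore yields the morphism $\cD^{an}_{\sB,\chi}=(\cO_\sB\smprod U(\frg_K))/\cI^{an}_{\sB,\chi}\to\shf/\sI^{an}_\chi=\sD_{r,\chi}$ of (\ref{equ-extension}), and since source, target and the two quotient maps are all $G$-equivariant, this morphism commutes with $g^*$ on source and on target; that is, it is equivariant. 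The only step needing genuine care is that $g^*$ really preserves $\sI^{an}_\chi$ stalkwise, but this is precisely the content of the equivariance of $\sI^{an}_\chi$ proved before the corollary, which in turn rests only on the equivariance of $\cI^{an}_{\sB,\chi}$ and on the fact that $g^*$ intertwines the $G$-action on $\frb^{\ci,an}$ via ${\rm Ad}(g)$ with the $\cO_\sB$-linear form $\lambda^{\ci,an}$. So there is really no new argument to make, and the only potential obstacle is already out of the way.
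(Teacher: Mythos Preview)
Your proof is correct and follows essentially the same approach as the paper: the paper's argument is the short paragraph immediately preceding the corollary, which deduces equivariance of $\sI^{an}_\chi$ from the equivariance of $\cO_\sB\smprod U(\frg)\to\shfo$ together with that of $\cI^{an}_{\sB,\chi}$, and then simply states that the corollary follows by passing to quotients. You have supplied the routine details of this passage to quotients, but the logical structure is identical.
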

\end{para}

\vskip8pt

The above discussion shows that there is a natural right action of
$G$ on the ringed space $(\sB,\sDc)$. We let ${\rm Mod}_G(\sDc)$
be the abelian category of $G$-equivariant (left) $\sDc$-modules.

\begin{para}Using very similar arguments we may use the isomorphisms
(\ref{equ-equiDr}) appearing in the above proof to define an
equivariant structure on the sheaves $\tiD$ and $\tiDt$. As before
we suppose $\sigma(\chi)=\theta$. If $M$ is a $D(G)$-module (resp.
$D(G)_\theta$-module) with $m\in M$ and $g\in G$ we put
$g.m:=\delta_{g^{-1}}m$. This defines a $K$-linear isomorphism
$$g^*:M_r(\Uze)\car M_r(U_{g^{-1}z}^{(e)})$$
via $g^*(\delta\otimes m):=g^*(\delta)\otimes gm$ for any
$\delta\in D_r(\Uze)$. As in the case of $\tiD$ these isomorphisms
lift to an equivariant structure on the sheaf $\tiM$. Since these
isomorphisms are compatible with the isomorphisms
(\ref{equ-equiDr}) we obtain that $\tiM$ is an equivariant
$\tiD$-module (resp. $\tiDt$-module). We now define
$g^*(\partial\otimes m):=g^*(\partial)\otimes g^*(m)$ for local
sections $\partial$ and $m$ of $\sDc$ and $\tiM$ respectively.
Since the morphism $\tiDt\rightarrow\sDc$ induced by (Prop.
\ref{prop-morconst}) is equivariant this yields an equivariant
structure on $\func(M)$. If $M\rightarrow N$ is a
$D(G)_\theta$-linear map the resulting morphism
$\func(M)\rightarrow\func(N)$ is easily seen to be equivariant.
This shows
\end{para}
\begin{cor} The functor $\func$ takes values in ${\rm
Mod}_G(\sDc)$.
\end{cor}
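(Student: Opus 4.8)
The plan is to obtain the $G$-equivariant structure on $\func(M)=\sDc\otimes_{\tiDt}\tiM$ by transporting, along the morphism $\tiDt\ra\sDc$ of (Prop.~\ref{prop-central}), the equivariant structures already available on the two factors, and by first equipping the coefficient sheaf $\tiM$ with a compatible one. The basic geometric input is the conjugation isomorphism $g^{-1}(\cdot)g: \Uze\car U^{(e)}_{g^{-1}z}$ furnished by (\ref{equ-conjgroups}); being compatible with variation of the level $e$ it respects the $p$-valuations, hence induces the isometric isomorphism of Banach algebras (\ref{equ-equiDr}), which descends after $\theta$ to central reductions $D_r(\Uze)_\theta\car D_r(U^{(e)}_{g^{-1}z})_\theta$. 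Arguing exactly as in the proof that $\shfo$ is $G$-equivariant, these isomorphisms glue over open $\Omega\subseteq\sB$ to $K$-algebra isomorphisms $g^*:\tiD(\Omega)\car\tiD(g^{-1}\Omega)$ and $g^*:\tiDt(\Omega)\car\tiDt(g^{-1}\Omega)$, compatible with restriction and satisfying $(gh)^*=h^*g^*$; the morphisms $\tiD\ra\tiDt$ and $\tiDt\ra\sDc$ are then equivariant, and so, by the corollary above, is $\sDc$.

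Next I would put an equivariant structure on $\tiM$ for a left $D(G)_\theta$-module $M$. Setting $g.m:=\delta_{g^{-1}}m$ and using $g\frx g^{-1}={\rm Ad}(g)(\frx)$ in $D(G)$, the rule $g^*(\delta\otimes m):=g^*(\delta)\otimes g.m$ is well defined on $M_r(\Uze)=D_r(\Uze)\otimes_{D(\Uze)}M$ and yields $K$-linear isomorphisms $M_r(\Uze)\car M_r(U^{(e)}_{g^{-1}z})$ compatible with the transition maps $\sigma_r^{F'F}\otimes{\rm id}$; as for $\tiD$ these lift to an equivariant structure on $\tiM$, and compatibility with (\ref{equ-equiDr}) makes $\tiM$ an equivariant $\tiDt$-module. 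On local sections of $\sDc\otimes_{\tiDt}\tiM$ one then sets $g^*(\partial\otimes m):=g^*(\partial)\otimes g^*(m)$; this is well defined because $g^*$ on $\tiDt$ intertwines the left action on $\sDc$ with the right action on $\tiM$, it commutes with restriction, and it satisfies $(gh)^*=h^*g^*$, so $\func(M)$ acquires a $G$-equivariant $\sDc$-module structure. Functoriality is then routine: a $D(G)_\theta$-linear map $M\ra N$ induces $\tiM\ra\underline{N}_r$ commuting with all $g^*$, hence $\func(M)\ra\func(N)$ is a morphism in ${\rm Mod}_G(\sDc)$.

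The step I expect to require the most care is checking that the pointwise isomorphisms $g^*$ really assemble into maps of the sheaves $\tiM$, $\tiD$ and $\tiDt$: concretely, that if $\{\Omega_i,s_i\}_{i\in I}$ is a datum (in the sense of Definition~\ref{def-sheaf} and the surrounding discussion) for a section with respect to a facet $F$, then $\{g^{-1}\Omega_i,(g^*)^{-1}(s_i)\}$ is a datum for the transported section with respect to $gF$. This reduces to the commutativity of the squares relating the germ maps $\iota_z$, the gluing homomorphisms $\sigma_r^{F'F}$ and the conjugations (\ref{equ-equiDr}) — precisely the diagrams appearing in the proof of the preceding proposition, now carried along the module structure. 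Once this bookkeeping is in place, the remaining verifications (cocycle identity, $\cO_\sB$-linearity, compatibility with the tensor relations) are formal.
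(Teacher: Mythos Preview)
Your proposal is correct and follows essentially the same route as the paper: equip $\tiD$, $\tiDt$ and $\tiM$ with equivariant structures via the conjugation isomorphisms (\ref{equ-equiDr}), define $g^*(\delta\otimes m)=g^*(\delta)\otimes g.m$ on stalks with $g.m=\delta_{g^{-1}}m$, and then set $g^*(\partial\otimes m)=g^*(\partial)\otimes g^*(m)$ on $\func(M)$, using equivariance of $\tiDt\ra\sDc$ for well-definedness. One small slip: in your datum check, the transported section on $g^{-1}\Omega$ has its datum with respect to the facet $g^{-1}F$, not $gF$ (since $g^{-1}\Omega_i$ covers $g^{-1}\Omega\cap St(g^{-1}F)$).
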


\section{Comparison with the Schneider-Stuhler
construction}\label{compSS}

In this section, we keep the same assumptions as before, i.e. we
assume $L = \bbQ_{p}$, $e >  \max(e_{uni},e_{0}, e_{1})$ and $r
\in [r_{0}, 1)$. We will work in this section with the trivial
infinitesimal character, i.e. $\lambda:=\lambda_0$ with
$r(\lambda_0)=r_0$ and $\theta:=\theta_0$. %and that $\bG$ is semisimple.
\subsection{Preliminaries on smooth distributions}

\begin{para}
Let $M$ be a co-admissible $D(G)$-module such the the associated
locally analytic representation $V=M'_b$ is smooth. In the
previous section, we have associated to $M$ a sheaf $\tiM$ on the
Bruhat-Tits building $\sB$. On the other hand, we also have the
sheaf $\tiV$ on $\sB$ constructed by Schneider and Stuhler (cf.
4.6). We now show that for $r<p^{\frac{-1}{p-1}}$, the two sheaves
$\check{\tiV}$ and $\tiM$ are canonically isomorphic. Here,
$\check{V}$ denotes the smooth dual. We remark straightaway that
$V$ is admissible-smooth (\cite[Thm. 6.6]{ST5}) and hence, so is
$\check{V}$ (\cite[1.5 (c)]{Cartier}).

\vskip8pt

Suppose $H$ is a uniform locally $\bbQ_p$-analytic group with
$\bbQ_p$-Lie algebra $\frh$. Let $D^\infty(H)$ denote the quotient
of $D(H)$ by the ideal generated by $\frh$. Let $\cC^\infty_H$
denote the category of coadmissible $D^\infty(H)$-modules. If
$U_r(\frh)$ denotes the closure of $U(\frh)$ inside $D_r(H)$ we
put
$$H_{(r)}:=H\cap U_r(\frh).$$\end{para}

\begin{lemma}
The set $H_{(r)}$ is an open normal subgroup of $H$ constituting,
for $r\uparrow 1$, a neighbourhood basis of $1\in H$.
\end{lemma}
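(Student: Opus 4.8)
The plan is to compute $H_{(r)}$ \emph{exactly} at the distinguished radii $r_m=\sqrt[p^m]{1/p}$ of (2.2.5)(ii) and then to obtain the general case by monotonicity in $r$. The two facts I aim for are
\[
U_{r_m}(\frh)\;=\;D_{r_0}(P_{m+1}(H))\subseteq D_{r_m}(H)\qquad\text{and}\qquad H_{(r_m)}\;=\;P_{m+1}(H)\qquad(m\geq 0),
\]
where $P_{m+1}(H)=H^{p^m}$ is the $(m{+}1)$-st term of the lower $p$-series of the uniform pro-$p$ group $H$. The elementary half is quick. Since $U_r(\frh)$ is a unital closed subring of $D_r(H)$ and $H\subseteq D_r(H)^\times$, $H_{(r)}$ is stable under products; and if $h\in H_{(r)}$ then $\|1-\delta_h\|_r\leq r<1$ (immediate from the ordered basis and submultiplicativity of $\|\cdot\|_r$), so $\delta_h^{-1}=\sum_{n\geq 0}(1-\delta_h)^n$ converges in $D_r(H)$ and lies in $U_r(\frh)$; hence $H_{(r)}$ is a subgroup. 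It is normal because for $g\in H$ conjugation by $g$ is an automorphism of $H$, preserves the canonical $p$-valuation, extends to an isometric ring automorphism of $D_r(H)$ carrying $\frh$ into itself (as $\mathrm{Ad}(g)$) and hence $U(\frh)$ and its closure $U_r(\frh)$ into themselves, so $gH_{(r)}g^{-1}=H\cap U_r(\frh)=H_{(r)}$.

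For the key identification I would argue as follows. As $H$ is uniform, $P_{m+1}(H)=H^{p^m}$ is uniform with Lie lattice $\cL(P_{m+1}(H))=p^m\cL(H)$, so (2.2.5)(i), applied to $P_{m+1}(H)$ over $K$, exhibits $D_{r_0}(P_{m+1}(H))$ as the $p$-adic completion of $U\!\big(p^{m-1}\cL(H)\big)$ with $p$ inverted. In particular it contains $\frh=\big(p^{m-1}\cL(H)\big)\otimes_{\bbZ_p}\bbQ_p$, hence contains the subring $U(\frh)$ of $D_{r_m}(H)$. By (2.2.5)(ii), $D_{r_m}(H)$ is finite free over $D_{r_0}(P_{m+1}(H))$ on a system of coset representatives $\{\delta_g\}$ for $H/P_{m+1}(H)$ (take $\delta_1$ for the trivial coset); thus $D_{r_0}(P_{m+1}(H))$ is the $g=1$ coordinate submodule, in particular closed in $D_{r_m}(H)$ and carrying its own Banach topology there. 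Since $U\!\big(p^{m-1}\cL(H)\big)$ is dense in $D_{r_0}(P_{m+1}(H))$ and
\[
U\!\big(p^{m-1}\cL(H)\big)\ \subseteq\ U(\frh)\ \subseteq\ D_{r_0}(P_{m+1}(H))\ \subseteq\ D_{r_m}(H),
\]
taking closures in $D_{r_m}(H)$ forces $U_{r_m}(\frh)=\overline{U(\frh)}^{D_{r_m}(H)}=D_{r_0}(P_{m+1}(H))$. Intersecting with $H$: for $h\in H$ write $h=gh'$ with $g$ the representative of $hP_{m+1}(H)$ and $h'\in P_{m+1}(H)$; by normality of $P_{m+1}(H)$, $\delta_h=\delta_{gh'g^{-1}}\delta_g$ with $\delta_{gh'g^{-1}}\in D_{r_0}(P_{m+1}(H))$, so $\delta_h$ has vanishing coordinates except the $g$-th. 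Hence $\delta_h\in D_{r_0}(P_{m+1}(H))$ iff $g=1$ iff $h\in P_{m+1}(H)$, i.e. $H_{(r_m)}=P_{m+1}(H)$.

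It remains to pass to arbitrary $r$ and to conclude. For $r\leq r'$ in $[r_0,1)$ the transition map $D_{r'}(H)\to D_r(H)$ is the honest inclusion of subsets of the formal power series ring $K[[b_1,\dots,b_d]]$ appearing in the series description of (2.2.3) and is norm-decreasing, and $U(\frh)$ is dense in each $U_s(\frh)$, so $U_{r'}(\frh)\subseteq U_r(\frh)$ and thus $H_{(r')}\subseteq H_{(r)}$. Since $r_m\uparrow 1$, any $r\in[r_0,1)$ lies in some $[r_m,r_{m+1})$, and then
\[
P_{m+2}(H)=H_{(r_{m+1})}\ \subseteq\ H_{(r)}\ \subseteq\ H_{(r_m)}=P_{m+1}(H);
\]
in particular $H_{(r)}$ is open (it contains the open subgroup $P_{m+2}(H)$), and with the normality above it is an open normal subgroup. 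The family $\{H_{(r)}\}_{r\in[r_0,1)}$ decreases with $r$, hence is directed; and any open $V\ni 1$ contains some $P_N(H)$ (the lower $p$-series being a neighbourhood basis of $1$ in the uniform pro-$p$ group $H$), so $H_{(r_N)}=P_{N+1}(H)\subseteq P_N(H)\subseteq V$. Therefore the $H_{(r)}$ form a neighbourhood basis of $1$ as $r\uparrow 1$.

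The only genuinely non-formal ingredient is the pair (2.2.5)(i)–(ii): the Ardakov–Wadsley description of $D_{1/p}$ of a uniform group as a completed enveloping algebra, together with the finiteness of $D_{r_m}(H)$ over $D_{r_0}(P_{m+1}(H))$. Once these are in hand the rest is bookkeeping; the points that want a little care are that the maps in the system $\{D_s(H)\}_s$ are literal inclusions of subsets of a formal power series ring — so the various closures may be compared unambiguously — and that $D_{r_0}(P_{m+1}(H))$ really occurs as a \emph{closed} submodule of $D_{r_m}(H)$ carrying there its intrinsic Banach topology, both of which follow from the finite-free structure and the explicit series descriptions.
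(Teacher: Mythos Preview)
Your proof is correct and follows the same overall strategy as the paper's --- use the lower $p$-series of $H$ via the embedding $D_{r_0}(P_{m+1}(H))\hookrightarrow D_{r_m}(H)$ from (2.2.5)(ii) --- but you go further. The paper only proves the one-sided inclusion $P_{m+1}(H)\subseteq H_{(r)}$ (for $r\leq r_m$), deducing openness from this and then asserting the neighbourhood-basis claim somewhat elliptically. You instead establish the \emph{exact} equality $U_{r_m}(\frh)=D_{r_0}(P_{m+1}(H))$ (hence $H_{(r_m)}=P_{m+1}(H)$) by combining the Ardakov--Wadsley description (2.2.5)(i) with the finite-free coset decomposition, and then intersect with $H$ component-wise. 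This sharper statement, together with your monotonicity in $r$, yields the sandwich $P_{m+2}(H)\subseteq H_{(r)}\subseteq P_{m+1}(H)$, which makes the neighbourhood-basis assertion transparent rather than implicit. Your treatment of closure under inverses via the geometric series $\delta_h^{-1}=\sum_{n\geq 0}(1-\delta_h)^n$ is also different from the paper's approach (which observes that inversion on $H$ extends to an isometric anti-automorphism of $D_r(H)$ preserving $U_r(\frh)$); both are valid, and yours is more self-contained.
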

\begin{proof}
As the norm $||.||_r$ on $D_r(H)$ does not depend on the choice of
ordered basis the inversion map $h\mapsto h^{-1}$ induces an
automorphism of $D_r(H)$. It induces an automorphism of
$U_r(\frh)$ which implies that $H_{(r)}$ is a subgroup of $H$. A
similar argument with the conjugation automorphism $h\mapsto
ghg^{-1}$ for a $g\in H$ implies that this subgroup is normal in
$H$. For the remaining assertions we choose $m\geq 0$ such that
$r_m=\sqrt[p^m]{r_0} \geq r$ and consider $D(P_{m+1}(H))$. The
inclusion $D(P_{m+1}(H))\subseteq D(H)$ gives rise to an isometric
embedding $$D_{r_0}(P_{m+1}(H))\hookrightarrow D_{r_m}(H)$$ (final
remark in 2.2.3). Since $U(\frh)$ is norm-dense inside
$D_{r_0}(P_{m+1}(H))$ it follows that $$P_{m+1}(H)\subset
U_{r_m}(\frh)\subseteq U_r(\frh)$$ which implies
$P_{m+1}(H)\subseteq H_{(r)}$ and therefore $H_{(r)}$ is open.
Finally, if $r\uparrow 1$ then $r_m\uparrow 1$ whence
$m\uparrow\infty$. Since the lower $p$-series $\{P_{m}(H)\}_m$
constitutes a neighbourhood basis of $1\in H$ the last assertion
of the lemma follows.
\end{proof}

The lemma implies (cf. \cite[pf. of Thm. 6.6]{ST5}) a canonical
$K$-algebra isomorphism $D^\infty(H)\simeq\varprojlim_r
K[H/H_{(r)}]$ coming from restricting distributions to the
subspace of $K$-valued locally constant functions on $H$.
\begin{prop}\label{prop-smoothcoinvariants}
(i) We have $D_r(H)\otimes_{D(H)} D^\infty(H)\simeq K[H/H_{(r)}]$
as right $D^\infty(H)$-modules;

(ii) If $M\in\cC^\infty_H$ and $V=M'_b$ denotes the corresponding
smooth representation then $D_r(H)\otimes_{D(H)}M\simeq
(\check{V})_{H_{(r)}}$ as $K$-vector spaces. Here,
$(\cdot)_{H_{(r)}}$ denotes $H_{(r)}$-coinvariants and
$\check{(\cdot)}$ denotes the smooth dual.
\end{prop}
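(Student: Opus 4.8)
The plan is to deduce (ii) from (i) together with the description of coadmissible $D^\infty(H)$-modules recalled above; the substance is in (i).

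For (i): by definition $D^\infty(H)=D(H)/D(H)\frh D(H)$, so right exactness of $D_r(H)\otimes_{D(H)}(-)$ identifies the left-hand side with $D_r(H)/D_r(H)\frh$ as a right $D^\infty(H)$-module; here one uses that $\frh$ is ${\rm Ad}(H)$-stable, so that $D_r(H)\frh D(H)=D_r(H)\frh$ and $D_r(H)\frh$ is in fact a two-sided ideal. The structural input I would invoke is that $D_r(H)$ is free as a left $U_r(\frh)$-module on any system $\{g\}$ of coset representatives for $H/H_{(r)}$; this follows from \cite{ST5} in the same way as the freeness recalled in (2.2.3), the point being that $\delta_h\in U_r(\frh)$ for every $h\in H_{(r)}$ by the very definition of $H_{(r)}$, which is exactly what makes the $\delta_g$ a basis. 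I would also use that the augmentation $U(\frh)\to K$ extends to a continuous algebra homomorphism $U_r(\frh)\to K$ with kernel $U_r(\frh)\frh$, a finitely generated (hence closed) left ideal that contains the dense codimension-one subspace $U(\frh)\frh$; thus $U_r(\frh)=K\cdot 1\oplus U_r(\frh)\frh$. Since $\delta_g\frh=\frh\delta_g$ we obtain $D_r(H)\frh=\bigoplus_{\bar g}U_r(\frh)\frh\,\delta_g$, whence
$$D_r(H)\otimes_{D(H)}D^\infty(H)=D_r(H)/D_r(H)\frh=\bigoplus_{\bar g\in H/H_{(r)}}\bigl(U_r(\frh)/U_r(\frh)\frh\bigr)\delta_g=\bigoplus_{\bar g\in H/H_{(r)}}K\,\delta_g=K[H/H_{(r)}],$$
an isomorphism of right $D^\infty(H)$-modules, the target carrying the action induced by $D^\infty(H)\twoheadrightarrow K[H/H_{(r)}]$.

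For (ii): viewing $M\in\cC^\infty_H$ as a $D(H)$-module through $D(H)\to D^\infty(H)$, part (i) gives
$$D_r(H)\otimes_{D(H)}M=\bigl(D_r(H)\otimes_{D(H)}D^\infty(H)\bigr)\otimes_{D^\infty(H)}M=K[H/H_{(r)}]\otimes_{D^\infty(H)}M.$$
As $K[H/H_{(r)}]$ is the quotient of $D^\infty(H)$ by the closed two-sided ideal generated by $\{h-1:h\in H_{(r)}\}$, the right-hand side equals $M/\langle(h-1)m:h\in H_{(r)},\,m\in M\rangle=M_{H_{(r)}}$, the module of $H_{(r)}$-coinvariants; the algebraic and the closed submodule generated by the $(h-1)m$ coincide here because $M_{H_{(r)}}=K[H/H_{(r)}]\otimes_{D^\infty(H)}M$ is the $r$-th term of the coherent sheaf of the coadmissible module $M$, hence finite dimensional and separated. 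It remains to identify $M_{H_{(r)}}$ with $(\check V)_{H_{(r)}}$. Since $H_{(r)}$ is a compact pro-$p$ group and $p$ is invertible in $K$, averaging over $H_{(r)}$ gives $M_{H_{(r)}}\car M^{H_{(r)}}$, and $M^{H_{(r)}}=(V'_b)^{H_{(r)}}$ is precisely the space of continuous functionals on $V$ factoring through $V\twoheadrightarrow V_{H_{(r)}}$, i.e. $M^{H_{(r)}}=(V_{H_{(r)}})^{*}$. Running the identical argument with the admissible-smooth representation $\check V$ in place of $V'_b$ gives $(\check V)_{H_{(r)}}\car(\check V)^{H_{(r)}}=(V_{H_{(r)}})^{*}$; since both identifications are induced by the canonical pairing between $M=V'_b$ and $V$, their composite is a canonical isomorphism $D_r(H)\otimes_{D(H)}M\car(\check V)_{H_{(r)}}$, functorial in $M$.

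I expect the main obstacle to be the freeness of $D_r(H)$ over $U_r(\frh)$ on $H/H_{(r)}$ used in (i) --- equivalently, the identification of $U_r(\frh)$ with the closure of $D(H_{(r)})$ inside $D_r(H)$ and the resulting decomposition $D_r(H)=\bigoplus_{\bar g}U_r(\frh)\delta_g$. Once that is available, the rest is formal bookkeeping with ideals and with coinvariants of smooth representations.
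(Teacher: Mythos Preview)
Your proof is correct and follows essentially the same approach as the paper. For (i) both you and the paper use the decomposition $D_r(H)=\bigoplus_{h\in H/H_{(r)}}\delta_h\,U_r(\frh)$ (the paper writes it as a right $U_r(\frh)$-module, you as a left one, which is immaterial since $\frh$ is ${\rm Ad}(H)$-stable) and pass to the quotient by the ideal generated by~$\frh$; for (ii) the paper simply records the identities $K[H/N]\otimes_{D^\infty(H)}M=\Hom_K(V^N,K)=(\check V)_N$ for open normal $N\trianglelefteq H$, which is exactly the chain $M_N\simeq M^N=(V_N)^*\simeq(\check V)^N\simeq(\check V)_N$ you spell out. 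Your identification of the freeness of $D_r(H)$ over $U_r(\frh)$ on $H/H_{(r)}$ as the substantive input is accurate: the paper also asserts it without further argument.
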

\begin{proof}
The first statement follows from $D_r(H)=\oplus_{h\in H/H_{(r)}}
\;\delta_h U_r(\frh)$ as right $U_r(\frh)$-modules by passing to
quotients modulo the ideals generated by $\frh$. The second
statement follows from (i) by observing the general identities
$K[H/N]\otimes_{D^\infty(H)}M=\Hom_K(V^N,K)=(\check{V})_N$ valid
for any normal open subgroup $N$ of $H$.
\end{proof}

%Let $\kappa =1$ if $p$ is odd and $2$ otherwise.

\begin{cor}
If $M\in\cC^\infty_H$ and $r_0\leq r<p^{-1/p-1}$ then
$D_r(H)\otimes_{D(H)}M\simeq (\check{V})_{H}.$
\end{cor}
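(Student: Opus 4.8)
The plan is to deduce this directly from Proposition~\ref{prop-smoothcoinvariants}(ii), which already identifies $D_r(H)\otimes_{D(H)}M$ with the $H_{(r)}$-coinvariants $(\check{V})_{H_{(r)}}$ as $K$-vector spaces. Thus it suffices to prove that $H_{(r)}=H$ whenever $r_0\le r<r_1:=p^{-1/(p-1)}$. Since $H_{(r)}=H\cap U_r(\frh)$ and $U_r(\frh)$ is a closed subalgebra of the Banach algebra $D_r(H)$ (it is the closure of the subalgebra $U(\frh)_K$ and the norm is submultiplicative), this amounts to showing $\delta_h\in U_r(\frh)$ for every $h\in H$. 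Because $H$ is uniform I would fix an ordered basis $h_1,\dots,h_d$ (so $\omega^{can}(h_i)=1$, hence $\|\delta_{h_i}-1\|_r=r$) and write $h=h_1^{a_1}\cdots h_d^{a_d}$ with $a_i\in\bbZ_p$; as $U_r(\frh)$ is multiplicatively closed it is enough to treat $\delta_{h_i}^{a_i}$.

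Next I would put $\frx_i:=\log\delta_{h_i}$. Since $H$ is uniform, $\frx_i$ lies in $\frh\subseteq U(\frh)$ and $h_i^{a_i}=\exp(a_i\frx_i)$ in $H$, so that $\delta_{h_i}^{a_i}=\exp(a_i\frx_i)=\sum_{k\ge0}(a_i\frx_i)^k/k!$ in $D(H)$ — this is the standard comparison of the group exponential with exponentiation in the distribution algebra for uniform groups, used e.g.\ in the proof of \cite[Thm.~6.6]{ST5}. The heart of the argument is then a $p$-adic norm estimate: from the logarithm series and $\|\delta_{h_i}-1\|_r=r$ one gets $\|\frx_i\|_r\le\sup_{k\ge1}|1/k|\,r^k=\sup_{v\ge0}p^{v}r^{p^{v}}$, and a short analysis of $v\mapsto p^{v}r^{p^{v}}$ shows this supremum equals $r_1$ at $r=r_1$ (attained at $v=0$ and $v=1$) and is strictly less than $r_1$ for every $r<r_1$. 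Combining $\|\frx_i\|_r<r_1$ with submultiplicativity and $|1/k!|\le p^{k/(p-1)}$ yields $\|(a_i\frx_i)^k/k!\|_r\le(\|\frx_i\|_r\,p^{1/(p-1)})^k=(\|\frx_i\|_r/r_1)^k\to0$, so the exponential series converges in $D_r(H)$; its partial sums lie in $U(\frh)_K$, hence $\delta_{h_i}^{a_i}\in U_r(\frh)$.

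Assembling these observations gives $H\subseteq U_r(\frh)$, hence $H_{(r)}=H$ — in fact $U_r(\frh)=D_r(H)$, since $D_r(H)$ is topologically generated over $U_r(\frh)$ by the monomials in the $\delta_{h_i}$ — and Proposition~\ref{prop-smoothcoinvariants}(ii) then yields $D_r(H)\otimes_{D(H)}M\simeq(\check{V})_{H_{(r)}}=(\check{V})_{H}$. I expect the only real obstacle to be the elementary but slightly delicate estimate $\|\log\delta_{h_i}\|_r<r_1$ for $r<r_1$ and the resulting convergence of the exponential series; once that is in place everything else is formal. One should also note that $r=r_1$ is genuinely excluded, since there $\|\log\delta_{h_i}\|_{r_1}=r_1$ sits exactly on the radius of convergence of $\exp$, which matches the strict inequality $r<p^{-1/(p-1)}$ in the statement.
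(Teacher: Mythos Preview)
Your proposal is correct and follows essentially the same route as the paper: both reduce, via Proposition~\ref{prop-smoothcoinvariants}(ii), to the assertion $H_{(r)}=H$, which in turn comes from $U_r(\frh)=D_r(H)$ for $r_0\le r<p^{-1/(p-1)}$. The paper simply states this last equality as a known fact and stops, whereas you supply a direct justification by showing $\delta_h\in U_r(\frh)$ for every $h\in H$ through the explicit log/exp estimates; your norm computations (in particular $\|\log\delta_{h_i}\|_r\le r<r_1$ and the resulting convergence of the exponential series) are correct, and the minor notational looseness in writing ``$h_i^{a_i}=\exp(a_i\frx_i)$ in $H$'' is harmless once one reads it as $\delta_{h_i^{a_i}}=\exp(a_i\frx_i)$ in $D_r(H)$.
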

\begin{proof}
We have $U_r(\frh)=D_r(H)$ for such an $r$ and therefore
$H_{(r)}=H$.
\end{proof}

\subsection{The comparison isomorphism}
\begin{para} Let us return back to our sheaf $M\mapsto\tiM$. We {\bf
assume} in the following $$r_0\leq r<p^{-1/p-1}.$$ Let $F$ be a
facet in $X$. If we apply the above corollary to the uniform group
$\UFe$ we obtain a canonical linear isomorphism $$f_r^F:
M(U_{F}^{(e)})=D_r(\UFe)\otimes_{D(\UFe)}M \car
(\check{V})_{\UFe}.$$ If $F\subseteq\overline{F'}$ for two facets
$F,F'$ in $X$ it follows that
\begin{equation}\label{compatible} f_r^{F'}\circ\sigma_r^{FF'}=pr^{FF'}\circ f_r^F\end{equation} where
$pr^{FF'}:(\check{V})_{\UFe}\rightarrow
(\check{V})_{U_{F'}^{(e)}}$ denotes the natural
projection.\end{para}

\begin{prop}\label{prop-ScSt1}Given an open subset $\Omega\subseteq X$ the collection of
maps $f_r^z$ for $z\in\Omega$ induces a $K$-linear isomorphism
$\tiM(\Omega)\simeq \check{\tiV}(\Omega)$ compatible with
restriction maps whence a canonical isomorphism of sheaves
$$\tiM\car \check{\tiV}$$
which is natural in admissible $V$.
\end{prop}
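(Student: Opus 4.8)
The plan is to promote the pointwise-defined isomorphisms $f_r^z$ to an isomorphism of sheaves by checking that they patch compatibly along facets. First I would fix an open set $\Omega \subseteq \sB$ and a section $s \in \tiM(\Omega)$, and define a candidate section $\tilde f_r(s) \in \check{\tiV}(\Omega)$ by $\tilde f_r(s)(z) := f_r^z(s(z))$ for $z \in \Omega$; by construction $\tilde f_r(s)(z) \in (\check V)_{\Uze}$. The content is to verify that $\tilde f_r(s)$ actually lies in $\check{\tiV}(\Omega)$, i.e. satisfies the local-constancy-up-to-facet condition in the definition of $\check{\tiV}$ (Schneider–Stuhler, cf.\ 4.6), and conversely that the inverse maps $(f_r^z)^{-1}$ send a section of $\check{\tiV}$ to a section of $\tiM$ in the sense of Definition~\ref{def-sheaf} (applied to $M_r(\UFe)$ in place of $D_r(\UFe)$).

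The key step is the compatibility identity \eqref{compatible}: for $F \subseteq \overline{F'}$ one has $f_r^{F'} \circ \sigma_r^{FF'} = pr^{FF'} \circ f_r^F$. Given $s \in \tiM(\Omega)$ and a facet $F$, pick a datum $\{\Omega_i\}_{i\in I}$ for $s$ with respect to $F$, so that over $\Omega_i$ with $\Omega_i \cap F \neq \emptyset$ there is $s_i \in M_r(\UFe)$ with $s(z) = s_i$ for $z \in \Omega_i \cap F$ and $s(z') = \sigma_r^{F'F}(s_i)$ for $z' \in \Omega_i$, $z' \in F'$. Applying $f_r^{(-)}$ and using \eqref{compatible}, the section $\tilde f_r(s)$ takes the constant value $f_r^F(s_i) \in (\check V)_{\UFe}$ on $\Omega_i \cap F$ and equals $pr^{F'F}(f_r^F(s_i))$ on $\Omega_i$; since $(\check V)_{\Uze}$ for $z \in F'$ receives $(\check V)_{\UFe}$ precisely via $pr^{F'F}$, this is exactly the pattern required for $\tilde f_r(s)$ to be a section of $\check{\tiV}$ over $\Omega \cap St(F)$, hence over $\Omega$ since the stars cover $\sB$. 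The reverse direction is symmetric: a section of $\check{\tiV}$ is locally represented by vectors $v_i \in \check V$, whose classes in $(\check V)_{\UFe}$ pull back under $(f_r^F)^{-1}$ to elements $s_i \in M_r(\UFe)$, and \eqref{compatible} again guarantees that the $s_i$ assemble to a section of $\tiM$. Compatibility with restriction maps on both sides is immediate since everything is defined pointwise, and bijectivity is clear because each $f_r^z$ is an isomorphism; thus we obtain $\tiM \car \check{\tiV}$.

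Finally, naturality in $V$: a morphism $V \to V'$ of (admissible smooth) representations dualizes to $\check{V'} \to \check V$ and corresponds, under the anti-equivalence, to a $D(G)$-linear map $M \to M'$ of the associated coadmissible modules; both the Schneider–Stuhler functor $V \mapsto \tiV$ and our functor $M \mapsto \tiM$ are functorial (for the latter, functoriality was noted in~8.1), and the isomorphism $f_r^F$ of Corollary~\ref{prop-smoothcoinvariants} before this proposition is itself natural in $M \in \cC^\infty_{\UFe}$ because it comes from the natural identification $D_r(\UFe) \otimes_{D(\UFe)} M \simeq (\check V)_{\UFe}$. Chasing these naturalities facet by facet gives the commuting square expressing naturality of $\tiM \car \check{\tiV}$.

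I expect the main obstacle to be purely bookkeeping: making sure that the two somewhat differently-phrased "constructibility" conditions — the Schneider–Stuhler condition (local representability by a single vector of $\check V$) and our Definition~\ref{def-sheaf} condition (local representability by an element of $M_r(\UFe)$ together with the $\sigma_r^{F'F}$-transport rule) — are genuinely matched by the family $\{f_r^F\}$, and that the data/refinement manipulations (as in Lemma~\ref{lem-refine}) go through cleanly. There is no deep input beyond Corollary~\ref{prop-smoothcoinvariants} and the identity \eqref{compatible}; the assumption $r_0 \leq r < p^{-1/(p-1)}$ is used precisely to guarantee $\UFe_{(r)} = \UFe$ so that the stalk of $\tiM$ is the full coinvariant space $(\check V)_{\UFe}$ rather than a coarser quotient.
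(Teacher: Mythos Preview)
Your proposal is correct and follows essentially the same approach as the paper's proof: both define the map pointwise via the $f_r^z$, then use the compatibility identity \eqref{compatible} to verify that a datum for $s$ with respect to a facet $F$ is carried to (and comes from) the Schneider--Stuhler local-representability condition for $\check{\tiV}$. The only small point you leave implicit in the forward direction is that one must choose an actual lift $\check v_i \in \check V$ of $f_r^F(s_i) \in (\check V)_{\UFe}$ to match the Schneider--Stuhler definition, which the paper does explicitly; this is trivial since the map $\check V \to (\check V)_{\UFe}$ is surjective.
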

\begin{proof}
Given $z\in\sB$ we have the isomorphism
$f_r^z:M_r(\Uze)\car(\check{V})_{\Uze}$ as explained above. These
maps assemble to a $K$-linear isomorphism, say $f_r^\Omega$,
between the space of maps
$$s:\Omega\rightarrow\dot{\bigcup}_{z\in\Omega} M_r(\Uze)$$ such
that $s(z)\in M_r(\Uze)$ for all $z\in\sB$ and the space of maps
$$s:\Omega\rightarrow\dot{\bigcup}_{z\in\Omega} (\check{V})_{\Uze}$$
such that $s(z)\in(\check{V})_{\Uze}$ for all $z\in\sB$. It is
clearly compatible with restriction. It therefore suffices to show
that it descends to an isomorphism between the subspaces
$\tiM(\Omega)$ and $\check{\tiV}(\Omega)$ respectively. Since
$\tiM$ and $\check{\tiV}$ are sheaves it suffices to verify this
over the open sets $\Omega\cap St(F)$ for facets $F\subset\sB$. We
may therefore fix a facet $F\subset\sB$ and assume that
$\Omega\subseteq St(F)$. Restricting to members $\Omega_i$ with
$\Omega_i\cap F\neq\emptyset$ of a datum for $s$ with respect to
$F$ and using the sheaf property a second time we may assume that
the covering $\{\Omega\}$ of $\Omega=\Omega\cap St(F)$ is a datum
for $s$ with respect to $F$ satisfying $\Omega\cap
F\neq\emptyset$. Let $s\in M_r(\UFe)$ be the corresponding element
of the datum. We let $\check{v}$ be any preimage in $\check{V}$ of
$f_r^F(s)\in(\check{V})_{\UFe}$. The value of the function
$f_r^\Omega(s)$ at $z\in\Omega$ is then given by
$$f_r^z(s(z))=f_r^{F'}(\sigma_r^{FF'}(s))\stackrel{(\ref{compatible})}{=}pr^{FF'}(f_r^F(s))={\rm
class~ of~}\check{v}\in (\check{V})_{U_{F'}^{(e)}}$$ where $F'\in
St(F)$ is the unique open facet containing $z$. This means
$f_r^\Omega(s)\in \check{\tiV}(\Omega)$.

Conversely, let $\check{s}\in\check{\tiV}(\Omega)$ and consider
$s:=(f_r^\Omega)^{-1}(\check{s})$. Let $F\subset\sB$ be a facet.
Any defining open covering $\Omega=\cup_{i\in I}\Omega_i$ with
vectors $\check{v}_i\in\check{V}$ for the section $\check{s}$
induces an open covering $\Omega\cap St(F)=\cup_{i\in
I}\Omega_{i,F}$ where $\Omega_{i,F}:=\Omega_i\cap St(F)$. If
$F\cap\Omega_{i,F}\neq\emptyset$ we let $s_i\in M_r(\UFe)$ be the
inverse image of the class~of~$\check{v}_i$ under $(f_r^F)^{-1}$.
We claim that this gives a datum for $s$ with respect to $F$.
Indeed, for any $z\in \Omega_{i,F}\cap F$ we compute
$$ s(z)=(f_r^z)^{-1}(\check{s}(z))=(f_r^z)^{-1}({\rm
class~of~}\check{v}_i)=s_i$$ which settles the axiom $(2a)$ for
$s$. Similarly, for any $z'\in\Omega_{i,F}$ the value of $s(z')$
equals

\[\begin{array}{rrl}
 (f_r^{z'})^{-1}(\check{s}(z'))=(f_r^{F'})^{-1}({\rm
class~of~}\check{v}_i)= & (f_r^{F'})^{-1}( pr^{FF'}(\check{v}_i))
\stackrel{(\ref{compatible})}{=}&\sigma_r^{FF'}(
(f_r^F)^{-1}(\check{v}_i))   \\
  &&  \\
 & =&\sigma_r^{FF'}(s_i) \\
\end{array}\]
where $F'$ denotes the unique open facet of $St(F)$ containing
$z'$. This proves (2b) for $s$. All in all $s\in\tiM(\Omega)$.
This proves the proposition.
\end{proof}

\vskip8pt

\begin{lemma}
Let $M$ be a coadmissible $D^{\infty}(G)$-module. Then $M$ is a
$D(G)_{\theta_{0}}$-module.
\end{lemma}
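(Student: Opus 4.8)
The plan is to show that the kernel of the canonical surjection $D(G)\twoheadrightarrow D(G)_{\theta_0}$ annihilates $M$, so that the given action of $D(G)$ on $M$ factors through $D(G)_{\theta_0}$. First I would unwind the hypothesis: a $D^{\infty}(G)$-module is, in particular, a $D(G)$-module on which the two-sided ideal generated by $\frg$ (equivalently by $\frg_K$) acts as zero, since $D^{\infty}(G)$ is by construction the quotient of $D(G)$ by this ideal. In particular $\frg_K\cdot M=0$, and therefore $U(\frg_K)\frg_K\cdot M=0$: for $u\in U(\frg_K)$, $x\in\frg_K$ and $m\in M$ one has $(ux)\cdot m=u\cdot(x\cdot m)=0$.

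Next I would bring in the description of the trivial infinitesimal character $\theta_0$ recalled earlier in the paper, namely $\ker\theta_0=Z(\frg_K)\cap U(\frg_K)\frg_K$. Together with the previous step this yields $\ker\theta_0\cdot M=0$. Since $Z(\frg_K)$ lies in the centre of $D(G)$ by \cite[Prop. 3.7]{ST4}, the two-sided ideal $I\subseteq D(G)$ generated by $\ker\theta_0$ equals $D(G)\cdot\ker\theta_0$, and for $\delta\in D(G)$, $z\in\ker\theta_0$, $m\in M$ we get $(\delta z)\cdot m=\delta\cdot(z\cdot m)=0$; hence $I\cdot M=0$. As $D(G)_{\theta_0}=D(G)\otimes_{Z(\frg_K),\theta_0}K$ is precisely the quotient $D(G)/I$, the $D(G)$-action on $M$ descends to $D(G)_{\theta_0}$, which is the assertion.

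If one wishes to record moreover that $M$ is then coadmissible as a $D(G)$-module (hence as a $D(G)_{\theta_0}$-module), I would pick a compact open uniform pro-$p$ subgroup $H\subseteq G$ over which $M$ is a coadmissible $D^{\infty}(H)$-module; the triviality of the $\frg_K$-action shows that the base change $D_r(H)\otimes_{D(H)}M$ factors through the smooth quotient of $D_r(H)$ and, by (the proof of) \ref{prop-smoothcoinvariants}, is finitely generated over $D_r(H)$ with $M\car\varprojlim_r\bigl(D_r(H)\otimes_{D(H)}M\bigr)$, so $M$ is coadmissible over $D(H)$ and therefore over $D(G)$. The whole argument is formal; the only inputs that are more than bookkeeping are the identity $\ker\theta_0=Z(\frg_K)\cap U(\frg_K)\frg_K$ and the centrality of $Z(\frg_K)$ in $D(G)$, both already available, so there is essentially no obstacle here — the content is simply the standard fact that a smooth representation carries the trivial infinitesimal character.
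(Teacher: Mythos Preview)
Your proof is correct and follows essentially the same approach as the paper: both arguments hinge on the identity $\ker\theta_0=Z(\frg_K)\cap U(\frg_K)\frg_K$ to conclude that the $D(G)$-action factors through $D(G)_{\theta_0}$, the paper phrasing this as a factorization $D(G)\to D(G)_{\theta_0}\to D^\infty(G)$ of ring maps while you work directly with the module. Your final paragraph on coadmissibility is not required by the lemma as stated (the claim is only that $M$ is a $D(G)_{\theta_0}$-module, not a coadmissible one), though it does no harm.
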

\begin{proof}
We have to show that the canonical map $D(G) \rightarrow
D^{\infty}(G)$ factors through $D(G)_{\theta_{0}}$. The kernel of
$D(G) \rightarrow D^{\infty}(G)$ is the two sided ideal generated
by $\frg$. The intersection of this latter ideal with $Z(\frg_K)$
equals $\ker\theta_0$ (cf. example 8.1.1). It follows that the map
$Z(\frg_K) \rightarrow D^{\infty}(G)$ factors through
$\theta_{0}$.
\end{proof}

\begin{thm}\label{thm-compSchSt}Let as above $r_0\leq r<p^{-1/p-1}.$
Suppose $M$ is a coadmissible $D^{\infty}(G)$-module. Then there
is a canonical isomorphism of $\cO_{\sB}$-modules
$$
C^{SS}: \cO_{\sB} \otimes_L \check{\tiV} \car \mathscr{L}_{r,
\rho}(M)
$$
which is natural in such $M$. Here, as above, $V = M'_{b}$.
\end{thm}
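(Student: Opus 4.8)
The plan is to realize $C^{SS}$ as a composite of two isomorphisms, the nontrivial one expressing the principle ``localizing a smooth module produces $\cO_\sB$ tensored with the associated constructible sheaf''. First note that, by the Lemma preceding the theorem, $M$ is a coadmissible $D(G)_{\theta_0}$-module, so $\mathscr{L}_{r,\rho}(M)$ is defined; also $\frg M=0$, and $V=M'_b$ and its smooth dual $\check V$ are admissible-smooth. Since $r_0\le r<p^{-1/(p-1)}$, Proposition \ref{prop-ScSt1} supplies a canonical isomorphism of sheaves $\tiM\car\check{\tiV}$, natural in $V$; its inverse, tensored with $\cO_\sB$, gives $\cO_\sB\otimes_L\check{\tiV}\car\cO_\sB\otimes_L\tiM$. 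The whole problem is thus reduced to producing a natural $\cO_\sB$-linear isomorphism $\Phi\colon\cO_\sB\otimes_L\tiM\car\mathscr{L}_{r,\rho}(M)$.

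To construct $\Phi$ I would start from the canonical morphism $\tiM\to\mathscr{L}_{r,\rho}(M)=\sD_{r,\rho}\otimes_{\tiDt}\tiM$, $m\mapsto 1\otimes m$ (using $\tiDt\to\sD_{r,\rho}$), and extend it $\cO_\sB$-linearly — the target being an $\cO_\sB$-module — by $\Phi(f\otimes m)=f\cdot(1\otimes m)$. It then suffices to check that $\Phi$ is an isomorphism on stalks. Fix $z\in\sB$. The crucial point is that $\frg$ acts trivially on $(\tiM)_z=M_r(\Uze)=D_r(\Uze)\otimes_{D(\Uze)}M$: for $\frx\in\frg\subset D(\Uze)$ we have $\frx\otimes m=1\otimes\frx m=0$ because $\frg M=0$. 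Moreover, for our range of $r$, the corollary to Proposition \ref{prop-smoothcoinvariants} gives $M_r(\Uze)\cong(\check V)_{\Uze}$, which is finite-dimensional over $K$ by admissibility. Now Proposition \ref{prop-stalks} computes $\mathscr{L}_{r,\rho}(M)_z$ as the $\lambda$-coinvariants ($\lambda=0$) of $(\kappa(z)\hat{\otimes}_L(\tiM)_z)/\frn_{\pi(z)}(\kappa(z)\hat{\otimes}_L(\tiM)_z)$; but $\frn_{\pi(z)}\subset\kappa(z)\otimes_L\frg$ and the quotient $\frb_{\pi(z)}/\frn_{\pi(z)}\cong\kappa(z)\otimes_L\frt$ both act through the trivial $\frg$-action on $(\tiM)_z$, so the $\frn_{\pi(z)}$-quotient and the $0$-coinvariants are vacuous and the stalk equals $\kappa(z)\hat{\otimes}_L(\tiM)_z$. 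As $(\tiM)_z$ is finite-dimensional over $K$ this coincides with $\kappa(z)\otimes_L(\tiM)_z=(\cO_\sB\otimes_L\tiM)_z$, and tracing through the associativity identifications in the proof of Proposition \ref{prop-stalks} (together with Lemma \ref{lem-stalksdis}) shows that this identification is exactly the stalk of $\Phi$. Hence $\Phi$ is an isomorphism. An alternative derivation first applies Corollary \ref{cor-zerodim} (legitimate since $\dim_K M_r(\Uze)<\infty$ for all $z$) to get $\mathscr{L}_{r,\rho}(M)\cong\cD^{an}_{\sB,\rho}\otimes_{\underline{U(\frg_K)_{\theta_0}}}\tiM$, and then uses $\frg M=0$ and the identification $\cD^{an}_{\sB,\rho}/\cD^{an}_{\sB,\rho}\cdot\frg\cong\cO_\sB\otimes_LK$ — the restriction to $\sB$ of the analytification of the classical fact $\cD_X/\cD_X\cdot\frg\cong\cO_X$, i.e.\ Beilinson-Bernstein for the trivial $U(\frg)_{\theta_0}$-module (Theorem \ref{thm-BB}(iii) with $M=K$).

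Naturality of $C^{SS}$ in $M$ is then inherited from that of Proposition \ref{prop-ScSt1}, the functoriality of $M\mapsto\tiM\mapsto\mathscr{L}_{r,\rho}(M)$, and the manifestly functorial definition of $\Phi$. I expect the main technical obstacle to be the bookkeeping between the two kinds of tensor product and the two coefficient fields — $\cO_\sB$ (and $\cD^{an}_\sB$) live over $L=\bbQ_p$ while $\tiM$ and $\check{\tiV}$ are sheaves of $K$-vector spaces, and Proposition \ref{prop-stalks} and Lemma \ref{lem-stalksdis} are stated with completed tensor products $\kappa(z)\hat{\otimes}_L(-)$ — which reconcile with the algebraic $\kappa(z)\otimes_L(-)$ in $\cO_\sB\otimes_L\tiM$ precisely because the relevant fibres $(\tiM)_z\cong(\check V)_{\Uze}$ are finite-dimensional over $K$. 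A secondary point needing care is the verification that the global, $\cO_\sB$-linear map $\Phi$ restricts at each stalk to the specific isomorphism furnished by the proof of Proposition \ref{prop-stalks}, rather than merely to some abstract isomorphism.
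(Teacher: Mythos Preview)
Your proposal is correct, and the ``alternative derivation'' you sketch at the end is precisely the paper's own proof: the paper observes that $\frg M=0$ gives a canonical isomorphism $\cO_{\sB}\otimes_L\tiM\car\cD^{an}_{\sB,\chi}\otimes_{U(\frg_K)_\theta}\tiM$, and then invokes Lemma~\ref{lem-zerodim} (equivalently Corollary~\ref{cor-zerodim}, since $\dim_K M_r(\Uze)<\infty$) together with Proposition~\ref{prop-ScSt1}.

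Your main approach---computing the stalk directly via Proposition~\ref{prop-stalks} and showing that the $\frb_{\pi(z)}$-coinvariants are vacuous---is a valid and slightly more hands-on variant. One point you pass over quickly deserves a word: the action of an element $\xi=\sum f_i\otimes\frx_i\in\kappa(z)\otimes_L\frg$ on $\kappa(z)\hat\otimes_L(\tiM)_z$ in the skew product is \emph{a priori} $\xi\cdot(g\hat\otimes n)=\sum f_i\,\alpha(\frx_i)(g)\hat\otimes n+\sum f_ig\hat\otimes\frx_i n$, involving both a derivation term and the $\frg$-action on $(\tiM)_z$. The reason the first term vanishes for $\xi\in\frb_{\pi(z)}$ is exactly that $\frb^{\circ}=\ker\alpha^{\circ}$, so $\sum f_i\alpha(\frx_i)=0$; only then does the action reduce to the trivial $\frg$-action and your conclusion follow. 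The paper's route via $\cD^{an}_{\sB,\chi}$ packages this observation implicitly (it is the content of the isomorphism $\cD_\rho\cong\cD_X$ and hence $\cD^{an}_{\sB,\rho}/\cD^{an}_{\sB,\rho}\cdot\frg\cong\cO_\sB$), which is arguably cleaner; your direct route makes the mechanism visible.
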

\begin{proof}
Since $\frg M=0$ there is a canonical isomorphism
$$\cO_{\sB}\otimes_L\tiM\car\cD^{an}_{\sB,\chi}\otimes_{U(\frg_K)_\theta}\tiM.$$
Hence the assertion is a combination of (Lem. \ref{lem-zerodim}) and
(Prop. \ref{prop-ScSt1}).
\end{proof}

%\begin{cor}
%The restriction of $\mathscr{L}_{r, \rho}$ to
%$D^{\infty}(G)$-modules is exact.
%\end{cor}
%\begin{proof}
%This follows from the previous proposition and the fact that the Schneider-Stuhler localization functor is exact.
%\end{proof}

\section{Compatibility with the Beilinson-Bernstein
localization}\label{compBB}

In this section, we invoke our usual assumptions, i.e. we assume
$L = \bbQ_{p}$, $e >  \max(e_{uni},e_{0}, e_{1})$ and $r \in
[r_{0}, 1)$.

\vskip8pt

Let $V$ denote a finite dimensional algebraic representation of
$\bG$. Then $V$ gives rise to a $U(\frg)$-module. Let $M = V'$
denote the dual of $V$. It is a coadmissible $D(G)$-module.
Suppose the $U(\frg_K)$-module underlying $M$ is a
$U(\frg_K)_{\theta}$-module.

\vskip8pt

Recall that to any $U(\frg_K)_{\theta}$-module $M$,
Beilinson-Bernstein associate a $\cD_{\chi}$-module which will be
denoted $\Delta(M)$ (cf. \S5). We can pull this back under the
natural map $\pi: X^{an} \rightarrow X$ to get a
$\cD_{\chi}^{an}$-module $\Delta(M)^{an}$. Finally, we may apply
the functor $\sF$ to this module. Denote the latter
$\cO_{\sB}$-module by $\Delta(M)_{\sB}^{an}$. One has the
following description of $\Delta(M)^{an}$ and
$\Delta(M)_{\sB}^{an}$:

\vskip8pt

\begin{center}

$\Delta(M)^{an}  = \cD_{\chi}^{an} \otimes_{U(\frg_K)_{\theta}} M  $  \\

\vskip8pt

$\Delta(M)^{an}_{\sB} = \cD_{\sB, \chi}^{an} \otimes_{U(\frg_K)_{\theta}} M. $\\

\end{center}
Here, the second identity follows from the compatibility between
tensor products with restriction functors
(\cite[Prop. 2.3.5]{KashiwaraSchapira}).

On the other hand, any finite dimensional algebraic representation
$V$ gives rise to a $D(G)$-module $M$, where $M = V'$. If $V$ is a
$U(\frg_K)_{\theta}$-module, then $M$ is a $D(G)_{\theta}$-module.
In particular, the results of section 8 allow us to associate to
$M$ the $\sDc$-module $\mathscr{L}_{r,\chi}(M)$. Recall, this
module is given by:
$$
\mathscr{L}_{r, \chi}(M) =  \sDc\otimes_{\tiDt} \tiM
$$
Now the canonical morphism $\cD_{\sB, \chi}^{an} \rightarrow \sDc$
induces a morphism $$C^{BB} : \cD_{\sB, \chi}^{an}
\otimes_{U(\frg_K)_{\theta}} M \longrightarrow \sDc\otimes_{\tiDt}
\tiM.$$

\setcounter{para}{0}
\begin{thm}\label{thm-compBB}
There is $r(M)\in [r_0,1)$ such that for all $r\geq r(M)$ the
canonical morphism
$$C^{BB}: \Delta(M)^{an}_{\sB}\car\mathscr{L}_{r, \chi}(M)$$ is an
isomorphism of $\cD_{\sB,\chi}^{an}$-modules.
\end{thm}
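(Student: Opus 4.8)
The plan is to check that $C^{BB}$ is an isomorphism stalkwise, i.e.\ to prove that for every $z\in\sB$ the map
\[
C^{BB}_z:\ \cD^{an}_{\sB,\chi,z}\otimes_{U(\frg_K)_\theta} M\ \lra\ \sD_{r,\chi,z}\otimes_{D_r(\Uze)_\theta}(\tiM)_z
\]
is an isomorphism once $r$ is large enough. The key observation is that the finite dimensionality of $V$ (and hence of $M$) forces the relevant coadmissible module to become \emph{finite dimensional over $K$} after passing to $D_r(\Uze)$, provided $r$ is chosen large enough. More precisely, $M=V'$ is a finitely generated $U(\frg_K)_\theta$-module which is finite dimensional over $K$; it is therefore already a finitely generated $D(G)$-module, and $(\tiM)_z=M_r(\Uze)=D_r(\Uze)\otimes_{D(\Uze)}M$ is a quotient of $D_r(\Uze)\otimes_{U(\frg_K)}M$, which is finite dimensional over $K$ because $M$ is. So for \emph{every} $r\in[r_0,1)$ the module $(\tiM)_z$ is finite dimensional over $K$. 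Hence Corollary \ref{cor-zerodim} applies verbatim: for every $r$ the natural morphism
\[
\cD^{an}_{\sB,\chi}\otimes_{U(\frg_K)_\theta}\tiM\ \car\ \sD_{r,\chi}\otimes_{\tiDt}\tiM=\mathscr{L}_{r,\chi}(M)
\]
is an isomorphism of sheaves. This reduces the theorem to comparing $\Delta(M)^{an}_\sB=\cD^{an}_{\sB,\chi}\otimes_{U(\frg_K)_\theta}M$ with $\cD^{an}_{\sB,\chi}\otimes_{U(\frg_K)_\theta}\tiM$, i.e.\ to showing that the canonical map $\underline{M}\to\tiM$ of sheaves on $\sB$ induces, after applying $\cD^{an}_{\sB,\chi}\otimes_{U(\frg_K)_\theta}(-)$, an isomorphism, for $r$ sufficiently large.

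The second main step is therefore to understand the natural map $M\to M_r(\UFe)=D_r(\UFe)\otimes_{D(\UFe)}M$ for a finite dimensional algebraic representation. Here the point is that $M$, being the dual of an algebraic representation $V$, is \emph{locally finite} for the $\UFe$-action in a strong sense: the orbit maps are \emph{analytic} (they are restrictions of algebraic functions), so $M$ is a finitely generated module over $U(\frg_K)$ already, and moreover the $\UFe$-action on $M$ extends to $U_r(\fru_F)$ for $r$ close enough to $1$, where $\fru_F=\Lie(\UFe)$. Concretely, since $\UFe$ is a uniform pro-$p$ group whose Lie algebra is $p^{e'}\frg_{\bbZ_p}$ for an appropriate $e'$, the representation $M$ of $\frg$ integrates to an analytic representation of $\UFe$ with radius of analyticity bounded below by some $r(M)<1$ depending only on $M$ (and on $e$). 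For $r\geq r(M)$ the map $D(\UFe)\to D_r(\UFe)$ kills nothing that acts nontrivially on $M$, more precisely the composite $D(\UFe)\to\End_K(M)$ factors through $D_r(\UFe)$; this gives $D_r(\UFe)\otimes_{D(\UFe)}M\car M$ canonically. Unwinding, $\tiM$ becomes the constant sheaf $\underline M$ on $\sB$ (all the transition maps $\sigma_r^{F'F}\otimes\mathrm{id}$ become the identity on $M$ via these identifications), and $\tiDt$-module structure is the obvious one. Hence $C^{BB}$ is exactly the identity map $\cD^{an}_{\sB,\chi}\otimes_{U(\frg_K)_\theta}M\to\cD^{an}_{\sB,\chi}\otimes_{U(\frg_K)_\theta}M$. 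Taking $r(M)$ as above finishes the proof.

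The routine parts are: (i) checking that $\tiM\cong\underline M$ compatibly with all structure — this is just chasing through Definition \ref{def-sheaf} and the identifications $M_r(\UFe)\cong M$, using that they are compatible with the maps $\sigma_r^{F'F}$ (which restrict to the canonical Lie algebra isomorphisms by the last statement of Lemma \ref{lem-glue}); and (ii) checking that the morphism $C^{BB}$ is a morphism of $\cD^{an}_{\sB,\chi}$-modules, which is built into its construction. The genuinely substantive step, and the one I expect to be the main obstacle, is establishing the existence of $r(M)<1$ such that the $\frg$-action on the finite dimensional space $M$ integrates to an $U_r(\fru_F)$-action compatibly with the given $\UFe$-action, uniformly in the facet $F$. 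This requires making precise the "radius of convergence" of the exponential/orbit maps for an algebraic representation: one writes the action of a generator $h_\al$ of $\UFe$ on $M$ as $\exp(\log(h_\al))=\exp(X)$ with $X\in\fru_F\subset p^{e'}\frg_{\bbZ_p}$ acting nilpotently-up-to-semisimplification on $M$, estimates the norms $\|X^k/k!\|$ on $\End_K(M)$, and concludes convergence in $D_r(\UFe)$-norm for $r$ close to $1$; then one checks these bounds depend only on $M$ and $e$ (not on $F$) using that all facets are $G$-conjugate to facets in $\overline{\sC}$ together with \eqref{equ-conjgroups}. Alternatively, and perhaps more cleanly, one invokes that $M$ is a finitely generated $K$-module and hence a coadmissible $D(\UFe)$-module whose associated coherent sheaf $\{M_r(\UFe)\}_r$ stabilizes: since $\dim_K M<\infty$, the surjections $M_r(\UFe)\twoheadrightarrow M_{r'}(\UFe)$ for $r\le r'$ are eventually isomorphisms, and their common value is $M$ itself because $M\car\varprojlim_r M_r(\UFe)$ and each term is finite dimensional; one then sets $r(M)$ to be a radius beyond which all these maps are isomorphisms. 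This second route avoids explicit exponential estimates and is the one I would write up.
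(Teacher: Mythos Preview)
Your overall strategy is correct and matches the paper's: apply Corollary~\ref{cor-zerodim} to reduce to showing that the natural map $\underline{M}\to\tiM$ is an isomorphism for $r$ sufficiently large, which amounts to showing $M\car M_r(\UFe)$ for each facet $F$, and then use $G$-conjugacy to a facet in $\overline{\sC}$ (together with the equivariance~\eqref{equ-conjgroups}) to achieve uniformity in $F$. The paper proves $M\car M_r(\UFe)$ differently from your route~(b): it invokes Emerton's result \cite[Prop.~4.2.10]{EmertonA} that the finite dimensional $D(\UFe)$-module $M$ decomposes as a finite direct sum of irreducibles $M_i$, then argues that $M_i\to M_{i,r}$ is surjective (dense image plus finite dimensionality) and, once $r$ is large enough that $M_{i,r}\neq 0$, also injective by irreducibility. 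Your stabilization-of-the-projective-system argument is a valid and slightly more elementary alternative that avoids this semisimplicity input.

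Two small points to clean up. First, the claim that $D_r(\Uze)\otimes_{U(\frg_K)}M$ is finite dimensional ``because $M$ is'' is not justified as stated (tensoring a finite dimensional module up along a ring map need not preserve finite dimensionality); instead argue directly that $M\to M_r(\Uze)$ has dense image---this is exactly what the paper invokes as Theorem~A of \cite{ST5}---and then use $\dim_K M<\infty$ to conclude surjectivity, hence $\dim_K M_r(\Uze)\le\dim_K M<\infty$. Second, the transition maps in the coherent sheaf go $M_{r'}(\UFe)\to M_r(\UFe)$ for $r\le r'$, not the direction you wrote; with this correction the dimensions are nondecreasing in $r$, bounded above by $\dim_K M$, hence eventually constant, and since $\{r:r\ge r(F)\}$ is cofinal in $[r_0,1)$ you get $M\cong\varprojlim_r M_r(\UFe)\cong M_{r(F)}(\UFe)$ as desired.
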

\begin{proof}
Let $F$ be a facet in $\sB$ such that $F\subseteq \overline{\sC}$.
By \cite[Prop. 4.2.10]{EmertonA} the $D(\UFe)$-module $M$
decomposes into a {\it finite} direct sum of irreducible
$D(\UFe)$-modules $M_i$. Since all $M_i$ are coadmissible
$D(\UFe)$-modules there exists $r(F)\in [r_0,1)$ such that
$$M_{i,r}:=D_r(\UFe)\otimes_{D(\UFe)}M_i \neq 0$$ for all $r\geq
r(F)$ and all $i$. By Theorem A (\cite[\S3]{ST4}) the
$D(\UFe)$-equivariant map $M_i\rightarrow M_{i,r}, m\mapsto
1\otimes m$ has dense image and is therefore surjective. Since
$M_i$ is irreducible the map is therefore bijective whenever
$r\geq r(F)$. It follows $M\car M_r(\UFe)$ for $r\geq r(F)$. Given
$g\in G$ we can use the $G$-equivariance of the sheaf $\tiM$ to
express the canonical map $M\rightarrow M_r(U_{g^{-1}F}^{(e)})$ as
the composite
$$M\stackrel{g\cdot}{\longrightarrow} M\car
M_r(\UFe)\stackrel{g^*}{\longrightarrow}M_r(U_{g^{-1}F}^{(e)}).$$
It is therefore bijective. Put
$r(M):=\max_{F\subseteq\overline{\sC}} r(F)$. Then $M\car
M_r(\UFe)$ for all $F\subset\sB$ and all $r\geq r(M)$. Identifying
$M$ with its constant sheaf on $\sB$ the natural morphism
$M\car\tiM$ is therefore an isomorphism for all $r\geq r(M)$. On
the other hand, (Lem. \ref{lem-zerodim}) gives a canonical
isomorphism
$$\cD_{\sB, \chi}^{an} \otimes_{U(\frg_K)_{\theta}} \tiM \car
\sDc\otimes_{\tiDt} \tiM.$$
\end{proof}

\section{A class of examples}

In this section, we invoke our usual assumptions, i.e. we assume
$L = \bbQ_{p}$, $e >  \max(e_{uni},e_{0}, e_{1})$ and $r \in
[r_{0}, 1)$.

\begin{para}

Let $\cO$ be the classical BGG-category for the reductive Lie
algebra $\frg_K$ relative to the choice of Borel subalgebra
$\frb_K$ (\cite{BGG2}). Since this category was originally defined
for complex semisimple Lie algebras only we briefly repeat what we
mean by it here. The category $\cO$ equals the full subcategory of
all (left) $U(\frg_K)$-modules consisting of modules $M$ such that
\begin{itemize}
    \item[(i)] $M$ is finitely generated as $U(\frg_K)$-module;
    \item[(ii)] the action of $\frt_K$ on $M$ is semisimple and locally finite;
    \item[(iii)] the action of $\frn_K$ on $M$ is locally finite.
\end{itemize}
Recall here that $\frt_K$ acts locally finite on some module $M$
if $U(\frt_K).m$ is finite dimensional for all $m\in M$ (similar
for $\frn_K$).

\vskip8pt

% Let $\theta: Z(\frg)\rightarrow K$ be a character and
%$M$ an object of $\cO$. Let $M^\theta\subseteq M$ be the subspace
%consisting of elements $m$ such that $(x-\theta(x))^{n}m=0$ for
%all $x\in Z(\frg)$ and some $n>0$ depending on $m$. The category
%$\cO$ decomposes into the direct sum of subcategories $\cO_\theta,
%\theta$ a central character, where $M\in\cO_\theta$ if and only if
%$M_\theta=M$. For more information on the category $\cO$ we refer
%to \cite{HumphreysBGG}.

%\vskip8pt

Let $\cO_{\rm alg}$ be the full abelian subcategory of $\cO$
consisting of those $U(\frg_K)$-modules whose $\frt_K$-weights are
integral, i.e. are contained in the lattice
$X^*(\bT)\subset\frt_K^*$. %The decomposition of $\cO$ along the
%central action induces a decomposition $$\cO_{\rm
%alg}=\oplus_\theta\; \cO_{{\rm alg},\theta}$$ where $\cO_{{\rm
%alg},\theta}=\cO_{\rm alg}\cap \cO_{\theta}$.

\end{para}

\begin{para}

In \cite{OrlikStrauchJH} the authors study an exact functor
$$M\mapsto \cF^G_B(M)$$ from $\cO_{\rm alg}$ to admissible locally
analytic $G$-representations. It maps irreducible modules to
(topologically) irreducible representations. The image of
$\cF^G_B$ comprises a wide class of interesting representations
containing all principal series representations and 'essentially'
all representations arising from homogeneous vector bundles on
$p$-adic symmetric spaces. In this final section we wish to study
the localizations of representations in this class. %Since all our
%functors commute with direct sums
We restrict our attention to modules $M\in\cO_{\rm alg,\theta}$
having fixed central character $\theta$. Let $\chi\in\frt^*_K$ be
such that $\sigma(\chi)=\theta$.

\end{para}

\begin{para}

To start with let $U(\frg_K,B)$ be the smallest subring of $D(G)$
containing $U(\frg_K)$ and $D(B)$. The $\frb$-action on any
$M\in\cO_{\rm alg}$ integrates to an algebraic, and hence, locally
analytic $B$-action on $M$ and one has a canonical $D(G)$-module
isomorphism
$$\cF^G_B(M)'_b\car D(G)\otimes_{U(\frg_K,B)}M=:N$$ (loc.cit., Prop.
3.6). Of course, $N$ is a $D(G)_\theta$-module. We may therefore
consider its localization $\mathscr{L}_{r,\chi}(N)$ on $\sB$. We
recall that the stalk $\mathscr{L}_{r,\chi}(N)_z$ at a point $z$
is a quotient of $\kappa(z)\hat{\otimes} (\underline{N}_{r})_z$
(Prop. \ref{prop-stalks}) and therefore has its quotient topology. %It is
%easy to see that this quotient topology coincides with the
%topology of $\mathscr{L}_{r,\chi}(N)_z$ as a finitely generated
%module over the compact type algebra $\sD_{r,\chi,z}$ which is
%hausdorff.
We finally say a morphism of sheaves to
$\mathscr{L}_{r,\chi}(N)$ has {\it dense image} if this holds
stalkwise at all points.

\vskip8pt

On the other hand, we may form $$GM:=K[G]\otimes_{K[B]} M.$$ It
may be viewed an $U(\frg_K)$-module via $x.(g\otimes m):= g\otimes
{\rm Ad}(g^{-1})(x).m$ for $g\in G, m\in M, x\in \frg_K$. Since
$K[G]$ is a free right $K[B]$-module, $GM$ equals the direct sum
of $U(\frg_K)$-submodules $gM:=g\otimes M$ indexed by a system of
coset representatives $g$ for $G/B$. Since the group $\bG$ is
connected, the adjoint action of $G=\bG(L)$ fixes the center
$Z(\frg_K)\subset U(\frg_K)$ (\cite[II \S6.1.5]{DemazureGabriel})
and therefore $GM$ still has central character $\theta$. Let us
consider its Beilinson-Bernstein module $\Delta(GM)$ over $X$. The
linear map $gM\car M, g\otimes m\mapsto m$ is an isomorphism and
equivariant with respect to the automorphism ${\rm Ad}(g^{-1})$ of
$U(\frg_K)$. It follows that, given an open subset $V\subseteq X$,
we have a linear isomorphism $\Delta(gM)(V)\car\Delta(M)(g^{-1}V)$
given by $\delta\otimes (g\otimes m)\mapsto g^*(\delta)\otimes m$
for a local section $\delta$ of $\cD_\chi$ and $m\in M$. Here
$g^*$ refers to the $G$-equivariant structure on $\cD_\chi$
(8.3.2). The same argument works for the analytifications
$\Delta^{an}(gM)$ and $\Delta^{an}(M)$. In particular, the stalks
$\Delta^{an}(gM)_z\simeq \Delta^{an}(M)_{g^{-1}z}$ are isomorphic
vector spaces for any $z\in\sB$ and any $g\in G$.
\begin{lemma}
We have
\[\Delta^{an}(M)|_{\sB}=0\Longleftrightarrow\Delta^{an}(GM)|_{\sB}=0.\]
\end{lemma}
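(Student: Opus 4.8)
The key observation is that, since $\bG$ is connected, the action of $G=\bG(L)$ on $X$ is transitive, and more to the point, the building $\sB$ meets every $G$-translate of a given Zariski-open subset of $X$; in fact $\vartheta_\bB(\sB)$ is contained in the single fiber $\pi^{-1}(\eta)$ over the generic point $\eta$. The plan is to reduce the vanishing of $\Delta^{an}(M)|_\sB$ and $\Delta^{an}(GM)|_\sB$ to stalk computations at an arbitrary point $z\in\sB$, and then to use the decomposition $GM=\bigoplus_g gM$ (over coset representatives $g$ for $G/B$) together with the $G$-equivariant identification of stalks $\Delta^{an}(gM)_z\simeq\Delta^{an}(M)_{g^{-1}z}$ recalled just before the lemma.

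First I would note that a sheaf on $\sB$ vanishes if and only if all its stalks vanish, so $\Delta^{an}(M)|_\sB=0$ is equivalent to $\Delta^{an}(M)_z=0$ for all $z\in\sB$. Next, since the functor $\sF$ (restriction along $\vartheta_\bB$) and the pullback $\pi^*$ are exact and compute stalks correctly, $\Delta^{an}(M)_z=(\pi^*\Delta(M))_{\vartheta_\bB(z)}$, and by Theorem \ref{thm-BB}(iii) the reduced stalk of $\Delta(M)=\cD_\chi\otimes_{U(\frg_K)_\theta}M$ at $\pi(\vartheta_\bB(z))=\eta$ is the $\lambda$-coinvariants of $(k(\eta)\otimes_L M)/\frn_\eta(k(\eta)\otimes_L M)$. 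Since $\sB$ is $G$-stable and the $G$-orbit of any point $z$ under the action coming from $\bG$ sweeps through $\sB$ — more precisely, the identity $\Delta^{an}(GM)=\bigoplus_{g\in G/B}\Delta^{an}(gM)$ holds because $GM=\bigoplus_g gM$ as $U(\frg_K)$-modules and $\Delta^{an}(\cdot)$ commutes with (arbitrary) direct sums — taking stalks at $z\in\sB$ gives
\[
\Delta^{an}(GM)_z=\bigoplus_{g\in G/B}\Delta^{an}(gM)_z\simeq\bigoplus_{g\in G/B}\Delta^{an}(M)_{g^{-1}z}.
\]

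From this the equivalence is immediate in both directions. If $\Delta^{an}(M)|_\sB=0$ then $\Delta^{an}(M)_{g^{-1}z}=0$ for every $g$ and every $z\in\sB$ (since $g^{-1}z\in\sB$), hence every summand on the right vanishes and $\Delta^{an}(GM)_z=0$ for all $z$, i.e. $\Delta^{an}(GM)|_\sB=0$. Conversely, if $\Delta^{an}(GM)|_\sB=0$ then each direct summand $\Delta^{an}(M)_{g^{-1}z}$ vanishes; taking $g=1$ (with $\dot 1=1$) gives $\Delta^{an}(M)_z=0$ for all $z\in\sB$, so $\Delta^{an}(M)|_\sB=0$. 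The one point that needs a small verification is that the decomposition $GM=\bigoplus_g gM$ is a decomposition of $U(\frg_K)$-modules (not merely of vector spaces): this is exactly what is asserted in the paragraph preceding the lemma, using that $K[G]$ is free as a right $K[B]$-module and that each $gM$ is a $U(\frg_K)$-submodule under the twisted action $x.(g\otimes m)=g\otimes\mathrm{Ad}(g^{-1})(x).m$.

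\textbf{Main obstacle.} There is essentially no hard analytic input here; the only thing one must be careful about is the bookkeeping of the $G$-equivariant structures — checking that the isomorphism $\Delta^{an}(gM)_z\simeq\Delta^{an}(M)_{g^{-1}z}$ is the one induced by the explicit formula $\delta\otimes(g\otimes m)\mapsto g^*(\delta)\otimes m$ and that it is compatible with the direct-sum decomposition, so that the vanishing of one summand really does transfer. Since all of this is recorded in the lines immediately above the lemma, the proof reduces to assembling these observations; I expect it to be short.
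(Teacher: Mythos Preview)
Your proof is correct and follows essentially the same approach as the paper: decompose $GM=\bigoplus_g gM$, use the stalk isomorphism $\Delta^{an}(gM)_z\simeq\Delta^{an}(M)_{g^{-1}z}$ together with $G$-invariance of $\sB$ for the forward direction, and the direct-summand $g=1$ for the converse. The paper's proof is simply a more terse version of what you wrote, omitting the extra commentary about $\pi^{-1}(\eta)$ and the explicit stalk formula (which are not needed here).
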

\begin{proof}
Suppose $\Delta^{an}(M)|_{\sB}=0$. Let $g\in G$. For any $z\in\sB$
we compute $\Delta^{an}(gM)_z\simeq \Delta^{an}(M)_{g^{-1}z}=0$
whence $\Delta^{an}(gM)|_{\sB}=0$. This yields
$\Delta^{an}(GM)|_{\sB}=0,$ since $\Delta^{an}(\cdot)_{\sB}$
commutes with arbitrary direct sums. The converse is clear.
\end{proof}
\begin{lemma}
There is a canonical morphism of $\sD_{r,\chi}$-modules
$$\sD_{r,\chi}\otimes_{\cD^{an}_{\sB,\chi}}\Delta(GM)^{an}_{\sB}\longrightarrow\mathscr{L}_{r,\chi}(\cF_B^G(M)')$$
functorial in $M$ and with dense image.
\end{lemma}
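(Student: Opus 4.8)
The plan is to build the morphism out of the universal property of the analytic Beilinson--Bernstein localization and then base change along the canonical map $\cD^{an}_{\sB,\chi}\lra\sDc$ from (\ref{equ-extension}). Write $N:=\cF^G_B(M)'$, identified with $D(G)\otimes_{U(\frg_K,B)}M$ via \cite[Prop.~3.6]{OrlikStrauchJH}; this is a coadmissible $D(G)$-module, and since $\sigma(\chi)=\theta$ and $GM$ has central character $\theta$ (by connectedness of $\bG$, cf.~\cite[II \S6.1.5]{DemazureGabriel}), both $N$ and $GM$ are $D(G)_\theta$-, resp.\ $U(\frg_K)_\theta$-modules. Recall from the description of $\Delta(\cdot)^{an}_\sB$ given just before (Thm.~\ref{thm-compBB}) that $\Delta(GM)^{an}_\sB=\cD^{an}_{\sB,\chi}\otimes_{U(\frg_K)_\theta}\underline{GM}$, with $\underline{GM}$ the constant sheaf; since $\func(N)$ is a $\sDc$-module and hence a $\cD^{an}_{\sB,\chi}$-module, giving a $\sDc$-linear morphism $\sDc\otimes_{\cD^{an}_{\sB,\chi}}\Delta(GM)^{an}_\sB\to\func(N)$ amounts, by adjunction, to giving a morphism of $\underline{U(\frg_K)_\theta}$-modules $\underline{GM}\lra\func(N)$.

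First I would construct a canonical $U(\frg_K)_\theta$-linear map $GM\lra N$. The inclusions $K[G]\hra D(G)$ and $K[B]\hra D(B)\subseteq U(\frg_K,B)$ induce a $K$-linear map $GM=K[G]\otimes_{K[B]}M\to D(G)\otimes_{U(\frg_K,B)}M=N$, $\,g\otimes m\mapsto\delta_g\otimes m$; this is well defined because the $B$-action on $M\in\cO_{\rm alg}$ integrates the $\frb$-action, so the $K[B]$-balancing on the source is compatible with the $U(\frg_K,B)$-balancing on the target. Using the identity $\frx\,\delta_g=\delta_g\,\mathrm{Ad}(g^{-1})(\frx)$ in $D(G)$ for $\frx\in\frg$, one checks that this map intertwines the action $\frx.(g\otimes m)=g\otimes\mathrm{Ad}(g^{-1})(\frx).m$ on $GM$ with left multiplication by $U(\frg_K)\subseteq D(G)$ on $N$; both actions factor through $\theta$, so the map is $U(\frg_K)_\theta$-linear. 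Then I would compose it with the morphism of $\underline{U(\frg_K)_\theta}$-modules $\underline{N}\lra\underline{N}_r$, $\,n\mapsto 1\otimes n$, into the constructible $\tiDt$-module $\underline{N}_r$ attached to $N$ (defined from $N$ exactly as $\tiM$ is defined from a $D(G)_\theta$-module in §8), and with the canonical morphism $\underline{N}_r\lra\func(N)$ of (Prop.~\ref{prop-stalks}). Feeding the resulting $\underline{U(\frg_K)_\theta}$-linear map $\underline{GM}\to\func(N)$ into the adjunction above and base changing along $\cD^{an}_{\sB,\chi}\to\sDc$ produces the desired $\sDc$-linear morphism; since every object and every arrow in the chain is functorial in $M$, so is the morphism.

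For density I would argue stalkwise at a point $z\in\sB$. By (Prop.~\ref{prop-stalks}) together with (Lem.~\ref{lem-stalksdis}), the stalk $\func(N)_z$ is a topological quotient of $\kappa(z)\cotimes_L\underline{N}_{r,z}$ with $\underline{N}_{r,z}=D_r(\Uze)\otimes_{D(\Uze)}N$, and unwinding the construction the image of our morphism at $z$ is the $\sD_{r,\chi,z}$-submodule of $\func(N)_z$ generated by the image of $GM$. It therefore suffices to show that the composite $GM\to N\to\underline{N}_{r,z}$ has dense image: a dense subspace of the Banach space $\underline{N}_{r,z}$ spans a dense subspace of $\kappa(z)\cotimes_L\underline{N}_{r,z}$ (the algebraic tensor over $L$ with $\kappa(z)$, a space of compact type, of a dense subspace is dense), hence a dense subspace of the quotient $\func(N)_z$. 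Now $N\to\underline{N}_{r,z}=D_r(\Uze)\otimes_{D(\Uze)}N$ has dense image (Theorem~A of \cite{ST4}, applied to the coadmissible $D(\Uze)$-module $N$; cf.\ the proof of Thm.~\ref{thm-compBB}), so the point is that $GM\to N$ has dense image; this I would deduce from the density of the group algebra $K[G]$ in $D(G)$ — which for non-compact $G$ reduces, via the decomposition $D(G)=\bigoplus_{\Uze\bksl G}D(\Uze)\delta_g$, to the compact case — by passing to the quotient $D(G)\otimes_{U(\frg_K,B)}M=N$.

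The main obstacle is precisely this last step: one must follow the density of $K[G]$ in $D(G)$ correctly through the (completed) tensor products $D_r(\Uze)\otimes_{D(\Uze)}(-)$ and $\kappa(z)\cotimes_L(-)$ and through the passage to the constructible stalk, keeping careful track of which topology is used at each stage — the coadmissible (Fr\'echet) topology on $N$, the Banach topology on $\underline{N}_{r,z}$, the compact-type topology on $\kappa(z)$, and the quotient topology on $\func(N)_z$. By comparison, the purely algebraic construction of the morphism in the first two paragraphs is routine.
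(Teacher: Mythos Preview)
Your construction of the morphism is essentially the paper's, just unwound via adjunction; that part is fine. The density argument is also structurally the same as the paper's up to the point where you need that $GM\to N$ has dense image, and there you have a genuine gap.

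You write that density of $GM$ in $N$ follows from density of $K[G]$ in $D(G)$ ``by passing to the quotient $D(G)\otimes_{U(\frg_K,B)}M=N$''. But $N$ is not presented as a topological quotient of $D(G)$ (or of $D(G)\otimes_K M$) in any way that makes this step immediate: $M$ is typically infinite-dimensional, and the canonical topology on the coadmissible module $N$ is defined through the Fr\'echet--Stein structure on $D(H)$ for a \emph{compact} open $H\subset G$, not directly on $D(G)$. Your left-coset decomposition $D(G)=\bigoplus_{\Uze\backslash G}D(\Uze)\delta_g$ is on the wrong side for the tensor product over $U(\frg_K,B)\supset D(B)$ and does not help here. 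What the paper does instead is use the Iwasawa decomposition $G=G_0\cdot B$ (with $G_0=P_{\{x_0\}}$ special maximal compact) to rewrite $GM\simeq G_0M:=K[G_0]\otimes_{K[B_0]}M$ and $N\simeq N_0:=D(G_0)\otimes_{U(\frg_K,B_0)}M$ with $B_0=B\cap G_0$. Now everything lives over the compact group $G_0$: writing $D(G_0)=\varprojlim_r D_r(G_0)$, each $D_r(G_0)\otimes_{U(\frg_K,B_0)}M$ is a finitely generated $D_r(G_0)$-module, hence carries a canonical Banach topology, and density of $K[G_0]$ in $D_r(G_0)$ gives density of $P_r(G_0M)$ in it; passing to the projective limit yields density of $G_0M$ in $N_0$. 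The Iwasawa step is precisely what absorbs the non-compact factor $B$ into the tensor relations so that one can work over a compact group and invoke the Banach/Fr\'echet--Stein machinery; without it your ``passing to the quotient'' is not justified.
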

\begin{proof}
The morphism is induced from the functorial map $$P: GM\rightarrow
D(G)\otimes_{U(\frg_K,B)}M=N$$ via the inclusions $K[B]\subset
D(B)$ and $K[G]\subset D(G)$. Let us show that the morphism has
dense image. We claim first that the map $P$ has dense image with
respect to the canonical topology on the coadmissible module $N$.
Let $G_0$ be the (hyper-)special maximal compact open subgroup of
$G$ equal to the stabilizer of the origin $x_0\in A$. Let
$B_0:=B\cap G_0$. The Iwasawa decomposition $G=G_0\cdot B$ implies
$K[G]=K[G_0]\otimes_{K[B_0]} K[B]$ and similarly for distributions
$D(\cdot)$. Let $G_0M:=K[G_0]\otimes_{K[B_0]} M$ and
$N_0:=D(G_0)\otimes_{U(\frg_K,B_0)}M$. Then $G_0M\simeq GM$ as
$K[G_0]$-modules and $N\simeq N_0$ as $D(G_0)$-modules via the
obvious maps. Write $D(G_0)=\varprojlim_r D_r(G_0)$ with some
Banach algebra completions $D_r(G_0)$. The map $P$ induces maps
$P_r: G_0M\rightarrow D_r(G_0)\otimes_{U(\frg_K,B_0)} M$. Since
$K[G_0]\subset D_r(G_0)$ is dense, the definition of the Banach
topology on the target implies that $P_r$ has dense image. Passing
to the limit over $r$ shows that $P$ has dense image. Let
$z\in\sB$. Then the map $P$ composed with the map $N\rightarrow
\underline{N}_{r,z}$ has dense image (\cite[\S3 Thm. A]{ST5}).
Now we are done: the map
$$\sD_{r,\chi,z}\otimes_{\cD^{an}_{\sB,\chi,z}}\Delta(GM)^{an}_{\sB,z}\rightarrow \mathscr{L}_{r,\chi}(N)_z,$$ pulled back to
$\Delta(GM)^{an}_{\sB,z}$, may be written as
$$((\kappa(z)\hat{\otimes}_L GM)/\frn_{\pi(z)}(\kappa(z)\hat{\otimes}_L
GM))_{\lambda-coinv.}\longrightarrow ((\kappa(z)\hat{\otimes}_L
\underline{N}_{r,z})/\frn_{\pi(z)}(\kappa(z)\hat{\otimes}_L
\underline{N}_{r,z}))_{\lambda-coinv.}$$ by (Thm. \ref{thm-BB}) and (Prop. \ref{prop-stalks}). Consequently, it has dense image.
\end{proof}
\end{para}
\begin{para}
We now look closer at the case $\theta=\theta_0$ and $\chi=\rho$.
Let $V:=ind_B^G(1)$ be the smooth induction of the trivial
character of $B$. Let us assume additionally that $e$ is large
enough so that the Schneider-Stuhler sheaf $\cVsim$ of its smooth
dual does not vanish (\cite[Thm. IV.4.1]{SchSt97}). The finitely
many irreducible modules in $\cO_{{\rm alg},\theta_0}$ are given
by the irreducible quotients $L_w$ of the Verma modules $M_w$ of
highest weight $-w(\rho)-\rho$ for $w\in W$. The cardinality of
the latter set of weights is $|W|$. As usual, $w_0$ denotes the
longest element in $W$. Let $w\in W$. Let $\cM_w$ and $\cL_w$ be
the Beilinson-Bernstein localizations over $X$ of $M_w$ and $L_w$
respectively. Let $\iota_w: X_w\hookrightarrow X$ be the inclusion
of the Bruhat cell $\bB w\bB/\bB$ into $X$ and let $\cO_{X_w}$ be
its structure sheaf with its natural (left) $D_{X_w}$-module
structure. Let $\cN_w=\iota_{w*} \cO_w$ be its $D$-module
push-forward to $X$. Since $\cO_{X_w}$ is a holonomic module and
$\iota$ is an affine morphism, $\cN_w$ may be viewed as an
$D_X$-module (rather than just a complex of such), cf.
\cite[3.4]{Hotta}.
\begin{prop}
Let $w\in W$ and $\cL_w^{an}$ be the analytification of $\cL_w$.
Then $\cL_w|_{\sB}\neq 0$ if and only if $w=w_0$.
\end{prop}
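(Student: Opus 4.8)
The plan is to reduce the statement to a question about where the support of the $\cD_X$-module $\cL_w$ meets the image of the Bruhat-Tits building $\sB \hookrightarrow X^{an}$. Recall that, by (Thm. \ref{thm-RTW}) and (Lem. \ref{lem-stalksarefields}), every point $z \in \sB$ maps to the generic point $\eta \in X$ under $\pi : X^{an} \to X$, so the analytic stalk $\cL_w^{an}|_{\sB,z}$ is computed from the stalk $(\cL_w)_\eta$ at the generic point of $X$ via base change to $\kappa(z) = \cO_{\sB,z}$. Hence $\cL_w|_\sB \neq 0$ if and only if $(\cL_w)_\eta \neq 0$, i.e. if and only if the support of $\cL_w$ as a sheaf on $X$ is all of $X$ (being a closed subset containing $\eta$). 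So the proposition is really the assertion: \emph{the Beilinson-Bernstein localization $\cL_w$ of the irreducible module $L_w$ has full support on $X$ precisely when $w = w_0$.}

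The key geometric input is the description of the support of $\cL_w$ via the Beilinson-Bernstein correspondence. For $\chi = \rho$ dominant regular, $\Delta_\chi$ is an equivalence between $U(\frg_K)_{\theta_0}$-modules and $\cD_\chi$-modules quasi-coherent over $\cO_X$, and under this equivalence Verma-type modules correspond to $D$-modules supported on (closures of) Bruhat cells. Concretely, first I would identify $\cL_w$ with $\cN_{w_0 w}$ (or the appropriate twist), the $D$-module pushforward $\iota_{w*}\cO_{X_w}$ of the structure sheaf of the Bruhat cell $X_w = \bB w \bB/\bB$; this is the standard statement (e.g. \cite[Ch. 12]{Hotta} or \cite{Milicic93}) that the irreducible quotient $L_w$ of the Verma module localizes to the intersection-cohomology $D$-module of the Schubert variety $\overline{X_w}$, whose support is $\overline{X_w}$. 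Since $\dim \overline{X_w} = \ell(w)$, the support equals all of $X$ if and only if $\ell(w)$ is maximal, i.e. $w = w_0$ (in which case $X_{w_0}$ is the big cell, dense in $X$). Conversely for $w \neq w_0$ the support is a proper closed subvariety, hence does not contain $\eta$, so $(\cL_w)_\eta = 0$ and therefore $\cL_w|_\sB = 0$.

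More carefully, I would proceed as follows. First, record the reduction: $\cL_w|_\sB \neq 0 \iff \operatorname{supp}(\cL_w) = X$, using $\pi(\vartheta_\bB(z)) = \eta$ for all $z$ and exactness of $\sF$ composed with $\pi^*$. Second, pin down $\operatorname{supp}(\cL_w)$: by the localization equivalence and the classification of irreducibles in $\cO_{\mathrm{alg},\theta_0}$, the module $\cL_w$ is the unique irreducible $\cD_\rho$-submodule (equivalently quotient) with the stated characteristic variety, and its support is the Schubert variety $\overline{\bB w \bB/\bB}$ of dimension $\ell(w)$. Third, conclude: $\overline{X_w} = X \iff \ell(w) = \dim X = \ell(w_0) \iff w = w_0$. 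The direction $w = w_0 \Rightarrow \cL_{w_0}|_\sB \neq 0$ then also follows because $\cL_{w_0}$ is (up to the $\rho$-twist) the trivial algebraic representation's localization $\cO_X$ — or more precisely the $D$-module of the open dense cell — whose generic stalk is visibly nonzero; alternatively this is the case already covered by the $\Delta_\chi(M)^{an}_\sB$ computation for finite-dimensional $V$, which is nonzero.

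The main obstacle I anticipate is not the geometry but bookkeeping of conventions: matching the combinatorial labelling of irreducibles $L_w$ (highest weight $-w(\rho) - \rho$, as in the excerpt) with the Schubert-cell labelling of the corresponding $\cD_\rho$-modules, so that the ``$w = w_0$'' in the proposition is indeed the long element and not its inverse or a $\rho$-shifted partner. Concretely one must check that the $L_w$ appearing here with $\lambda = \lambda_0$, $\theta = \theta_0$, $\chi = \rho$ localizes to the $D$-module whose support has dimension $\ell(w)$ rather than $\ell(w_0 w)$ or $\ell(w_0) - \ell(w)$; this is a standard but convention-sensitive point, and I would resolve it by testing against the one module whose localization is unambiguous, namely the trivial one-dimensional $\frg_K$-module, which is $L_e$ or $L_{w_0}$ depending on the chosen normalization, and whose localization is the structure sheaf $\cD_\rho$-module $\cO_X$ with full support. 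Everything else — the passage to the building, the vanishing of stalks over proper closed subsets — is formal given (Lem. \ref{lem-stalksarefields}) and the exactness properties of $\sF$ and $\pi^*$.
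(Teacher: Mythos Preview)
Your proposal is correct and follows essentially the same route as the paper: reduce to the vanishing of the generic stalk $(\cL_w)_\eta$ using $\pi(\sB) = \{\eta\}$ (Lem.~\ref{lem-stalksarefields}), then invoke the Schubert-cell support of $\cL_w$ to conclude for $w \neq w_0$, and handle $w = w_0$ via $L_{w_0} =$ trivial module, $\cL_{w_0} = \cO_X$. The only minor difference is in how the support is pinned down: the paper avoids the full IC-sheaf characterization and simply uses the injection $\cL_w \hookrightarrow \cN_w = \iota_{w*}\cO_{X_w}$ (citing \cite[Prop.~12.3.2(i)]{Hotta}) together with $\mathrm{supp}(\cN_w) \subseteq \overline{X_w}$, which is slightly lighter machinery than your appeal to the localization of irreducibles as intersection-cohomology $D$-modules; your convention-checking worry is exactly right, and the paper resolves it the same way you suggest, by testing against the trivial module.
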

\begin{proof}
By loc.cit., Lem. 12.3.1 the sheaf $\cN_w$ has support contained
in $X_w$. By loc.cit., Prop. 12.3.2 (i) the module $\cL_w$ injects
into $\cN_w$. Now let $w\neq w_0$. Let $\eta\in X$ be the generic
point of $X$ and $\Xan_{\eta}$ the fibre of $\pi: \Xan\rightarrow
X$ over $\eta$. Since $\eta\notin X_w$ one has $(\cN_w)_\eta=0$
and therefore $\cN_w^{an}|_{\Xan_{\eta}}=0$. (Lem. \ref{lem-stalksarefields}) states that $\sB\subset\Xan_\eta$ whence
$\cL_w^{an}|_{\sB}=0$. The converse is clear: the module $L_{w_0}$
equals the trivial one-dimensional $U(\frg)$-module having
localization $\cL_{w_0}=\cO_X$ (e.g. by the Borel-Weil theorem).
Hence, $\cL^{an}_{w_0}|_{\sB}=\cO_{\sB}$.
\end{proof}
\begin{cor}
Let $w\in W$. Then $\mathscr{L}_{r,\rho}(\cF^G_B(L_w)')\neq 0$ if
and only if $w=w_0$.
\end{cor}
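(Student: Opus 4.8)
The plan is to deduce the corollary from the preceding proposition by the same mechanism used in Theorem~\ref{thm-compBB} and the two lemmas about $GM$ immediately before the proposition. Concretely, write $M:=L_w$ and $N:=\cF^G_B(L_w)'\cong D(G)\otimes_{U(\frg_K,B)}L_w$, and recall that by (Prop.~\ref{prop-stalks}) the stalk $\mathscr{L}_{r,\rho}(N)_z$ is the $\frb_{\pi(z)}$-coinvariants of $\kappa(z)\hat{\otimes}_L \underline{N}_{r,z}$. So the assertion $\mathscr{L}_{r,\rho}(N)=0$ is equivalent to the vanishing of all these stalks, and since $\pi(z)=\eta$ for every $z\in\sB$ (Lem.~\ref{lem-stalksarefields}), this is a statement about a single fibre.

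First I would dispose of the case $w=w_0$: here $L_{w_0}$ is the trivial one-dimensional $U(\frg_K)$-module, so $\cF^G_B(L_{w_0})$ is (by \cite{OrlikStrauchJH}, or directly) the trivial smooth representation $\mathrm{ind}_B^G(1)$ up to the relevant normalization, and its dual $N$ is a smooth (hence $\frg$-trivial) coadmissible $D(G)_{\theta_0}$-module. By Theorem~\ref{thm-compSchSt} we get $\mathscr{L}_{r,\rho}(N)\cong \cO_\sB\otimes_L \cVsim$, and the extra hypothesis on $e$ guarantees $\cVsim\neq 0$ (\cite[Thm. IV.4.1]{SchSt97}); hence $\mathscr{L}_{r,\rho}(N)\neq 0$. (Alternatively, one may run the $GM$-argument below in reverse using $\cL_{w_0}=\cO_X$.)

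For the case $w\neq w_0$, the key is the morphism of the second lemma before the proposition,
\[
\sD_{r,\rho}\otimes_{\cD^{an}_{\sB,\rho}}\Delta(GL_w)^{an}_\sB\longrightarrow \mathscr{L}_{r,\rho}(N),
\]
which has dense image at every stalk. By the first lemma before the proposition, $\Delta^{an}(GL_w)|_\sB=0$ if and only if $\Delta^{an}(L_w)|_\sB=0$, and the proposition (applied with $\chi=\rho$, $\theta=\theta_0$) says $\cL_w|_\sB=0$ precisely because $w\neq w_0$; passing to analytifications, $\cL_w^{an}|_\sB=\Delta^{an}(L_w)|_\sB=0$. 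Therefore $\Delta(GL_w)^{an}_\sB=0$, so the source of the displayed morphism is the zero sheaf, so its image is zero; but the image is dense in each stalk $\mathscr{L}_{r,\rho}(N)_z$, and each such stalk carries its quotient (hence Hausdorff) topology as noted in the excerpt, so a dense zero subspace forces $\mathscr{L}_{r,\rho}(N)_z=0$ for all $z\in\sB$. Hence $\mathscr{L}_{r,\rho}(\cF^G_B(L_w)')=0$.

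The main obstacle — really the only non-formal point — is making sure the density/Hausdorffness argument is legitimate: one must check that $\mathscr{L}_{r,\rho}(N)_z$ is indeed Hausdorff in its quotient topology (so that ``dense image of the zero sheaf'' genuinely implies vanishing), and that ``dense image'' in the lemma is meant stalkwise in exactly this topology. Both are asserted in the text preceding the proposition (the stalk is a quotient of $\kappa(z)\hat\otimes(\underline N_r)_z$, and the definition of dense image is ``stalkwise at all points''), so the argument should go through cleanly; the remaining steps are bookkeeping with the cited lemmas and Theorem~\ref{thm-compSchSt}.
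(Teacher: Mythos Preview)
Your argument for $w\neq w_0$ is exactly the paper's: the proposition gives $\cL_w^{an}|_\sB=0$, the first lemma gives $\Delta^{an}(GL_w)|_\sB=0$, and then the dense-image lemma forces $\mathscr{L}_{r,\rho}(\cF^G_B(L_w)')=0$. Your caveat about Hausdorffness is well taken but is implicit in the paper as well, so there is no discrepancy there.

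For $w=w_0$ there is a genuine gap. You invoke Theorem~\ref{thm-compSchSt} to identify $\mathscr{L}_{r,\rho}(N)$ with $\cO_\sB\otimes_L\cVsim$, but that theorem carries the hypothesis $r_0\le r<p^{-1/(p-1)}$, whereas the corollary (and all of section~11) is stated for arbitrary $r\in[r_0,1)$. So your argument only covers a proper subrange of radii. The paper avoids this by arguing directly at a stalk: choose $z$ with $\check{V}_{\Uze}\neq 0$, use Prop.~\ref{prop-smoothcoinvariants} to get a surjection $(\underline{N}_r)_z=\check{V}_{(\Uze)_{(r)}}\twoheadrightarrow\check{V}_{\Uze}$, hence $(\underline{N}_r)_z\neq 0$; then since $\frg N=0$ the stalk formula of Prop.~\ref{prop-stalks} collapses to $\mathscr{L}_{r,\rho}(N)_z=\kappa(z)\otimes_L(\underline{N}_r)_z\neq 0$. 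This works for every $r\in[r_0,1)$. The fix is thus straightforward: replace the appeal to Theorem~\ref{thm-compSchSt} by this direct stalk computation.
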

\begin{proof}
Let $w\neq w_0$. The preceding proposition together with the first
lemma above yields $\Delta^{an}(GL_w)|_{\sB}=0$. The second lemma
then yields $\mathscr{L}_{r,\rho}(\cF^G_B(L_w)')=0.$ Conversely,
let $w=w_0$. We have $\cF^G_B(L_{w_0})=ind_B^G(1)=V$, the smooth
induction of the trivial $B$-representation
(\cite{OrlikStrauchJH}). By choice of $e$ we have $\cVsim\neq 0$.
Let $z\in\sB$ be a point such that $\check{V}_{\Uze}\neq 0$. With
$N:=V'$ and $(\Uze)_{(r)}:=\Uze\cap U_r(\Uze)$, the (Prop.
\ref{prop-smoothcoinvariants}) yields a surjection
$$(\underline{N}_{r})_z=D_r(\Uze)\otimes_{D(\Uze)}
N=\check{V}_{(\Uze)_{(r)}}\longrightarrow \check{V}_{\Uze}$$
between the two spaces of coinvariants which implies
$(\underline{N}_{r})_z\neq 0$. It follows
$\mathscr{L}_{r,\rho}(N)_z=\kappa(z)\otimes_L(\underline{N}_{r})_z\neq
0$ (Prop. \ref{prop-stalks}) which means
$\mathscr{L}_{r,\rho}(N)|_{\sB}\neq 0$.
\end{proof}
Recall that any $U(\frg_K)$-module $M\in\cO$ is of finite length.
\begin{prop}
Let $M\in\cO_{{\rm alg},\theta_0}$. Let $n\geq 0$ be the
Jordan-H\"older multiplicity of the trivial representation in the
module $M$ and let $V=ind_B^G(1)$. There is a (noncanonical)
isomorphism of $\cO_{\sB}$-modules
$$\mathscr{L}_{\rho,r}(\cF^G_B(M)')\car
\mathscr{L}_{\rho,r}(V'^{\oplus n})$$ with both sides equal to
zero in case $n=0$.
\end{prop}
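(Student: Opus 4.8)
The plan is to reduce the statement to the case of irreducible modules via the exactness of $\mathscr{L}_{r,\chi}$ and a dévissage along a Jordan--Hölder filtration of $M\in\cO_{{\rm alg},\theta_0}$. First I would recall that any $M\in\cO_{{\rm alg},\theta_0}$ has finite length, with irreducible subquotients among the $L_w$, $w\in W$ (these are all the simple objects of $\cO_{{\rm alg},\theta_0}$). The functor $\cF^G_B$ is exact (\cite{OrlikStrauchJH}), and passing to duals $M\mapsto\cF^G_B(M)'$ is exact and lands in coadmissible $D(G)_{\theta_0}$-modules; moreover $\mathscr{L}_{r,\rho}$ restricted to coadmissible modules is exact by the Corollary following (Prop.~\ref{prop-stalks}) (note $\chi=\rho$ is dominant and regular for $\theta_0$). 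Hence a composition series $0=M_0\subset M_1\subset\cdots\subset M_\ell=M$ with simple quotients $M_i/M_{i-1}\cong L_{w(i)}$ yields a filtration of $\mathscr{L}_{r,\rho}(\cF^G_B(M)')$ whose graded pieces are the sheaves $\mathscr{L}_{r,\rho}(\cF^G_B(L_{w(i)})')$.

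Next I would invoke the preceding Corollary: $\mathscr{L}_{r,\rho}(\cF^G_B(L_w)')\neq 0$ if and only if $w=w_0$, and in that case $\cF^G_B(L_{w_0})=\mathrm{ind}_B^G(1)=V$, so the nonzero graded pieces are each isomorphic to $\mathscr{L}_{r,\rho}(V')$. By definition $n$ is the Jordan--Hölder multiplicity of the trivial representation in $M$; since $L_{w_0}$ is the trivial one-dimensional $\frg_K$-module (Borel--Weil), $n$ is exactly the number of indices $i$ with $w(i)=w_0$, and by Jordan--Hölder this is independent of the chosen composition series. So $\mathscr{L}_{r,\rho}(\cF^G_B(M)')$ admits a filtration with exactly $n$ nonzero graded pieces, each $\cong\mathscr{L}_{r,\rho}(V')$, and all other graded pieces zero. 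In particular it vanishes when $n=0$.

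Finally I would upgrade "same graded pieces" to an actual (noncanonical) isomorphism $\mathscr{L}_{r,\rho}(\cF^G_B(M)')\cong\mathscr{L}_{r,\rho}(V'^{\oplus n})=\mathscr{L}_{r,\rho}(V')^{\oplus n}$ of $\cO_\sB$-modules. The clean way is to check stalkwise: by (Prop.~\ref{prop-stalks}) the stalk at $z\in\sB$ of $\mathscr{L}_{r,\rho}(\cF^G_B(M)')$ is the $\frb_{\pi(z)}$-coinvariants of $\kappa(z)\hat\otimes_L\underline{N}_{r,z}$ where $N=\cF^G_B(M)'$, and $\frg M=0$ would be too strong here — instead one uses that the stalk functor $N\mapsto\mathscr{L}_{r,\rho}(N)_z$ is exact on coadmissible modules, so the stalk is filtered with $n$ copies of $\mathscr{L}_{r,\rho}(V')_z$ and the rest zero; over the field $\kappa(z)$ such a filtration splits, giving $\mathscr{L}_{r,\rho}(\cF^G_B(M)')_z\cong(\mathscr{L}_{r,\rho}(V')_z)^{\oplus n}$. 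The remaining point is that these stalkwise isomorphisms can be chosen compatibly to yield a global isomorphism of sheaves; here I would argue that since all sheaves in sight are constructible and $\tiM$ etc.\ restrict to constant sheaves on each facet, it suffices to produce the isomorphism over each star $St(F)$ compatibly, and on a facet everything reduces to a statement about finitely generated modules over the Banach algebra $D_r(\UFe)_{\theta_0}$, where the composition series of $M$ again splits off $n$ copies of $V'_r(\UFe)$ after applying $D_r(\UFe)\otimes_{D(\UFe)}(-)$. The main obstacle is precisely this last globalization/naturality bookkeeping: making the noncanonical splittings coherent across facets and stars so that they glue to a genuine morphism of sheaves, rather than just an abstract equality of Jordan--Hölder data. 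One can finesse this by working on each facet $F\subseteq\overline{\sC}$ with a fixed splitting of the $D_r(\UFe)$-module $M_r(\UFe)$ into $\UFe$-isotypic-type summands, transporting by the $G$-equivariance of $\tiM$ to arbitrary facets, and then checking the gluing maps $\sigma_r^{F'F}$ respect the chosen decompositions up to the same permutation — a routine but slightly tedious verification that I would carry out facet by facet.
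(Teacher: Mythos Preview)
Your d\'evissage strategy via a Jordan--H\"older filtration and the exactness of $\mathscr{L}_{r,\rho}\circ(\cF^G_B(\cdot)')$ is exactly right, and correctly identifies the graded pieces. The gap is in the last step, and you have put your finger on it yourself: stalkwise isomorphisms of sheaves do not in general glue to a global isomorphism, and your proposed fix (choose splittings on each facet, transport by $G$-equivariance, check compatibility along $\sigma_r^{F'F}$) is not a proof but a hope. There is no reason the noncanonical splittings you pick over different facets should match under the transition maps, and ``up to the same permutation'' is not a well-defined condition here.

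The paper sidesteps the globalization problem entirely with one observation you are missing: $\mathscr{L}_{r,\rho}(V')$ is isomorphic to $\cO_{\sB}\otimes_L \cVrsim$ (an analogue of Thm.~\ref{thm-compSchSt} for general $r$, using the groups $(\UFe)_{(r)}$ in place of $\UFe$), hence it is a \emph{free} $\cO_{\sB}$-module. With this in hand one argues by induction on $n$: for $n\geq 2$ choose a short exact sequence $0\to M_1\to M\to M_2\to 0$ with multiplicities $n_1,n_2\geq 1$, apply the induction hypothesis to the outer terms, and obtain a short exact sequence of $\cO_\sB$-modules whose right-hand term $\mathscr{L}_{r,\rho}(V'^{\oplus n_2})$ is free. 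Freeness of the quotient forces the sequence to split (noncanonically) as $\cO_\sB$-modules, and the induction closes. No stalkwise patching is needed.
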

\begin{proof}
Let $\cVrsim$ be the constructible sheaf of $K$-vector spaces on
$\sB$ which is constructed in the same way as $\cVsim$ but using
the groups $(\UFe)_{(r)}$ instead of $\UFe$ for all facets $F$.
The very same arguments as in the case $r=r_0$ (Thm.
\ref{thm-compSchSt}) show that the $\cO_{\sB}$-module
$\mathscr{L}_{\rho,r}(V')$ is isomorphic to the module
$\cO_{\sB}\otimes_L\cVrsim$. In particular, it is a free
$\cO_{\sB}$-module.

We now prove the claim of the proposition by induction on $n$. Let
$n=0$. By exactness of the functors $\cF^G_B(.)'$ and
$\mathscr{L}_{\rho,r}$ a Jordan-H\"older filtration of $M$ induces
a filtration of $\mathscr{L}_{\rho,r}(\cF^G_B(M)')$ whose graded
pieces vanish by the preceding corollary. Thus
$\mathscr{L}_{\rho,r}(\cF^G_B(M)')=0.$ Let $n=1$. Using a
Jordan-H\"older filtration of $M$ and the case $n=0$ we may assume
that the trivial representation sits in the top graded piece of
$M$. Applying the case $n=0$ a second time gives the claim. Assume
now $n\geq 2$. Using again a Jordan-H\"older filtration of $M$ we
have an exact sequence
$$0\rightarrow M_1\rightarrow M\rightarrow M_2\rightarrow 0$$
in $\cO_{{\rm alg},\theta_0}$ where $M_i$ has multiplicity
$n_i\geq 1$. Applying the induction hypothesis to $M_1$ and $M_2$
yields an exact sequence of $\cO_{\sB}$-modules
$$0\rightarrow \mathscr{L}_{\rho,r}(V'^{\oplus n_1})\rightarrow \mathscr{L}_{\rho,r}(\cF^G_B(M)')
\rightarrow \mathscr{L}_{\rho,r}(V'^{\oplus n_2})\rightarrow 0.$$
By our first remark this sequence is (noncanonically) split. Since
$\mathscr{L}_{\rho,r}$ commutes with direct sums, this completes
the induction.
\end{proof}

\end{para}

\bibliographystyle{plain}
\bibliography{local_120919}

\end{document}